\setlist[enumerate,1]{label={\upshape(\arabic*)}}
\setlist[enumerate,2]{label={\upshape(\alph*)}}
\tikzset{black/.style={circle,fill=black,inner sep=3pt,outer sep=3pt},
         white/.style={circle,fill=white,draw=black,inner sep=3pt,outer sep=3pt},
}
\newcolumntype{C}{>{$}c<{$}}
\newtheorem{theorem}{Theorem}[section]
\newtheorem{theoremi}{Theorem}
\newtheorem{corollaryi}[theoremi]{Corollary}
\newtheorem{corollary}[theorem]{Corollary}
\newtheorem{lemma}[theorem]{Lemma}
\newtheorem*{lemma*}{Lemma}
\newtheorem*{theorem*}{Theorem}
\newtheorem{proposition}[theorem]{Proposition}
\newtheorem{definition-proposition}[theorem]{Definition-Proposition}
\newtheorem{problem}[theorem]{Problem}
\theoremstyle{definition}
\newtheorem{definition}[theorem]{Definition}
\newtheorem{assumption}[theorem]{Assumption}
\newtheorem{remark}[theorem]{Remark}
\newtheorem{example}[theorem]{Example}
\newtheorem{examplei}[theoremi]{Example}
\renewcommand{\AA}{\mathcal{A}}
\newcommand{\CC}{\mathcal{C}}
\newcommand{\MM}{\mathsf{M}}
\newcommand{\DD}{\mathcal{D}}
\newcommand{\KK}{\mathsf{K}}
\newcommand{\FF}{\mathcal{F}}
\newcommand{\PP}{\mathsf{P}}
\renewcommand{\SS}{\mathcal{S}}
\newcommand{\TT}{\mathcal{T}}
\newcommand{\Z}{\mathbb{Z}}
\renewcommand{\P}{\mathcal{P}}
\newcommand{\I}{\mathcal{I}}
\newcommand{\N}{\mathbb{N}}
\renewcommand{\top}{\operatorname{top}\nolimits}
\newcommand{\rad}{\operatorname{rad}\nolimits}
\newcommand{\Ext}{\operatorname{Ext}\nolimits}
\newcommand{\Hom}{\operatorname{Hom}\nolimits}
\newcommand{\gl}{\operatorname{gl.\!dim}\nolimits}
\newcommand{\op}{\operatorname{op}\nolimits}
\newcommand{\RHom}{\mathbf{R}\strut\kern-.2em\operatorname{Hom}\nolimits}
\newcommand{\Image}{\operatorname{Im}\nolimits}
\newcommand{\Cokernel}{\operatorname{Coker}\nolimits}
\newcommand{\rank}{\operatorname{rank}}
\newcommand{\coker}{\Cokernel}
\newcommand{\im}{\Image}
\newcommand{\un}{\underline}
\newcommand{\ov}{\overline}
\newcommand{\can}{\mathsf{can}}
\newcommand{\ot}{\leftarrow}
\DeclareMathOperator{\moduleCategory}{\mathsf{mod}} \renewcommand{\mod}{\moduleCategory}
\DeclareMathOperator{\Ob}{\mathsf{Ob}}
\DeclareMathOperator{\proj}{\mathsf{proj}}
\DeclareMathOperator{\ind}{\mathsf{ind}}
\DeclareMathOperator{\simp}{\mathsf{sim}}
\DeclareMathOperator{\gp}{\mathsf{gp}}
\DeclareMathOperator{\Iso}{\mathsf{Iso}}
\DeclareMathOperator{\Atom}{\mathsf{Atom}}
\DeclareMathOperator{\Sub}{\mathsf{Sub}}
\DeclareMathOperator{\Fac}{\mathsf{Fac}}
\DeclareMathOperator{\CM}{\mathsf{CM}}
\DeclareMathOperator{\GP}{\mathsf{GP}}
\DeclareMathOperator{\add}{\mathsf{add}}
\DeclareMathOperator{\id}{\mathsf{id}}
\DeclareMathOperator{\inv}{\mathsf{inv}}
\DeclareMathOperator{\Binv}{\mathsf{Binv}}
\DeclareMathOperator{\supp}{\mathsf{supp}}
\DeclareMathOperator{\udim}{\un{\dim}}
\DeclareMathOperator{\WR}{\mathsf{W}_R}
\newcommand{\iso}{\cong}
\newcommand{\infl}{\rightarrowtail}
\newcommand{\defl}{\twoheadrightarrow}
\newcommand{\hc}[1]{\left[ #1 \right)}
\newenvironment{sbmatrix}{\left[\begin{smallmatrix}}{\end{smallmatrix}\right]}
\renewcommand{\AA}{\mathcal{A}}
\newcommand{\EE}{\mathcal{E}}
\numberwithin{equation}{section}
\begin{document}
\title[(JHP) and Grothendieck monoids of exact categories]{The Jordan-H\"older property \\and Grothendieck monoids of exact categories}

\author[H. Enomoto]{Haruhisa Enomoto}
\address{Graduate School of Science, Osaka Prefecture University, 1-1 Gakuen-cho, Naka-ku, Sakai, Osaka 599-8531, Japan}
\email{the35883@osakafu-u.ac.jp}
\subjclass[2010]{18E10, 16G10, 16G20}
\keywords{exact category; Grothendieck monoid; Jordan-H\"older property; Bruhat inversion}
\begin{abstract}
  We investigate the Jordan-H\"older property (JHP) in exact categories. First, we show that (JHP) holds in an exact category if and only if the Grothendieck monoid introduced by Berenstein and Greenstein is free. Moreover, we give a criterion for this which only uses the Grothendieck group and the number of simple objects.
  Next, we apply these results to the representation theory of artin algebras. For a large class of exact categories including functorially finite torsion(-free) classes, (JHP) holds precisely when the number of indecomposable projectives is equal to that of simples. We study torsion-free classes in a quiver of type A in detail using the combinatorics of symmetric groups. We introduce Bruhat inversions of permutations and show that simples in a torsion-free class are in bijection with Bruhat inversions of the corresponding $c$-sortable element. We use this to give a combinatorial criterion for (JHP).
\end{abstract}

\maketitle

\tableofcontents

\section{Introduction}
To begin with, let us recall the \emph{Jordan-H\"older theorem} for modules. This classical theorem says that the ways in which $M$ can be built up from simple modules are essentially the same.
\begin{theorem*}[the Jordan-H\"older theorem]
  Let $\Lambda$ be a ring and $M$ a $\Lambda$-module of finite length. Then all composition series of $M$ are equivalent. In particular, composition factors of $M$ together with their multiplicities are uniquely determined by $M$.
\end{theorem*}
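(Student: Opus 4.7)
The plan is to prove this by induction on the length of a shortest composition series, using the second isomorphism theorem as the key algebraic input. So let $0 = M_0 \subset M_1 \subset \cdots \subset M_n = M$ and $0 = N_0 \subset N_1 \subset \cdots \subset N_m = M$ be two composition series, and I want to show $n = m$ and that the multisets of factors $\{M_i/M_{i-1}\}$ and $\{N_j/N_{j-1}\}$ coincide. The base case $n = 0$ (so $M = 0$) is trivial, and for $n = 1$, simplicity of $M$ forces $m = 1$. Assume the statement for all modules admitting a composition series of length $< n$.

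For the inductive step, I would compare $M_{n-1}$ and $N_{m-1}$. If $M_{n-1} = N_{m-1}$, then $M/M_{n-1} \iso M/N_{m-1}$, so the top factors agree, and the inductive hypothesis applied to $M_{n-1}$ finishes the argument. Otherwise $M_{n-1} + N_{m-1}$ is a submodule strictly larger than $M_{n-1}$, and since $M/M_{n-1}$ is simple this forces $M_{n-1} + N_{m-1} = M$. Setting $K := M_{n-1} \cap N_{m-1}$, the second isomorphism theorem yields
\[
  M/N_{m-1} \iso M_{n-1}/K \quad \text{and} \quad M/M_{n-1} \iso N_{m-1}/K.
\]
In particular, $M_{n-1}/K$ and $N_{m-1}/K$ are both simple.

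Now pick any composition series $0 = K_0 \subset K_1 \subset \cdots \subset K_r = K$ of $K$ (which exists because $K \subseteq M_{n-1}$ has finite length, inherited from any composition series of $M_{n-1}$). This gives two refined composition series of $M$:
\[
  0 = K_0 \subset \cdots \subset K_r \subset M_{n-1} \subset M \quad \text{and} \quad 0 = K_0 \subset \cdots \subset K_r \subset N_{m-1} \subset M.
\]
These two series have the same multiset of factors because their top two factors are swapped via the isomorphisms above, while the bottom parts are identical. Applying the inductive hypothesis to $M_{n-1}$ (length $n-1$) with the two composition series $0 \subset M_1 \subset \cdots \subset M_{n-1}$ and $0 \subset K_1 \subset \cdots \subset K_r \subset M_{n-1}$ gives that the first chain on $M$ has the same multiset of factors as the refined chain through $M_{n-1}$. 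Similarly, induction on $N_{m-1}$ (which now has a composition series of length $r+1 \leq n$, but one can reduce to the strict inductive hypothesis by a symmetric argument or by noting $r + 1 < m$ after bookkeeping) matches the second chain with the refined chain through $N_{m-1}$. Stitching these equalities yields $n = m$ and agreement of composition factors.

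The main obstacle is the bookkeeping in the last step: one must verify that the inductive hypothesis genuinely applies to $N_{m-1}$, which requires knowing $r + 1 \le m - 1$ or an equivalent symmetric argument. The cleanest route is to invoke the inductive hypothesis applied to $K$ to conclude that any composition series of $M_{n-1}$ has length $r+1$ and similarly for $N_{m-1}$, immediately yielding $n - 1 = r + 1 = m - 1$, hence $n = m$, and compatibility of multisets follows automatically. Everything else is bookkeeping with the second isomorphism theorem.
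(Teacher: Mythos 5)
Your argument is correct and is the classical direct proof of Jordan--H\"older via the second isomorphism theorem and induction; the bookkeeping worry you raise at the end resolves exactly as you suspect. Induct on the \emph{minimum} length $n$ of a composition series of $M$. In the case $M_{n-1}\neq N_{m-1}$, the module $M_{n-1}$ has a composition series of length $n-1<n$, so the inductive hypothesis applies to it; comparing its two composition series forces $r+1=n-1$. Then $N_{m-1}$ possesses a composition series of length $r+1=n-1<n$, so the inductive hypothesis now legitimately applies to $N_{m-1}$ as well, giving $m-1=n-1$ and hence $m=n$. The only step you gloss over is the existence of a composition series for $K$; this needs the standard (but not entirely trivial) fact that a submodule of a finite-length module has finite length, which is usually proved alongside this theorem.

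As for the paper's own treatment: the paper does not actually give a proof of this classical statement. It is quoted in the introduction as motivation and cited from Stenstr\"om \cite[p.92]{st}; the same citation reappears for length abelian categories in Corollary \ref{abeliancase}. The route the paper follows (and later abstracts to exact categories) is the lattice-theoretic one: show that the subobject poset $\PP(X)$ is a modular lattice, apply the Jordan--H\"older theorem for modular lattices (Theorem \ref{latticejh}) to get unique length and ``projective'' intervals in the lattice sense, and then use the abelian structure (essentially the Zassenhaus butterfly lemma) to convert lattice-projectivity of intervals into genuine isomorphism of the quotient objects. Your proof is more elementary and self-contained, requiring only the second isomorphism theorem; the lattice route is more general and is what the paper exploits when it studies quasi-abelian and more general exact categories, where the algebraic isomorphism step may fail even if modularity holds (this is why the paper gets only the unique length property, and not (JHP), from modularity alone).
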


See for example \cite[Theorem I.1.2]{ARS} for the proof.
Let us express this situation as follows: the category of $\Lambda$-modules of finite length satisfies \emph{the Jordan-H\"older property}, abbreviated by \emph{(JHP)}.  The aim of this paper is to investigate to what extent this property is valid in various settings, especially those arising in the representation theory of algebras. It is known that an abelian category satisfies (JHP) if every object has finite length (e.g. \cite[p.92]{st}), so the abelian case is rather trivial. In this paper, we study (JHP) in the context of Quillen's \emph{exact categories}, which generalize abelian categories and serve as a useful categorical framework for studying various subcategories of module categories.

As in the case of module categories, we can define a \emph{poset of admissible subobjects} of an object of an exact category, and classical notions like simple objects, composition series and (JHP) make sense in this setting (see Section 2).
Typical examples of exact categories are extension-closed subcategories of $\mod\Lambda$ for an artin algebra $\Lambda$, and in this case, all objects have \emph{at least one} composition series. However, it turns out that there exist many categories which do not satisfy (JHP), as well as those which do. Here are some examples (we refer to Section \ref{jhpex} for more examples).
\begin{examplei}[{c.f. \cite[Example 6.9]{bhlr}}]\label{exa}
  Let $k$ be a field and define $\EE$ as the category of finite-dimensional $k$-vector spaces whose dimensions are not equal to $1$. Then obviously $\EE$ is closed under extensions in $\mod k$, thus an exact category, and $k^2$ and $k^3$ are \emph{simple} objects in $\EE$. However, we have the following two different decompositions of $k^6$ into simples:
  \[
  k^3 \oplus k^3 = k^6 = k^2 \oplus k^2 \oplus k^2.
  \]
  Thus, composition series (and lengths) are not unique.
\end{examplei}
The next examples are \emph{torsion-free classes} of a module category, which have been studied in the representation theory of algebras. See Section \ref{typeasec} for the detailed treatment of this example.
\begin{examplei}[c.f. Example \ref{main1ex} (1)]\label{introex}
  Let $Q$ be a quiver $1 \to 2 \ot 3$ and consider the path algebra $kQ$. Then the Auslander-Reiten quiver of $\mod kQ$ is given as follows:
  \[
  \begin{tikzpicture}
    [baseline={([yshift=-.5ex]current bounding box.center)}, scale=0.7,  every node/.style={scale=0.8}]

    \fill[gray!30, rounded corners] (-.7,1) -- (1,-.7) -- (3,-.7) -- (3.7,0) -- (1,2.7) -- cycle;
    \fill[gray!70, rounded corners] (-.5,1) -- (1,-.5) -- (2.5,1) -- (1,2.5) -- cycle;

    \node (12) at (3,2) {$S_1$};
    \node (13) at (1,0) {$P_1$};
    \node (14) at (2,1) {$I_2$};
    \node (23) at (0,1) {$S_2$};
    \node (24) at (1,2) {$P_3$};
    \node (34) at (3,0) {$S_3$};

    \draw[->] (23) -- (24);
    \draw[->] (23) -- (13);
    \draw[->] (24) -- (14);
    \draw[->] (13) -- (14);
    \draw[->] (14) -- (12);
    \draw[->] (14) -- (34);

    \fill[gray!30] (6,2) circle (6pt);
    \fill[gray!70] (6,1) circle (6pt);
    \node at (7,2) {: $\EE_1$};
    \node at (7,1) {: $\EE_2$};
  \end{tikzpicture}
  \]
  Consider the subcategory $\EE_1 := \add \{S_2,P_1,P_3,I_2,S_3\}$ of $\mod\Lambda$. The objects $S_2, P_1, S_3$ are simples of $\EE_1$, and $\EE_1$ satisfies (JHP). For example, $P_3$ decomposes into $S_2$ and $S_3$, and $I_2$ decomposes into $P_1$ and $S_3$.

  On the other hand, consider $\EE_2:= \add \{S_2,P_1,P_3,I_2\}$. Then (JHP) fails in $\EE_2$: in fact, all four indecomposables are simple in $\EE_2$, but we have two exact sequences
  \begin{align*}
    &0 \to P_1 \to P_1 \oplus P_3 \to P_3 \to 0 \quad  \text{(a split sequence),} \\
    \text{and }\quad&0 \to S_2 \to P_1 \oplus P_3 \to I_2 \to 0 \quad \text{(an almost split sequence),}
  \end{align*}
  which implies that $P_1 \oplus P_3$ has two different decomposition: one into $P_1$ and $P_3$ and the other into $S_2$ and $I_2$. Thus, \emph{composition factors} of $P_1 \oplus P_3$ are not unique.
\end{examplei}

To explore (JHP), we make use of a lesser-known invariant of exact categories, \emph{Grothendieck monoids}, which were introduced by Berenstein and Greenstein in \cite{bg}. First, let us make some observation on the Grothendieck \emph{group} $\KK_0(\EE)$ of $\EE$, which is more famous. If $\EE$ satisfies (JHP), then we can easily check that $\KK_0(\EE)$ is a free abelian group, since simple objects form a free basis of $\KK_0(\EE)$ by (JHP). Thus, (JHP) implies the freeness of $\KK_0(\EE)$.
However, the converse is not true: $\KK_0(\EE_i) \iso \Z^3$ holds for $i=1,2$ in Example \ref{introex} but (JHP) fails in $\EE_2$. Therefore, we need a \emph{more sophisticated invariant} of exact categories than Grothendieck groups.
To this purpose, we propose to study a \emph{Grothendieck monoid $\MM(\EE)$} of an exact category $\EE$, which is a monoid defined by the same universal property as the Grothendieck group. In the representation theory of algebras, this monoid is closely related to the monoid of dimension vectors of modules (see Corollary \ref{setofdim}).

By using $\MM(\EE)$, we obtain the following simple characterization of (JHP):
\begin{theoremi}[= Theorem \ref{JHchar1.5}]\label{introthm1}
  Let $\EE$ be a skeletally small exact category. Then the following are equivalent:
  \begin{enumerate}
    \item $\EE$ satisfies (JHP).
    \item $\MM(\EE)$ is a free monoid.
    \item $\KK_0(\EE)$ is a free abelian group, and the images of non-isomorphic simple objects in $\KK_0(\EE)$ form a basis of $\KK_0(\EE)$.
  \end{enumerate}
\end{theoremi}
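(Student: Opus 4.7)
The plan is to prove the cycle $(1) \Rightarrow (2) \Rightarrow (3) \Rightarrow (1)$. The guiding idea is that (JHP) is fundamentally a \emph{uniqueness of decomposition} statement, and among commutative monoids, uniqueness of decomposition is captured exactly by freeness; the Grothendieck monoid $\MM(\EE)$, defined via admissible short exact sequences, is the natural object in which to compare (JHP) with a freeness condition, and its group completion is $\KK_0(\EE)$.

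For $(1) \Rightarrow (2)$, let $\Sigma$ be a set of isomorphism-class representatives of simple objects of $\EE$. Assuming (JHP), I would construct the composition-multiplicity map $c : \MM(\EE) \to \N^{(\Sigma)}$ sending $[X]$ to the multiset of composition factors of $X$. This is well-defined on isomorphism classes by existence and uniqueness of composition factors, and it respects the monoid relations because composition factors are additive on admissible short exact sequences. The map $c$ is inverse to the canonical homomorphism $\N^{(\Sigma)} \to \MM(\EE)$ sending $e_S \mapsto [S]$ (surjective since every object admits a composition series), so $\MM(\EE)$ is free on $\Sigma$.

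For $(2) \Rightarrow (3)$, group completion is a left adjoint and so sends free commutative monoids to free abelian groups, whence $\KK_0(\EE)$ is free. A free basis of $\MM(\EE)$ consists of its atoms, and one identifies atoms with isomorphism classes of simples, concluding that the images of non-isomorphic simples form a basis of $\KK_0(\EE)$. For $(3) \Rightarrow (1)$, I would consider the canonical monoid map $\varphi : \N^{(\Sigma)} \to \MM(\EE)$, $e_S \mapsto [S]$, which is surjective by existence of composition series. Its composition with group completion $\MM(\EE) \to \KK_0(\EE)$ lands on a basis by hypothesis (3) and is therefore injective. Since $\N^{(\Sigma)}$ is cancellative and embeds in its group completion, $\varphi$ itself is injective, hence an isomorphism; the freeness of $\MM(\EE)$ then translates back into uniqueness of composition factor multiplicities, yielding (JHP).

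The main obstacle is the delicate identification of monoid-theoretic atoms of $\MM(\EE)$ with simple objects of $\EE$: a priori an equation $[X] = [A] + [B]$ in $\MM(\EE)$ arises not from a single short exact sequence in $\EE$, but from an iterated chain of monoid relations, so it is not automatic that atoms of $\MM(\EE)$ are classes of simples. Together with the verification that every object admits at least one composition series in the settings of the theorem, this is the technical core that must be set up in Section~2 and then invoked freely here.
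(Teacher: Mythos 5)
Your proposal is correct and runs along essentially the same lines as the paper. The paper proves these equivalences by first establishing $(1)\Leftrightarrow(2)$ as Theorem~\ref{JHchar}: the $(1)\Rightarrow(2)$ direction is exactly your composition-multiplicity map $c\colon\MM(\EE)\to\N^{(\Sigma)}$, with well-definedness from (JHP) and respect for conflations verified by patching composition series via the interval isomorphism $[X,Y]\cong\PP(Z)$ from Proposition~\ref{interval}; the converse is your $(3)\Rightarrow(1)$ step applied at the level of $\MM(\EE)$. The paper then dispatches the remaining equivalences via an abstract monoid-theoretic lemma (Theorem~\ref{freechar0}), which is precisely the group-completion argument you sketch for $(2)\Rightarrow(3)$ and $(3)\Rightarrow(1)$, including the diagram chase showing the natural map $\N^{(\Sigma)}\to\MM(\EE)$ is injective because its composite into $\KK_0(\EE)$ lands in a basis. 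You also correctly isolate the technical hinge: a relation $[X]=[A]+[B]$ in $\MM(\EE)$ need not arise from a single conflation, so identifying atoms with simples requires an explicit description of the congruence (the paper's Proposition~\ref{explicit}, feeding into Proposition~\ref{simpleatom}).

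One caution worth flagging. The introduction's statement of condition (3) omits the ``length exact category'' hypothesis that appears in the body's Theorem~\ref{JHchar1.5}; as you stated it, (3) unqualified does not by itself guarantee that every object has a composition series, which you need to assert surjectivity of $\varphi\colon\N^{(\Sigma)}\to\MM(\EE)$. Conditions (1) and (2) force length-exactness automatically (the first by the definition of (JHP), the second because a free monoid carries a length-like function), but (3) does not, so the length hypothesis must be carried along there as the body of the paper does. Your write-up implicitly relies on it, so make that dependence explicit.
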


By using this theorem, we obtain the following easier criterion for (JHP) which can be useful in actual situations.
\begin{theoremi}[= Theorem \ref{JHchar2}]\label{introthm2}
  Let $\EE$ be a skeletally small exact category. Suppose that $\KK_0(\EE)$ is finitely generated. Then the following are equivalent:
  \begin{enumerate}
    \item $\EE$ satisfies (JHP).
    \item $\KK_0(\EE)$ is a free abelian group, and $\rank \KK_0(\EE)$ is equal to the number of non-isomorphic simple objects in $\EE$.
  \end{enumerate}
\end{theoremi}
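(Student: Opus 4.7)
The plan is to deduce both implications from Theorem~\ref{introthm1}.

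The forward direction $(1)\Rightarrow(2)$ is immediate: under (JHP), Theorem~\ref{introthm1} supplies that the classes of the pairwise non-isomorphic simples form a $\Z$-basis of $\KK_0(\EE)$, so $\KK_0(\EE)$ is free abelian and its rank equals the number of simples.

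For the converse $(2)\Rightarrow(1)$, the strategy is to verify condition~(3) of Theorem~\ref{introthm1} and then invoke that theorem. Let $n = \rank\KK_0(\EE)$, which by hypothesis is also the number of non-isomorphic simples $S_1,\ldots,S_n$, and consider the homomorphism
\[
\phi \colon \Z^n \longrightarrow \KK_0(\EE), \qquad e_i \longmapsto [S_i].
\]
Since source and target are free abelian of the same finite rank $n$, the map $\phi$ will be an isomorphism as soon as it is surjective: the kernel of any surjection $\Z^n\to\Z^n$ is a torsion-free abelian group of rank $0$, hence trivial, and then $[S_1],\ldots,[S_n]$ form the basis required by Theorem~\ref{introthm1}(3).

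Thus the crux is to show that the classes of simples generate $\KK_0(\EE)$. Since $\KK_0(\EE)$ is generated as an abelian group by the classes of arbitrary objects of $\EE$, it is enough to check that $[X]\in\im\phi$ for every $X\in\EE$. The plan is to work through the Grothendieck monoid $\MM(\EE)$, which is finitely generated by the finite-generation assumption on $\KK_0(\EE)$: pick a finite generating set, and reduce each non-simple generator $X$ by using an admissible conflation $0\to A\to X\to B\to 0$ with $A,B\neq0$ to replace $[X]$ by $[A]+[B]$.

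The main obstacle will be ensuring that this reduction process actually terminates at simples. The cleanest route may be to verify condition~(2) of Theorem~\ref{introthm1} directly by showing $\MM(\EE)$ is free on the simples, rather than grinding through the argument at the level of $\KK_0(\EE)$. The numerical equality $n=\#\{\text{simples}\}$ should supply the rigidity needed to rule out ``accidental'' relations among simples, such as $[S_2]+[I_2]=[P_1]+[P_3]$ arising in Example~\ref{introex}'s subcategory $\EE_2$, where consistently $\#\{\text{simples}\}=4$ strictly exceeds $\rank\KK_0(\EE_2)=3$ and (JHP) fails.
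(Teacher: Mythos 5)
Your forward direction is fine. The reverse direction has the right skeleton---you correctly observe that a surjection $\Z^n \to \KK_0(\EE)$ between free abelian groups of the same finite rank is automatically an isomorphism, so everything hinges on showing the classes of the simples generate $\KK_0(\EE)$---but you stop exactly at that hinge. Worse, your proposed route rests on a false premise: finite generation of $\KK_0(\EE) = \gp \MM(\EE)$ does \emph{not} force $\MM(\EE)$ to be finitely generated. Section~\ref{kroex} exhibits a functorially finite torsion-free class over the Kronecker quiver with $\KK_0(\EE) \iso \Z^2$ yet infinitely many atoms in $\MM(\EE)$. And the worry you flag about the reduction ``terminating at simples'' is genuine and fatal as things stand: nothing in condition~(2) as you have phrased it guarantees that every object of $\EE$ admits a composition series, which is exactly the generation statement you need.

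What you have silently dropped is the clause ``$\EE$ is a length exact category,'' which appears explicitly as part (a) of Theorem~\ref{JHchar2}(5). Once it is restored, your argument closes: being length makes $\MM(\EE)$ atomic (Proposition~\ref{atomicfg}), so every $[X]$ is a finite sum of atoms $[S]$ with $S$ simple; the simples therefore generate $\MM(\EE)$ and hence $\KK_0(\EE)$; your rank count then forces $\{\,[S] : S \in \simp\EE\,\}$ to be a $\Z$-basis, and Theorem~\ref{JHchar1.5}(3) delivers (JHP). The paper packages precisely this step as a pure monoid lemma (Corollary~\ref{freechar}): a reduced \emph{atomic} monoid $M$ whose group completion is free of finite rank equal to $\#\Atom M$ is free. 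Your intuition that $n = \#\simp\EE$ supplies the rigidity killing spurious relations is exactly right, but without the length/atomicity input the generation step is simply missing, and no amount of cleverness at the level of $\KK_0$ alone will recover it.
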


We apply these results to the representation theory of artin algebras. In many cases, the Grothendieck group turns out to be free of finite rank, whose rank is equal to the number of non-isomorphic indecomposable projective objects (Proposition \ref{assumprop}). Thus, Theorem \ref{introthm2} tells us that all we have to do to check (JHP) is to count the number of simple objects in $\EE$, and then to compare it to that of projectives.
In particular, we can give a criterion whether a functorially finite torsion(-free) class satisfies (JHP) using the language of $\tau$-tilting theory (Corollary \ref{tautilting}).
As an easy application of this theorem, we show the following result on Nakayama algebras.
\begin{corollaryi}[= Corollary \ref{nakayamamain}]
  Let $\Lambda$ be a Nakayama algebra. Then every torsion-free class of $\mod\Lambda$ satisfies (JHP).
\end{corollaryi}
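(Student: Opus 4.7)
The plan is to apply Theorem \ref{introthm2} in its $\tau$-tilting reformulation (Corollary \ref{tautilting}). Since a Nakayama algebra is representation-finite, every torsion-free class $\FF \subseteq \mod\Lambda$ has only finitely many indecomposables and is therefore functorially finite; Proposition \ref{assumprop} then guarantees that $\KK_0(\FF)$ is free of finite rank equal to the number $p(\FF)$ of non-isomorphic Ext-projective indecomposables of $\FF$. The task thus reduces to
\[
\#\{\text{simples of }\FF\} = p(\FF).
\]

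The decisive structural input is uniseriality: every indecomposable $\Lambda$-module has submodule lattice a single chain, and $\FF$ being closed under submodules forces the chain $M \supsetneq \rad M \supsetneq \cdots \supsetneq 0$ entirely into $\FF$ once $M \in \FF$. Write $T$ for the set of simples $S_i$ of $\Lambda$ occurring as tops of indecomposables of $\FF$, and $P_i(\FF)$ for the (unique, by uniseriality and finiteness) longest indecomposable of $\FF$ with top $S_i$. I would establish two bijections $T \leftrightarrow \{\text{Ext-projective indecs of }\FF\}$ and $T \leftrightarrow \{\text{simples of }\FF\}$. The first sends $S_i \mapsto P_i(\FF)$: maximality forces Ext-projectivity (see the next paragraph), while the converse --- every Ext-projective of $\FF$ is some $P_i(\FF)$ --- follows since otherwise the strictly longer $P_i(\FF)$ would surject onto the given Ext-projective via a non-split conflation in $\FF$ (its kernel $\rad^\ell P_i(\FF)$ lying in $\FF$ by closure under submodules), contradicting Ext-projectivity. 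The second sends $S_i$ to $P_i(\FF)/\rad^{j_i} P_i(\FF)$ where $j_i := \min\{j > 0 : P_i(\FF)/\rad^j P_i(\FF) \in \FF\}$; minimality of $j_i$ makes this quotient simple in $\FF$, and conversely any simple of $\FF$ with top $S_i$ must coincide with it, as any other choice would yield a proper nonzero quotient in $\FF$ contradicting simplicity.

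The main obstacle I anticipate is the Ext-projectivity argument for $P_i(\FF)$. Given a non-split conflation $F \rightarrowtail E \twoheadrightarrow P_i(\FF)$ in $\FF$, decompose $E = E_1 \oplus \cdots \oplus E_k$ into indecomposables (each $E_j \in \FF$ as a summand of $E \in \FF$); each $E_j$ maps into $P_i(\FF)$ with image equal to some $\rad^{a_j} P_i(\FF)$, and since these submodules form a chain by uniseriality, the total image is the smallest of them, so some summand $E_{j_0}$ surjects onto $P_i(\FF)$. Non-splitness forces $E_{j_0} \ne P_i(\FF)$ (else a splitting section exists), so $E_{j_0}$ is a strictly longer indecomposable of $\FF$ with top $S_i$, contradicting the maximality defining $P_i(\FF)$. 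With this computation in place, $p(\FF) = \#T = \#\{\text{simples of }\FF\}$, and Theorem \ref{introthm2} yields (JHP) for $\FF$.
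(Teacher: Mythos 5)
Your argument is correct and arrives at the same underlying combinatorics as the paper: both route through the bijections between (i) the set of simple $\Lambda$-module tops occurring in $\ind\FF$, (ii) the simple objects of $\FF$, and (iii) the Ext-projective indecomposables of $\FF$, with the maximal-length indecomposable of a given top sitting in (iii) and the minimal-length one sitting in (ii). The difference lies in how you certify that the maximal-length object $P_i(\FF)$ is Ext-projective. The paper's Theorem~\ref{nakasimp} establishes surjectivity of $\top\colon \ind\P(\FF)\to\top\FF$ by invoking the fact that $\FF$ has enough projectives (via Assumption~\ref{assumperp}) and decomposing a deflation from a sum of projectives, then deduces injectivity and the formula for the inverse by producing non-split conflations. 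You instead argue directly on an arbitrary deflation $E\defl P_i(\FF)$: decomposing $E$ into indecomposables, using uniseriality of $P_i(\FF)$ to observe that the restricted images form a chain so that one summand $E_{j_0}$ already surjects, and then obtaining a contradiction with maximality (or else a splitting section). This is more self-contained than the paper's surjectivity step, since it never appeals to the ``enough projectives'' lemma for $\FF$ except implicitly through Proposition~\ref{assumprop} to know the rank of $\KK_0(\FF)$. One small wording slip: in your Ext-projectivity paragraph, ``the total image is the smallest of them'' should read that the total image is the \emph{largest} submodule in the chain (equivalently, the one with the \emph{smallest} radical exponent $a_j$), since the sum of a nested family equals its maximum; the subsequent conclusion that some $E_{j_0}$ surjects onto $P_i(\FF)$ is what you actually use and is correct. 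Also note that the reference should be to the (unlabeled) torsion-free analogue stated just before Corollary~\ref{tautilting}, or more directly to Theorem~\ref{artinmain}, rather than to Corollary~\ref{tautilting} itself, which is phrased for torsion classes; this is purely a citation slip.
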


Finally, we discuss simple objects in torsion-free classes of the category $\mod kQ$ for a type $A_n$ quiver $Q$. Torsion-free classes of $\mod kQ$ are classified in \cite{it,airt,thomas}: they are in bijection with \emph{$c_Q$-sortable elements} in the symmetric group $S_{n+1}$.
Let $\FF(w)$ be the torsion-free class of $\mod kQ$ corresponding to a $c_Q$-sortable element $w$. Then it is natural to expect that we can describe simples in $\FF(w)$ and the validity of (JHP) by using the combinatorics of $S_{n+1}$. We obtain the following result along these lines.
\begin{theoremi}[= Theorem \ref{binvsimp}, Corollary \ref{jhtypea}]\label{introthm3}
  Let $Q$ be a quiver of type $A_n$ and $w\in S_{n+1}$ a $c_Q$-sortable element. Consider the corresponding torsion-free class $\FF(w)$ of $\mod kQ$. Then we have the following:
  \begin{enumerate}
    \item Simple objects in $\FF(w)$ are in bijection with Bruhat inversions of $w$.
    \item $\FF(w)$ satisfies (JHP) if and only if the number of supports of $w$ is equal to that of Bruhat inversions of $w$.
  \end{enumerate}
\end{theoremi}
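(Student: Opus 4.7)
The plan is to first establish (1) by combining the classification of indecomposables in $\FF(w)$ via inversions of $w$ with an explicit analysis of when such an indecomposable fails to be simple, and then to deduce (2) from (1) via Theorem \ref{introthm2}.

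For (1), I would begin from the bijection between inversions of $w$ and indecomposable objects of $\FF(w)$, which is already built into the Ingalls--Thomas / Amiot--Iyama--Reiten--Todorov classification in type $A_n$: an inversion $(i,j)$ of $w$ corresponds to a positive root $e_i - e_j$ and hence to an interval module $M(i,j) \in \mod kQ$, and this module lies in $\FF(w)$ exactly when $(i,j)$ is an inversion of $w$. Then, for each inversion $(i,j)$, I would classify the possible nontrivial admissible short exact sequences $0 \to L \to M(i,j) \to N \to 0$ inside $\FF(w)$. Because $M(i,j)$ is an interval module over a type $A$ quiver, the indecomposable sub/quotient pairs are indexed by intermediate indices $k$, producing pairs $(i,k)$ and $(k,j)$; such a pair lies in $\FF(w) \times \FF(w)$ exactly when both $(i,k)$ and $(k,j)$ are inversions of $w$, which by a direct rewriting is the existence of a middle index $k$ violating the cover-inversion condition for $(i,j)$. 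Thus $(i,j)$ is a Bruhat inversion of $w$ iff no such $k$ exists iff $M(i,j)$ admits no proper admissible subobject in $\FF(w)$ iff $M(i,j)$ is simple in $\FF(w)$.

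For (2), I would invoke Theorem \ref{introthm2}, which reduces (JHP) to the equality between $\rank \KK_0(\FF(w))$ and the number of non-isomorphic simple objects of $\FF(w)$. Since $\FF(w)$ is functorially finite, Proposition \ref{assumprop} combined with the $\tau$-tilting description of functorially finite torsion-free classes identifies $\rank \KK_0(\FF(w))$ with the number of indecomposable Ext-projectives of $\FF(w)$. For a $c_Q$-sortable element $w$, this latter number is standardly equal to the number of distinct simple reflections occurring in a (equivalently, any) reduced expression of $w$, that is, to the cardinality of $\supp w$. Together with part (1), this yields: $\FF(w)$ satisfies (JHP) iff $|\supp w|$ equals the number of Bruhat inversions of $w$.

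The main obstacle will be the combinatorial bookkeeping inside (1): verifying that, for a non-Bruhat inversion $(i,j)$ witnessed by a middle index $k$, the resulting pairs $(i,k)$ and $(k,j)$ are themselves inversions of $w$ (so that the sub- and quotient interval modules genuinely live in $\FF(w)$), and conversely that no admissible decomposition in $\FF(w)$ can occur for a Bruhat inversion. This requires careful manipulation of reduced words of the $c_Q$-sortable element $w$ and the interaction between the sortability condition and the cover-inversion combinatorics in $S_{n+1}$; once established, part (2) is essentially a corollary of part (1) and the previously developed machinery.
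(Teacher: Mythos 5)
Your overall strategy matches the paper for both parts, and part (2) is fine: (JHP) for $\FF(w)$ reduces via Theorem~\ref{JHchar2} (through Proposition~\ref{assumprop} and Theorem~\ref{artinmain}) to comparing $\#\simp\FF(w)$ with $\#\ind\P(\FF(w))$, and the latter equals $\#\supp(w)$ by Theorem~\ref{sortabletorf}(3).

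The gap is in part (1), in the step where you write that ``the indecomposable sub/quotient pairs are indexed by intermediate indices $k$, producing pairs $(i,k)$ and $(k,j)$,'' and then conclude that $M_{\hc{i,j}}$ fails to be simple iff some middle index $k$ has both $(i\,k)$ and $(k\,j)$ among the inversions of $w$. This implicitly assumes that every proper admissible subobject of $M_{\hc{i,j}}$ in $\FF(w)$ fits into a conflation with \emph{both} terms indecomposable and sharing an endpoint with $\hc{i,j}$. That is false: an indecomposable admissible subobject of $M_{\hc{i,j}}$ is of the form $M_{\hc{l,l'}}$ with $\hc{l,l'}\subset\hc{i,j}$, and when $i<l$ and $l'<j$ both hold strictly, its cokernel is the \emph{decomposable} module $M_{\hc{i,l}}\oplus M_{\hc{l',j}}$, so there is no single middle index visible from this conflation. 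This is exactly why the paper introduces the auxiliary sequences (\ref{ex4})--(\ref{ex5}) of Lemma~\ref{exlemma}(2) and invokes closure of $\FF(w)$ under submodules to upgrade one of the two endpoints to obtain a genuine middle index; alternatively, one can patch your argument by observing that $(i\,l)$, $(l\,l')$, $(l'\,j)\in\inv(w)$ forces $(l\,j)\in\inv(w)$ (if $j$ precedes $l'$ and $l'$ precedes $l$ in one-line notation, then $j$ precedes $l$), yielding the pair $(i\,l),(l\,j)$. Either way, this is the substantive step, not mere bookkeeping; and the ``main obstacle'' you flag---that for a non-Bruhat inversion $(i\,j)$ the witnessing $(i\,k)$, $(k\,j)$ are inversions---is immediate from Lemma~\ref{binvlem}(4) and is not where the difficulty lies. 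Note also that the paper's argument for Theorem~\ref{binvsimp} uses only that $\FF(w)$ is a torsion-free class together with Lemma~\ref{binvlem}; no manipulation of reduced words or of $c$-sortability is needed at this stage (those enter only in establishing Theorem~\ref{sortabletorf}).
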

Here \emph{Bruhat inversions} are inversions of $w$ which give rise to covering relations in the Bruhat order of $S_{n+1}$ (see Definition \ref{binvdef}). In a forthcoming paper \cite{enforth}, it will be shown that this holds for other Dynkin types (see Remark \ref{forthcoming}).

\subsection*{Relation to other works}
Let us clarify the relation and differences between this paper and other related papers.
\begin{itemize}
  \item In \cite{bg}, the notions of Grothendieck monoids and simple objects were originally introduced and studied in order to investigate Hall algebras of exact categories. Actually several statements about Grothendieck monoids in Section 3.1 are found in \cite{bg}: Propositions \ref{gromon}, \ref{explicit}, \ref{mreduced} and \ref{simpleatom} (see these propositions for the precise references). Also a particular case of Proposition \ref{compex} was considered in \cite[Section 3.4]{bg}.
  However, Section 3.1 is about elementary properties of Grothendieck monoids, hence it is not the main part of this paper. All the other results are not contained in \cite{bg}.
  \item In \cite{bhlr}, the notions of simple objects, composition series and lengths of objects and the Jordan-H\"older property for exact categories were introduced. In particular, they used lengths in detail to consider the Gabriel-Roiter measure on exact categories. Except for these definitions of these concepts and Example \ref{exa}, there is no overlap between this paper and \cite{bhlr}.
  \item In \cite{hr}, the notion of exact categories satisfying \emph{admissible intersection property} was defined, and it was shown that these categories satisfy (JHP). There is no overlap between this paper and \cite{hr} except for terminologies.
  \item After the first version of this paper, in \cite{bht}, some class of exact categories satisfying (JHP) were studied, namely, \emph{diamond exact categories} and \emph{Artin-Wedderburn exact categories}.
  Proposition \ref{interval} appears in \cite[Proposition 3.8]{bht} under the name of \emph{the fourth $\EE$-isomorphism theorem}.
\end{itemize}

\subsection*{Organization}
This paper is divided into two parts and an appendix.
In Part 1, we develop the general theory of posets of admissible subobjects, (JHP) and Grothendieck monoids of exact categories.
In Part 2, we apply results in Part 1 to the representation theory of artin algebras, and give some examples of computations.
In Appendix A, we collect basic properties on monoids which we need in the body part of the paper.

The content of each section is as follows.
In Section 2, we study posets of admissible subobjects of objects in exact categories, and give several basic definitions which we use throughout this paper, such as (JHP).
In the latter part of Section 2, we give some categorical conditions which ensure that subobject posets are (modular) lattices.
In Section 3, we define the Grothendieck monoid of an exact category, and study its basic properties as a monoid.
In Section 4, we give characterizations of (JHP)  in terms of Grothendieck monoids or groups (Theorems \ref{introthm1} and \ref{introthm2}).

In Section 5, we apply the general results to the representation theory of artin algebras. We mainly consider \emph{good} extension-closed subcategories of module categories (see Assumption \ref{assumperp}), and study (JHP) for this class of categories.
In Section 6, we consider torsion-free classes of $\mod kQ$ where $Q$ is a quiver of type A and prove Theorem \ref{introthm3}.
In Section 7, we give examples of computations of the Grothendieck monoids.
In Section 8, we collect counter-examples on various conditions which we have investigated.
In Section 9, some open problems are discussed.

\subsection*{Conventions and notation}
Throughout this paper, \emph{we assume that all categories are skeletally small}, that is, the isomorphism classes of objects form a set. In addition, \emph{all subcategories are assumed to be full and closed under isomorphisms}. For a category $\EE$, we denote by $\Iso \EE$ the set of all isomorphism classes of objects in $\EE$. For an object $X$ in $\EE$, the isomorphism class of $X$ is denoted by $[X] \in \Iso\EE$.

For a set of object $\CC$ of an additive category $\EE$, we denote by $\add\CC$ the subcategory of $\EE$ consisting of direct summands of finite direct sums of objects in $\CC$.

For a Krull-Schmidt category $\EE$, we denote by $\ind\EE$ the set of isomorphism classes of indecomposable objects in $\EE$. We denote by $|X|$ the number of non-isomorphic indecomposable direct summands of $X$.

By a \emph{module} we always mean right modules unless otherwise stated.
For a noetherian ring $\Lambda$, we denote by $\mod\Lambda$ (resp. by $\proj\Lambda$) the category of finitely generated right $\Lambda$-modules (resp. finitely generated projective right $\Lambda$-modules).

As for exact categories, we use the terminologies \emph{inflations}, \emph{deflations} and \emph{conflations}. We refer the reader to \cite{buhler} for the basics of exact categories.
For an inflation $A \infl X$ in an exact category, we denote by $X \defl X/A$ the cokernel of $A \infl X$.
We say that an exact category $\EE$ \emph{has a progenerator $P$} (resp. \emph{an injective cogenerator $I$}) if $P$ is projective (resp. $I$ is injective) and every object in $\EE$ admits a deflation from a finite direct sum of $P$ (resp. an inflation into a finite direct sum of $I$).
For an exact category $\EE$, we denote by $\P(\EE)$ (resp. $\I(\EE)$) the category consisting of all projective (resp. injective) objects in $\EE$.

Let $\AA$ be an abelian category and $\EE$ a subcategory of $\AA$. We say that $\EE$ is \emph{extension-closed} if for every short exact sequence $0 \to X \to Y \to Z \to 0$ in $\AA$, we have that $Y$ belongs to $\EE$ whenever both $X$ and $Z$ belong to $\EE$. In such a case, unless otherwise stated, we regard $\EE$ as an exact category with the induced exact structure; conflations are precisely short exact sequences of $\AA$ whose terms all belong to $\EE$.

For a poset $P$ and two elements $a,b \in P$ with $a \leq b$, we denote by $[a,b]$ the \emph{interval poset} $[a,b]:=\{ x \in P \, | \, a \leq x \leq b \}$ with the obvious partial order.

By a \emph{monoid} $M$ we mean a \emph{commutative} semigroup with a unit, and we always use an additive notation: the operation is denoted by $+$, and the unit of addition is always denoted by $0$. A \emph{homomorphism} between monoids is a map which preserves the addition and $0$.

We denote by $\N$ the monoid of non-negative integers: $\N = \{ 0, 1, 2, \cdots \}$ with the addition $+$.

For a set $A$, we denote by $\# A$ the cardinality of $A$.

\part{General theory}
In this part, we develop the general theory of posets of admissible subobjects, (JHP) and Grothendieck monoids of exact categories.
\section{Posets of admissible subobjects}
To study the Jordan-H\"older property (JHP) on exact categories, one has to define what (JHP) exactly means, which we will do in this section. The contents of this section are natural generalizations of the corresponding ones in module categories or abelian categories.
\subsection{Basic properties}\label{posetdef}
First, we collect some basic definitions on the poset of admissible subobjects, which we use throughout this paper.
\begin{definition}
  Let $\EE$ be a skeletally small exact category and $X$ an object of $\EE$.
  \begin{itemize}
    \item We call an inflation $A \infl X$ an \emph{admissible subobject of $X$} (c.f. \cite[Definition 3.1]{bhlr}). We often call $A$ an admissible subobject of $X$ in this case.
    \item Two subobjects $A, B$ of $X$ are called \emph{equivalent} if there exists an isomorphism $A \xrightarrow{\sim} B$ which makes the following diagram commutes:
    \[
    \begin{tikzcd}
      A \rar[rightarrowtail] \dar["\rotatebox{90}{$\sim$}"] & X \\
      B \ar[ru, rightarrowtail]
    \end{tikzcd}
    \]
    We denote by $\PP(X)$ the equivalence class of admissible subobjects of $X$. When we want to emphasize the ambient category $\EE$, we write $\PP_\EE(X)$.
    \item We define a partial order on $\PP(X)$ as follows: We write $A \leq B$ for $A, B \in \PP(X)$ if there exists an \emph{inflation} $A \infl B$ which makes the following diagram commutes:
    \[
    \begin{tikzcd}
      A \rar[rightarrowtail] \dar[rightarrowtail] & X \\
      B \ar[ru, rightarrowtail]
    \end{tikzcd}
    \]
    It is easily checked that this relation actually gives a partial order on $\PP(X)$.
  \end{itemize}
\end{definition}
We remark that $\PP(X)$ always has the greatest element $X$ and the smallest element $0$.

\begin{example}
  Let $\AA$ be an abelian category and $\EE$ an extension-closed subcategory of $\AA$. Then for an object $X \in \EE$, we have that
  \[
  \PP_\EE(X) = \{ A \,|\, A \text{ is a subobject of $X$ in $\AA$ which satisfies $A, X/A \in \EE$} \}.
  \]
\end{example}

Although we assume that there exists an \emph{inflation} $A \infl B$ in order to have $A \leq B$, if $\EE$ is \emph{weakly idempotent complete}, then this condition can be weakened. We refer to \cite[Section 7]{buhler} for weakly idempotent completeness.
\begin{lemma}\label{wic}
  Let $\EE$ be a weakly idempotent complete exact category. Then for an object $X$ and $A, B \in \PP(X)$ of $X$, the following are equivalent:
  \begin{enumerate}
    \item $A \leq B$ holds in $\PP(X)$.
    \item There exists a morphism $\varphi \colon A \to B$ which makes the following diagram commute:
    \[
    \begin{tikzcd}
      A \rar[rightarrowtail] \dar["\varphi"'] & X \\
      B \ar[ru, rightarrowtail, "\iota"']
    \end{tikzcd}
    \]
  \end{enumerate}
\end{lemma}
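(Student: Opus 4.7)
The direction (1) $\Rightarrow$ (2) is immediate, since any inflation is a morphism: the inflation $A \infl B$ witnessing $A \leq B$ satisfies exactly the required commutativity.

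For (2) $\Rightarrow$ (1), the point is to upgrade the mere morphism $\varphi \colon A \to B$ to an inflation. Denote the inflation $A \infl X$ by $j$. The hypothesis says $\iota \circ \varphi = j$, so we are in the situation where a composite of a morphism followed by an inflation is itself an inflation. This is precisely the content of one of the standard consequences of weak idempotent completeness: in a weakly idempotent complete exact category, if $g \circ f$ is an inflation and $g$ is an inflation, then $f$ is an inflation. This is \cite[Proposition~7.6]{buhler}.

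So the plan is to apply this proposition with $f = \varphi$ and $g = \iota$; since $\iota \circ \varphi = j$ is an inflation and $\iota$ is an inflation, we conclude that $\varphi$ is an inflation. The commutativity of the original diagram then witnesses $A \leq B$ in $\PP(X)$.

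The only genuine content is the invocation of \cite[Proposition~7.6]{buhler}, which is where weak idempotent completeness is essential; without it, one can only assert that $\varphi$ becomes an inflation after being composed with something, not that it already is one. Everything else is formal, so there is no real obstacle to writing this proof in two short lines.
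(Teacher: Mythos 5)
Your proof is correct and follows exactly the same route as the paper: (1)\,$\Rightarrow$\,(2) is trivial, and for (2)\,$\Rightarrow$\,(1) both you and the paper reduce to showing $\varphi$ is an inflation by invoking \cite[Proposition~7.6]{buhler}. One small nitpick: that proposition is actually stronger than you state — it concludes that $f$ is an inflation from $g\circ f$ being an inflation alone, without needing $g$ itself to be an inflation (though of course $\iota$ is one here, so your reading still applies).
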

\begin{proof}
  (1) $\Rightarrow$ (2): Obvious.

  (2) $\Rightarrow$ (1):
  It is enough to check that $\varphi$ is an inflation. This follows from that $\iota \varphi$ is an inflation and $\EE$ is weakly idempotent complete, see \cite[Proposition 7.6]{buhler} for example.
\end{proof}
\begin{example}
  Let $\AA$ be an abelian category and $\EE$ an extension-closed subcategory of $\AA$. Suppose that $\EE$ is \emph{closed under direct summands}, that is, if $A \oplus B$ belongs to $\EE$ then so do $A$ and $B$. Then $\EE$ is idempotent complete, thus weakly idempotent complete. Many important exact categories investigated in the representation theory of algebras arise in this way.
\end{example}

The next proposition ensures that for any interval $[A,B]$ in $\PP(X)$, we have an isomorphism of posets $[A,B] \iso \PP(B/A)$, as in the case of abelian categories.
\begin{proposition}\label{interval}
  Let $\EE$ be a skeletally small exact category and $X$ an object of $\EE$. Then the following hold, where all the intervals are considered in $\PP(X)$.
  \begin{enumerate}
    \item For $A \in \PP(X)$, we have an isomorphism of posets $[0,A] \iso \PP(A)$.
    \item For $A \in \PP(X)$, we have an isomorphism of posets
    $(-)/A \colon [A,X] \iso \PP(X/A)$.
    Moreover, $X/B \iso (X/A)/(B/A)$ holds for any $B$ in $[A,X]$.
    \item () For $A, B \in \PP(X)$ with $A \leq B$, we have an isomorphism of posets $(-)/A\colon  \allowbreak[A,B] \iso \PP(B/A)$.
    Moreover, for any $X_1,X_2$ in $\PP(X)$ with $A \leq X_1 \leq X_2 \leq B$, we have that $X_2/X_1 \iso (X_2/A)/(X_1/A)$.
  \end{enumerate}
\end{proposition}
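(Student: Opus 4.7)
The plan is to reduce the proposition to two standard ingredients of exact category theory: the closure of inflations under composition and under pullback along deflations, and the Noether-type isomorphism theorems that follow from the $3\times 3$ lemma (see \cite{buhler}). Throughout, I will work with representatives of equivalence classes of admissible subobjects and check that the constructions are compatible with the equivalence relation.

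For (1), I would send $B \in [0,A]$ to the inflation $B \infl A$ that witnesses $B \leq A$ in $\PP(X)$, and conversely send an admissible subobject $B \infl A$ of $A$ to the composition $B \infl A \infl X$, which is an inflation since inflations compose. These two assignments are easily seen to be well-defined on equivalence classes, mutually inverse, and monotone, giving the claimed isomorphism of posets.

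Part (2) is the main content. Given $B \in [A,X]$, the Noether isomorphism for exact categories (a direct consequence of the $3 \times 3$ lemma) produces an inflation $B/A \infl X/A$ together with an identification $X/B \iso (X/A)/(B/A)$; this simultaneously defines the forward map $(-)/A$ and yields the moreover statement, and order-preservation is routine. Conversely, given $C \infl X/A$ in $\PP(X/A)$, I would form the pullback
\[
\begin{tikzcd}
B \ar[r, twoheadrightarrow] \ar[d, rightarrowtail] & C \ar[d, rightarrowtail] \\
X \ar[r, twoheadrightarrow] & X/A
\end{tikzcd}
\]
which exists in any exact category and produces an inflation $B \infl X$ together with a deflation $B \defl C$ whose kernel is identified with the kernel $A$ of $X \defl X/A$. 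Thus $A \leq B \leq X$, and $B/A \iso C$ compatibly over $X/A$. The universal property of the pullback shows this construction is inverse to $(-)/A$ on equivalence classes, completing (2).

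Part (3) I would then deduce formally from (1) and (2). Applying (1) to $B \in \PP(X)$ identifies the poset $[0,B]_{\PP(X)}$ with $\PP(B)$, and under this isomorphism the subinterval $[A,B]_{\PP(X)}$ corresponds to the interval from $A$ to $B$ inside $\PP(B)$. Applying (2) inside $\PP(B)$ then gives $[A,B] \iso \PP(B/A)$. The moreover statement of (3) follows from the moreover statement of (2) applied to the chain $A \leq X_1 \leq X_2$ inside $X_2$. The main technical obstacle is verifying that the two constructions really do produce admissible subobjects in $\EE$: that $B/A \infl X/A$ is an inflation and that the pullback of $C \infl X/A$ along $X \defl X/A$ yields an inflation into $X$. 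Both reduce to Quillen's pushout/pullback axioms and the resulting Noether-type lemmas, after which the remainder of the argument is bookkeeping on representatives of equivalence classes.
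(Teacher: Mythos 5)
Your proposal is correct and follows essentially the same route as the paper: part (1) via closure of inflations under composition, part (2) via the $3\times 3$ lemma in one direction (the paper cites \cite[Lemma 3.5]{buhler}) and pullback of the deflation $X \defl X/A$ along an inflation in the other (the paper cites \cite[Proposition 2.15]{buhler}), and part (3) by combining (1) and (2), with the moreover statement deduced by applying (2) inside $X_2$ exactly as you describe.
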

\begin{proof}
  (1)
  For an element $B \in [0,A]$, there exists an inflation $B \infl A$ since $B \leq A$ holds. Thus, $B \in \PP(A)$ holds.
  On the other hand, let $B \in \PP(A)$. Then we have an inflation $B \infl A$. Since inflations are closed under compositions, it follows that the composition $B \infl A \infl X$ is an inflation, hence $B$ is an admissible subobject of $X$. Consequently we have $B \in \PP(X)$, and clearly $B \in [0,A]$ holds. These assignments are easily shown to be mutually inverse isomorphisms of posets between $[0,A]$ and $\PP(A)$.

  (2)
  For an element $B \in [A,X]$, we have inflations $A \infl B \infl X$. Then we obtain the following commutative diagram
  \[
  \begin{tikzcd}
    A \dar[equal] \rar[rightarrowtail] & B \rar[twoheadrightarrow] \dar[rightarrowtail] & B/A \dar[rightarrowtail] \\
    A \rar[rightarrowtail] & X \rar[twoheadrightarrow] \dar[twoheadrightarrow] & X/A \dar[twoheadrightarrow] \\
    & X/B \rar[equal] & X/B
  \end{tikzcd}
  \]
  in $\EE$ (see \cite[Lemma 3.5]{buhler}).
  Thus, the assignment $B \mapsto B/A$ gives a morphism of poset $(-)/A \colon [A,X] \to \PP(X/A)$. Moreover, by the above sequence, $X/B \iso (X/A)/(B/A)$ holds.

  Conversely, let $M \infl X/A$ be an admissible subobject of $X/A$. By taking the pullback of $X \defl X/A$ along $M \infl X/A$, we obtain the following diagram
  \[
  \begin{tikzcd}
    A \dar[equal]\rar[rightarrowtail] & B \rar[twoheadrightarrow] \dar[rightarrowtail] & M \dar[rightarrowtail] \\
    A \rar[rightarrowtail] & X \rar[twoheadrightarrow] \dar[twoheadrightarrow] & X/A \dar[twoheadrightarrow] \\
    & X/B \rar[equal] & X/B
  \end{tikzcd}
  \]
  in $\EE$ where all rows and columns are conflations (see \cite[Proposition 2.15]{buhler}).
  Since $A \leq B$ holds in $\PP(X)$, the assignment $M \mapsto B$ gives a morphism of poset $\PP(X/A) \to [A,X]$. These maps are easily seen to be mutually inverse to each other.

  (3)
  This follows from (1) and (2), since we have the following chain of isomorphisms of posets
  \[
  \begin{tikzcd}
    & & \PP(A) \\
    & \PP(B) \rar["\iso"] & \left[0,B\right] \ar[u,hookrightarrow] \\
    \PP(B/A) \rar["\iso"] & \left[A,B\right] \ar[u,hookrightarrow] \rar["\iso"] & \left[A,B\right] \ar[u,hookrightarrow]
  \end{tikzcd}
  \]
  where the middle interval is considered in $\PP(B)$ and the two right intervals are considered in $\PP(A)$.

  The isomorphism $X_2/X_1 \iso (X_1/A)/(X_2/A)$ is obtained by applying (2) to $A \in \PP(X_2)$.
\end{proof}

Now we introduce \emph{simple} objects in exact categories, that is, objects which cannot be decomposed into smaller pieces with respect to conflations. These objects play a central role throughout this paper, and to determine simple objects in a given exact category is an essential (and difficult) task when we check (JHP).
\begin{definition}
  Let $\EE$ be an exact category and $X$ an object of $\EE$. We say that $X$ is \emph{simple} if the poset $\PP(X)$ of admissible subobjects of $X$ has exactly two distinct elements $0$ and $X$. This is equivalent to the condition that $X$ is not zero and there exists no conflation of the form $L \infl X \defl N$ in $\EE$ with $L, N \neq 0$. We denote by $\simp \EE$ the set of isomorphism classes of simple objects in $\EE$.
\end{definition}
We remark that the notion of simple objects in exact categories was also introduced in \cite[Definition 3.2]{bhlr} (under the name of \emph{$\EE$-simple} objects) and \cite[Definition 5.2]{bg} (under the name of \emph{almost simple} objects).
\subsection{Basic definitions on inflation series, composition series and (JHP)}\label{basicdef}
In module categories, \emph{submodule series} (a chain of submodules) serves as a basic tool when we consider composition series and the Jordan-H\"older theorem. It is natural to introduce the corresponding notion, \emph{inflation series}, in the context of exact categories. Here are the definitions of it and related notions, including (JHP).

First, recall some notions from poset theory.
A \emph{chain} of a poset $P$ is a totally ordered subset of $P$.
A chain $T$ of $P$ is called \emph{maximal} if there exists no chain of $P$ which properly contains $T$.
A chain $T$ is called \emph{finite} if $T$ is a finite set. Such a chain is of the form $x_0 < x_1 < \cdots < x_n$, and we say that this chain \emph{has length $n$} in this case.

Let $\EE$ be a skeletally small exact category.
\begin{itemize}
  \item For $X$ in $\EE$, an \emph{inflation series} of $X$ is a finite sequence of inflations $0 = X_0 \infl X_1 \infl \cdots \infl X_n = X$.
  We often identify it with a weakly increasing sequence $0 = X_0 \leq X_1 \leq \cdots \leq X_n = X$ in $\PP(X)$.
  \item We say that an inflation series $0 = X_0 \infl X_1 \infl \cdots \infl X_n = X$ of $X$ is a \emph{proper inflation series} if none of $X_i \infl X_{i+1}$ is an isomorphism, or equivalently, $X_0 < X_1 < \cdots < X_n$ holds in $\PP(X)$. In this case, we say that this inflation series has \emph{length} $n$. We often identify proper inflation series with finite chains of $\PP(X)$ which contain $0$ and $X$.
  \item Let $X$ and $Y$ be objects in $\EE$, and let $0 = X_0 \infl X_1 \infl \cdots \infl X_n = X$ and $0 = Y_0 \infl Y_1 \infl \cdots \infl Y_m = Y$ be two inflation series of $X$ and $Y$ respectively.
  Then these inflation series are called \emph{isomorphic} if $m = n$ and there exists a permutation $\sigma$ of the set $\{1,2,\cdots n\}$ such that $X_i / X_{i-1}$ and $ Y_{\sigma(i)} / Y_{\sigma(i) -1}$ are isomorphic in $\EE$ for all $i$.
  In this case, we say that $X$ and $Y$ \emph{have isomorphic inflation series}.
  \item An inflation series $0 = X_0 \infl X_1 \infl \cdots \infl X_n = X$ of $X$ is called a \emph{composition series} if $X_0 < X_1 < \cdots < X_n$ is a maximal chain in $\PP(X)$, or equivalently, each quotient $X_i/X_{i-1}$ is a simple object for all $i$ (this follows from Proposition \ref{interval}), c.f. \cite[Section 5.3]{bg}, \cite[Definition 6.1]{hr}. We often identify  composition series with finite maximal chains of $\PP(X)$.
  \item $\EE$ is called a \emph{length exact category} if for every object $X$ in $\EE$, the set of lengths of proper inflation series of $X$ has an upper bound, or equivalently, the set of lengths of finite chains of $\PP(X)$ has an upper bound.
  \item A length exact category $\EE$ satisfies the \emph{unique length property} if all composition series of $X$ have the same length for every object $X$ in $\EE$.
  \item A length exact category $\EE$ satisfies the \emph{Jordan-H\"older property}, abbreviated by \emph{(JHP)}, if all composition series of $X$ are isomorphic to each other for every object $X$ in $\EE$.
\end{itemize}

First, we prove some properties of length exact categories which easily follow from definitions. Actually the proof only uses the general theory of posets.
\begin{proposition}
  For a length exact category $\EE$ and an object $X$ in $\EE$, the following holds:
  \begin{enumerate}
    \item Every chain of $\PP(X)$ is finite.
    \item The composition series of $X$ are precisely the maximal chains of $\PP(X)$.
    \item Every proper inflation series of $X$ can be refined to a composition series.
    \item $X$ has at least one composition series.
  \end{enumerate}
\end{proposition}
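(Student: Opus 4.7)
The plan is to treat assertions (1)--(4) as order-theoretic consequences of the length hypothesis, together with the interval description supplied by Proposition~\ref{interval}.

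For (1), I would invoke the definition of length exact category directly. By assumption there is an upper bound $N$ on the lengths of finite chains of $\PP(X)$. Given any totally ordered subset $C \subseteq \PP(X)$, every finite subset of $C$ is again a chain of $\PP(X)$, hence has length at most $N$. Therefore $\#C \leq N+1$ and in particular $C$ is finite.

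For (2), note first that $0$ is the minimum and $X$ is the maximum of $\PP(X)$, so every maximal chain of $\PP(X)$ must contain both. By (1) such a chain is finite, and after labelling it takes the form $0 = X_0 < X_1 < \cdots < X_n = X$, which is precisely a proper inflation series from $0$ to $X$. Maximality is equivalent to the statement that for each $i$ no element of $\PP(X)$ can be inserted strictly between $X_{i-1}$ and $X_i$, i.e.\ that the interval $[X_{i-1}, X_i]$ consists of exactly two elements. By Proposition~\ref{interval}(3) we have an isomorphism of posets $[X_{i-1}, X_i] \iso \PP(X_i/X_{i-1})$, and the latter having exactly two elements is by definition the condition that $X_i/X_{i-1}$ is simple. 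Thus maximal chains are exactly composition series.

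For (3), I would argue by a termination procedure. Take a proper inflation series and view it as a finite chain $C$ of $\PP(X)$ containing $0$ and $X$. If $C$ is not maximal, insert a new element of $\PP(X)$ to obtain a strictly longer chain $C'$, and iterate. By (1) the lengths of chains in $\PP(X)$ are bounded, so the procedure terminates at a maximal chain, which by (2) is a composition series refining the original inflation series. Finally (4) follows by applying (3) to the minimal proper inflation series $0 \leq X$ (or simply to $X$ itself when $X=0$).

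The proof is in the end a packaging exercise rather than a deep argument; the one step that genuinely uses the exact structure is the identification $[X_{i-1}, X_i] \iso \PP(X_i/X_{i-1})$ already established in Proposition~\ref{interval}. I do not expect any real obstacle — the only subtle point is to keep track that the adjective \emph{proper} in an inflation series corresponds exactly to strict inequality in $\PP(X)$, but this is immediate from the definitions.
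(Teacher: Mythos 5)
Your proposal is correct and follows essentially the same route as the paper: (1) is proved by the same argument (an infinite chain would yield arbitrarily long finite subchains, contradicting the length bound), (3) is the same greedy insertion-and-termination procedure, and (4) likewise applies (3) to the chain $0 < X$. Your treatment of (2) is slightly more explicit than the paper's one-line appeal to the definition — you re-derive the simple-quotient characterization from Proposition~\ref{interval}(3), which the paper already notes inside its definition of composition series — but this is the same observation, just unpacked.
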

\begin{proof}
  (1)
  Suppose that $\PP(X)$ has a chain $T$ consisting of infinitely many elements. Then by choosing a finite subset of $T$, we can obtain a finite chain of $\PP(X)$ with an arbitrary large length, which is a contradiction.

  (2)
  This follows from (1) and the definition of compositions series.

  (3)
  Suppose that a proper chain $0 = X_0 < X_1 < \cdots < X_n = X$ of $\PP(X)$ is given. If this chain is maximal, then we have nothing to do. Suppose that this is not the case.
  If $[X_{i-1}, X_i] = \{ X_{i-1}, X_i \}$ for every $i$, then this chain is clearly maximal. Thus, there exist some $i$ and $Y \in \PP(X)$ with $X_{i-1} < Y < X_i$.
  Consequently, we obtain a chain $X_0 < X_1 < \cdots < X_{i-1} < Y < X_i < \cdots < X_n$ of $\PP(X)$ with length $n+1$, which is a refinement of the original chain. By iterating this process, we will eventually obtain a finite maximal chain, since the set of lengths of finite chains of $\PP(X)$ has an upper bound.

  (4)
  We may assume that $X \neq 0$. Then (4) follows by applying (3) to a chain $0 < X$ of $\PP(X)$.
\end{proof}
\begin{remark}\label{lengthremark}
  Let $\EE$ be an exact category and suppose that for every $X\in\EE$ there exists at least one composition series of $X$. Even so, $\EE$ may not be a length exact category, since lengths of finite chains of $\PP(X)$ may be arbitrary large (unfortunately the author does not know such an example).
  However, if $\EE$ is an abelian category (or more generally, an exact category such that every subobject poset is a modular lattice), then $\EE$ is a length exact category. This follows from Theorem \ref{latticejh}.
\end{remark}

\begin{remark}
  In a recent paper \cite{bht}, two classes of exact categories are introduced and are shown to satisfy (JHP); \emph{diamond exact categories} and Krull-Schmidt \emph{Artin-Wedderburn categories}.
\end{remark}
By definition, (JHP) implies the unique length property for length exact categories. For concrete (counter-)examples concerning (JHP) and the unique length property, we refer the reader to Section \ref{jhpex}.

In the remaining part of this section, we give a sufficient condition for the unique length property by using the theory of modular lattices.

\subsection{Quasi-abelian implies lattice}
Recall that a \emph{lattice} $L$ is a poset such that for every two elements $a,b \in L$, there exist the \emph{meet} $a \wedge b$, the greatest lower bound, and the \emph{join} $a \vee b$, the least upper bound.
In abelian categories, it is classical that the subobject poset $\PP(X)$ is always a lattice by considering sum and intersection of subobjects. However, this does not hold in general, \emph{even if we consider pre-abelian categories} (see Examples \ref{nonlattice1} and \ref{nonlattice2}).
In this subsection, we will show that $\PP(X)$ is a lattice when we consider torsion(-free) classes of abelian categories, or more generally, \emph{quasi-abelian} categories.

First, we recall the definition of quasi-abelian categories.
\begin{definition}
  Let $\EE$ be an additive category.
  \begin{enumerate}
    \item $\EE$ is \emph{pre-abelian} if every morphism in $\EE$ has a kernel and a cokernel. It is well-known that pre-abelian category has all pullbacks and pushouts.
    \item A pre-abelian category $\EE$ is \emph{quasi-abelian} if the class of kernel morphisms is closed under pushouts and the class of cokernel morphisms is closed under pullbacks.
  \end{enumerate}
\end{definition}
Quasi-abelian categories in the above sense are sometimes called \emph{almost abelian} \cite{rumpstar,rump}, or \emph{semi-abelian} \cite{raikov}. See Historical remark in \cite[Section 2]{rumpraikov} for the somewhat involved history of quasi-abelian categories.

Every quasi-abelian category has the natural greatest exact structure, and we always consider this greatest exact structure in what follows. More precisely, the following proposition holds. We refer to \cite[Proposition 4.4]{buhler} for the proof.
\begin{proposition}
  Let $\EE$ be a quasi-abelian category. Then $\EE$ has the greatest exact structure, in which conflations, inflations and deflations are precisely kernel-cokernel pairs, kernel morphisms and cokernel morphisms respectively.
\end{proposition}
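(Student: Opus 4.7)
The plan is to verify Quillen's axioms for an exact category on the class $\SS$ of all kernel--cokernel pairs in $\EE$, and then to show that any other exact structure on $\EE$ is already contained in $\SS$. The axioms for identities and isomorphism-closure are immediate: $1_A$ is the kernel of $A\to 0$ and the cokernel of $0\to A$, while kernel--cokernel pairs are preserved by isomorphism by universality. The existence axioms for pushouts of inflations and pullbacks of deflations follow from pre-abelianness, and the stability of the kernel (respectively cokernel) property under these constructions is precisely the quasi-abelian hypothesis; a short diagram chase then shows that the full kernel--cokernel pair transports correctly along the pushout or pullback.

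The substantive step is to show closure of kernel morphisms, and dually of cokernel morphisms, under composition. I would handle the cokernel case first: given $p_1\colon B\defl C$ and $p_2\colon C\defl D$, set $k=\ker(p_2p_1)$ and aim to verify $p_2p_1=\coker(k)$. The strategy is to form a suitable pullback of $p_2$ along a morphism arising from the kernel--cokernel factorization of $p_1$, and then to exploit the quasi-abelian stability of cokernel morphisms under pullback to promote the resulting strict epimorphism into a genuine cokernel of $k$. The dual statement for kernels follows by reversing arrows, and any ``obscure axiom'' required by the specific formalization used falls out of these arguments.

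The main obstacle I expect is precisely this composition step. In an abelian category one would reason directly with sums and intersections of subobjects, but in a merely pre-abelian, quasi-abelian category $\PP(X)$ need not be a lattice, so one must argue purely through universal properties and the two quasi-abelian stability hypotheses, carefully tracking which squares of the assembled $3\times 3$ diagram are bicartesian. The temptation to shortcut via subobject arithmetic has to be resisted.

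Maximality is then formal: if $\SS'$ is any exact structure on $\EE$ and $(i,p)\in\SS'$, then by the axioms of an exact category $i=\ker p$ and $p=\coker i$, so $(i,p)\in\SS$; hence $\SS'\subseteq\SS$. Thus $\SS$ is the greatest exact structure, and in it inflations and deflations coincide with kernel and cokernel morphisms respectively, each kernel morphism $i$ pairing with $\coker i$ to yield a conflation, and dually for deflations.
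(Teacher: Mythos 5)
The paper itself gives no argument for this proposition: it simply cites B\"uhler's Proposition~4.4. So there is no internal proof to compare against, and the only question is whether your sketch is viable. The overall plan---verify Quillen's axioms directly on the class of kernel--cokernel pairs, observe that existence of the relevant pushouts and pullbacks comes from pre-abelianness while stability of the kernel/cokernel property under them is exactly the quasi-abelian hypothesis, treat composability as the one genuinely non-formal axiom, and deduce maximality from the fact that every conflation of every exact structure on $\EE$ is automatically a kernel--cokernel pair---is indeed the standard route, and it is what the cited source carries out. Your decision to argue purely via universal properties and not via subobject arithmetic is also exactly right at this stage: the lattice structure on $\PP(X)$ (Proposition~\ref{quasiabelian} in the paper) is deduced \emph{from} the exact structure you are in the middle of constructing, so you cannot appeal to it here.

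Where your sketch is not yet a proof is precisely where you anticipated trouble. As written, ``a pullback of $p_2$ along a morphism arising from the kernel--cokernel factorization of $p_1$'' does not typecheck: to pull back $p_2\colon C\to D$ you need a morphism with codomain $D$, and neither $\ker p_1$ nor $p_1$ itself lands in $D$. The construction that works is the dual: pull back $p_1$ along $\lambda:=\ker p_2\colon L\to C$, giving a square with legs $\lambda'\colon P\to B$ and $p_1'\colon P\to L$. By the quasi-abelian axiom, $p_1'$ is again a cokernel morphism, and a direct check using the universal property of the pullback shows $\lambda'=\ker(p_2p_1)$. One then verifies $p_2p_1=\coker\lambda'$ as follows: given $u\colon B\to X$ with $u\lambda'=0$, note that $\ker p_1$ lifts along $\lambda'$, so $u$ kills $\ker p_1$ and hence $u=u_1p_1$ for a unique $u_1$ (as $p_1=\coker(\ker p_1)$); then $u_1\lambda p_1'=u_1p_1\lambda'=u\lambda'=0$, and since $p_1'$ is epi this forces $u_1\lambda=0$, whence $u_1=u_2p_2$ for a unique $u_2$ (as $p_2=\coker\lambda$); finally $u=u_2p_2p_1$ with $u_2$ unique because $p_2p_1$ is epi. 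With that step corrected, the rest of your plan goes through as outlined.
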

By this proposition, for an object $X$ in a quasi-abelian category $\EE$, an inflation $A \infl X$ is nothing but a kernel morphism in $\EE$.
Moreover, for two admissible subobjects $\iota_A \colon A \infl X$ and $\iota_B \colon B \infl X$, we have that $A \leq B$ holds in $\PP(X)$ if and only if $\iota_A$ factors through $\iota_B$. This follows from Proposition \ref{wic} since every pre-abelian category is idempotent complete.

A typical example of quasi-abelian categories are torsion(-free) classes of abelian categories.
\begin{definition}\label{tfdef}
  Let $\AA$ be an abelian category.
  \begin{enumerate}
    \item Let $\TT$ and $\FF$ be subcategories of $\AA$. We say that a pair $(\TT,\FF)$ is a \emph{torsion pair} in $\AA$ if it satisfies the following conditions:
    \begin{enumerate}
      \item $\AA(\TT,\FF)=0$ holds, that is, $\AA(T,F)=0$ holds for every $T \in \TT$ and $F \in \FF$.
      \item For every object $X \in \AA$, there exists a short exact sequences
      \[
      0 \to TX \to X \to FX \to 0
      \]
      in $\AA$ with $TX \in \TT$ and $FX \in \FF$.
    \end{enumerate}
    In this case, we say that $\TT$ is a torsion class and $\FF$ is a torsion-free class.
    \item We say that a torsion pair $(\TT,\FF)$ is \emph{hereditary} if $\TT$ is closed under subobjects in $\AA$. In this case, we say that $\FF$ is a \emph{hereditary torsion-free class}.
  \end{enumerate}
\end{definition}
For a torsion pair $(\TT,\FF)$ on an abelian category, it is easily checked that $\TT$ is closed under quotients and extensions, and that $\FF$ is closed under subobjects and extensions. Hence we can regard $\TT$ and $\FF$ as exact categories.

The next proposition is classical, e.g. \cite[Theorem 2]{rump}. For the convenience of the reader, we give a simple proof which makes use of the exact structure.
\begin{proposition}
  Let $(\TT,\FF)$ be a torsion pair in an abelian category $\AA$. Then $\TT$ and $\FF$ are both quasi-abelian.
\end{proposition}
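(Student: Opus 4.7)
The plan is to treat the case of $\TT$; the case of $\FF$ is dual. Two features of a torsion class drive the argument: $\TT$ is closed under quotients and extensions in $\AA$, and the inclusion $\TT \hookrightarrow \AA$ admits a left adjoint $T$, the torsion radical, sending $X$ to its torsion subobject $TX$ (the first term of the canonical sequence).

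I would first verify that $\TT$ is pre-abelian. For $f \colon X \to Y$ in $\TT$, the $\AA$-cokernel is a quotient of $X \in \TT$, hence already lies in $\TT$ and serves as the cokernel in $\TT$. For the kernel, let $K$ denote the $\AA$-kernel; $K$ need not lie in $\TT$, but $TK \hookrightarrow X$ gives the kernel in $\TT$ via the adjunction, since any $Z \in \TT$ with $Z \to X$ killed by $f$ factors through $K$ and, by adjointness, through $TK$.

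Next, I would use the natural exact structure on $\TT$ inherited from $\AA$, whose conflations are short exact sequences in $\AA$ with all terms in $\TT$; this is an exact structure precisely because $\TT$ is extension-closed. A direct computation inside $\TT$ yields: (i) the inflations are exactly the monomorphisms in $\AA$ between objects of $\TT$, because $X/A \in \TT$ automatically, and each such monomorphism is a kernel morphism in $\TT$ since $\ker_\TT \coker_\TT$ returns the map itself using $TA = A$; and (ii) the deflations are exactly the epimorphisms in $\AA$ between objects of $\TT$ whose $\AA$-kernel lies in $\TT$, and these coincide with the cokernel morphisms in $\TT$, because $\coker_\TT \ker_\TT$ reproduces the map precisely when the $\AA$-kernel already lies in $\TT$. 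Thus kernel (resp.\ cokernel) morphisms of $\TT$ agree with inflations (resp.\ deflations) in the inherited exact structure, and the quasi-abelian axioms reduce to the standard exact-category axioms for pushouts of inflations and pullbacks of deflations. To close the argument I would confirm that these coincide with the pre-abelian pushouts and pullbacks: pushouts in $\TT$ match those in $\AA$ (closure under quotients), and the $\AA$-pullback of a deflation already lies in $\TT$ via the exact sequence $0 \to \ker_\AA f \to P \to Z \to 0$ (extension-closedness).

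The main obstacle is the asymmetric claim (ii): not every epimorphism in $\AA$ between objects of $\TT$ is a cokernel morphism in $\TT$, since one needs the $\AA$-kernel to lie in $\TT$. This is exactly the point at which extension-closedness intervenes, which is why the argument is natural from the perspective of the inherited exact structure rather than from purely pre-abelian considerations.
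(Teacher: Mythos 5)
Your proof is correct and follows essentially the same route as the paper, which treats $\FF$ directly (where kernels come for free from closure under subobjects and only the cokernel needs the torsion decomposition) and defers $\TT$ to duality. Both arguments endow the subcategory with the inherited exact structure and reduce the quasi-abelian axioms to the exact-category axioms on pushouts of inflations and pullbacks of deflations, after identifying kernel morphisms with inflations and cokernel morphisms with deflations.
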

\begin{proof}
  We only show that $\FF$ is quasi-abelian. Since $\FF$ is closed under extensions in $\AA$, we have that $\FF$ has the natural exact structure, where conflations are short exact sequences with all terms in $\FF$.
  By the axiom of exact categories, inflations are closed under pushouts and deflations are closed under pullbacks. Thus, it suffices to show that $\EE$ is pre-abelian, that every kernel morphism in $\FF$ is an inflation and that every cokernel morphism in $\FF$ is a deflation.

  First, we show that every morphism has a kernel, and every kernel morphism is an inflation. Let $f \colon X \to Y$ be an arbitrary morphism in $\FF$. Then we have the following commutative diagram in $\AA$ with exact rows:
  \[
  \begin{tikzcd}
    0 \rar & K \rar["\iota"] \dar[equal] & X \dar[equal] \rar["f"] & Y \\
    0 \rar & K \rar & X \rar & \im f \uar[hookrightarrow] \rar & 0.
  \end{tikzcd}
  \]
  Since $\FF$ is closed under subobjects in $\AA$, we have $K, \im f \in \FF$. Therefore, $\iota \colon K \to X$ is a kernel of $f$ in $\FF$, and this is actually an inflation in $\FF$ since the bottom row is a conflation in $\FF$.

  Next, we show that every morphism in $\FF$ has a cokernel and every cokernel morphism is a deflation in $\FF$. Let $f \colon X \to Y$ be an arbitrary morphism in $\FF$ and denote by $C$ the cokernel of $f$ in $\AA$. Then we have the following commutative diagram in $\AA$ with exact rows and columns
  \[
  \begin{tikzcd}
    & & X \dar["f"] \\
    0 \rar & L \rar \dar & Y \rar["\pi"] \dar & FC \rar \dar[equal] & 0 \\
    0 \rar & TC \rar & C \rar\dar & FC \rar & 0 \\
    & & 0
  \end{tikzcd}
  \]
   where $TC \in \TT$ and $FC \in \FF$. Then $L \in \FF$ holds since $\FF$ is closed under subobjects, and it is straightforward to see that $\pi$ is a cokernel of $f$ in $\FF$. It follows that the middle row is a conflation in $\FF$ and that $\pi$ is actually a deflation in $\FF$.
\end{proof}
We omit the proof of the following standard lemma, see e.g. \cite[Proposition I.13.2]{mitchel}.
\begin{lemma}\label{pblemma}
  Let $\EE$ be an additive category and suppose that we have a commutative diagram
  \[
  \begin{tikzcd}
    & E \rar["i'"] \dar \ar[rd, phantom, "{\rm p.b.}"] & W \rar["fg"] \dar["g"] & Y \dar[equal] \\
    0 \rar & K \rar["i"] & X \rar["f"] & Y
  \end{tikzcd}
  \]
  such that the left square is pullback and $i$ is a kernel of $f$. Then $i'$ is a kernel of $fg$. In particular, kernel morphisms are closed under pullbacks if $\EE$ has pullbacks.
\end{lemma}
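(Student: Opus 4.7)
The plan is a direct diagram chase using the universal properties of the kernel $i$ and of the pullback square. Denote the unnamed left-hand arrow of the pullback square by $p\colon E \to K$, so that the square $g\circ i' = i\circ p$ commutes.

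First I would check that $(fg)\circ i' = 0$: this follows at once from $(fg)\circ i' = f\circ(g\circ i') = f\circ i\circ p = 0$, using $fi=0$ and commutativity of the pullback square. So $i'$ is a candidate to factor morphisms annihilated by $fg$.

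Next, I would verify the universal property. Suppose $h\colon Z\to W$ satisfies $(fg)\circ h = 0$, i.e.\ $f\circ(gh)=0$. By the kernel property of $i$, there is a unique $\alpha\colon Z\to K$ with $i\circ\alpha = g\circ h$. The pair $(h,\alpha)$ then gives a cone on the cospan $W\xrightarrow{g} X \xleftarrow{i} K$, since $g\circ h = i\circ\alpha$. By the universal property of the pullback, there exists a unique $\beta\colon Z\to E$ with $i'\circ\beta = h$ and $p\circ\beta = \alpha$. This shows existence of the factorization of $h$ through $i'$.

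For uniqueness of $\beta$ as a factorization $i'\circ\beta = h$ (without a priori prescribing $p\circ\beta$), suppose $\beta'\colon Z\to E$ also satisfies $i'\circ\beta' = h$. Then $i\circ(p\circ\beta') = g\circ i'\circ\beta' = g\circ h = i\circ\alpha$, and since $i$ is a kernel, hence monic, we obtain $p\circ\beta' = \alpha$. The pullback uniqueness then forces $\beta' = \beta$. This establishes that $i'$ is a kernel of $fg$. The last sentence of the lemma is immediate: given a kernel morphism $i$ and an arbitrary morphism $g$ into its codomain, once the pullback of $i$ along $g$ exists, the argument above shows it is again a kernel morphism. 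I do not anticipate any real obstacle; the only subtle point is invoking monicity of $i$ to promote pullback-uniqueness to kernel-uniqueness.
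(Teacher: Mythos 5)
Your proof is correct and is exactly the standard diagram chase one would expect; the paper in fact omits a proof of this lemma altogether, remarking only that it ``can be checked straightforwardly,'' so there is no alternative argument to compare against. The one subtlety you flag---using monicity of the kernel morphism $i$ to upgrade the pullback's uniqueness (which assumes both legs $i'\circ\beta = h$ and $p\circ\beta = \alpha$) to the kernel's uniqueness (which prescribes only $i'\circ\beta = h$)---is handled correctly.
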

Now we can prove that \emph{quasi-abelian implies lattice property}.
\begin{proposition}\label{quasiabelian}
  Let $\EE$ be a quasi-abelian category with the greatest exact structure and $X$ an object of $\EE$. Then $\PP(X)$ is a lattice.
\end{proposition}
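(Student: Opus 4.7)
The plan is to exhibit explicit candidates for the meet and the join of two admissible subobjects $A,B \in \PP(X)$ using the pullback and the cokernel in $\EE$, and then verify the universal properties using the description of $\leq$ given just before the proposition (namely, $A \leq B$ in $\PP(X)$ if and only if $\iota_A$ factors through $\iota_B$, which applies since pre-abelian categories are weakly idempotent complete).

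For the meet, I would form the pullback of $\iota_A \colon A \infl X$ along $\iota_B \colon B \infl X$, yielding a square
\[
\begin{tikzcd}
A \times_X B \rar["i'"] \dar \ar[rd, phantom, "{\rm p.b.}"] & B \dar[rightarrowtail, "\iota_B"] \\
A \rar[rightarrowtail, "\iota_A"'] & X.
\end{tikzcd}
\]
Since $\iota_A$ is a kernel morphism (being an inflation in the greatest exact structure), Lemma \ref{pblemma} gives that $i'$ is also a kernel morphism, hence an inflation. Composing with $\iota_B$ shows $A \times_X B \infl X$ is an inflation, so it defines an element of $\PP(X)$, which is $\leq A$ and $\leq B$ by the commutativity of the pullback. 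For any $C \in \PP(X)$ with $C \leq A$ and $C \leq B$, the morphism $\iota_C$ factors through both $\iota_A$ and $\iota_B$, and by the universal property of the pullback there is a morphism $C \to A \times_X B$ over $X$; by the characterization of $\leq$, this gives $C \leq A \times_X B$. Thus $A \wedge B = A \times_X B$.

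For the join, I would take the cokernel $\pi \colon X \twoheadrightarrow C$ of $[\iota_A,\iota_B] \colon A \oplus B \to X$. Since $\pi$ is a cokernel morphism it is a deflation, so $\ker \pi \infl X$ is an inflation and defines an element of $\PP(X)$, which I take as $A \vee B$. Both $\iota_A$ and $\iota_B$ are annihilated by $\pi$, so they factor through $\ker \pi$, giving $A \leq A \vee B$ and $B \leq A \vee B$. For the universal property, suppose $D \in \PP(X)$ satisfies $A \leq D$ and $B \leq D$; then $\iota_D$ is a kernel, say of some $\psi \colon X \to E$, and the hypothesis says $[\iota_A,\iota_B]$ factors through $\iota_D$, so $\psi \circ [\iota_A,\iota_B] = 0$. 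By the universal property of the cokernel, $\psi$ factors as $\psi'\circ \pi$, and then $\psi \circ (\ker\pi \to X) = 0$, so $\ker\pi \to X$ factors through $\iota_D = \ker \psi$. Hence $A \vee B \leq D$.

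The construction itself is forced and each step is short; the only real point requiring attention is ensuring that the relevant morphisms are genuine kernel or cokernel morphisms in $\EE$ so that they qualify as inflations or deflations in the greatest exact structure — this is exactly where Lemma \ref{pblemma} and the quasi-abelian axioms (i.e., that kernel morphisms are stable under pullback and cokernel morphisms are stable under pushout) do the work. No further subtlety is expected, and the modularity of $\PP(X)$, if needed later, can be deduced afterwards from the exact-category interpretation of meet and join.
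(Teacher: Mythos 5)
Your proof is correct and essentially matches the paper's argument: the meet via pullback is identical (including the use of Lemma \ref{pblemma} and Lemma \ref{wic}), and your join $A \vee B := \ker(\coker[\iota_A,\iota_B])$ is exactly the same object as the paper's second construction of the join, which realizes it through B\"uhler's factorization of $[\iota_A,\iota_B]$ into an epimorphism followed by a kernel morphism. You verify the universal property of the join directly from the cokernel's universal property rather than from the epimorphism property of the coimage map $A \oplus B \to A+B$; both are short and valid, and yours has the mild advantage of not invoking the factorization result at all.
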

\begin{proof}
  Let $A, B \in \PP(X)$ be two admissible subobjects of $X$.

  We first construct the meet $A \cap B$. Note that since $\EE$ is pre-abelian, $\EE$ has pullbacks. Now we take the following pullback:
  \[
  \begin{tikzcd}
    A\cap B \ar[rd, phantom, "{\rm p.b.}"] \rar \dar & A \dar[rightarrowtail] \\
    B \rar[rightarrowtail] & X
  \end{tikzcd}
  \]
  Then by Lemma \ref{pblemma}, the above morphisms $A \cap B \to A$ and $A \cap B \to B$ are kernel morphisms, hence inflations. Thus, the composition $A \cap B \infl A \infl X$ is an inflation, so it is an admissible subobject of $X$. Hence we have $A \cap B \in \PP(X)$ and that $A\cap B \leq A, B$.
  Suppose that $W\in\PP(X)$ satisfies $W \leq A$ and $W \leq B$. Then by the universal property of the pullback, we have a morphism $W \to A \cap B$ such that the composition $W \to A \cap B \infl X$ is an inflation. Note that $\EE$ is idempotent complete because $\EE$ is pre-abelian. Then Lemma \ref{wic} shows that $W \leq A \cap B$ holds in $\PP(X)$. Therefore, $A \cap B$ is the meet of $A$ and $B$ in the poset $\PP(X)$.

  Next, we will show the existence of the join $A + B \in \PP(X)$. Here we shall give two constructions.

  {\bf (First Construction)}:
  This construction is in fact the dual of the previous one, but we include it here for the completeness.
  Since $\EE$ is pre-abelian, it has pushouts. Now we take the following pushout:
  \[
  \begin{tikzcd}
    X \ar[rd, phantom, "{\rm p.o.}"] \rar[twoheadrightarrow] \dar[twoheadrightarrow] & X/A \dar \\
    X/B \rar & C
  \end{tikzcd}
  \]
  Then by the dual of Lemma \ref{pblemma}, we have that the above morphism $X/A \to C$ is a cokernel morphism, hence a deflation. It follows that the composition $X \defl X/A \defl C$ is a deflation, so $C$ can be written as $C = X/(A+B)$ for some $A+B \in \PP(X)$. Now we leave it to the reader to verify that $A+B$ is actually a join of $A$ and $B$ in $\PP(X)$.

  {\bf (Second Construction)}:
  The second construction is similar to the usual construction of the join in abelian categories: the join $A+B$ is obtained by taking the image of the map $A\oplus B \to X$. Moreover, we will use this construction later to show the modular property of the lattice.
  To state the construction, we use the following basic property on quasi-abelian categories.
  \begin{lemma}[{\cite[Proposition 4.8]{buhler}}]
    Every morphism $f$ in a quasi-abelian category admits a factorization $f = \iota p$ such that $\iota$ is a kernel morphism and $p$ is an epimorphism.
  \end{lemma}
  Now for admissible subobjects $\iota_A \colon A \infl X$ and $\iota_B \colon B \infl Y$, consider the induced morphism $[\iota_A,\iota_B] \colon A\oplus B \to X$.
  By the above lemma, there exist an object $A+B$ and morphisms $A\oplus B \xrightarrow{p} A+B \xrightarrow{\iota} X$ such that $[\iota_A,\iota_B] = \iota p$ holds, $p$ is an epimorphism and $\iota$ is a kernel morphism. We claim that $\iota \colon A+B \infl X$ is a join of $A$ and $B$.

  Clearly $\iota_A$ and $\iota_B$ factors through $\iota$, so $A ,B \leq A+B$ holds by Proposition \ref{wic}.
  On the other hand, suppose that $\iota_C \colon C \infl X$ satisfies $A,B \leq C$. Then since $\iota_A, \iota_B$ factors through $\iota_C$, the universal property of $A\oplus B$ yield a morphism $\varphi \colon A\oplus B \to C$ which makes the following diagram commute, where the bottom row is a conflation.
  \[
  \begin{tikzcd}
    & A \oplus B \rar["p"] \dar["\varphi"] & A + B \dar[rightarrowtail, "\iota"] \\
    0 \rar & C \rar[rightarrowtail, "\iota_C"] & X \rar[twoheadrightarrow, "\pi_C"] & X/C \rar & 0
  \end{tikzcd}
  \]
  It follows that $\pi_C \iota p = 0$, and since $p$ is an epimorphism, we have that $\pi_C \iota=0$. Thus, $\iota$ factors through $\iota_C$, so $A+B \leq C$ holds in $\PP(X)$ by Proposition \ref{wic}.
\end{proof}

\begin{remark}
  Proposition \ref{quasiabelian} can be proved directly in the case of a torsion-free class, as follows.
  Let $(\TT,\FF)$ be a torsion pair in an abelian category $\AA$ and $X$ an object of $\FF$. We give a construction of a meet and a join of $A,B \in \PP_\FF(X)$.
  \begin{itemize}
    \item A meet $A \cap B$ is the usual meet of subobjects in an abelian category $\AA$. It suffices to observe that $A \cap B$ is actually \emph{admissible}, that is, $X/(A \cap B)$ belongs to $\FF$. We have the following exact sequence in $\AA$:
    \[
    0 \to X/(A\cap B) \to X/A \oplus X/B
    \]
    Since $\FF$ is closed under direct sums and subobjects, it follows that $X/(A \cap B)$ belongs to $\FF$, that is, $A \cap B$ is an admissible subobject of $X$.
    \item A join of $A$ and $B$ in $\PP_\FF(X)$ is in general \emph{different from} the usual join $A+B$ of subobjects in an abelian category $\AA$. The problem is that $A+B$ is not necessarily admissible, that is, $X/(A+B)$ may not belong to $\FF$. However, we have the following exact sequence
    \[
    0 \to T\left( X/(A+B) \right) \to X/(A+B) \to F\left( X/(A+B) \right) \to 0.
    \]
    with the left term in $\TT$ and the right term in $\FF$, and let us define $\ov{A+B} \in \PP_\FF(X)$ by the isomorphism $X/(\ov{A+B}) \iso F \left( X/(A+B) \right)$. We leave it to the reader to check that $\ov{A+B}$ is indeed a join of $A$ and $B$ in $\PP_\FF(X)$.
  \end{itemize}
\end{remark}

\subsection{Integral quasi-abelian implies modularity and the unique length property}
It is known that a submodule lattice, or more generally, a subobject lattice in an abelian category, is a \emph{modular lattice} (see e.g. \cite[Proposition IV.5.3]{st}).
First, let us see what modularity of subobject lattices implies in our context.

Let us recall the definition of modularity. A lattice $L$ is called \emph{modular} if for every $a,b,x$ in $L$ with $a \leq b$, we have $(x \vee a) \wedge b = (x \wedge b) \vee a$.
Modularity is a useful tool to study composition series, since we have the following \emph{Jordan-H\"older theorem} for modular lattices. For the proof, we refer the reader to \cite[Corollary 3.2, Proposition 3.3]{st}.
\begin{theorem}[The Jordan-H\"older theorem for modular lattices]\label{latticejh}
  Let $L$ be a modular lattice with the least element $0$ and the greatest element $1$. Suppose that $L$ has at least one finite maximal chain. Then the following holds:
  \begin{enumerate}
    \item Every chain of $L$ must be finite.
    \item Every maximal chain of $L$ has the same length.
    \item Every chain can be refined to some maximal chain, thus the set of lengths of chains of $L$ has an upper bound.
  \end{enumerate}
\end{theorem}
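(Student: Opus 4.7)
The plan is to derive everything from the diamond isomorphism lemma for modular lattices: for any $a, b \in L$, the assignments $x \mapsto x \vee b$ and $y \mapsto y \wedge a$ give mutually inverse poset isomorphisms $[a \wedge b, a] \iso [b, a \vee b]$. In particular, if $a$ covers $a \wedge b$, then $a \vee b$ covers $b$. I would prove (1), (2) and (3) simultaneously by induction on the length $n$ of a chosen finite maximal chain $0 = c_0 < c_1 < \cdots < c_n = 1$ of $L$. The case $n = 0$ is trivial, and the inductive hypothesis will be applied to the interval $[c_1, 1]$, which inherits the maximal chain $c_1 < c_2 < \cdots < c_n$ of length $n - 1$.

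For (1), let $0 = x_0 < x_1 < \cdots < x_p = 1$ be any chain, extended if necessary so that its endpoints are $0$ and $1$. If $c_1 \leq x_1$, then $x_1 < \cdots < x_p$ lives in $[c_1, 1]$, so $p - 1 \leq n - 1$ by induction. Otherwise atomicity of $c_1$ forces $c_1 \wedge x_1 = 0$. Set $y_i := c_1 \vee x_i$; this gives a weakly increasing chain $c_1 < y_1 \leq y_2 \leq \cdots \leq y_p = 1$ in $[c_1, 1]$. Let $j$ be the smallest index with $c_1 \leq x_j$. For $i < j - 1$ the modular law yields $x_{i+1} \wedge y_i = x_i \vee (x_{i+1} \wedge c_1) = x_i$, so $y_i = y_{i+1}$ would force $x_{i+1} = x_i$; hence $y_i < y_{i+1}$. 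For $i \geq j$ one has $y_i = x_i < x_{i+1} = y_{i+1}$. Only the step $y_{j-1} \leq y_j$ can fail to be strict, so the chain has length at least $p - 1$ in $[c_1, 1]$, giving $p - 1 \leq n - 1$ by induction.

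For (2), let $0 = d_0 < d_1 < \cdots < d_m = 1$ be a second maximal chain. If $c_1 = d_1$, the inductive hypothesis applied to $[c_1, 1]$ immediately yields $m = n$. If $c_1 \neq d_1$, then $c_1 \wedge d_1 = 0$ by atomicity, so by the diamond lemma $e := c_1 \vee d_1$ covers both $c_1$ and $d_1$. Choose a maximal chain from $e$ to $1$, which exists and has some finite length $k$ by combining (1) with a greedy refinement inside $[e, 1]$. Then $c_1 < c_2 < \cdots < c_n$ and $c_1 < e < \cdots < 1$ are both maximal chains in $[c_1, 1]$, so the inductive hypothesis forces $k + 1 = n - 1$, and the symmetric comparison in $[d_1, 1]$ gives $m - 1 = k + 1 = n - 1$, i.e.\ $m = n$. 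Finally, (3) follows at once from (1): a non-maximal chain admits a strictly longer refinement by definition, and the length bound ensures that such refinements terminate at a maximal chain.

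The main obstacle is the bookkeeping in the translated chain $y_i = c_1 \vee x_i$ in the proof of (1): one has to verify that the strict inequalities $x_i < x_{i+1}$ survive the join with the atom $c_1$ except possibly at the single transition index $j$ where $x_i \wedge c_1$ first becomes $c_1$ instead of $0$. The modular law $x_{i+1} \wedge (x_i \vee c_1) = x_i \vee (x_{i+1} \wedge c_1)$ localizes the possible collapse to exactly that one step, and is ultimately the only nontrivial calculation in the argument.
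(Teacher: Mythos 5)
Your proof is correct. The paper itself does not prove Theorem \ref{latticejh}; it simply cites Stenst\"om \cite[Corollary 3.2, Proposition 3.3]{st}, so there is no proof in the paper to compare line by line. What you have written is the classical self-contained argument: induct on the length $n$ of a chosen finite maximal chain $0=c_0<c_1<\cdots<c_n=1$, pass to the interval $[c_1,1]$, and control arbitrary chains by joining with the atom $c_1$. The diamond isomorphism $[a\wedge b,a]\iso[b,a\vee b]$ (which holds precisely by modularity) and the modular identity $x_{i+1}\wedge(x_i\vee c_1)=x_i\vee(x_{i+1}\wedge c_1)$ are used exactly where they should be, and your accounting—that the translated chain $y_i=c_1\vee x_i$ can collapse at most once, at the transition index where $c_1$ first sits below $x_j$—is the standard crux of the argument and is carried out correctly. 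The deduction of (2) via the atom $e=c_1\vee d_1$ covering both $c_1$ and $d_1$, and of (3) as a formal consequence of the length bound in (1), are both fine. This is essentially the proof one finds in Stenst\"om and other standard references, so there is nothing to flag.
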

Moreover, we have some kind of \emph{uniqueness of composition series}: for two maximal chains $0 = x_0 < x_1 < \cdots < x_l = 1$ and $0 = y_0 < y_1 < \cdots < y_l = 1$, the intervals $[x_{i-1},x_i]$ and $[y_{j-1},y_j]$ are \emph{``isomorphic''} up to permutations
(\emph{isomorphic} here means \emph{projective} in lattice theory, see e.g. \cite[III. Section 2]{st} for the detail).
Actually, the Jordan-H\"older theorem for abelian categories can be proved by using the above lattice-theoretic one (see \cite[p.92]{st} for example).

For exact categories, modularity of subobject lattices does not imply (JHP) in general because \emph{``isomorphic"} above may not induce an isomorphism. However, modularity clearly \emph{does} imply the unique length property, so the following is an immediate corollary of Theorem \ref{latticejh}.
\begin{corollary}\label{modulp}
  Let $\EE$ be an exact category. Suppose that for every object $X \in \EE$, the poset $\PP(X)$ is a modular lattice and that at least one composition series of $X$ exists. Then $\EE$ is a length exact category and satisfies the unique length property.
\end{corollary}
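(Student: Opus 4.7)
The plan is to derive this as an essentially immediate consequence of the lattice-theoretic Jordan--H\"older theorem (Theorem \ref{latticejh}), applied pointwise to each poset $\PP(X)$.

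Fix $X \in \EE$. By hypothesis, $\PP(X)$ is a modular lattice; as remarked just after the definition of $\PP(X)$, it has least element $0$ and greatest element $X$, so the setup of Theorem \ref{latticejh} is satisfied. Also by hypothesis, $X$ admits at least one composition series, which (by the definition of composition series as a finite maximal chain in $\PP(X)$) provides a finite maximal chain in the lattice $\PP(X)$. This is exactly the ``at least one finite maximal chain'' assumption required by Theorem \ref{latticejh}.

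Now apply Theorem \ref{latticejh} to $L = \PP(X)$. Conclusion (3) of that theorem states that every chain of $\PP(X)$ can be refined to a maximal chain and hence the set of lengths of finite chains of $\PP(X)$ has an upper bound. Since lengths of proper inflation series of $X$ coincide with lengths of finite chains in $\PP(X)$ containing $0$ and $X$, this upper bound is precisely what is required for $\EE$ to be a length exact category. Conclusion (2) then says that all maximal chains of $\PP(X)$ have the same (finite) length; since composition series of $X$ are exactly the maximal chains of $\PP(X)$, this gives the unique length property for $X$. Doing this for every $X \in \EE$ completes the proof.

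There is essentially no obstacle: the content is packaged into Theorem \ref{latticejh} and the correspondence between inflation/composition series of $X$ and chains/maximal chains of $\PP(X)$, which has already been recorded in Section \ref{basicdef}. The only minor point to state clearly is that the hypothesis ``at least one composition series of $X$ exists'' supplies precisely the finite maximal chain needed to invoke Theorem \ref{latticejh}.
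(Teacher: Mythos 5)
Your proof is correct and follows exactly the paper's (implicit) route: the corollary is presented as an immediate consequence of Theorem \ref{latticejh} applied to each $\PP(X)$, using parts (2) and (3) of that theorem for the unique length property and the length bound respectively, together with the identifications of inflation series with chains and composition series with maximal chains from Section \ref{basicdef}. Nothing is missing.
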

In the rest of this section, we will show that \emph{integral quasi-abelian categories} are exact categories in which this situation occurs.

\begin{definition}
  A quasi-abelian category $\EE$ is called \emph{integral quasi-abelian} if the class of epimorphisms is closed under pullbacks and the class of monomorphisms is closed under pushouts.
\end{definition}
The typical example is a \emph{hereditary torsion-free class} of abelian categories (Definition \ref{tfdef}), see e.g. \cite[Lemma 6]{rumpstar} for the proof.

To show modularity, we need the following lemma of integral quasi-abelian categories, which asserts that a regular morphism is an \emph{essential epimorphism}:
\begin{lemma}\label{integralemma}
  Let $\EE$ be an integral quasi-abelian category. Suppose that we have morphisms $A \xrightarrow{f} B \xrightarrow{g} C$ in $\EE$ which satisfy the following conditions:
  \begin{enumerate}
    \item $g$ is a monomorphism and an epimorphism (that is, $g$ is regular).
    \item $g f$ is an epimorphism.
  \end{enumerate}
  Then $f$ is an epimorphism.
\end{lemma}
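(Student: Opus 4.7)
The plan is to show that the cokernel $D$ of $f$ is zero, which in a pre-abelian category is equivalent to $f$ being epic. To bring $g$ and the epimorphism $gf$ into play, I will form the pushout of $g$ along the canonical map $h \colon B \to D$ with cokernel $h$, and then use the integrality hypothesis (monomorphisms are closed under pushouts) to carry the mono property of $g$ across to an arrow out of $D$.

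Concretely, let $h \colon B \to D$ be the cokernel of $f$, which exists because $\EE$ is pre-abelian, and take the pushout
\[
\begin{tikzcd}
B \ar[r, "g"] \ar[d, "h"'] & C \ar[d, "\tilde h"] \\
D \ar[r, "\tilde g"'] & E,
\end{tikzcd}
\]
so that $\tilde h g = \tilde g h$. Since $g$ is a monomorphism and $\EE$ is integral quasi-abelian, the pushout $\tilde g \colon D \to E$ is again a monomorphism. I would then compute
\[
\tilde h (gf) = (\tilde h g) f = (\tilde g h) f = \tilde g (hf) = 0,
\]
using that $hf = 0$ because $h$ is the cokernel of $f$. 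Since $gf$ is assumed to be an epimorphism, this forces $\tilde h = 0$. Plugging this back into the pushout relation gives $\tilde g h = \tilde h g = 0$, and since $h$, being a cokernel morphism, is in particular an epimorphism, I conclude $\tilde g = 0$.

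Finally, $\tilde g$ is a monomorphism that is also the zero morphism; applying this monomorphism to the pair $\id_D, 0 \colon D \to D$ forces $\id_D = 0$, hence $D = 0$. Therefore $\coker f = D = 0$, i.e., $f$ is an epimorphism.

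The only nontrivial ingredient is the use of the integral quasi-abelian hypothesis to guarantee that $\tilde g$ is a monomorphism; without it, the pushout of a monomorphism along an arbitrary cokernel morphism would not be controlled, and the argument would break at that step. Everything else is a formal diagram chase using the universal property of the cokernel and the definitions of monomorphism and epimorphism.
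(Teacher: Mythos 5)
Your proof is correct and follows essentially the same strategy as the paper: form the pushout of $g$ along the cokernel morphism of $f$, use integrality to conclude the pushed-forward arrow $\tilde g$ is a monomorphism, and use the fact that $gf$ is epic to kill the other pushout arrow. The only cosmetic difference is in the last step: the paper cancels the mono $\tilde g$ directly from $\tilde g \circ \coker(f) = 0$ to get $\coker(f) = 0$, whereas you first cancel the epi $\coker(f)$ to get $\tilde g = 0$ and then use $\tilde g$ mono to get $D = 0$; both routes are valid and give the same conclusion.
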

\begin{proof}
  Consider the following commutative diagram in $\EE$:
  \[
  \begin{tikzcd}
    A \rar["f"] & B \rar["\pi", twoheadrightarrow] \ar[rd, phantom, "{\rm p.o.}"] \dar["g"'] & \coker f \dar["g'"] \rar & 0. \\
    & C \rar["\pi'"'] & W
  \end{tikzcd}
  \]
  Then we have $\pi' g f = g' \pi f = 0$, and since $gf$ is an epimorphism, $\pi' = 0$. Thus, $g' \pi = 0$ holds. On the other hand, since monomorphisms are stable under pushouts and $g$ is a monomorphism, so is $g'$. Therefore, $\pi = 0$ holds, so $f$ is an epimorphism.
\end{proof}
To prove modularity, the following criterion is quite useful.
\begin{lemma}[{\cite[Proposition III.2.3]{st}}]\label{modularlemma}
  Let $L$ be a lattice. Then the following are equivalent:
  \begin{enumerate}
    \item $L$ is a modular lattice.
    \item Take any $a,b\in L$ with $a \leq b$, and take $x,c,c' \in [a,b]$. Suppose that $x \vee c = b = x \vee c'$ and $x \wedge c = a = x \wedge c'$ hold (that is, $c$ and $c'$ are complements of $x$ in $[a,b]$), and that $c \leq c'$ holds. Then we have $c = c'$.
  \end{enumerate}
\end{lemma}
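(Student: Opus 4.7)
The plan is to prove the two implications separately using only elementary lattice manipulations; both directions rely on the absorption identities and on the fact that $\vee$ and $\wedge$ preserve $\leq$.

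For $(1) \Rightarrow (2)$, assume $L$ is modular and let $a \leq b$, $x, c, c' \in [a,b]$ satisfy $c \leq c'$ and $x \vee c = x \vee c' = b$, $x \wedge c = x \wedge c' = a$. Apply the modular law to $c \leq c'$ with the element $x$:
\[
(x \vee c) \wedge c' \;=\; (x \wedge c') \vee c.
\]
The left-hand side equals $b \wedge c' = c'$ since $c' \leq b$, while the right-hand side equals $a \vee c = c$ since $a \leq c$. Thus $c = c'$, as required.

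For $(2) \Rightarrow (1)$, I would argue by contrapositive. Suppose $L$ is not modular, so there exist $a_0, b_0, x_0 \in L$ with $a_0 \leq b_0$ and
\[
p \;:=\; a_0 \vee (x_0 \wedge b_0) \;<\; (a_0 \vee x_0) \wedge b_0 \;=:\; q,
\]
the weak inequality $p \leq q$ being automatic in any lattice. The crucial choice is to work in the interval $I := [x_0 \wedge b_0,\, x_0 \vee a_0]$ rather than in $[a_0, b_0]$. Using only absorption and $a_0 \leq b_0$, one then verifies that $x_0, p, q$ all lie in $I$, that $p \leq q$, and that
\[
p \vee x_0 \;=\; q \vee x_0 \;=\; x_0 \vee a_0, \qquad p \wedge x_0 \;=\; q \wedge x_0 \;=\; x_0 \wedge b_0.
\]
Thus $p$ and $q$ are two complements of $x_0$ in $I$ with $p \leq q$ but $p \neq q$, contradicting (2).

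The only step requiring real insight is the choice of the interval $I$ in the reverse direction: the naive attempt to take $[a_0, b_0]$ fails because $x_0$ need not lie there, whereas $[x_0 \wedge b_0,\, x_0 \vee a_0]$ is tailor-made so that $x_0 \wedge b_0$ and $x_0 \vee a_0$ already appear as the common meet and join values of the would-be complements with $x_0$. Once $I$ is identified, all four equalities above reduce to routine manipulations of absorption identities, and nothing beyond $a_0 \leq b_0$ is needed.
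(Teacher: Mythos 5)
This lemma is cited in the paper from \cite[Proposition III.2.3]{st} without a proof being reproduced, so there is no paper argument to compare against; the review below assesses your proof on its own terms.

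Your proof is correct and is the standard one found in lattice-theory texts. The forward direction is an immediate application of the modular law to the triple $(c,c',x)$ with hypothesis $c \leq c'$: the left side $(x\vee c)\wedge c'$ collapses to $b\wedge c'=c'$ because $c'\leq b$, and the right side $(x\wedge c')\vee c$ collapses to $a\vee c=c$ because $a\leq c$. For the reverse direction, your choice of interval $I=[x_0\wedge b_0,\;x_0\vee a_0]$ is exactly right, and the four claimed equalities all hold as you say: $p\vee x_0 = a_0\vee(x_0\wedge b_0)\vee x_0 = a_0\vee x_0$ by absorption; $q\vee x_0 = a_0\vee x_0$ because $q\leq a_0\vee x_0$ always and $a_0\leq q$ follows from $a_0\leq b_0$; $q\wedge x_0 = (a_0\vee x_0)\wedge(b_0\wedge x_0)=x_0\wedge b_0$ by absorption; and $p\wedge x_0 = x_0\wedge b_0$ because $x_0\wedge b_0\leq p\wedge x_0$ is immediate and $p\leq b_0$ (using $a_0\leq b_0$) forces the reverse. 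Since $p<q$ strictly, $p$ and $q$ are distinct comparable complements of $x_0$ in $I$, contradicting (2). No gaps.
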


\begin{proposition}\label{integralmodular}
  Let $\EE$ be an integral quasi-abelian category with the greatest exact structure and $X$ an object of $\EE$. Then $\PP(X)$ is a modular lattice.
\end{proposition}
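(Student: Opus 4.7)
The plan is to verify the criterion from Lemma \ref{modularlemma}. Using the interval-to-quotient isomorphism from Proposition \ref{interval}, any interval $[a,b]$ in $\PP(X)$ is order-isomorphic to $\PP(b/a)$, so it suffices to prove: for $x, c, c' \in \PP(X)$ with $x \vee c = X = x \vee c'$, $x \wedge c = 0 = x \wedge c'$, and $c \leq c'$, one has $c = c'$. Note that by the ``Second Construction'' in the proof of Proposition \ref{quasiabelian}, the condition $x \vee c = X$ is equivalent to saying that the induced morphism $[\iota_x, \iota_c] \colon x \oplus c \to X$ is an epimorphism.

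The key step is to form the pullback
\[
\begin{tikzcd}
P \rar["\psi"] \dar["\rho"'] & x \oplus c \dar["{[\iota_x, \iota_c]}"] \\
c' \rar[rightarrowtail, "\iota_{c'}"'] & X,
\end{tikzcd}
\]
and invoke integrality to conclude that $\rho$ is an epimorphism, since $[\iota_x,\iota_c]$ is an epimorphism and $\EE$ is integral quasi-abelian. Writing $\psi = (\psi_x, \psi_c)$ and letting $\varphi \colon c \infl c'$ be the inflation witnessing $c \leq c'$, commutativity of the pullback square together with $\iota_c = \iota_{c'} \varphi$ gives $\iota_x \psi_x = \iota_{c'}(\rho - \varphi \psi_c)$. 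Thus $\iota_x \psi_x$ factors simultaneously through $\iota_x$ and $\iota_{c'}$, so by the universal property of the pullback defining the meet $x \wedge c' = 0$, the morphism $\psi_x$ factors through the zero object, i.e., $\psi_x = 0$. Cancelling the monomorphism $\iota_{c'}$ from $\iota_{c'} \rho = \iota_{c'} \varphi \psi_c$ yields $\rho = \varphi \psi_c$, so $\varphi$ is an epimorphism because $\rho$ is.

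It remains to upgrade the mono-and-epi $\varphi$ to an isomorphism. Since $\varphi$ is an inflation, it fits into a conflation $c \xrightarrow{\varphi} c' \xrightarrow{\pi} c'/c$ with $\pi$ a deflation. The relation $\pi \varphi = 0$ combined with $\varphi$ being epi forces $\pi = 0$, and since a deflation is in particular an epimorphism, $c'/c$ must be a zero object. The conflation then reads $c \xrightarrow{\varphi} c' \defl 0$, forcing $\varphi$ to be an isomorphism, hence $c = c'$ in $\PP(X)$.

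The main obstacle will be the final upgrade from ``inflation which is epi'' to ``isomorphism'': in a general quasi-abelian category, mono and epi do not imply iso, but here the extra information that $\varphi$ is an inflation (hence the kernel of its cokernel) allows us to conclude. Everything else is a standard pullback-chasing argument once the integrality hypothesis is used to propagate the epimorphism $[\iota_x, \iota_c]$ down to $\rho$.
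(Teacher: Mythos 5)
Your proof is correct and establishes the same reduction (via Lemma \ref{modularlemma} and Proposition \ref{interval} to showing that two comparable complements of $x$ must coincide), but it executes the key step by a genuinely different diagram chase. The paper observes that each map $r_i = [\iota_x, \iota_{c_i}] \colon x \oplus c_i \to X$ is a \emph{monomorphism} (because $x \wedge c_i = 0$ means the pullback of $\iota_x$ and $\iota_{c_i}$ is zero) as well as an epimorphism, then invokes Lemma \ref{integralemma} — that regular morphisms in integral quasi-abelian categories are essential epimorphisms, which is proved from the stability of monomorphisms under pushouts — to conclude that the inflation $\begin{sbmatrix} 1 & 0 \\ 0 & \iota \end{sbmatrix}$ in the factorization $r_1 = r_2 \circ \begin{sbmatrix} 1 & 0 \\ 0 & \iota \end{sbmatrix}$ is epic, hence an isomorphism. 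You instead use only the epimorphism $[\iota_x, \iota_c]$ and the \emph{other} half of the integrality axiom: pulling back along the inflation $\iota_{c'}$ preserves epimorphisms, giving the epi $\rho$, and then the meet $x \wedge c' = 0$ forces $\psi_x = 0$ and $\rho = \varphi\psi_c$, so $\varphi$ is epi. Both proofs finish identically (an inflation that is also an epimorphism is an isomorphism, because its cokernel deflation is zero and hence has zero codomain). Your version never needs the monomorphism property of $r_i$ or the essential-epi lemma, but requires setting up an auxiliary pullback; the paper's version stays entirely with morphisms between $x \oplus c_i$ and $X$ at the cost of proving Lemma \ref{integralemma} first. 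Both hinge on integrality and neither seems strictly simpler, but your route makes the use of the epi-pullback axiom more transparent.
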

\begin{proof}
  For admissible subobjects $A$ and $B$ of $X$, we denote by $A \cap B$ (resp. $A + B$) the meet (resp. join) of them constructed in Proposition \ref{quasiabelian}.

  We make use of Lemma \ref{modularlemma}. By Proposition \ref{interval}, every interval of $\PP(X)$ is isomorphic to $\PP(Y)$ for some object $Y$ of $\EE$. Thus, it suffices to show the following claim:

  {\bf (Claim)}: \emph{Let $A, B_1, B_2$ be admissible subobjects of $X$. Suppose that $A \cap B_1 = A \cap B_2 = 0$, $A + B_1 = A + B_2 = X$ and $B_1 \leq B_2$ hold in $\PP(X)$. Then $B_1 = B_2$ holds}.

  \emph{(Proof of Claim).}
  Let $\iota_A, \iota_{B_1}$ and $\iota_{B_2}$ be the inflations corresponding to $A, B_1$ and $B_2$ respectively.
  By the construction of the meet given by Proposition \ref{quasiabelian}, we have the following pullback diagram for each $i=1,2$.
  \[
  \begin{tikzcd}
    0 \rar \dar \ar[rd, phantom, "{\rm p.b.}"] & B_i \dar[rightarrowtail, "\iota_{B_i}"] \\
    A \rar[rightarrowtail, "\iota_A"] & X
  \end{tikzcd}
  \]
  It follows that $0 \to A \oplus B_i$ is a kernel of $[\iota_A, \iota_{B_i}] \colon A \oplus B_i \to X$, so $[\iota_A, \iota_{B_i}]$ is a monomorphism for each $i=1,2$.

  On the other hand, by the second construction of the join given in Proposition \ref{quasiabelian}, we have the following factorization on the join $A +B_i$ for each $i$:
  \[
  [\iota_A,\iota_{B_i}] \colon A \oplus B_i \xrightarrow{r_i} A+B_i \infl X
  \]
  such that $r_i$ is an epimorphism. Now since $A+B_i = X$, we have $r_i = [\iota_A,\iota_{B_i}] \colon A \oplus B_i \to X$.

  We have shown that $r_i$ is both a monomorphism and an epimorphism for each $i$. Let $\iota \colon B_1 \infl B_2$ be the inflation corresponding to $B_1 \leq B_2$. Then we have the following commutative diagram:
  \[
  \begin{tikzcd}[ampersand replacement=\&]
    A \oplus B_1 \rar["r_1"] \dar[rightarrowtail, "{\begin{sbmatrix} 1 & 0 \\ 0 & \iota \end{sbmatrix}}"'] \& X \dar[equal] \\
    A \oplus B_2 \rar["r_2"'] \& X
  \end{tikzcd}
  \]
  Since $r_2$ is a monomorphism and an epimorphism and $r_1$ is an epimorphism, it follows that $\begin{sbmatrix} 1 & 0 \\ 0 & \iota \end{sbmatrix} \colon A\oplus B_1 \to A \oplus B_2$ is an epimorphism by Lemma \ref{integralemma}. However, $\begin{sbmatrix} 1 & 0 \\ 0 & \iota \end{sbmatrix}$ is a kernel morphism, so it must be an isomorphism. Thus, $\iota$ also is an isomorphism by the standard matrix calculation.
\end{proof}

\begin{corollary}\label{htfulp}
  Let $\EE$ be an integral quasi-abelian category which is length as an exact category (for example, a hereditary torsion-free class of abelian length category). Then $\EE$ satisfies the unique length property.
\end{corollary}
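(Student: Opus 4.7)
The plan is to deduce this directly by combining Proposition \ref{integralmodular} and Corollary \ref{modulp}. Since $\EE$ is integral quasi-abelian, Proposition \ref{integralmodular} yields that $\PP(X)$ is a modular lattice for every $X \in \EE$. Since $\EE$ is assumed to be a length exact category, the proposition on length exact categories established earlier guarantees that every object $X$ admits at least one composition series. Both hypotheses of Corollary \ref{modulp} are then satisfied, and we immediately conclude that $\EE$ satisfies the unique length property.

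For the parenthetical example I would verify the two assertions separately. First, a hereditary torsion-free class $\FF$ of an abelian category is integral quasi-abelian: that $\FF$ is quasi-abelian was established in the proposition preceding Lemma \ref{pblemma}, and the additional closure of epimorphisms under pullbacks (and dually of monomorphisms under pushouts) inside $\FF$ follows from the hereditary hypothesis on $\TT$, as recorded in \cite{rumpstar}. Second, if the ambient abelian category is length, then any chain in $\PP_\FF(X)$ is in particular a chain of admissible subobjects of $X$ viewed in $\AA$, so its length is bounded by the length of $X$ in $\AA$; hence $\FF$ is itself length as an exact category, and the main statement applies.

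There is no real obstacle here: the substantive work has already been absorbed into Proposition \ref{integralmodular} (for modularity) and Corollary \ref{modulp} (for the passage from modularity plus existence of a composition series to the unique length property). The corollary is therefore a formal combination of previously proved ingredients, with only the auxiliary verification that the sample class of hereditary torsion-free classes fits the hypotheses.
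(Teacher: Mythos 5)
Your proof is correct and takes essentially the same route as the paper: the corollary follows immediately from Proposition \ref{integralmodular} together with Corollary \ref{modulp}, with the length hypothesis supplying the existence of composition series. The extra verification you give for the parenthetical example (that hereditary torsion-free classes of length abelian categories are integral quasi-abelian and length as exact categories) is correct and a reasonable supplement to the paper's one-line proof.
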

\begin{proof}
  This immediately follows from Proposition \ref{integralmodular} and Corollary \ref{modulp}.
\end{proof}
We refer the reader to Example \ref{ulpnonjh} for examples which satisfy the unique length property.

\section{Grothendieck monoid of an exact category}
To an exact category $\EE$, we can associate the \emph{Grothendieck group} $\KK_0(\EE)$, which is the abelian group defined by the universal property with respect to conflations. The investigation and calculation of this group is a very classical topic in various area of mathematics. However, much less attention has been paid to the \emph{monoid version} of the Grothendieck group, the \emph{Grothendieck monoid} $\MM(\EE)$, whose properties we investigate in this section.

\begin{remark}
  Let us mention some related works on Grothendieck monoids. The notion of Grothendieck monoids was originally introduced and investigated in \cite{bg} to study Hall algebras of exact categories. As stated in the introduction, some statements in Section 3.1 are found in \cite{bg}, and the author is grateful to J. Greenstein for pointing it out.

  Also Brookfield studied the structure of the Grothendieck monoid of the category of finitely generated modules over noetherian rings in \cite{bro1,bro2,bro3}.
\end{remark}

\subsection{Construction and basic properties of the Grothendieck monoid}
Recall that \emph{monoids are assumed to be commutative} in our convention.
First, we give the definition of Grothendieck monoids and study their basic properties as monoids. We refer to Appendix A for some unexplained notions on monoids.
\begin{definition}\label{def:grmon}
  Let $\EE$ be a skeletally small exact category.
  \begin{enumerate}
    \item A map $f \colon \Iso\EE \to M$ to a monoid $M$ is said to \emph{respect conflations} if it satisfies the following conditions:
   \begin{enumerate}
     \item $f[0] = 0$ holds.
     \item For every conflation
     \[
     0 \to X \to Y \to Z \to 0
     \]
     in $\EE$, we have that $f[Y] = f[X] + f[Z]$ holds in $M$.
   \end{enumerate}
   \item A \emph{Grothendieck monoid} $\MM(\EE)$ is a monoid $\MM(\EE)$ together with a map $\pi \colon \Iso\EE \to \MM(\EE)$ which satisfies the following universal property:
   \begin{enumerate}
     \item $\pi$ respects conflations in $\EE$.
     \item Every map $f\colon \Iso\EE \to M$ to an monoid $M$ which respects conflations in $\EE$ uniquely factors through $\pi$, that is, there exists a unique monoid homomorphism $\ov{f} \colon \MM(\EE) \to M$ which satisfies $f = \ov{f} \pi$.
     \[
     \begin{tikzcd}
       \Iso\EE \dar["\pi"'] \rar["f"] & M \\
       \MM(\EE) \ar[ru, "\ov{f}"', dashed]
     \end{tikzcd}
     \]
   \end{enumerate}
   By abuse of notation, we often write $\pi[X] = [X]$ to represent an element in $\MM(\EE)$.
  \end{enumerate}
\end{definition}

First of all, we must show that the Grothendieck monoid \emph{actually exists}.
\begin{proposition}[{c.f. \cite[Definition 2.6]{bg}}]\label{gromon}
  For a skeletally small exact category $\EE$, its Grothendieck monoid $\MM(\EE)$ exists.
\end{proposition}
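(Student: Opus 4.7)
The plan is to construct $\MM(\EE)$ explicitly by generators and relations, in the standard fashion for algebraic objects defined by a universal property. Since $\EE$ is skeletally small, the collection $\Iso\EE$ is a genuine set, which is the only reason we can start the construction.

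First I would form the free commutative monoid $F$ on the set $\Iso\EE$; concretely, $F = \bigoplus_{[X] \in \Iso\EE} \N \cdot [X]$ with componentwise addition. Then I would define $\sim$ to be the congruence on $F$ (that is, the equivalence relation compatible with $+$) generated by the two families of relations
\begin{itemize}
\item $[0] \sim 0$, and
\item $[Y] \sim [X] + [Z]$ for every conflation $0 \to X \to Y \to Z \to 0$ in $\EE$.
\end{itemize}
Such a congruence exists: it is the intersection of all congruences on $F$ containing these basic relations, and this intersection is itself a congruence. I then set $\MM(\EE) := F / {\sim}$, which inherits a commutative monoid structure from $F$, and take $\pi \colon \Iso\EE \to \MM(\EE)$ to be the composition of the canonical inclusion $\Iso\EE \hookrightarrow F$ with the quotient map. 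By the very choice of $\sim$, the map $\pi$ respects conflations.

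For the universal property, suppose $f\colon \Iso\EE \to M$ respects conflations. Since $F$ is the free commutative monoid on $\Iso\EE$, $f$ extends uniquely to a monoid homomorphism $\tilde f \colon F \to M$. Because $f$ respects conflations, $\tilde f$ identifies both families of generating relations in $\sim$, hence identifies every pair related by $\sim$. Consequently $\tilde f$ descends to a unique homomorphism $\overline{f}\colon \MM(\EE) \to M$ with $\overline{f}\pi = f$. Uniqueness follows since $\pi(\Iso\EE)$ generates $\MM(\EE)$ as a monoid.

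The only real obstacle is keeping straight that we are working with a monoid (not a group) congruence, so we cannot use differences or additive inverses to simplify relations, and must argue entirely within the additive monoid $F$. Beyond this bookkeeping, the proof is a routine instance of the free-object-modulo-relations construction, parallel to the classical one for $\KK_0(\EE)$, the difference being that we do not group-complete at the end.
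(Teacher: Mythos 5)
Your proof is correct.  It differs mildly from the paper's in its starting point.  The paper first observes that $\Iso\EE$ is \emph{already} a commutative monoid under $[A]+[B] := [A\oplus B]$, with $[0]$ as its identity, and then defines $\MM(\EE)$ as the quotient of $(\Iso\EE, \oplus)$ by the congruence generated solely by $[Y]\sim[X]+[Z]$ for conflations $X\infl Y\defl Z$ (citing Proposition \ref{gencong} for existence of the generated congruence).  You instead begin with the free commutative monoid $F = \N^{(\Iso\EE)}$ on the underlying set and impose two families of relations: the conflation relations \emph{and} $[0]\sim 0$.  Both routes work, and the universal property shows they produce isomorphic monoids.  The paper's version is more economical: since $[0]$ is already the unit of $(\Iso\EE,\oplus)$, the extra relation is unnecessary, and the monoid structure automatically encodes $[X\oplus Z]=[X]+[Z]$ without invoking the split conflation.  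Your version is more parallel to the generators-and-relations construction of $\KK_0$ and requires no preliminary observation about $\oplus$; the price is the extra relation $[0]\sim 0$ (which, as you implicitly must know, does not follow from the conflation relations: the trivial conflation only yields $[0]\sim[0]+[0]$, and an idempotent element of a monoid need not be zero).  Either way, the universal property argument you give is the standard and correct one.
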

\begin{proof}
  Define the operation $+$ on $\Iso\EE$ by $[A] + [B] := [A\oplus B]$, then clearly $\Iso\EE$ is a monoid with an additive unit $[0]$.

  Now let us define a monoid congruence $\sim$ on $\Iso\EE$ which is generated by the following relations (see Proposition \ref{gencong}):
   \emph{For every conflation $X \infl Y \defl Z$ in $\EE$, we impose $[Y] \sim [X] + [Z]$.}
  Using this, we obtain a monoid $\MM(\EE):= \Iso\EE / \sim$. Now it is clear from the construction that $\MM(\EE)$ enjoys the required universal property of the Grothendieck monoid.
\end{proof}
This construction was used to define Grothendieck monoids in \cite[Definition 2.6]{bg}.
The following is a more direct characterization whether $[X] \sim [Y]$ holds in $\MM(\EE)$.
Note that for an inflation series $0=X_0 \infl X_1 \infl \cdots \infl X_n = X$ of $X$, we can easily show inductively that $[X] = [X_1] + [X_2/X_1] + \cdots + [X_n/X_{n-1}]$ holds in $\MM(\EE)$. Thus, if $X$ and $Y$ have isomorphic inflation series, then $[X] = [Y]$ holds in $\MM(\EE)$.
\begin{proposition}[{c.f. \cite[Lemma 5.1]{bg}}]\label{explicit}
  Let $\EE$ be a skeletally small exact category and $X,Y$ two objects of $\EE$. Then $[X] = [Y]$ holds in $\MM(\EE)$ if and only if there exist a sequence of objects $X=X_0, X_1, \cdots,\allowbreak X_m = Y$ in $\EE$ such that $X_i$ and $X_{i-1}$ have isomorphic inflation series for each $i$.
\end{proposition}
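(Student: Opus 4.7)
The ``if'' direction is an immediate consequence of the observation just preceding the statement: isomorphic inflation series of consecutive terms yield $[X_{i-1}] = [X_i]$ in $\MM(\EE)$, and chaining across $i$ gives $[X] = [Y]$. For the ``only if'' direction, the plan is to introduce the binary relation $\approx$ on $\Iso\EE$ defined by the stated chain condition, and to show that $\approx$ is a monoid congruence containing the generating relations of the congruence $\sim$ used in the proof of Proposition \ref{gromon}. Then minimality of $\sim$ yields $\sim\, \subseteq\, \approx$, which is exactly $[X]=[Y] \Rightarrow X \approx Y$.

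That $\approx$ is an equivalence relation is routine: reflexivity uses the trivial length-zero sequence, symmetry uses reversal, and transitivity uses concatenation. Also, $\approx$ contains the generating relations of $\sim$: for a conflation $X \infl Y \defl Z$, both $Y$ and $X \oplus Z$ admit the inflation series $0 \infl X \infl Y$ and $0 \infl X \oplus 0 \infl X \oplus Z$, respectively, which have the same quotients $X, Z$ in the same order, so $Y \approx X \oplus Z$.

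The substantive step is compatibility with $\oplus$: given $X \approx X'$ and $Y \approx Y'$, one must produce $X \oplus Y \approx X' \oplus Y'$. The key auxiliary fact is that for any inflation series $0 = X_0 \infl X_1 \infl \cdots \infl X_n = X$ and any object $V$, the sequence
\[
0 \infl V \infl X_1 \oplus V \infl \cdots \infl X_n \oplus V = X \oplus V
\]
is an inflation series of $X \oplus V$ with quotients $V, X_1/X_0, \dots, X_n/X_{n-1}$, each step being the direct sum of the conflation $X_{i-1} \infl X_i \defl X_i/X_{i-1}$ with the identity on $V$ (or, at the initial step, with $0 \infl V$). Applied to a single link $X_i, X_{i-1}$ of a chain witnessing $X \approx X'$, and extending the permutation matching the quotients to fix the new $V$-entry, this produces a link between $X_i \oplus V$ and $X_{i-1} \oplus V$; iterating along the chain gives $X \oplus V \approx X' \oplus V$, and the symmetric argument in the second coordinate then gives $X \oplus Y \approx X' \oplus Y'$.

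The main obstacle is precisely this compatibility with $\oplus$; everything else is essentially formal. In particular, one must verify carefully that the augmented sequences really are inflation series with the asserted quotients, which reduces to the fact that direct sums of conflations are conflations (a standard consequence of the exact category axioms), and one must track how the permutation matching the original quotients extends to the augmented series.
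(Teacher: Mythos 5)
Your proposal matches the paper's proof essentially step for step: both define the relation $\approx$ via chains of isomorphic inflation series, show it is a monoid congruence by adding a fixed summand to each stage of an inflation series (you prepend $V$ at the start, the paper appends $A$ at the end — cosmetically different, same idea), and then identify $\approx$ with the congruence $\sim$ by observing that each defining relation $[Y]\sim[X]+[Z]$ is witnessed by the isomorphic inflation series $0\infl X\infl Y$ and $0\infl X\infl X\oplus Z$. Correct, and the same approach.
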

\begin{proof}
  We freely use the construction of $\MM(\EE)$ in Proposition \ref{gromon}. For two elements $[X],[Y]$ in $\Iso\EE$, we write $[X] \approx [Y]$ if there exists objects $X=X_0, X_1, \cdots, X_m = Y$ such that $X_i$ and $X_{i-1}$ have isomorphic inflation series for each $i$. It is clear that $\approx$ is an equivalence relation on $\Iso\EE$, and it suffices to show that $\approx$ and $\sim$ coincides.

  First, we show that $\approx$ is a monoid congruence on $\Iso\EE$. It is enough to show that for every object $A$ in $\EE$, if $X$ and $Y$ have isomorphic inflation series, then so do $A \oplus X$ and $A \oplus Y$.
  Let $0 = X_0 \infl X_1 \infl \cdots \infl X_n = X$ and $0 = Y_0 \infl Y_1 \infl \cdots \infl Y_n = Y$ be two isomorphic inflation series. Then it is obvious that two inflation series $X_0 \infl \cdots \infl X_n \infl X_n \oplus A$ and $Y_0 \infl Y_1 \infl \cdots \infl Y_n \infl Y_n \oplus A$ are isomorphic, where the last inflations are the inclusion into direct summands. Therefore, $\approx$ is a monoid congruence on $\Iso\EE$.

  Next, we will prove that $\approx$ coincides with $\sim$. From the argument above this proposition, we have that $[X] \approx [Y]$ implies $[X] \sim [Y]$ since $[X] = [X_0] \sim [X_1] \sim \cdots \sim [X_m] = [Y]$. To show the converse implication, it suffices to show that for every conflation $X \infl Y \defl Z$, we have that $[Y] \approx [X] + [Z] = [X \oplus Z]$.
  This is clear since we have inflation series $0 \infl X \infl Y$ of $Y$ and $0 \infl X \infl X \oplus Z$ of $X \oplus Z$ which are isomorphic since $Y/X \iso Z$.
 \end{proof}

Using this description of the Grothendieck monoid, we can prove some properties of it.

\begin{proposition}[{c.f. \cite[Lemma 2.9]{bg}}]\label{mreduced}
  Let $\EE$ be a skeletally small exact category. Then the following hold.
  \begin{enumerate}
    \item For an object $X$ in $\EE$, we have that $[X] = 0$ in $\MM(\EE)$ if and only if $X \iso 0$.
    \item $\MM(\EE)$ is reduced.
  \end{enumerate}
\end{proposition}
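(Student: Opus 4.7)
The plan is to reduce both parts to a single universal-property argument using a judiciously chosen target monoid.

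For part (1), I will build a homomorphism from $\MM(\EE)$ to the two-element monoid $M = \{0,1\}$ with addition given by $0+0=0$, $0+1=1+0=1$, $1+1=1$ (the ``or'' monoid). Define $f\colon \Iso\EE \to M$ by $f[X] = 0$ if $X \iso 0$, and $f[X]=1$ otherwise. To see that $f$ respects conflations, take any conflation $X \infl Y \defl Z$ in $\EE$: if $Y \iso 0$, then $X$ is a subobject of $0$ and $Z$ is a quotient of $0$, so both vanish; conversely, if $X \iso 0 \iso Z$, then $X \infl Y$ is an isomorphism onto $0 \infl Y$ with quotient $Y \iso Z \iso 0$. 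In either case the defining relation $f[Y] = f[X] + f[Z]$ holds in $M$. By the universal property of $\MM(\EE)$, $f$ factors as $f = \ov{f}\pi$ for a monoid homomorphism $\ov{f}\colon \MM(\EE) \to M$. Then $[X]=0$ in $\MM(\EE)$ forces $f[X] = \ov{f}(0) = 0$, i.e.\ $X \iso 0$. The converse is immediate.

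For part (2), recall that a monoid is reduced when $a+b=0$ implies $a=b=0$. Any two elements of $\MM(\EE)$ can be written as $[X]$ and $[Y]$ for objects $X, Y \in \EE$, since the image of $\pi \colon \Iso\EE \to \MM(\EE)$ generates $\MM(\EE)$ as a monoid and $[X]+[Y] = [X\oplus Y]$. So suppose $[X]+[Y]=0$, i.e.\ $[X \oplus Y] = 0$ in $\MM(\EE)$. By part (1), $X \oplus Y \iso 0$; since any direct summand of the zero object is zero in an additive category, we conclude $X \iso 0$ and $Y \iso 0$, hence $[X] = [Y] = 0$.

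The only real content is the verification that $f$ respects conflations, which is genuinely straightforward given the observation above on zero objects. I anticipate no serious obstacle: the crux is simply recognizing that ``being nonzero'' descends to a well-defined monoid invariant on $\MM(\EE)$ via the explicit target $M$. An alternative route would use Proposition \ref{explicit} directly: if $[X] = 0$ then there is a chain $X = X_0, \ldots, X_m = 0$ with consecutive terms admitting isomorphic inflation series. Since an inflation series of $0$ has all subquotients zero, one shows inductively that each $X_i$ is zero; however, this is strictly less clean than the universal-property argument and requires more bookkeeping about inflation series. I will therefore use the universal-property proof.
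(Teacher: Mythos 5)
Your proof is correct, but it takes a genuinely different route from the paper for part (1). The paper relies on Proposition~\ref{explicit}, which characterizes the defining congruence of $\MM(\EE)$ in terms of chains of objects with pairwise isomorphic inflation series, and then observes that an object sharing an inflation series with $0$ must itself be zero. You instead exhibit a monoid homomorphism $\MM(\EE) \to \{0,1\}$ directly from the universal property, where $\{0,1\}$ carries the idempotent addition $1+1=1$. The verification that ``being nonzero'' respects conflations rests only on the elementary facts that inflations are monomorphisms and deflations are epimorphisms (so a subobject or quotient of $0$ is $0$), and that a conflation with zero end terms has zero middle term. This sidesteps the congruence-generation machinery entirely, which is arguably cleaner and more self-contained; the trade-off is that you need the slightly unusual target monoid, and you do not get the finer structural information that Proposition~\ref{explicit} provides (which the paper reuses almost immediately in Propositions~\ref{simpleatom} and~\ref{serre}, so it has to be developed regardless). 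Part (2) is identical in both treatments, reducing to part (1) via $[X]+[Y]=[X\oplus Y]$.
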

\begin{proof}
  (1)
  This follows from Proposition \ref{explicit}, since if $0$ and $X$ has isomorphic inflation series, then clearly $X \iso 0$ holds.

  (2)
  Suppose that $[X] + [Y] = 0$ holds in $\MM(\EE)$. Then $[X\oplus Y] = 0$ holds, hence $X \oplus Y \iso 0$ by (1). Therefore, both $X$ and $Y$ must be isomorphic to $0$ in $\EE$. Thus, $\MM(\EE)$ is reduced.
\end{proof}

We can prove that non-isomorphic simples in $\EE$ are distinct in $\MM(\EE)$, and in fact they are in bijection with \emph{atoms} of $\MM(\EE)$. This property is remarkable compared to the Grothendieck group, since non-isomorphic simples may represent the same element in the Grothendieck group $\KK_0(\EE)$ (see Section \ref{kroex}). We refer the reader to Definition \ref{monoiddef} for the notion of atoms in monoids.
\begin{proposition}[{c.f. \cite[Lemma 5.3]{bg}}]\label{simpleatom}
  Let $\EE$ be a skeletally small exact category. Then the following hold.
  \begin{enumerate}
    \item Let $S$ and $X$ be two objects in $\EE$. Suppose that $[S] = [X]$ holds in $\MM(\EE)$ and that $S$ is simple. Then $S \iso X$ holds in $\EE$.
    \item Let $S_1$ and $S_2$ be two simple objects in $\EE$. Then $[S_1] = [S_2]$ holds in $\MM(\EE)$ if and only if $S_1 \iso S_2$ holds in $\EE$.
    \item The assignment $S \mapsto [S]$ from $\Ob\EE$ to $\MM(\EE)$ induces a bijection $\simp \EE \xrightarrow{\simeq} \Atom\MM(\EE)$.
  \end{enumerate}
\end{proposition}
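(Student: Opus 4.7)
The plan is to deduce all three statements from the explicit description of $[X]=[Y]$ in $\MM(\EE)$ provided by Proposition \ref{explicit}, combined with the fact (Proposition \ref{mreduced}(1)) that $[X]=0$ forces $X\iso 0$. Since statements (2) and (3) follow quickly once (1) is in hand, the main content is (1).

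For (1), I would first analyze what inflation series of a simple object look like. Because $\PP(S)=\{0,S\}$, every inflation series of $S$ has the form $0=S_0\infl S_1\infl\cdots\infl S_n=S$ with each $S_i\in\{0,S\}$, and so its multiset of consecutive quotients consists of one copy of $S$ together with $n-1$ copies of $0$. By Proposition \ref{explicit}, $[S]=[X]$ yields a chain $S=X_0,X_1,\ldots,X_m=X$ in which consecutive terms admit isomorphic inflation series; I would then show by induction on $i$ that $X_i\iso S$. For the inductive step, assuming $X_{i-1}\iso S$, any inflation series $\tau$ of $X_{i-1}$ has the structure above, and so an inflation series $0=Y_0\infl\cdots\infl Y_n=X_i$ of $X_i$ isomorphic to $\tau$ must have exactly one quotient $Y_j/Y_{j-1}\iso S$ and all others zero. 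The vanishing quotients force $Y_k\iso 0$ for $k<j$ and $Y_k\iso X_i$ for $k\geq j$, and the conflation $0\infl Y_j\defl S$ then gives $X_i\iso S$. This propagates $S\iso X_0\iso X_1\iso\cdots\iso X_m=X$.

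For (2), apply (1) with $X=S_2$. For (3), the map $\simp\EE\to\MM(\EE)$, $[S]\mapsto[S]$, is well-defined and injective by (2). To show its image lies in $\Atom\MM(\EE)$, suppose $S$ is simple and $[S]=a+b$ in $\MM(\EE)$; every element of $\MM(\EE)$ has the form $[-]$ by construction, so write $a=[A]$ and $b=[B]$, giving $[S]=[A\oplus B]$. By (1), $S\iso A\oplus B$, and since $S$ is simple it is indecomposable (a nontrivial direct sum decomposition would yield a nontrivial admissible subobject via the split inflation into a summand), so $A\iso 0$ or $B\iso 0$, i.e.\ $a=0$ or $b=0$. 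Together with $[S]\neq 0$ from Proposition \ref{mreduced}(1), this shows $[S]\in\Atom\MM(\EE)$. Conversely, for an atom $a=[X]$, if $X$ were not simple there would be a conflation $L\infl X\defl N$ with $L,N\neq 0$, giving $[X]=[L]+[N]$ with both summands nonzero by Proposition \ref{mreduced}(1), contradicting atomicity; hence $X$ is simple.

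The only nontrivial step is the inductive argument in (1), where one must carefully unpack the definition of isomorphic inflation series and confirm that the one nonzero quotient $S$ forces $Y_j\iso S$ and hence $X_i\iso S$; the rest is bookkeeping and direct invocation of the earlier results.
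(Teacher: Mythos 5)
Your proof is correct and takes essentially the same route as the paper's: (1) via Proposition \ref{explicit} and the observation that an inflation series of a simple object has a single nonzero successive quotient, (2) as a special case of (1), and (3) by the same atom/simple dichotomy using indecomposability of simples and Proposition \ref{mreduced}. The only difference is that you make explicit the step the paper calls ``clear,'' namely that an inflation series isomorphic to one of a simple object forces the ambient object to be isomorphic to that simple.
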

\begin{proof}
  (1)
  Suppose that $S$ and $X$ have isomorphic inflation series for $X\in\EE$ and a simple object $S$ in $\EE$. Then it is clear that $S \iso X$ holds, since possible inflation series of $S$ is of the form $0 \infl  \cdots \infl 0 \infl S = \cdots = S$. Now the assertion of (1) inductively follows from Proposition \ref{explicit}.

  (2)
  This is a special case of (1).

  (3)
  We first show that $[S]$ is an atom in the monoid $\MM(\EE)$. Suppose that $[S] = [X] + [Y]$. Then we have that $[S] = [X \oplus Y]$, so (1) implies that $S \iso X \oplus Y$. Since simple objects are indecomposable, it follows that $X \iso 0$ or $Y \iso 0$ holds, that is, $[X] = 0$ or $[Y] = 0$. Therefore, $[S]$ is an atom.

  Conversely, suppose that $X$ is not simple. We may assume that $X \neq 0$, so we have a non-trivial conflation $X_1 \infl X \defl X_2$ with $X_1,X_2 \neq 0$. Then $[X] = [X_1] + [X_2]$ holds in $\MM(\EE)$ and we have $[X_1], [X_2] \neq 0$ by Proposition \ref{mreduced}. Thus, $[X]$ is not an atom.

  Now we have proved that atoms in $\MM(\EE)$ are precisely elements which come from simple objects. Then the remaining claim follows immediately from (2).
\end{proof}

For later use, we show another necessary condition for $[X] = [Y]$ in $\MM(\EE)$. Recall that a subcategory $\DD$ of an exact category $\EE$ is called \emph{Serre} if for any conflation $0 \to X \to Y \to Z \to 0$ in $\EE$, we have that $Y \in \DD$ holds if and only if both $X \in \DD$ and $Z \in \DD$ hold.
\begin{proposition}\label{serre}
  Let $\EE$ be a skeletally small exact category and $X, Y$ two objects of $\EE$. Suppose that $X$ belongs to a Serre subcategory $\DD$ of $\EE$ and that $[X] = [Y]$ holds in $\MM(\EE)$. Then $Y$ also belongs to $\DD$.
\end{proposition}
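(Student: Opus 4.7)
The plan is to reduce the statement to the atomic equivalence used to build $\MM(\EE)$, and then exploit the Serre condition termwise. By Proposition \ref{explicit}, the equality $[X]=[Y]$ in $\MM(\EE)$ means there is a finite sequence $X=X_0,X_1,\ldots,X_m=Y$ of objects of $\EE$ such that consecutive members $X_{i-1}$ and $X_i$ admit isomorphic inflation series. A straightforward induction on $m$ reduces the proposition to the case $m=1$, namely: \emph{if $X\in\DD$ and $X,Y$ have isomorphic inflation series, then $Y\in\DD$}.

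The key step is a simple two-directional consequence of the Serre axiom, which I would isolate as a short observation: for any inflation series $0=Z_0\infl Z_1\infl\cdots\infl Z_n=Z$, the object $Z$ lies in $\DD$ if and only if each composition factor $Z_i/Z_{i-1}$ lies in $\DD$. Indeed, applying the Serre property to the conflations
\[
Z_{i-1}\infl Z_i\defl Z_i/Z_{i-1}
\]
from $i=n$ down to $i=1$ shows that $Z\in\DD$ forces every $Z_i$ and every $Z_i/Z_{i-1}$ into $\DD$; running the same chain of conflations upward from $i=1$ to $i=n$ gives the converse.

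Now fix isomorphic inflation series $0=X_0\infl\cdots\infl X_n=X$ and $0=Y_0\infl\cdots\infl Y_n=Y$, together with a permutation $\sigma$ of $\{1,\ldots,n\}$ with $X_i/X_{i-1}\cong Y_{\sigma(i)}/Y_{\sigma(i)-1}$. Since $X\in\DD$, the observation applied to the first series yields $X_i/X_{i-1}\in\DD$ for all $i$, hence $Y_{\sigma(i)}/Y_{\sigma(i)-1}\in\DD$ for all $i$, i.e.\ every factor of the second series lies in $\DD$. Applying the observation in the other direction to the $Y$-series then gives $Y=Y_n\in\DD$, which completes the induction step.

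There is no real obstacle here; the only thing to be careful about is invoking the Serre property \emph{in both directions} (closure under subobjects/quotients of a conflation and closure under extensions), which is exactly what the definition provides. The argument is purely bookkeeping once one has Proposition \ref{explicit} and the observation above, so I would keep the write-up short.
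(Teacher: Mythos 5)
Your proof is correct and follows essentially the same route as the paper's: reduce via Proposition \ref{explicit} to the case of isomorphic inflation series, use the Serre property downward along the $X$-series to place all composition factors in $\DD$, transfer them across the permutation, and use the Serre property (extension-closedness) upward along the $Y$-series to conclude $Y\in\DD$. Your explicit isolation of the "both directions" observation is just a slightly more spelled-out version of what the paper compresses into "closed under subquotients" and "extension-closed."
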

\begin{proof}
  By Proposition \ref{explicit}, it suffices to show the assertion when $X$ and $Y$ have isomorphic inflation series.
  Take inflation series $0 = X_0 \infl X_1 \infl \cdots \infl X_n = X$ and $0 = Y_0 \infl Y_1 \infl \cdots \infl Y_n = Y$ and a permutation $\sigma$ of $\{ 1, \cdots, n\}$ such that $X_i/X_{i-1} \iso Y_{\sigma(i)}/Y_{\sigma(i)-1}$ for each $i$.
  Since $X \in \DD$ and $\DD$ is closed under subquotients, $X_i/X_{i-1}$ belongs to $\DD$, thus so does $Y_i/Y_{i-1}$ for each $i$. Therefore, it inductively follows that $Y \in \DD$ holds, since $\DD$ is extension-closed and $Y$ can be obtained from $Y_i/Y_{i-1}$'s by extensions.
\end{proof}

\subsection{Grothendieck monoid and positive part of Grothendieck group}\label{pospartsubsec}

The \emph{Grothendieck group} of an exact category is a classical invariant, which is defined by the similar universal property as the Grothendieck monoid. In this section, we will recall the Grothendieck group and discuss its relation to the Grothendieck monoid.

\begin{definition}
  Let $\EE$ be a skeletally small exact category.
  \begin{enumerate}
    \item A map $f \colon \Iso\EE \to G$ to an abelian group $G$ is said to \emph{respect conflations} if it satisfies $f[Y] = f[X] + f[Z]$ in $G$ for every conflation $X \infl Y \defl Z$ in $\EE$.
    \item A \emph{Grothendieck group} $\KK_0(\EE)$ of an exact category $\EE$ is an abelian group $\KK_0(\EE)$ together with a map $\pi \colon \Iso\EE \to \KK_0(\EE)$ which satisfies the following universal property:
    \begin{enumerate}
     \item $\pi$ respects conflations in $\EE$.
     \item Every map $f\colon \Iso\EE \to G$ to an abelian group $G$ which respects conflations in $\EE$ uniquely factors through $\pi$, that is, there exists a unique group homomorphism $\ov{f} \colon \KK_0(\EE) \to G$ which satisfies $f = \ov{f} \pi$.
     \[
     \begin{tikzcd}
       \Iso\EE \dar["\pi"] \rar["f"] & G \\
       \KK_0(\EE) \ar[ru, "\ov{f}"', dashed]
     \end{tikzcd}
     \]
   \end{enumerate}
   By abuse of notation, we often write $\pi[X] = [X]$ to represent an element of $\KK_0(\EE)$.
  \end{enumerate}
\end{definition}

By the defining properties of the Grothendieck monoid and the Grothendieck group, it is clear that the latter is obtained from the former by taking the \emph{group completion}, as the following proposition claims. We refer the reader to Appendix A for this notion.
\begin{proposition}
  Let $\EE$ be a skeletally small exact category. Then the Grothendieck group $\KK_0(\EE)$ exists, and it is realized as the group completion $\gp \MM(\EE)$ of the Grothendieck monoid $\MM(\EE)$.
\end{proposition}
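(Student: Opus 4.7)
The plan is to prove existence of $\KK_0(\EE)$ and its identification with $\gp \MM(\EE)$ simultaneously, by verifying that $\gp \MM(\EE)$ (equipped with the appropriate structure map) satisfies the universal property defining $\KK_0(\EE)$. Since the universal property is what defines $\KK_0(\EE)$ up to unique isomorphism, establishing this is both an existence statement and the desired identification.

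First I would define the candidate structure map. Let $\pi_\MM \colon \Iso\EE \to \MM(\EE)$ denote the canonical map into the Grothendieck monoid, and let $\iota \colon \MM(\EE) \to \gp \MM(\EE)$ be the canonical homomorphism into the group completion (which is reviewed in Appendix A). Set $\pi := \iota \circ \pi_\MM \colon \Iso\EE \to \gp \MM(\EE)$. Since $\pi_\MM$ respects conflations by definition of $\MM(\EE)$, and $\iota$ is a monoid homomorphism, the composite $\pi$ also respects conflations when $\gp \MM(\EE)$ is viewed as a monoid (in particular, $\pi[0]=0$ and $\pi[Y]=\pi[X]+\pi[Z]$ for every conflation $X\infl Y\defl Z$).

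Next I would verify the universal property. Let $f \colon \Iso\EE \to G$ be any map to an abelian group $G$ that respects conflations. Regarding $G$ as a (commutative) monoid in the evident way, $f$ respects conflations as a map into a monoid, so by the universal property of $\MM(\EE)$ there is a unique monoid homomorphism $\widetilde{f} \colon \MM(\EE) \to G$ with $\widetilde{f} \circ \pi_\MM = f$. By the universal property of group completion (recorded in Appendix A), $\widetilde{f}$ factors uniquely through $\iota$ as a group homomorphism $\overline{f} \colon \gp \MM(\EE) \to G$ with $\overline{f}\circ \iota = \widetilde{f}$. Composing, $\overline{f}\circ \pi = \overline{f}\circ \iota \circ \pi_\MM = \widetilde{f}\circ \pi_\MM = f$, as required.

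Finally, I would check uniqueness of $\overline{f}$. If $\overline{f}' \colon \gp\MM(\EE) \to G$ is another group homomorphism with $\overline{f}'\circ \pi = f$, then $\overline{f}'\circ \iota$ is a monoid homomorphism $\MM(\EE)\to G$ satisfying $(\overline{f}'\circ\iota)\circ \pi_\MM = f$, so by uniqueness in the universal property of $\MM(\EE)$ we have $\overline{f}'\circ \iota = \widetilde{f}$; then uniqueness in the universal property of group completion forces $\overline{f}' = \overline{f}$. This shows $(\gp\MM(\EE),\pi)$ satisfies the universal property of the Grothendieck group, proving existence and realizing $\KK_0(\EE)\iso \gp\MM(\EE)$. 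The argument is routine once the universal properties are laid out; the only potential pitfall is keeping track of the distinction between monoid homomorphisms and group homomorphisms when invoking the two universal properties in succession, which is exactly where the group completion is inserted.
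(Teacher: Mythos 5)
Your proof is correct and is essentially the argument the paper intends: the paper does not write out a proof, but remarks just before the proposition that the identification ``is clear'' from the two universal properties, which is precisely the composition-of-universal-properties argument you fill in. No discrepancy.
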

In what follows, we always identify $\gp\MM(\EE)$ with $\KK_0(\EE)$ and we denote by $\iota \colon \MM(\EE) \to \KK_0(\EE)$ the natural map which satisfies $\iota [X] = [X]$ for every $X$ in $\EE$.

\begin{definition}
  Let $\EE$ be a skeletally small exact category. We denote by $\KK_0^+(\EE)$ the image of the natural map $\iota \colon \MM(\EE) \to \KK_0(\EE)$, that is, $\KK_0^+(\EE) := \{ [X] \, | \, X \in \EE \}$. We call $\KK_0^+(\EE)$ the \emph{positive part} of the Grothendieck group.
\end{definition}
It follows from Proposition \ref{canquot} that $\iota \colon \MM(\EE) \to \KK_0(\EE)$ induces an isomorphism of monoids $\MM(\EE)_\can \iso \KK_0^+(\EE)$, where $\MM(\EE)_\can$ is the largest cancellative quotient of $\MM(\EE)$.
Thus, the positive part has lost information on \emph{non-cancellative part} compared to the Grothendieck monoid. Even so, $\KK_0^+(\EE)$ is a more sophisticated invariant of $\EE$ than $\KK_0(\EE)$.
Note that we have many examples in which the Grothendieck monoids are not cancellative, see Section \ref{noncancel}.

We shall see later in Corollary \ref{setofdim} that for a large class of exact categories $\EE$ arising in the representation theory of artin algebras, including functorially finite torsion(-free) classes, we can identify $\KK_0^+(\EE)$ with the monoid of dimension vectors of modules in $\EE$.

\section{Characterization of (JHP)}
In this section, we will show the basic relationship between structures of exact categories and combinatorial properties of the Grothendieck monoids.

\subsection{Length-like functions and length exact categories}
First, we will introduce an analogue of \emph{dimension} or \emph{length} of modules, which characterizes length exact categories.
\begin{definition}\label{lengthlikedef}
  Let $\EE$ be a skeletally small exact category. We say that a map $\nu \colon \Iso \EE \to \N$ is a \emph{weakly length-like function} if it satisfies the following conditions:
  \begin{enumerate}
    \item For every conflation $X \infl Y \defl Z$ in $\EE$, we have $\nu[Y] \geq \nu[X] + \nu[Z]$.
    \item $\nu[X] = 0$ implies $X \iso 0$ for every $X \in \EE$.
  \end{enumerate}
  We say that $\nu$ is a \emph{length-like} function if it satisfies (2) and the following:
  \begin{enumerate}
    \item[(1$'$)] For every conflation $X \infl Y \defl Z$ in $\EE$, we have $\nu[Y] = \nu[X] + \nu[Z]$.
  \end{enumerate}
\end{definition}
\begin{remark}\label{llremark}
  Obviously, a length-like function is weakly length-like. It is clear that we can identify a length-like function with a length-like function on the monoid $\MM(\EE)$ by Proposition \ref{mreduced} (1), see Definition \ref{llmonoid}.
\end{remark}
Note that (weakly) length-like functions are far from being unique (for example, any positive multiple of a length-like function is a length-like function).

A weakly length-like function gives an upper bound for all possible lengths of $X$, as follows.
\begin{lemma}\label{bddlength}
  Let $\EE$ be a skeletally small exact category with a weakly length-like function $\nu$. For every $X \in \EE$ and proper inflation series $0 = X_0 < X_1 < \cdots < X_n = X$ of $X$, we have $n \leq \nu[X]$. In particular, $\EE$ is a length exact category.
\end{lemma}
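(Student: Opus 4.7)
The plan is to run a telescoping argument along the proper inflation series, using property (1) of a weakly length-like function to break off one factor at a time, and property (2) to guarantee each factor contributes at least $1$ to $\nu[X]$.

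First, I would observe that for each $i$ we have a conflation
\[
X_{i-1} \infl X_i \defl X_i/X_{i-1}
\]
in $\EE$, so property (1) of Definition \ref{lengthlikedef} gives
\[
\nu[X_i] \geq \nu[X_{i-1}] + \nu[X_i/X_{i-1}].
\]
The crucial point is that the series is \emph{proper}, meaning $X_{i-1} < X_i$ in $\PP(X)$. If the quotient $X_i/X_{i-1}$ were zero, then the inflation $X_{i-1} \infl X_i$ would have zero cokernel and hence be an isomorphism, forcing $X_{i-1}$ and $X_i$ to represent the same admissible subobject, which contradicts strictness. Therefore $X_i/X_{i-1} \not\iso 0$, and property (2) of Definition \ref{lengthlikedef} yields $\nu[X_i/X_{i-1}] \geq 1$.

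Combining these gives $\nu[X_i] \geq \nu[X_{i-1}] + 1$ for every $i = 1, \dots, n$. Iterating $n$ times (or by a trivial induction on $n$) and using that $\nu[X_0] = \nu[0] = 0$ (which again follows from property (2), applied to the zero object, or directly from (1) applied to the trivial conflation $0 \infl 0 \defl 0$), I obtain
\[
\nu[X] = \nu[X_n] \geq \nu[X_0] + n = n.
\]

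For the last assertion, note that $\nu[X]$ is a fixed natural number depending only on $X$, and the above bound $n \leq \nu[X]$ holds for the length of every proper inflation series of $X$. Since proper inflation series correspond to finite chains of $\PP(X)$ containing $0$ and $X$, and any finite chain of $\PP(X)$ can be extended by $0$ and $X$ at the ends without decreasing its length, the set of lengths of finite chains of $\PP(X)$ is also bounded by $\nu[X]$. Hence $\EE$ is a length exact category by definition. The only step that required any thought was ruling out $X_i/X_{i-1} \iso 0$, and this is immediate from the fact that in an exact category an inflation with vanishing cokernel is invertible; no genuine obstacle arises.
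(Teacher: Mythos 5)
Your proof is correct and follows essentially the same telescoping argument as the paper: apply the weak subadditivity of $\nu$ along the conflations $X_{i-1}\infl X_i\defl X_i/X_{i-1}$, use properness to get each $X_i/X_{i-1}\neq 0$, and then condition (2) forces each $\nu[X_i/X_{i-1}]\geq 1$. One small slip: property (2) only says $\nu[X]=0\Rightarrow X\iso 0$, not the converse, so it does not by itself give $\nu[0]=0$; however your alternative derivation from (1) applied to $0\infl 0\defl 0$ does, and in fact you never needed $\nu[X_0]=0$ at all since $\nu[X_0]\geq 0$ already suffices for the bound $\nu[X_n]\geq n$.
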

\begin{proof}
  Since $\nu$ is weakly length-like, we have inductively
  \begin{align*}
    \nu[X] & \geq \nu[X_1] + \nu[X/X_1]\\
    & \geq \nu[X_1] + \nu[X_2 / X_1] + \nu[X/X_2]\\
    & \cdots \\
    &\geq \sum_{i=1}^n \nu[X_i/X_{i-1}].
  \end{align*}
  Since $X_{i-1} \neq X_i$ for each $i$, we have $X_i / X_{i-1} \neq 0$. Therefore, $\nu[X_i/X_{i-1}] > 0$, hence $\nu[X_i/X_{i-1}] \geq 1$, since $\nu$ is weakly length-like. It follows that $\nu[X] \geq n$.

  This means that the set of lengths of finite chains of $\PP(X)$ has an upper bound $\nu[X]$ for every $X \in \EE$, thus $\EE$ is a length exact category.
\end{proof}
Actually, the existence of a weakly length-like function is equivalent to the length-ness:
\begin{theorem}\label{lengthlikelength}
  Let $\EE$ be a skeletally small exact category. Then the following are equivalent:
  \begin{enumerate}
    \item $\EE$ has a weakly length-like function.
    \item $\EE$ is a length exact category.
  \end{enumerate}
\end{theorem}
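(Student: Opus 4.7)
The direction (1) $\Rightarrow$ (2) is already settled by Lemma \ref{bddlength}, so the real content is the converse: every length exact category admits a weakly length-like function. I would construct the obvious candidate, namely
\[
\nu[X] := \sup\{\, n \geq 0 \mid \text{there exists a proper inflation series } 0 = X_0 < X_1 < \cdots < X_n = X \,\},
\]
which I can equivalently describe as the supremum of lengths of finite chains in $\PP(X)$. Because $\EE$ is a length exact category, this supremum is finite for every $X$, so $\nu$ lands in $\N$; that it is well-defined on $\Iso\EE$ is immediate since $\PP(X)$ depends only on $[X]$.

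Next I would verify condition (2) of Definition \ref{lengthlikedef}: if $X \not\iso 0$, then $0 < X$ is already a proper chain in $\PP(X)$ of length $1$, whence $\nu[X] \geq 1$; conversely, $\PP(0) = \{0\}$ gives $\nu[0] = 0$. So $\nu[X] = 0$ forces $X \iso 0$.

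The main step is condition (1) of Definition \ref{lengthlikedef}: for a conflation $X \infl Y \defl Z$ I must show $\nu[Y] \geq \nu[X] + \nu[Z]$. The idea is to concatenate longest chains using Proposition \ref{interval}. Pick a maximal-length proper chain $0 = X_0 < X_1 < \cdots < X_m = X$ in $\PP(X)$ with $m = \nu[X]$; by Proposition \ref{interval}(1) this embeds as a chain in $\PP(Y)$ ending at $X$. Pick a maximal-length proper chain $0 = Z_0 < Z_1 < \cdots < Z_n = Z$ in $\PP(Z) = \PP(Y/X)$ with $n = \nu[Z]$; by Proposition \ref{interval}(2) the isomorphism of posets $[X,Y] \iso \PP(Y/X)$ lifts this to a proper chain $X = Y_0 < Y_1 < \cdots < Y_n = Y$ in $\PP(Y)$. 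Concatenating produces a proper chain in $\PP(Y)$ of length $m+n$, so $\nu[Y] \geq \nu[X] + \nu[Z]$.

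The only part that might cause friction is making sure the chain obtained by concatenation is genuinely a chain in $\PP(Y)$ (and properly increasing at the junction $X_m = X = Y_0$), but this is taken care of by the poset isomorphisms in Proposition \ref{interval}, so no subtlety remains. The superadditivity inequality (rather than equality) is precisely what allows $\nu$ to be only weakly length-like, which is exactly what the statement asks for; one should not expect equality here in general, since the sum of maximal chain lengths on the two sides of a conflation need not match the maximal chain length in the middle.
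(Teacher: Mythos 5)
Your proof is correct and takes essentially the same route as the paper: define $\nu[X]$ to be the maximum length of a finite chain in $\PP(X)$ (finite by the length hypothesis), observe $\nu[X]=0$ forces $X\iso 0$ since otherwise $0<X$ is a length-one chain, and establish superadditivity along a conflation $X\infl Y\defl Z$ by concatenating a longest chain in $\PP(X)$ with the lift of a longest chain in $\PP(Z)$ under the poset isomorphism $[X,Y]\iso\PP(Z)$ from Proposition \ref{interval}. The only cosmetic difference is that the paper writes this supremum as a $\max$ from the outset, which is justified for exactly the same reason you give.
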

\begin{proof}
  (1) $\Rightarrow$ (2):
  This is Lemma \ref{bddlength}.

  (2) $\Rightarrow$ (1):
  For an object $X \in \EE$, the set of lengths of chains of $\PP(X)$ has an upper bound, thus we can define $\nu[X]$ by the following:
  \[
  \nu[X] := \max \{\, l \,\, |\,\,  \text{there exists a chain of $\PP(X)$ of length $l$}\, \}.
  \]
  We will show that $\nu\colon \Iso \EE \to \N$ is a weakly length-like function. If $\nu[X] = 0$, then $X \iso 0$ holds since otherwise we have a chain $0 < X$ of length one.

  Suppose that we have a conflation $X \infl Y \defl Z$ in $\EE$ and put $n:=\nu[X]$ and $m:= \nu[Z]$.
  We have chains $0=X_0 < X_1 < \cdots < X_n = X$ of $\PP(X)$ and $0=Z_0 < Z_1 < \cdots < Z_m = Z$ of $\PP(Z)$.
  By Proposition \ref{interval}, we have an isomorphism of posets $[X,Y] \iso \PP(Z)$, so we have a chain $X = \ov{Z_0} < \ov{Z_1} < \cdots < \ov{Z_m} = Y$ of $\PP(Y)$ which corresponds to $Z_i$'s.
  Thus, we obtain a chain $0 = X_0 < \cdots < X_n (= X = \ov{Z_0}) < \ov{Z_1} < \cdots < \ov{Z_m} = Y$ of length $m+n$. Therefore, $\nu[Y] \geq m + n$ follows by the definition of $\nu[Y]$.
\end{proof}
\begin{remark}
  The weakly length-like function constructed in the proof of (2) $\Rightarrow$ (1) is the same as the \emph{length} given in \cite[Section 6]{bhlr}.
\end{remark}
Using this theorem, we obtain numerous examples of length exact categories.
\begin{example}\label{llexample}
  The typical example in the representation theory of artin algebra is given as follows.
  Let $\Lambda$ be an artin algebra and $\EE$ an extension-closed subcategory of $\mod\Lambda$. The assignment $X \mapsto l(X)$, where $l(X)$ denotes the (usual) length of $X$ as a $\Lambda$-module, gives a length-like function on $\EE$.
  It is clear that the same argument holds for any extension-closed subcategory of an abelian length category.
\end{example}
\begin{remark}
  It can be showed that the category of maximal Cohen-Macaulay modules $\CM R$ over a commutative Cohen-Macaulay ring $R$ (or more generally, $\CM \Lambda$ for an $R$-order $\Lambda$) has a length-like function. It is an interesting problem to investigate when these categories satisfy (JHP).
\end{remark}
The following gives the criterion for the Grothendieck monoid to be finitely generated.
\begin{proposition}\label{atomicfg}
  Let $\EE$ be a skeletally small exact category. Suppose that $\EE$ is a length exact category. Then $\MM(\EE)$ is atomic, and the following are equivalent:
  \begin{enumerate}
    \item $\simp \EE$ is a finite set.
    \item $\Atom\MM(\EE)$ is a finite set.
    \item $\MM(\EE)$ is a finitely generated.
  \end{enumerate}
\end{proposition}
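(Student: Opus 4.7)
The plan is to first establish atomicity of $\MM(\EE)$ directly from length-ness, then deduce the equivalence (1) $\iff$ (2) from the already-established bijection $\simp\EE \xrightarrow{\sim} \Atom\MM(\EE)$ of Proposition \ref{simpleatom} (3), and finally argue (2) $\iff$ (3) by general monoid-theoretic considerations using that $\MM(\EE)$ is reduced.

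For atomicity, I would fix any nonzero $X \in \EE$ and use that $\EE$ is a length exact category to invoke the existence of a composition series $0 = X_0 \infl X_1 \infl \cdots \infl X_n = X$ with each quotient $S_i := X_i/X_{i-1}$ simple. Applying the relations defining $\MM(\EE)$ successively to each conflation $X_{i-1} \infl X_i \defl S_i$ yields $[X] = [S_1] + [S_2] + \cdots + [S_n]$ in $\MM(\EE)$, expressing $[X]$ as a finite sum of atoms (each $[S_i]$ is an atom by Proposition \ref{simpleatom} (3)). Since every nonzero element of $\MM(\EE)$ arises as $[X]$ for some $X \not\iso 0$ (using Proposition \ref{mreduced} (1)), this shows $\MM(\EE)$ is atomic.

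The equivalence (1) $\iff$ (2) is immediate from the bijection of Proposition \ref{simpleatom} (3). For (2) $\Rightarrow$ (3), atomicity gives that $\Atom\MM(\EE)$ generates $\MM(\EE)$ as a monoid, so finiteness of $\Atom\MM(\EE)$ yields finite generation. For the converse (3) $\Rightarrow$ (2), suppose $\MM(\EE)$ is generated by $m_1, \ldots, m_k$; I may assume each $m_i \neq 0$. Let $a \in \Atom\MM(\EE)$ and write $a = \sum_i c_i m_i$ with $c_i \in \N$. If two coefficients were positive, or if some $c_i \geq 2$, I could split off a nonzero summand from $a$ using that $\MM(\EE)$ is reduced (Proposition \ref{mreduced} (2)), contradicting the atom property. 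Hence exactly one $c_i$ equals $1$ and all others vanish, giving $a = m_i$. Thus $\Atom\MM(\EE) \subseteq \{m_1, \ldots, m_k\}$ is finite.

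There is no serious obstacle here; the mild point to be careful about is the reducedness input in (3) $\Rightarrow$ (2), which is what forbids the pathological decomposition $a = m_i + ((c_i - 1)m_i + \sum_{j \neq i} c_j m_j)$ when $c_i \geq 2$ or when another coefficient is positive. Everything else is a direct appeal to the composition-series hypothesis and to Propositions \ref{mreduced} and \ref{simpleatom} already established.
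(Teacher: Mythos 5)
Your proof is correct and follows essentially the same route as the paper: atomicity via composition series, (1)~$\Leftrightarrow$~(2) via the bijection in Proposition~\ref{simpleatom}~(3), and (2)~$\Leftrightarrow$~(3) from general monoid considerations. The only stylistic difference is that the paper simply cites Proposition~\ref{factprop}~(2) for the last equivalence, whereas you unfold that lemma's proof inline, correctly identifying the role reducedness plays in ruling out nontrivial splittings of an atom.
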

\begin{proof}
  Since $\EE$ is a length exact category, each object $X$ of $\EE$ admits a composition series $0 = X_0 \infl X_1 \infl \cdots \infl X_n$, and $X_i/X_{i-1}$ is simple for each $i$.
  It follows that $[X] = [X_1/X_0] + [X_2/X_1] + \cdots + [X_n/X_{n-1}]$ holds in $\MM(\EE)$, and Proposition \ref{simpleatom} implies that each $[X_i/X_{i-1}]$ is an atom in $\MM(\EE)$. Thus, $\MM(\EE)$ is atomic.

  Proposition \ref{simpleatom} shows that (1) is equivalent to (2). On the other hand, Proposition \ref{factprop} (2) shows that (2) is equivalent to (3).
\end{proof}

\subsection{Freeness of monoids and (JHP)}
Now we can prove the characterization of (JHP) in terms of its Grothendieck monoid.
\begin{theorem}\label{JHchar}
  Let $\EE$ be a skeletally small exact category. Then the following are equivalent:
  \begin{enumerate}
    \item $\EE$ is a length exact category and satisfies (JHP).
    \item $\MM(\EE)$ is a free monoid, or equivalently, a factorial monoid.
  \end{enumerate}
  In this case, $\MM(\EE)$ is a free monoid with the basis $\Atom \MM(\EE) = \{ [S] \,|\, S \in \simp \EE \}$, and $\KK_0(\EE)$ is a free abelian group with the basis $\{ [S] \,|\, S \in \simp \EE \}$.
\end{theorem}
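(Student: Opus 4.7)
The plan is to prove the equivalence (1) $\Leftrightarrow$ (2) by exhibiting the expected isomorphism $\MM(\EE) \iso \bigoplus_{S \in \simp\EE} \N$ in one direction, and by extracting a length-like function from freeness in the other. Throughout I freely use that, by the paragraph preceding Proposition \ref{explicit}, every inflation series of $X$ gives $[X] = \sum_i [X_i/X_{i-1}]$ in $\MM(\EE)$.

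For (1) $\Rightarrow$ (2), I would assume $\EE$ is length and satisfies (JHP) and build an isomorphism $\bar{f} \colon \MM(\EE) \xrightarrow{\sim} \bigoplus_{S \in \simp\EE} \N$. Define $f \colon \Iso\EE \to \bigoplus_{S} \N$ by sending $[X]$ to the multiset of isomorphism classes of composition factors of $X$; this is well-defined precisely by (JHP). The only nontrivial check is that $f$ respects conflations: given a conflation $X \infl Y \defl Z$, use the isomorphism $[X,Y] \iso \PP(Z)$ from Proposition \ref{interval} to lift a composition series of $Z$ to a chain from $X$ to $Y$, and concatenate with a composition series of $X$ to get a composition series of $Y$ whose factors are the disjoint union of those of $X$ and of $Z$; (JHP) then identifies these with the composition factors of $Y$. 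By the universal property, $f$ descends to $\bar{f} \colon \MM(\EE) \to \bigoplus_{S} \N$. Surjectivity is immediate since $\bar f[S]$ hits each generator. For injectivity, note that $\bar f[X]$ records the coefficients of $[X]$ in the decomposition into simples visible in any composition series, so two classes with the same image coincide already in $\MM(\EE)$.

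For (2) $\Rightarrow$ (1), assume $\MM(\EE)$ is free on some basis $B$, and define $\nu \colon \Iso\EE \to \N$ by letting $\nu[X]$ be the total number of basis elements (with multiplicity) appearing in the unique decomposition of $[X]$ in $\MM(\EE)$. Condition (1$'$) of Definition \ref{lengthlikedef} is automatic from $[Y] = [X] + [Z]$ in $\MM(\EE)$, and condition (2) is Proposition \ref{mreduced}(1). Hence $\nu$ is length-like, so by Theorem \ref{lengthlikelength} the category $\EE$ is length. Proposition \ref{atomicfg} then tells us $\MM(\EE)$ is atomic; since in a free monoid the atoms coincide with (and freely generate from) the basis, we get $B = \Atom \MM(\EE) = \{[S] : S \in \simp \EE\}$ by Proposition \ref{simpleatom}. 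Any composition series of $X$ yields a decomposition of $[X]$ into these atoms, and uniqueness of factorization in a free monoid together with the bijection of Proposition \ref{simpleatom}(3) forces the multiset of simple composition factors, up to isomorphism, to be an invariant of $X$; this is exactly (JHP). The final assertions about $\MM(\EE)$ and $\KK_0(\EE) = \gp\MM(\EE)$ then follow from the fact that the group completion of the free monoid on $B$ is the free abelian group on $B$.

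The main technical point is the conflation-respecting check in the (1) $\Rightarrow$ (2) step, where the lifting of a composition series of $Z$ to one of $Y$ via Proposition \ref{interval} must be done carefully; everything else is a formal consequence of the universal properties of $\MM(\EE)$ and $\KK_0(\EE)$ together with Propositions \ref{mreduced}, \ref{simpleatom}, and \ref{atomicfg}.
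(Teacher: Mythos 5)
Your proof is correct and follows essentially the same route as the paper: for (1)~$\Rightarrow$~(2) the paper also builds the map $[X] \mapsto \sum_i [X_i/X_{i-1}]$ into the free monoid on $\Atom\MM(\EE)$, checks conflation-respecting via the poset isomorphism of Proposition~\ref{interval} exactly as you describe, and identifies it as an isomorphism (the paper exhibits a two-sided inverse $\varphi$ whereas you argue bijectivity directly, but this is a presentational difference); for (2)~$\Rightarrow$~(1) the paper likewise extracts a length-like function from freeness (via Lemma~\ref{freell}) and then invokes unique factorization plus Proposition~\ref{simpleatom}(2) to get (JHP).
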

\begin{proof}
  Recall that we have an equality $\Atom \MM(\EE) = \{ [S] \,|\, S \in \simp \EE \}$ by Proposition \ref{simpleatom}. Also note that the conditions in (2) are equivalent by Proposition \ref{factprop} (5) and Proposition \ref{mreduced}.

  (1) $\Rightarrow$ (2):
  Let us denote by $F$ the free monoid with basis $\Atom\MM(\EE)$. Then we have a natural monoid morphism $\varphi \colon F \to \MM(\EE)$ defined by $\varphi [S] = [S]$ for $[S]$ in $\Atom \MM(\EE)$. We shall construct the inverse of this map.

  For $[X]$ in $\Iso\EE$, take a composition series $0 = X_0 < X_1 < \cdots < X_n = X$ of $X$. Define the map $\Iso\EE \to F$ by $[X] \mapsto [X_1/X_0] + [X_2/X_1] + \cdots + [X_i/X_{i-1}] \in F$. This map does not depend on the choice of composition series by (JHP). We show that this map respects conflations in $\EE$.

  Suppose that we have a conflation $X \infl Y \defl Z$ in $\EE$ and that we have composition series $0=X_0 < X_1 < \cdots < X_n = X$ in $\PP(X)$ and $0=Z_0 < Z_1 < \cdots < Z_m = Z$ in $\PP(Z)$.
  By Proposition \ref{interval} (2), we have an isomorphism of posets $[X,Y] \iso \PP(Z)$, so we have a chain $X = \ov{Z_0} < \ov{Z_1} < \cdots < \ov{Z_m} = Y$ in $\PP(Y)$ which corresponds to the chosen composition series of $Z$.
  For each $i$, we have $\ov{Z_i}/\ov{Z_{i-1}} \iso Z_i/Z_{i-1}$ by Proposition \ref{interval} (2), so $\ov{Z_i}/\ov{Z_{i-1}}$ is simple. Thus, $0 = X_0 < \cdots < X_n (= X = \ov{Z_0}) < \ov{Z_1} < \cdots < \ov{Z_m} = Y$ is a composition series of $Y$.
  Now it is obvious that the map $\Iso\EE \to F$ constructed above respects conflations. Thus, we obtain a monoid homomorphism $\psi \colon \MM(\EE) \to F$.

  We claim that $\varphi$ and $\psi$ are mutually inverse to each other.

  ($\psi\varphi = \id_F$):
  Since $F$ is generated by $[S]$ with $S\in \simp \EE$, it suffices to show that $\psi\varphi [S] = [S]$ holds. This is obvious since $\varphi [S] = [S]$, and $S$ has the composition series $0 < S$.

  ($\varphi \psi = \id_{\MM(\EE)}$):
  Let $X$ be an object in $\EE$ with a composition series $0 = X_0 < X_1 < \cdots < X_n = X$. Then $\varphi \psi [X] = [X_1/X_0] + [X_2/X_1] + \cdots + [X_n/X_{n-1}]$. On the other hand, inductively we have $[X] = [X_1/X_0] + [X_2/X_1] + \cdots + [X_n/X_{n-1}]$ in $\MM(\EE)$, thus $\varphi\psi [X] = [X]$ holds.

  (2) $\Rightarrow$ (1):
  First, we show that $\EE$ is a length exact category. By Lemma \ref{freell}, we have that $\MM(\EE)$ has a length-like function. Thus, $\EE$ has a length-like function, therefore it is a length exact category by Proposition \ref{lengthlikelength}.

  Next, we will show that $\EE$ satisfies (JHP). Let $X$ be an object of $\EE$ and let $0 = X_0 < X_1 < \cdots < X_m =X$ and $0 = Y_0 < Y_1 < \cdots <Y_n = X$ be two composition series of $X$. Then we have
  \[
  [X] = [X_1/X_0] + [X_2/X_1] + \cdots + [X_m/X_{m-1}]
  = [Y_1/Y_0] + [Y_2/Y_1] + \cdots + [Y_n/Y_{n-1}]
  \]
  in $\MM(\EE)$, where $[X_i/X_{i-1}]$ and $[Y_j/Y_{j-1}]$ belong to $\Atom\MM(\EE)$ for each $i$.
  Then since $\MM(\EE)$ is free on $\Atom\MM(\EE)$ by Proposition \ref{factprop} (3), it follows that $m = n$ holds, and that there exists a permutation $\sigma$ of the set $\{ 1, 2, \cdots, n \}$ such that $[X_i/X_{i-1}] = [Y_{\sigma(i)}/Y_{\sigma(i)-1}]$ holds in $\MM(\EE)$ for each $i$.
  This implies that $X_i/X_{i-1} \iso Y_{\sigma(i)}/Y_{\sigma(i)-1}$ by Proposition \ref{simpleatom} (2). Thus, these two composition series are isomorphic.

  The remaining assertions follow from Proposition \ref{factprop} (3) and Example \ref{freecompl}.
\end{proof}

The most classical example in which (JHP) holds is, as stated in the introduction, an \emph{abelian length category}. We say that an abelian category is called \emph{length} if every object has a composition series  \cite{gabriel}, which implies that it is length as an exact category (see Remark \ref{lengthremark}).
\begin{corollary}\label{abeliancase}
  Let $\AA$ be an abelian length category. Then $\MM(\AA)$ is a free monoid with the basis $\{ [S] \, | \, S \in \simp \AA \}$. In particular, $\KK_0(\AA)$ is a free abelian group with the same basis.
\end{corollary}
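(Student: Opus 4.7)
The plan is to reduce the statement to Theorem~\ref{JHchar} applied to $\EE = \AA$. Two hypotheses must be verified: that $\AA$ is length as an exact category, and that it satisfies (JHP). Length-ness of $\AA$ as an exact category is already flagged in Remark~\ref{lengthremark}: every subobject poset $\PP(X)$ in an abelian category is a modular lattice, so once a single composition series of $X$ exists, Theorem~\ref{latticejh} bounds the lengths of all chains of $\PP(X)$. Equivalently, one can observe that $X \mapsto \ell(X)$, where $\ell(X)$ is the common length of any composition series of $X$, is a length-like function in the sense of Definition~\ref{lengthlikedef}, and then invoke Theorem~\ref{lengthlikelength}. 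For (JHP), I would appeal to the classical Jordan-H\"older theorem for abelian categories, referenced in the introduction as \cite[p.92]{st}.

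With both conditions in hand, Theorem~\ref{JHchar} delivers the first assertion: $\MM(\AA)$ is a free monoid, and the final sentence of that theorem identifies the basis as $\Atom \MM(\AA) = \{[S] \mid S \in \simp\AA\}$, using Proposition~\ref{simpleatom}. The statement about $\KK_0(\AA)$ is then a formal consequence: by the discussion in Section~\ref{pospartsubsec}, $\KK_0(\AA)$ is the group completion $\gp \MM(\AA)$, and the group completion of the free commutative monoid on a set $X$ is the free abelian group on $X$, as recorded in Example~\ref{freecompl}. This is precisely the content of the final clause of Theorem~\ref{JHchar} as well.

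There is no real obstacle here; the entire proof is a one-line invocation of Theorem~\ref{JHchar} together with classical facts. The only point worth being careful about is that ``length abelian category'' in the corollary means every object admits a composition series, which a priori is weaker than being length as an exact category; this is exactly the gap closed by the modular-lattice argument of Remark~\ref{lengthremark}.
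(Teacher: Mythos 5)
Your proposal is correct and follows the same route as the paper: invoke Theorem~\ref{JHchar} together with the classical Jordan--H\"older theorem for abelian categories. You are slightly more careful than the paper's one-line proof in explicitly closing the gap that a ``length abelian category'' (every object has a composition series) is in fact a length \emph{exact} category, which the paper delegates to Remark~\ref{lengthremark}; this is a correct and useful clarification but not a different argument.
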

\begin{proof}
  This follows from Theorem \ref{JHchar} and the Jordan-H\"older theorem for abelian length categories (see e.g. \cite[p.92]{st}).
\end{proof}

\subsection{Grothendieck groups and (JHP)}
We have shown that (JHP) can be checked by the freeness of the Grothendieck monoid, but the computation of it is rather hard.
In this subsection, we give a criterion for (JHP) in terms of the Grothendieck group, which is easier to compute than the Grothendieck monoid.
All content of this subsection is just a formal consequence of the general theory of monoids, that is, the general criterion for a given monoid to be free (Theorem \ref{freechar0} and Corollary \ref{freechar}).

Before we state this, we will make an inequality concerning the Grothendieck groups.
\begin{proposition}
  Let $\EE$ be a skeletally small length exact category. Suppose that $\KK_0(\EE)$ is a free abelian group. Then the following inequality holds:
  \[
  \rank \KK_0(\EE) \leq \#\Atom \MM(\EE) = \# \simp\EE.
  \]
\end{proposition}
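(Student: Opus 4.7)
The plan is to reduce everything to the observation that $\MM(\EE)$ is generated by classes of simples, and then pass to the group completion.

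First, I would invoke Proposition \ref{atomicfg}: since $\EE$ is a length exact category, $\MM(\EE)$ is atomic, and by Proposition \ref{simpleatom}(3) its atoms are precisely $\{[S] \mid S \in \simp\EE\}$. In particular, every element of $\MM(\EE)$ is a finite $\N$-linear combination of such classes, so the set $\{[S] \mid S \in \simp\EE\}$ generates $\MM(\EE)$ as a monoid. This already gives the equality $\#\Atom\MM(\EE) = \#\simp\EE$.

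Next, I would pass to $\KK_0(\EE) = \gp\MM(\EE)$. Because the natural map $\iota\colon\MM(\EE)\to\KK_0(\EE)$ has image $\KK_0^+(\EE)$ that generates $\KK_0(\EE)$ as an abelian group, and because $\MM(\EE)$ itself is generated (as a monoid) by $\{[S] \mid S \in \simp\EE\}$, the image set $\{[S] \mid S \in \simp\EE\} \subseteq \KK_0(\EE)$ generates $\KK_0(\EE)$ as an abelian group.

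Finally, I would use the standard fact that if a free abelian group $G$ admits a generating set $T$, then $\rank G \leq \#T$ (the composition $\Z^{(T)} \twoheadrightarrow G$ is surjective, so the rank of $G$ cannot exceed $\#T$; if $T$ is infinite there is nothing to show). Applying this with $G = \KK_0(\EE)$ and $T = \{[S] \mid S \in \simp\EE\}$ yields
\[
\rank \KK_0(\EE) \leq \#\simp\EE = \#\Atom\MM(\EE),
\]
as required. There is no real obstacle here: the whole argument is a formal consequence of atomicity of $\MM(\EE)$ (from Proposition \ref{atomicfg}) together with the universal property that identifies $\KK_0(\EE)$ with $\gp\MM(\EE)$.
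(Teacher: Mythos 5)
Your argument is correct and is essentially the same as the paper's: the paper simply cites the monoid-level statement (Proposition \ref{ineq}), whose proof is precisely the observation you unfold — that $\gp\MM(\EE)$ is generated by the images of the atoms, so its rank is bounded by $\#\Atom\MM(\EE)$. The only difference is that you inline the argument rather than quote the lemma.
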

\begin{proof}
  This follows from the corresponding statement on monoids, Proposition \ref{ineq}.
\end{proof}
Now let us state our main characterizations of (JHP) in terms of the Grothendieck group.
\begin{theorem}\label{JHchar1.5}
  Let $\EE$ be a skeletally small exact category. Then the following are equivalent:
  \begin{enumerate}
    \item $\EE$ is a length exact category and satisfies (JHP).
    \item $\MM(\EE)$ is free.
    \item $\EE$ is a length exact category, and $\KK_0(\EE)$ is a free abelian group with the basis $\{ [S] \,|\, S \in \simp \EE \}$.
    \item $\EE$ is a length exact category, and all elements $[S] \in \KK_0(\EE)$ with $S \in \simp\EE$ are linearly independent over $\Z$ in $\KK_0(\EE)$.
  \end{enumerate}
\end{theorem}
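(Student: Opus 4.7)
The plan is to reduce the four-fold equivalence to Theorem \ref{JHchar} together with a formal statement about passing between a reduced atomic monoid and its group completion. Since (1) $\Leftrightarrow$ (2) is precisely Theorem \ref{JHchar}, the remaining work is the cycle (2) $\Rightarrow$ (3) $\Rightarrow$ (4) $\Rightarrow$ (2), all of which should be proved inside $\MM(\EE)$ using the identification $\Atom \MM(\EE) = \{[S] \mid S \in \simp \EE\}$ from Proposition \ref{simpleatom} and the fact that $\KK_0(\EE) = \gp \MM(\EE)$.

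For (2) $\Rightarrow$ (3): if $\MM(\EE)$ is free, then Theorem \ref{JHchar} gives that $\EE$ is length exact and that $\MM(\EE)$ is free on $\{[S] \mid S \in \simp \EE\}$. Applying group completion to a free monoid on a set $B$ yields the free abelian group on $B$, so $\KK_0(\EE)$ is free abelian on $\{[S] \mid S \in \simp \EE\}$. The implication (3) $\Rightarrow$ (4) is immediate, since the elements of a basis are linearly independent over $\Z$.

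The one substantive step is (4) $\Rightarrow$ (2). Here I would argue as follows. Because $\EE$ is a length exact category, Proposition \ref{atomicfg} tells us that $\MM(\EE)$ is atomic, and together with Proposition \ref{simpleatom} its set of atoms is exactly $\{[S] \mid S \in \simp\EE\}$. Consequently, every element of $\MM(\EE)$ can be written as $\sum_{S \in \simp\EE} a_S [S]$ for some $a_S \in \N$ (almost all zero). To upgrade existence of such expressions to uniqueness, suppose $\sum_S a_S [S] = \sum_S b_S [S]$ in $\MM(\EE)$. Applying the canonical map $\iota \colon \MM(\EE) \to \KK_0(\EE)$ yields the same equality in $\KK_0(\EE)$, and the hypothesis of $\Z$-linear independence of the $[S]$'s forces $a_S = b_S$ for every $S$. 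Thus $\MM(\EE)$ is freely generated by its atoms, proving (2). Combined with Proposition \ref{mreduced}, which guarantees that $\MM(\EE)$ is reduced, this could alternatively be packaged as a direct application of the general monoid-theoretic criterion from the appendix (Theorem \ref{freechar0} and Corollary \ref{freechar}).

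I do not anticipate any genuine obstacle: the theorem is essentially a translation, via $\MM(\EE) \rightsquigarrow \gp \MM(\EE)$, of the abstract fact that a reduced atomic monoid is free precisely when its atoms remain linearly independent after group completion. The only care needed is to invoke length-exactness in (3) and (4) explicitly, so as to guarantee that $\MM(\EE)$ is atomic before attempting to promote linear independence in $\KK_0(\EE)$ to unique factorization in $\MM(\EE)$.
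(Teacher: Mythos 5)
Your proposal is correct and follows essentially the same route as the paper: the paper's proof is a one-line reduction to Theorem \ref{JHchar} together with the abstract freeness criterion Theorem \ref{freechar0}, using the identifications $\Atom\MM(\EE)=\{[S]\mid S\in\simp\EE\}$, the reducedness of $\MM(\EE)$, and atomicity from length-exactness. Your ``(4) $\Rightarrow$ (2)'' argument is precisely the ``(4) $\Rightarrow$ (1)'' step of Theorem \ref{freechar0} unfolded inline for $\MM(\EE)$, so the two proofs coincide in substance.
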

\begin{proof}
  This immediately follows from Theorems \ref{JHchar} and \ref{freechar0} once we observe the following:
  \begin{itemize}
    \item $\Atom \MM(\EE) = \{ [S] \,|\, S \in \simp \EE \}$ holds (Proposition \ref{simpleatom}).
    \item $\EE$ has a length-like function if and only if $\MM(\EE)$ has a length-like function (Definition \ref{lengthlikedef}).
    \item $\MM(\EE)$ is reduced (Proposition \ref{mreduced}).
    \item If $\EE$ is a length exact category, then $\MM(\EE)$ is atomic (Proposition \ref{atomicfg}).
  \end{itemize}
\end{proof}
If the Grothendieck group is known to be free of finite rank, then we have a more convenient characterization: we only have to count the number of simples and compare it to the rank.
\begin{theorem}\label{JHchar2}
  Let $\EE$ be a skeletally small exact category. Then the following are equivalent:
  \begin{enumerate}
    \item $\EE$ is a length exact category satisfying (JHP) and $\# \simp\EE$ is finite.
    \item $\MM(\EE)$ is free and $\#\simp\EE$ is finite.
    \item $\MM(\EE)$ is a finitely generated free monoid.
    \item $\EE$ is a length exact category, $\#\simp\EE$ is finite and $\KK_0(\EE)$ is a free abelian group with the basis $\{ [S] \,|\, S \in \simp \EE \}$.
    \item The following conditions hold:
    \begin{enumerate}
      \item $\EE$ is a length exact category.
      \item $\KK_0(\EE)$ is a free abelian group of finite rank.
      \item $\# \simp\EE = \rank \KK_0(\EE)$ holds.
    \end{enumerate}
  \end{enumerate}
\end{theorem}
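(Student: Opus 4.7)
The plan is to reduce everything to Theorem~\ref{JHchar1.5} together with the identifications supplied by Propositions~\ref{simpleatom} and~\ref{atomicfg}. The equivalences (1)~$\Leftrightarrow$~(2)~$\Leftrightarrow$~(4) will be immediate: each follows from the corresponding characterization in Theorem~\ref{JHchar1.5} by simply adjoining the finiteness condition $\#\simp\EE<\infty$ to both sides. For (2)~$\Leftrightarrow$~(3), I would use that the basis of a free commutative monoid coincides with its set of atoms, so $\MM(\EE)$ is a finitely generated free monoid if and only if it is free with finitely many atoms; by Proposition~\ref{simpleatom} the latter is the same as $\#\simp\EE<\infty$.

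The implication (4)~$\Rightarrow$~(5) is trivial, since possessing a basis of cardinality $n$ exhibits $\rank\KK_0(\EE)=n$. The genuine content of the theorem lies in (5)~$\Rightarrow$~(4). Set $n=\rank\KK_0(\EE)=\#\simp\EE$. Since $\EE$ is a length exact category, Proposition~\ref{atomicfg} tells us that $\MM(\EE)$ is atomic, so $\MM(\EE)$ is generated as a monoid by its atom set, which by Proposition~\ref{simpleatom} is exactly $\{[S]\mid S\in\simp\EE\}$, a set of cardinality $n$. Passing to the group completion, the same $n$ elements generate $\KK_0(\EE)=\gp\MM(\EE)$ as an abelian group. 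The argument then concludes by invoking the elementary fact that any $n$-element generating set of a free abelian group of rank $n$ must be a $\Z$-basis (e.g., the induced surjection $\Z^n\twoheadrightarrow\KK_0(\EE)\iso\Z^n$ is forced to be an isomorphism by the Hopfian property of finitely generated free abelian groups). This yields (4).

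I expect no serious obstacle. All non-trivial ingredients --- the bijection $\simp\EE\iso\Atom\MM(\EE)$, atomicity of $\MM(\EE)$ under the length hypothesis, and the monoid/group characterizations of (JHP) --- have already been established in the preceding results. The hardest step is just the rank-counting argument in (5)~$\Rightarrow$~(4), which is a standard fact about $\Z^n$; everything else is bookkeeping around Theorem~\ref{JHchar1.5}.
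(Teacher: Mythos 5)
Your proof is correct and takes essentially the same route as the paper. The paper simply cites Theorems~\ref{JHchar} and~\ref{freechar0} (with Corollary~\ref{freechar} doing the finite-rank bookkeeping, whose proof of (4)~$\Rightarrow$~(1) is exactly the Hopfian / rank-counting argument you spell out), together with the standing translations via Propositions~\ref{simpleatom}, \ref{mreduced}, and~\ref{atomicfg}; you re-derive the crucial (5)~$\Rightarrow$~(4) step directly in the same way. One small point worth making explicit: the Hopfian argument is what guarantees that the $n$ classes $[S]$ are pairwise distinct in $\KK_0(\EE)$ (a priori they need not be; the paper's example in Section~\ref{kroex} shows they can collide), so stating that the surjection $\Z^n \twoheadrightarrow \KK_0(\EE)$ is injective is doing more than just producing a generating set of the right size — it is what lets you call $\{[S]\}$ a \emph{basis}.
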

\begin{proof}
  This immediately follows from Theorems \ref{JHchar} and \ref{freechar0}, as in the proof of Theorem \ref{JHchar1.5}.
\end{proof}
This characterization has lots of applications in Part 2, since Grothendieck groups of various exact categories arising in the representation theory of algebras are turned out to be free of finite rank (see Proposition \ref{cotiltprop}).

\subsection{Half-factoriality of monoids and the unique length property}
In this subsection, we will give a characterization of the \emph{unique length property} in terms of Grothendieck monoids. As we have seen in Theorem \ref{JHchar}, (JHP) corresponds to freeness, or equivalently, factoriality of monoids. We will see that the unique length property corresponds to \emph{half-factoriality}. This is natural since both two properties are about the uniqueness of length of factorizations.
We refer the reader to Definition \ref{monoiddef} for the notion of half-factorial monoids.

\begin{theorem}
  Let $\EE$ be a skeletally small exact category. Then the following are equivalent:
  \begin{enumerate}
    \item $\EE$ is a length exact category and satisfies the unique length property.
    \item $\EE$ has a length-like function $l$ satisfying $l[S]=1$ for every simple object $S$ in $\EE$.
    \item $\MM(\EE)$ is half-factorial.
  \end{enumerate}
\end{theorem}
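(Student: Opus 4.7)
My plan is to establish the equivalence via the cycle (1) $\Rightarrow$ (2) $\Rightarrow$ (3) $\Rightarrow$ (1), exploiting the correspondence between composition series in $\EE$ and atomic factorizations in $\MM(\EE)$ guaranteed by Proposition \ref{simpleatom}. The key observation throughout is that a proper inflation series $0 = X_0 < X_1 < \cdots < X_n = X$ is a composition series precisely when every $X_i/X_{i-1}$ is simple, which by Proposition \ref{simpleatom} corresponds to an atomic factorization $[X] = [X_1/X_0] + \cdots + [X_n/X_{n-1}]$ in $\MM(\EE)$, and both notions of length coincide.

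For (1) $\Rightarrow$ (2), I would define $l[X]$ to be the common length of any composition series of $X$. Well-definedness uses the unique length property. To verify $l$ is length-like, I take a conflation $X \infl Y \defl Z$ and splice composition series of $X$ and $Z$ to build one for $Y$ (using the interval isomorphism $[X,Y] \iso \PP(Z)$ from Proposition \ref{interval}), yielding $l[Y] = l[X] + l[Z]$; the axiom $l[X]=0 \Rightarrow X\iso 0$ is immediate. Finally, for a simple $S$ the composition series $0 < S$ has length $1$, so $l[S]=1$.

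For (2) $\Rightarrow$ (3), note that by Remark \ref{llremark} the length-like function $l$ descends to a length-like function on $\MM(\EE)$, so $\EE$ is length by Proposition \ref{lengthlikelength} and $\MM(\EE)$ is atomic by Proposition \ref{atomicfg}. Given any atomic factorization $[X] = [S_1] + \cdots + [S_n]$ in $\MM(\EE)$, the atoms correspond to simples (Proposition \ref{simpleatom}), so applying $l$ and using $l[S_i]=1$ forces $n = l[X]$. Hence all atomic factorizations of $[X]$ have length $l[X]$, i.e.\ $\MM(\EE)$ is half-factorial.

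For (3) $\Rightarrow$ (1), which I expect to be the subtlest step, I first need to know that half-factoriality entails atomicity (this should be built into the definition in Appendix A, or follows from it) so that every $[X] \in \MM(\EE)$ admits an atomic factorization. Then I define $\nu[X]$ to be the common length of any atomic factorization of $[X]$; half-factoriality makes this well-defined, and it is additive on $\MM(\EE)$ by concatenating factorizations, so pulling back along $\Iso\EE \to \MM(\EE)$ yields a length-like function on $\EE$ (the vanishing axiom uses Proposition \ref{mreduced}). By Proposition \ref{lengthlikelength}, $\EE$ is a length exact category. For the unique length property, given any composition series $0 = X_0 < \cdots < X_n = X$, telescoping produces the atomic factorization $[X] = \sum_{i=1}^n [X_i/X_{i-1}]$, so $n = \nu[X]$ depends only on $X$. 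The main care point here is justifying that atomic factorizations exist for every element; once that is secured, everything else is bookkeeping via the atom-simple bijection.
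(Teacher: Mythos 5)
Your proof is correct and uses essentially the same ingredients as the paper's. The main organizational difference is that you prove the three-step cycle (1) $\Rightarrow$ (2) $\Rightarrow$ (3) $\Rightarrow$ (1), whereas the paper proves (1) $\Rightarrow$ (2), then the biconditional (2) $\Leftrightarrow$ (3), then (2) $\Rightarrow$ (1). In doing so, the paper simply invokes the purely monoid-theoretic Lemma \ref{freell} to dispose of (2) $\Leftrightarrow$ (3), while you inline that lemma's content: your (2) $\Rightarrow$ (3) and (3) $\Rightarrow$ (1) together re-derive Lemma \ref{freell} in the special case of $\MM(\EE)$. Neither route is materially harder; the paper's factoring keeps the appendix self-contained and the theorem's proof shorter, while your version is more self-contained at the cost of redoing some bookkeeping. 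One small thing worth making explicit, which you flag but should nail down: that half-factoriality entails atomicity is exactly built into Definition \ref{monoiddef}(4), so no separate argument is needed there.
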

\begin{proof}
  The proof is similar to that of Theorem \ref{JHchar}, and actually is easier.

  (1) $\Rightarrow$ (2):
  We will construct a length-like function $l\colon \Iso\EE \to \N$.
  Let $X$ be an object of $\EE$. Since $\EE$ satisfies the unique length property, $X$ has at least one composition series, and every composition series of $X$ has the same length $n$. We define $l[X]:=n$.
  Then $l$ is a length-like function by the similar argument to the proof of Theorem \ref{lengthlikelength}. Moreover, clearly $l[S] = 1$ holds for every simple object $S$ in $\EE$.

  (2) $\Leftrightarrow$ (3):
  By Remark \ref{llremark}, we can identify a length-like function on $\EE$ with that of $\MM(\EE)$.
  Moreover, we have an equality $\Atom \MM(\EE) = \{ [S] \,|\, S \in \simp \EE \}$ by Proposition \ref{simpleatom}. Thus, this equivalence follows from Lemma \ref{freell}.

  (2) $\Rightarrow$ (1):
  By Proposition \ref{lengthlikelength}, our category $\EE$ is a length exact category.
  Let $X$ be an object of $\EE$. Then it is easily checked that the length of any composition series of $X$ is equal to $l[X]$. Thus, $\EE$ satisfies the unique length property.
\end{proof}

\part{Applications to artin algebras}
In this part, we investigate several properties on exact categories arising in the representation theory of artin algebras, by using the results in the previous part.
\section{(JHP) for extension-closed subcategories of module categories}
In this section, we investigate (JHP) for extension-closed subcategories of module categories of artin algebras. \emph{In the rest of this paper, we fix a commutative artinian ring $R$}.
An $R$-algebra $\Lambda$ is called an \emph{artin $R$-algebra} if $\Lambda$ is finitely generated as an $R$-module. We often omit the base ring $R$ and call $\Lambda$ an \emph{artin algebra}.
For an artin $R$-algebra $\Lambda$, we denote by $D \colon \mod\Lambda \to \mod\Lambda^{\op}$ the standard Matlis duality.

\subsection{Basic properties on Grothendieck monoids}
First, we collect some basic properties on the Grothendieck monoid, which immediately follow from the general observations we have made. In particular, every extension-closed subcategory of $\mod\Lambda$ for an artin algebra $\Lambda$ is a length exact category.
\begin{proposition}\label{genprop}
  Let $\Lambda$ be an artin algebra and $\EE$ an extension-closed subcategory of $\mod\Lambda$. Then the following hold.
  \begin{enumerate}
    \item The assignment $X \mapsto l(X)$, where $l(X)$ denotes the usual length of $X$ as a $\Lambda$-module, induces a length-like function on $\EE$.
    \item $\EE$ is a length exact category.
    \item $\MM(\EE)$ is atomic.
    \item $\MM(\EE)$ is finitely generated if and only if $\simp\EE$ is a finite set.
  \end{enumerate}
\end{proposition}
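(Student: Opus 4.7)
The plan is to observe that each of the four statements is an immediate specialization of a general result established earlier in the paper, so the main work is to verify condition (1) carefully and then to invoke the appropriate machinery.

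First I would prove (1). Since $\EE$ is extension-closed in $\mod\Lambda$, the conflations in $\EE$ are precisely short exact sequences in $\mod\Lambda$ whose terms all lie in $\EE$. The usual $\Lambda$-module length is additive on short exact sequences in $\mod\Lambda$, so for any conflation $X \infl Y \defl Z$ in $\EE$ we get $l(Y) = l(X) + l(Z)$; this verifies condition (1$'$) of Definition \ref{lengthlikedef}. The remaining condition $l(X) = 0 \Rightarrow X \iso 0$ holds because artinian modules of length zero are zero. Thus $X \mapsto l(X)$ is length-like on $\EE$.

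For (2), a length-like function is in particular weakly length-like (see Remark \ref{llremark}), so Theorem \ref{lengthlikelength} (direction (1) $\Rightarrow$ (2)) yields that $\EE$ is a length exact category. For (3), Proposition \ref{atomicfg} states that any skeletally small length exact category has atomic Grothendieck monoid, so this is immediate from (2). Finally for (4), the same Proposition \ref{atomicfg} gives the equivalence between $\simp\EE$ being finite, $\Atom\MM(\EE)$ being finite, and $\MM(\EE)$ being finitely generated, so (4) follows directly.

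There is no real obstacle here: all the technical content has been absorbed into the earlier theorems, and the proof reduces to citing them in sequence after a one-line verification that $\Lambda$-module length is length-like on $\EE$.
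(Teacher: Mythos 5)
Your proof is correct and follows exactly the paper's own route: the paper's proof simply cites Example \ref{llexample}(1) (which is your direct verification that $\Lambda$-module length is length-like on any extension-closed subcategory), Theorem \ref{lengthlikelength}, and Proposition \ref{atomicfg}. The only cosmetic difference is that you unpack Example \ref{llexample} rather than citing it by number.
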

\begin{proof}
  These follow from Example \ref{llexample} (1), Theorem \ref{lengthlikelength} and Proposition \ref{atomicfg}.
\end{proof}
The simplest example is $\EE = \mod\Lambda$, and we can compute $\MM(\mod\Lambda)$ and $\KK_0(\mod\Lambda)$ as follows.
\begin{proposition}\label{modprop}
  Let $\Lambda$ be an artin algebra, and let $S_1, \cdots, S_n$ be a complete set of simple right $\Lambda$-modules up to isomorphism. Then $\MM(\mod\Lambda)$ is a free monoid with basis $[S_1], \cdots, [S_n]$. In particular, $\KK_0(\mod\Lambda)$ is a free abelian group with the same basis. Moreover, $|\Lambda| = \# \ind \P(\mod\Lambda) = \# \simp (\mod\Lambda) =  n$ holds.
\end{proposition}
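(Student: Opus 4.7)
The plan is to derive the proposition as an immediate consequence of Corollary~\ref{abeliancase} together with classical facts about artin algebras. First I would observe that since $\Lambda$ is artinian, every finitely generated right $\Lambda$-module has finite length; in particular $\mod\Lambda$ is a length abelian category in the sense used in Corollary~\ref{abeliancase}. Applying that corollary with $\AA = \mod\Lambda$ then gives at once that $\MM(\mod\Lambda)$ is a free monoid on $\{[S] \mid S \in \simp(\mod\Lambda)\}$ and that $\KK_0(\mod\Lambda)$ is a free abelian group on the same set.

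Next I would check that the exact-categorical notion of simple object in $\mod\Lambda$ agrees with the classical one. Since $\mod\Lambda$ is abelian, its exact structure consists of all short exact sequences, so admissible subobjects of $X$ are just ordinary submodules, and $X$ is simple in $\mod\Lambda$ as an exact category precisely when it is simple as a $\Lambda$-module in the classical sense. Hence $\simp(\mod\Lambda) = \{[S_1], \dots, [S_n]\}$ and $\#\simp(\mod\Lambda) = n$.

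It then remains to identify the two projective counts with $n$. The equality $\#\ind \P(\mod\Lambda) = n$ is the classical correspondence for artin algebras: the functor $P \mapsto P/\rad P$ induces a bijection between isomorphism classes of indecomposable projective right $\Lambda$-modules and simple right $\Lambda$-modules. Writing the Krull--Schmidt decomposition $\Lambda_\Lambda \iso P_1^{m_1} \oplus \cdots \oplus P_n^{m_n}$, where $P_i$ is the indecomposable projective with top $S_i$, each exponent $m_i$ is positive (since $\Hom_\Lambda(\Lambda, S_i) \iso S_i \neq 0$ forces some summand of $\Lambda$ to have $S_i$ as a quotient, which must be $P_i$). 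By the convention in the introduction, $|\Lambda|$ counts the distinct indecomposable summands of $\Lambda_\Lambda$, giving $|\Lambda| = n$.

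There is essentially no obstacle: the entire argument is a routine assembly of Corollary~\ref{abeliancase} with the standard projective-simple correspondence. The only point that requires any care is the identification of the categorical simples with the classical simples, which is immediate once one remembers that $\mod\Lambda$ being abelian trivializes the admissible subobject condition.
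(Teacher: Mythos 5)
Your proposal is correct and follows essentially the same route as the paper: invoke Corollary~\ref{abeliancase} for the statements about $\MM(\mod\Lambda)$ and $\KK_0(\mod\Lambda)$, then use the classical bijection between indecomposable projectives and simples for the last equality. The extra details you supply (that categorical simples coincide with classical simples in an abelian category, and the Krull--Schmidt decomposition of $\Lambda_\Lambda$ with positive multiplicities) are accurate expansions of what the paper states tersely.
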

\begin{proof}
  All the assertions except the last follow form Corollary \ref{abeliancase}. The last one holds because  there is a bijection between non-isomorphic indecomposable projective $\Lambda$-modules and non-isomorphic simple $\Lambda$-modules.
\end{proof}
By this, the Grothendieck group $\KK_0(\mod\Lambda)$ is often identified with $\Z^n$, where $n$ is the number of simple $\Lambda$-modules. This is what is usually called the \emph{dimension vectors} of modules.
\begin{definition}\label{dimvecdef}
  Let $\Lambda$ be an artin algebra, and fix a complete set of non-isomorphic simple $\Lambda$-modules $\{ [S_1], \cdots, [S_n]\}$. Then we denote by $\udim \colon \mod\Lambda \to \Z^n$ the assignment which sends $X$ to $(a_1, \cdots, a_n)$, where $a_i$ is the Jordan-H\"older multiplicity of $S_i$ in $X$. This induces an isomorphism $\KK_0(\mod\Lambda) \xrightarrow{\sim} \Z^n$.
  For a $\Lambda$-module $X$, we call $\udim X$ the \emph{dimension vector} of $X$.
\end{definition}

A typical example of extension-closed subcategories is an \emph{Ext-perpendicular category} with respect to a given module.
\begin{definition}
  Let $\Lambda$ be an artin algebra and $U \in \mod\Lambda$ a $\Lambda$-module. We denote by $^\perp U$ the subcategory of $\mod\Lambda$ which consists of $X \in \mod\Lambda$ such that $\Ext^{>0}_\Lambda (X,U)=0$ holds, that is, $\Ext_\Lambda^{i}(X,U) = 0$ for all $i > 0$.
\end{definition}
Note that $^\perp U$ is an extension-closed subcategory of $\mod\Lambda$, and we always regard it as an exact category from now on.
Exact categories arising in this way have nice homological properties as follows. These can be checked directly, so we omit the proofs.
\begin{proposition}\label{perprop}
  Let $\Lambda$ be an artin algebra and $U \in \mod\Lambda$ a $\Lambda$-module, and put $\EE:= {}^\perp U$. Then the following hold.
  \begin{enumerate}
    \item For any short exact sequence $0 \to X \to Y \to Z \to 0$ in $\mod\Lambda$, if $Y$ and $Z$ belong to $\EE$, then so does $X$.
    \item $\EE$ is closed under direct summands, thus it is idempotent complete.
    \item $\proj\Lambda \subset \EE$ holds.
    \item $\EE$ is an exact category with a progenerator $\Lambda$.
  \end{enumerate}
\end{proposition}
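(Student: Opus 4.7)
The proof of each of the four assertions in Proposition~\ref{perprop} is an essentially routine consequence of the long exact sequence of $\Ext$, so the plan is to spell out which piece of the $\Ext$ long exact sequence yields each conclusion rather than to introduce any genuinely new idea.

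For (1), the plan is to apply $\Hom_\Lambda(-,U)$ to the short exact sequence $0 \to X \to Y \to Z \to 0$ and read off the long exact sequence. For each $i \geq 1$ the relevant three-term fragment
\[
\Ext^{i}_\Lambda(Y,U) \longrightarrow \Ext^{i}_\Lambda(X,U) \longrightarrow \Ext^{i+1}_\Lambda(Z,U)
\]
has both outer terms zero because $Y, Z \in \EE = {}^{\perp}U$, forcing $\Ext^{i}_\Lambda(X,U)=0$. For (2), I would use that $\Ext^{i}_\Lambda(-,U)$ converts finite direct sums into direct sums, so $\Ext^{i}_\Lambda(X\oplus Y,U)=0$ implies both $\Ext^{i}_\Lambda(X,U)=0$ and $\Ext^{i}_\Lambda(Y,U)=0$; idempotent completeness of $\EE$ then follows because $\mod\Lambda$ is idempotent complete and $\EE$ is closed under direct summands inside it.

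For (3), the plan is simply to recall that $\Ext^{i}_\Lambda(P,-)=0$ for all $i>0$ whenever $P$ is projective, so $\proj\Lambda \subset {}^{\perp}U = \EE$. For (4), the strategy is in two steps. First, $\Lambda$ is projective in $\EE$: given any conflation $X' \infl X \defl X''$ in $\EE$, it is a short exact sequence in $\mod\Lambda$, so $\Hom_\Lambda(\Lambda,-)$ sends it to a short exact sequence of abelian groups, which means $\Lambda$ is projective with respect to the exact structure of $\EE$. Second, for every $X \in \EE$ there is a surjection $\pi\colon \Lambda^{n} \defl X$ in $\mod\Lambda$; since $\Lambda^{n} \in \EE$ by (3) together with the direct-sum-closure observation used in (2), and $X \in \EE$, part~(1) applied to the short exact sequence $0 \to \ker \pi \to \Lambda^{n} \to X \to 0$ gives $\ker\pi \in \EE$, so this short exact sequence is a conflation in $\EE$ and $\pi$ is a deflation from a finite direct sum of copies of $\Lambda$.

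There is no real obstacle here; the only mild subtlety worth flagging is that in~(4) one must verify that the canonical surjection $\Lambda^{n} \defl X$ is a deflation in the exact structure of $\EE$ (not merely an epimorphism in $\mod\Lambda$), which is exactly what part~(1) supplies once one knows $\Lambda^{n} \in \EE$. Consequently I would present the four parts in the order (1), (2), (3), (4), since (4) logically depends on (1) and (3).
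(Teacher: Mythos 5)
Your proof is correct and is exactly the direct verification the paper alludes to when it says the assertions ``can be checked directly, so we omit the proofs.'' The long-exact-sequence argument for (1), additivity of $\Ext$ for (2), $\Ext$-vanishing for projectives in (3), and the combination of (1) and (3) to show that the kernel of a projective cover lies in $\EE$ for (4) are all the standard steps one would supply here.
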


\subsection{Ext-perpendicular categories of modules with finite injective dimension}
To check whether (JHP) holds or not, it is easier to deal with the case where the Grothendieck group is free of finite rank, since we can use Theorem \ref{JHchar2}. Let us introduce an assumption which ensures this.
\begin{assumption}\label{assumperp}
  There exists an artin algebra $\Lambda$ and a $\Lambda$-module $U\in\mod\Lambda$ with finite injective dimension such that $\EE$ is equivalent to ${}^\perp U$ as an exact category.
\end{assumption}
We give examples of such $\EE$ later in Example \ref{perpex}.

\begin{lemma}\label{assumplemm}
  Let $\EE$, $\Lambda$ and $U$ be as in Assumption \ref{assumperp}. Then the following hold.
  \begin{enumerate}
    \item Put $n:= \id(U_\Lambda)$. For every module $X \in \mod\Lambda$, there exists an exact sequence
      \begin{equation}\label{kore}
      0 \to \Omega^n X \to P_{n-1} \to \cdots \to P_1 \to P_0 \to X \to 0
      \end{equation}
    in $\mod\Lambda$ such that each $P_i$ is finitely generated projective and $\Omega^n X$ belongs to $\EE$.
    \item The natural inclusion $\EE \hookrightarrow \mod\Lambda$ induces an isomorphism of the Grothendieck groups $\KK_0(\EE) \xrightarrow{\sim} \KK_0(\mod\Lambda)$.
    \end{enumerate}
\end{lemma}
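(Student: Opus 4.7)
Part (1) should be a routine dimension-shifting argument. I plan to take any finitely generated projective resolution $\cdots \to P_1 \to P_0 \to X \to 0$ in $\mod\Lambda$ (which exists because $\Lambda$ is artin), truncate at step $n$, and set $\Omega^n X := \ker(P_{n-1} \to P_{n-2})$ so that the exact sequence (\ref{kore}) is produced. The only nontrivial point is to verify $\Omega^n X \in \EE = {}^{\perp}U$. For every $i>0$, the standard dimension-shifting isomorphism $\Ext^i_\Lambda(\Omega^n X,U) \iso \Ext^{i+n}_\Lambda(X,U)$ applies, and the right-hand side vanishes because $\id(U_\Lambda)=n$. Hence $\Omega^n X \in \EE$.

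For part (2), the inclusion $\EE \hookrightarrow \mod\Lambda$ respects conflations and therefore induces a homomorphism $\varphi \colon \KK_0(\EE) \to \KK_0(\mod\Lambda)$. The plan is to construct an explicit inverse. Using (1), for each $X \in \mod\Lambda$ choose a resolution (\ref{kore}) and set
\[
\psi[X] := \sum_{i=0}^{n-1}(-1)^i[P_i] + (-1)^n[\Omega^n X] \in \KK_0(\EE),
\]
which makes sense because every $P_i$ lies in $\EE$ by Proposition \ref{perprop}(3) and $\Omega^n X$ lies in $\EE$ by (1).

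Two things then need to be verified. First, independence of the chosen resolution: given two such truncated resolutions of $X$, a generalized Schanuel argument produces an isomorphism between appropriate direct sums of $\Omega^n X$, $(\Omega^n X)'$, and projectives, taking place inside $\EE$ (all terms involved already lie in $\EE$), so it yields an equality of the two alternating sums in $\KK_0(\EE)$. Second, compatibility with short exact sequences: given $0 \to X' \to X \to X'' \to 0$ in $\mod\Lambda$, the horseshoe lemma produces compatible resolutions with $P_i(X) = P_i(X') \oplus P_i(X'')$ and a short exact sequence $0 \to \Omega^n X' \to \Omega^n X \to \Omega^n X'' \to 0$ in $\mod\Lambda$. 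The outer terms lie in $\EE$ by (1), so Proposition \ref{perprop}(1) forces $\Omega^n X \in \EE$ and makes this short exact sequence a conflation in $\EE$; the three alternating sums then assemble to $\psi[X]=\psi[X']+\psi[X'']$ in $\KK_0(\EE)$. Consequently $\psi$ descends to a homomorphism $\KK_0(\mod\Lambda) \to \KK_0(\EE)$.

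Finally, $\varphi \psi = \id$ follows by breaking (\ref{kore}) into short exact sequences in $\mod\Lambda$ and taking classes in $\KK_0(\mod\Lambda)$. For $\psi\varphi = \id$, note that when $X \in \EE$, iterating Proposition \ref{perprop}(1) with the projectives $P_i \in \EE$ gives $\Omega^i X \in \EE$ for all $i \leq n$, so (\ref{kore}) decomposes into conflations entirely within $\EE$, whence $[X] = \psi[X]$ already in $\KK_0(\EE)$. The main obstacle I anticipate is the compatibility of $\psi$ with short exact sequences: this is the step where one must combine the horseshoe lemma with the extension-closedness property Proposition \ref{perprop}(1) in order to produce an honest conflation of $n$-th syzygies inside $\EE$, rather than merely a short exact sequence in $\mod\Lambda$.
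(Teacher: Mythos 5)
Your proposal is correct and follows essentially the same route as the paper: truncate a projective resolution and dimension-shift for (1), then construct the inverse by the alternating-sum assignment, invoking Schanuel and the Horseshoe lemma for well-definedness and additivity, for (2). You actually spell out some details the paper leaves to the reader (notably that extension-closedness together with closure under kernels of deflations upgrades the Horseshoe short exact sequence of syzygies to a conflation in $\EE$, and that $\psi\varphi=\id$ because the syzygies of an object of $\EE$ stay in $\EE$), but the strategy is identical.
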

\begin{proof}
  (1)
  By taking a projective resolution of $X$, obviously there exists a short exact sequence of the form (\ref{kore}) such that each $P_i$ is finitely generated projective. Thus, it suffices to show that $\Omega^n X \in {}^\perp U$. This is because $\Ext_\Lambda^{>0}(\Omega^n X,U) = \Ext_\Lambda^{>n}(X,U) = 0$ since $\id U = n$.

  (2)
  This follows from (1) and Quillen's Resolution Theorem \cite[\S 4]{qu} on algebraic K-theory. We give an elementary proof here, which is similar as given in \cite[Lemma 13.2]{yo}.

  We shall construct the inverse of the natural homomorphism $\KK_0(\EE) \to \KK_0(\mod\Lambda)$. For a module $X \in \mod\Lambda$, take an exact sequence of the form (\ref{kore}). Then consider the assignment $X \mapsto \sum_{0 \leq i < n}(-1)^i [P_i] + (-1)^n [\Omega^n X]$.
  This assignment does not depend on the choice of exact sequences of the form (\ref{kore}) by the Schanuel lemma, and it respects short exact sequences by the Horseshoe lemma. Thus, we obtain the map $\KK_0(\mod\Lambda) \to \KK_0(\EE)$. This is the desired inverse, and we leave it to the reader to check the details.
\end{proof}
To sum up, our exact category has the following nice properties.
\begin{proposition}\label{assumprop}
  Let $\EE$ be as in Assumption \ref{assumperp}. Then the following hold.
  \begin{enumerate}
    \item $\EE$ is a length exact category with a progenerator.
    \item $\KK_0(\EE)$ is free of finite rank.
    \item $\rank \KK_0(\EE) = \# \ind \P(\EE) = |P|$ holds, where $P$ is a progenerator of $\EE$.
  \end{enumerate}
\end{proposition}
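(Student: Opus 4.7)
My plan is to prove the three claims in order, drawing almost entirely on earlier results in the excerpt; the proof should be short because the real content is already packaged in Lemma \ref{assumplemm} and Proposition \ref{perprop}.

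For part (1), I would just observe that under Assumption \ref{assumperp}, $\EE$ is exact-equivalent to ${}^\perp U$, so by Proposition \ref{perprop}(3)(4) the regular module $\Lambda$ lies in $\EE$ and serves as a progenerator. Since ${}^\perp U$ is extension-closed in $\mod\Lambda$, Proposition \ref{genprop}(2) (applied through the exact equivalence) gives that $\EE$ is a length exact category.

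For part (2), I would invoke Lemma \ref{assumplemm}(2): the inclusion $\EE \hookrightarrow \mod\Lambda$ induces an isomorphism $\KK_0(\EE) \xrightarrow{\sim} \KK_0(\mod\Lambda)$. By Proposition \ref{modprop} the latter group is free abelian of rank $n = |\Lambda|$, the number of non-isomorphic simple $\Lambda$-modules. Hence $\KK_0(\EE)$ is free of finite rank $n$.

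For part (3), the target identity reads $\rank \KK_0(\EE) = \# \ind \P(\EE) = |P|$ for any progenerator $P$ of $\EE$. The key step is to show that for any progenerator $P$ one has $\add P = \P(\EE)$. Indeed, one inclusion is obvious, and for the other, given any projective $Q \in \P(\EE)$, there is a deflation $P^n \twoheadrightarrow Q$ (since $P$ is a progenerator), and this must split because $Q$ is projective; hence $Q \in \add P$. Thus $\# \ind \P(\EE) = |P|$, and in particular taking $P = \Lambda$ gives $\#\ind \P(\EE) = |\Lambda| = n = \rank \KK_0(\EE)$ by Proposition \ref{modprop}.

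The only potential snag I see is the verification that $\Lambda$ really serves as a progenerator of $\EE$ under the exact equivalence (as opposed to just of ${}^\perp U$), but this is automatic because progenerators are preserved by exact equivalences. Everything else is bookkeeping on results already established, so I do not anticipate any genuine obstacle.
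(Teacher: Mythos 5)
Your proof is correct and follows essentially the same route as the paper's (very terse) argument: (1) via Propositions \ref{genprop}(2) and \ref{perprop}(4), and (2)--(3) via Lemma \ref{assumplemm}(2) and Proposition \ref{modprop}. The one thing you do that the paper leaves implicit is spell out the identity $\add P = \P(\EE)$ for any progenerator $P$ (deflation $P^n \twoheadrightarrow Q$ splits by projectivity of $Q$), which is exactly the fact needed to pass from $\rank\KK_0(\mod\Lambda) = |\Lambda|$ to $\#\ind\P(\EE) = |P|$; making this explicit is a slight improvement over the paper's exposition, but it is not a different approach.
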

\begin{proof}
  (1) follows from Propositions \ref{genprop} (2) and \ref{perprop} (4).
  (2) and (3) follows directly from Lemma \ref{assumplemm} and Proposition \ref{modprop}.
\end{proof}
In this situation, the positive part of the Grothendieck group of $\EE$ can be identified with the set of dimension vectors of modules which belong to $\EE$.
\begin{corollary}\label{setofdim}
  Let $\Lambda$ and $\EE$ be as in Assumption \ref{assumperp}, and fix a complete set of simple $\Lambda$-modules $\{[S_1], \cdots, [S_n]\}$.
  Then the natural map $\MM(\EE) \to \KK_0(\EE) \to \KK_0(\mod\Lambda) \iso \Z^n$ induces an isomorphism of monoids between $\MM(\EE)_\can$ and the monoid of dimension vectors of modules in $\EE$:
  \[
  \udim \EE = \{\, \udim X \,\, | \,\, X \in \EE\, \} \subset \N^n.
  \]
\end{corollary}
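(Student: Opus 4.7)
The plan is to chain together three already-established identifications and observe that the composite monoid homomorphism $\MM(\EE) \to \Z^n$ sends $[X]$ to $\udim X$, whence its image is exactly $\udim\EE$. First, by Proposition \ref{canquot} of the appendix (applied in Subsection \ref{pospartsubsec}), the natural map $\iota \colon \MM(\EE) \to \KK_0(\EE)$ factors through $\MM(\EE) \twoheadrightarrow \MM(\EE)_\can$ and induces an isomorphism of monoids $\MM(\EE)_\can \iso \KK_0^+(\EE)$, where $\KK_0^+(\EE)$ is the positive part. So it suffices to show that the composition
\[
\KK_0^+(\EE) \hookrightarrow \KK_0(\EE) \xrightarrow{\sim} \KK_0(\mod\Lambda) \xrightarrow{\sim} \Z^n
\]
identifies $\KK_0^+(\EE)$ with $\udim\EE \subset \N^n$.

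Next, I would trace what happens to a generator $[X]$ of $\MM(\EE)$, for $X \in \EE$, under this composition. The isomorphism $\KK_0(\EE) \iso \KK_0(\mod\Lambda)$ from Lemma \ref{assumplemm}(2) was constructed so that the inclusion $\EE \hookrightarrow \mod\Lambda$ induces it; hence $[X] \in \KK_0(\EE)$ is sent to $[X] \in \KK_0(\mod\Lambda)$. The isomorphism $\KK_0(\mod\Lambda) \iso \Z^n$ from Proposition \ref{modprop} and Definition \ref{dimvecdef} then sends $[X]$ to $\udim X$. Thus the composite map $\MM(\EE) \to \Z^n$ sends $[X]$ to $\udim X \in \N^n$, and its image is precisely $\{\udim X \mid X \in \EE\} = \udim\EE$.

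Finally, since the map $\KK_0(\EE) \to \Z^n$ is a group isomorphism, its restriction to the submonoid $\KK_0^+(\EE)$ is a monoid isomorphism onto its image $\udim\EE$. Composing with $\MM(\EE)_\can \iso \KK_0^+(\EE)$ yields the claimed isomorphism $\MM(\EE)_\can \iso \udim\EE$. The proof is essentially bookkeeping, and I do not anticipate any real obstacle: the only thing to verify carefully is that the isomorphism $\KK_0(\EE) \iso \KK_0(\mod\Lambda)$ of Lemma \ref{assumplemm}(2) really does send the class of $X \in \EE$ to the class of $X$ in $\mod\Lambda$ (not to some alternating sum involving the projective resolution), which follows immediately from the construction of its inverse via the Schanuel/Horseshoe argument given there.
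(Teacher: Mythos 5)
Your proof is correct and follows essentially the same route as the paper's own (which consists only of a list of citations to Proposition~\ref{canquot}, the definition of $\KK_0^+(\EE)$, the isomorphism of Grothendieck groups, and the definition of $\udim$); you simply fill in the bookkeeping that the paper declares "immediate." Incidentally, you correctly attribute the isomorphism $\KK_0(\EE)\xrightarrow{\sim}\KK_0(\mod\Lambda)$ to Lemma~\ref{assumplemm}(2), whereas the paper's proof mistakenly cites Proposition~\ref{perprop} for it.
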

\begin{proof}
  This is immediate from Proposition \ref{canquot}, the definition of $\KK_0^+(\EE)$, an isomorphism $\KK_0(\EE) \xrightarrow{\sim} \KK_0(\mod\Lambda)$ shown in Proposition \ref{perprop} and the definition of the dimension vector.
\end{proof}
Now our previous characterization of (JHP) has rather simple consequence in this situation.
\begin{theorem}\label{artinmain}
  Let $\EE$ be an exact category which satisfies Assumption \ref{assumperp}. Then $\EE$ satisfies (JHP) if and only if $\# \simp \EE = \#\ind \P(\EE)$ holds, that is, the number of non-isomorphic simples in $\EE$ is equal to that of non-isomorphic indecomposable projectives in $\EE$.
\end{theorem}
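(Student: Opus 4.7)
The plan is to reduce the statement to Theorem \ref{JHchar2} by verifying that the hypotheses of that theorem hold automatically under Assumption \ref{assumperp}. The work has already been done in Proposition \ref{assumprop}: it guarantees that (a) $\EE$ is a length exact category, (b) $\KK_0(\EE)$ is a free abelian group of finite rank, and (c) $\rank \KK_0(\EE) = \# \ind \P(\EE)$. These are precisely the structural ingredients required for Theorem \ref{JHchar2} to apply.

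Concretely, first I would invoke Proposition \ref{assumprop} to extract (a), (b), (c) above. Then I would appeal to Theorem \ref{JHchar2}, specifically to the equivalence between (JHP) and the numerical condition $\# \simp \EE = \rank \KK_0(\EE)$: given that $\EE$ is a length exact category with $\KK_0(\EE)$ free of finite rank, (JHP) holds if and only if the number of non-isomorphic simples in $\EE$ equals $\rank \KK_0(\EE)$. Finally, substituting the identity $\rank \KK_0(\EE) = \# \ind \P(\EE)$ from Proposition \ref{assumprop} (3) yields the desired equivalence.

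For the forward direction, one subtle point to mention is finiteness of $\# \simp \EE$: it is not assumed but follows once (JHP) is known, because Theorem \ref{JHchar1.5} identifies $\{[S] \mid S \in \simp\EE\}$ with a basis of the finitely generated free group $\KK_0(\EE)$. For the converse, if $\# \simp\EE = \# \ind\P(\EE)$, the right-hand side is finite by Proposition \ref{assumprop}, so $\# \simp\EE$ is automatically finite and condition (5) of Theorem \ref{JHchar2} is met.

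Honestly, there is no serious obstacle to overcome here — the theorem is essentially a translation of the general criterion of Theorem \ref{JHchar2} into the representation-theoretic language made available by Assumption \ref{assumperp}. The bulk of the actual work has been carried out earlier (building the Grothendieck monoid machinery in Section 4, and establishing the homological consequences of $\id U < \infty$ in Lemma \ref{assumplemm} via a Quillen-type resolution argument); the present theorem just combines these ingredients.
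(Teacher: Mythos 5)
Your proposal is correct and follows essentially the same route as the paper: the paper's proof is a one-line appeal to Proposition \ref{assumprop} combined with Theorem \ref{JHchar2}, which is exactly what you do. The extra care you take about finiteness of $\# \simp\EE$ in each direction is sound and consistent with how Theorem \ref{JHchar2} is stated.
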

\begin{proof}
  Immediately follow from Proposition \ref{assumprop} and Theorem \ref{JHchar2}.
\end{proof}

Actually most exact categories which have been investigated in the representation theory of artin algebras satisfy Assumption \ref{assumperp}. Among them, those arising from \emph{cotilting modules} have been attracted an attention.
\begin{definition}
  Let $\Lambda$ be an artin algebra. We say that a $\Lambda$-module $U \in \mod\Lambda$ is \emph{cotilting} if it satisfies the following conditions:
  \begin{enumerate}
    \item The injective dimension of $U$ is finite.
    \item $\Ext^{>0}_\Lambda(U,U) = 0$ holds.
    \item There exists an exact sequence
    \[
    0 \to U_n \to \cdots \to U_1 \to U_0 \to D \Lambda \to 0
    \]
    in $\mod\Lambda$ for some $n$ such that $U_i \in \add U$ for each $i$.
  \end{enumerate}
\end{definition}
The simplest example of cotilting modules is $D\Lambda$. In this case, the perpendicular category $^\perp(D\Lambda)$ coincides with the module category $\mod\Lambda$. For a general cotilting module, although $^\perp U$ is not abelian, it has nice properties.
\begin{proposition}\label{cotiltprop}
  Let $\Lambda$ be an artin algebra and $U$ a cotilting $\Lambda$-module. Put $\EE:={}^\perp U$. Then the following hold:
  \begin{enumerate}
    \item $\EE$ has a progenerator $\Lambda$ and an injective cogenerator $U$.
    \item $\KK_0(\EE)$ is a free abelian group of finite rank.
    \item $\rank \KK_0(\EE) = \#\ind\P(\EE) = \#\ind\I(\EE)$ holds.
  \end{enumerate}
\end{proposition}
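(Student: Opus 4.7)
The plan is to verify the three claims in order, using what has been established about $\EE = {}^\perp U$ for modules $U$ with finite injective dimension, and invoking the special cotilting structure only where necessary.

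For (1), that $\Lambda$ is a progenerator I would argue as follows. Since $\Lambda$ is projective, $\Ext^{>0}_\Lambda(\Lambda,U)=0$, so $\Lambda\in\EE$; projectivity of $\Lambda$ in $\EE$ follows from projectivity in $\mod\Lambda$ because conflations in $\EE$ are just short exact sequences of $\mod\Lambda$; and any $X\in\EE$ admits a surjection $\Lambda^n\twoheadrightarrow X$ whose kernel is forced into $\EE$ by Proposition~\ref{perprop}(1). For the injective cogenerator assertion, $U\in\EE$ is immediate from $\Ext_\Lambda^{>0}(U,U)=0$, and $U$ is injective in $\EE$ by the long exact sequence applied to any conflation $0\to X\to Y\to Z\to 0$ with $Z\in\EE$. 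The delicate point is to produce, for every $X\in\EE$, an inflation $X\infl V$ with $V\in\add U$ and cokernel again in $\EE$.

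To produce such an inflation I would proceed in two stages. First, embed $X\hookrightarrow I$ into an injective hull (so $I\in\add D\Lambda$), and apply the cotilting resolution $0\to U_n\to\cdots\to U_0\to D\Lambda\to 0$ in an appropriate direct sum to get a deflation $V\twoheadrightarrow I$ whose kernel $K$ has an $\add U$-resolution of length $n-1$. An easy dimension shift using $\Ext_\Lambda^{>0}(X,U_i)=0$ and the injectivity of $D\Lambda$ shows $\Ext_\Lambda^{1}(X,K)=0$, so the pullback square produces a split extension $X\oplus K\hookrightarrow V$. Thus $X$ embeds into $V\in\add U$. In the second stage, take the minimal left $\add U$-approximation $f\colon X\to V^0$, which is therefore injective, and consider its cokernel $X^1$ in $\mod\Lambda$. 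For $i\geq 2$, $\Ext^i_\Lambda(X^1,U)=0$ is immediate from the long exact sequence and the rigidity of $U$. For $i=1$, the long exact sequence identifies $\Ext^1_\Lambda(X^1,U)$ with the cokernel of $\Hom_\Lambda(V^0,U)\to\Hom_\Lambda(X,U)$, which vanishes by the defining approximation property of $f$. Hence $X^1\in\EE$, and $0\to X\to V^0\to X^1\to 0$ is the required conflation.

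Claim (2) then follows directly from Proposition~\ref{assumprop} once (1) is in hand, since $U$ has finite injective dimension by hypothesis; this also yields the equality $\rank\KK_0(\EE)=\#\ind\P(\EE)=|\Lambda|$ in (3). It remains to show $\#\ind\I(\EE)=\rank\KK_0(\EE)$. I would first verify that $\I(\EE)=\add U$: one inclusion is clear, while for the other, if $I\in\I(\EE)$ then the conflation $0\to I\to V^0\to I^1\to 0$ from the previous paragraph splits by injectivity of $I$ in $\EE$, exhibiting $I$ as a summand of an object of $\add U$. Therefore $\#\ind\I(\EE)=|U|$, and it remains to see $|U|=|\Lambda|$. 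This is the classical Bongartz-type invariant for cotilting modules: applying the Matlis duality $D$ turns $U$ into a tilting $\Lambda^{\op}$-module $DU$, and the number of indecomposable summands of a tilting module is well-known to equal the number of simples (equivalently, of indecomposable projectives). Combining gives the desired equality.

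The main obstacle will be the construction of an $\add U$-envelope for arbitrary $X\in\EE$; the argument has to use the full cotilting resolution together with a dimension-shift Ext vanishing and a functorial finiteness argument for $\add U$. The identification $\I(\EE)=\add U$ and the formula $|U|=|\Lambda|$ are then essentially formal consequences.
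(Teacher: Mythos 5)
Your proof is correct and follows the same route as the paper's, which simply cites Proposition~\ref{perprop} for the progenerator claim, \cite[Theorem 5.4]{applications} for $U$ being an injective cogenerator, and \cite{happel} for $|U|=|\Lambda|$. You have essentially unwound those citations: the two-stage construction of the $\add U$-envelope (cotilting resolution plus minimal left approximation) is the standard argument behind the Auslander--Reiten result, and the Matlis-duality reduction to tilting theory is exactly what the reference to \cite{happel} is invoking.
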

\begin{proof}
  (1)
  By Proposition \ref{perprop}, the exact category $\EE$ has a progenerator $\Lambda$. We refer to \cite[Theorem 5.4]{applications} for the proof of the fact that $U$ is an injective cogenerator of $\EE$.

  (2)
  This follows from Propositions \ref{perprop} and \ref{modprop}.

  (3)
  By Propositions \ref{perprop} and \ref{modprop}, we only have to show that $\#\ind \I(\EE) = |U|$ is equal to $\# \ind \P(\EE)= |\Lambda|$. This follows from tilting theory, e.g. \cite[Theorem 1.19]{miyashita}.
\end{proof}
In particular, Proposition \ref{cotiltprop} says the number of indecomposable \emph{projectives} and \emph{injectives} in $\EE$ coincide. In the case of $\mod\Lambda$, this number is also equal to the number of \emph{simples}. Thus, Theorem \ref{artinmain} says that the violation of this coincidence can be understood as an obstruction for (JHP).

\begin{example}\label{perpex}
  The following exact categories $\EE$ satisfy Assumption \ref{assumperp}, hence Theorem \ref{artinmain} holds for them.
  \begin{enumerate}
    \item $\EE:= {}^\perp U$ for a cotilting $\Lambda$-module $U$ over an artin algebra $\Lambda$. Note that $\# \ind\P(\EE) = \# \ind\I(\EE)$ holds in this case, by Proposition \ref{cotiltprop}.
    \item $\EE = \GP\Lambda$ for an Iwanaga-Gorenstein artin algebra $\Lambda$. Here $\Lambda$ is called \emph{Iwanaga-Gorenstein} if $\id \Lambda_\Lambda$ and $\id {}_\Lambda \Lambda$ are both finite, and in this case, we denote by $\GP\Lambda$ the category ${}^\perp \Lambda$, called the category of \emph{Gorenstein-projective} modules.
    \item Functorially finite torsion-classes and torsion-free classes of $\mod\Lambda$ for an artin algebra $\Lambda$ (see e.g. \cite{applications} for functorial finiteness). This classes of categories has been attracted an attention, since \emph{$\tau$-tilting theory} gives a powerful combinatorial tool to investigate them, e.g. \cite{air}.
  \end{enumerate}
\end{example}
\begin{proof}
  (1) and (2) follow from definition. (3) is well-known to experts (e.g. see \cite[Proposition 1.2.1]{iy}), but we give a sketch here for the convenience of the reader.

  We show that a functorially finite torsion-free class $\FF$ is a special case of (1). By factoring out the annihilator, we may assume that $\FF$ is faithful torsion-free class of $\Lambda$, that is, the intersection of annihilators of modules in $\FF$ is zero. Then it can be shown that $\Lambda_\Lambda \in \FF$ holds. The well-known characterizations on classical (co)-tilting modules tells us that $\FF = {}^\perp U$ holds for a cotilting module $U$ with $\id U \leq 1$ (see e.g. \cite[Theorem VI.6.5]{ASS}).

  For a functorially finite torsion class $\TT$, the similar argument shows that $\TT$ comes from the classical tilting module. Now the Brenner-Butler theorem (\cite[Theorem VI.3.8]{ASS}) tells us that $\TT$ is equivalent as an exact category to the torsion-free class over another algebra which is induced by some classical cotilting module. Thus, $\TT$ is also a special case of (1).
\end{proof}

For those familiar with $\tau$-tilting theory, we will state a consequence of our result in $\tau$-tilting-theoretical language. We omit the related definitions and notation, see \cite{air} for the detail.
\begin{corollary}
  Let $\Lambda$ be an artin algebra and $\FF = \Sub U$ a functorially finite torsion-free class of $\mod\Lambda$ where $U$ is a support $\tau^-$-tilting module. Then the following are equivalent:
  \begin{enumerate}
    \item $\FF$ satisfies (JHP).
    \item The number of non-isomorphic simple objects in $\FF$ is equal to $|U|$.
  \end{enumerate}
\end{corollary}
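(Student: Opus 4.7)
The plan is to reduce the claim to Theorem~\ref{artinmain} by verifying Assumption~\ref{assumperp} for $\FF$ and then identifying $\#\ind\P(\FF)$ with $|U|$ through the standard dictionary of $\tau$-tilting theory.

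First I would invoke Example~\ref{perpex}(3): since $\FF$ is a functorially finite torsion-free class of $\mod\Lambda$, the sketch given there (factoring out the annihilator, observing $\Lambda_\Lambda\in\FF$ in the faithful case, then applying the classical characterization \cite[Theorem VI.6.5]{ASS}) shows that there is a cotilting module $V$ with $\id V\le 1$ and a natural exact equivalence $\FF\simeq{}^\perp V$. Hence $\FF$ meets Assumption~\ref{assumperp}, and Proposition~\ref{assumprop} together with Proposition~\ref{cotiltprop} apply. In particular $\KK_0(\FF)$ is free of finite rank and
\[
\rank\KK_0(\FF)=\#\ind\P(\FF)=\#\ind\I(\FF).
\]
Theorem~\ref{artinmain} then says that $\FF$ satisfies (JHP) if and only if $\#\simp\FF=\#\ind\P(\FF)$.

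Thus the only thing left is to identify $\#\ind\P(\FF)$ (equivalently $\#\ind\I(\FF)$) with $|U|$. This is the $\tau$-tilting-theoretic input: by \cite{air}, a functorially finite torsion-free class $\FF$ determines a basic support $\tau^-$-tilting module $U$ uniquely by the properties $\FF=\Sub U$ and $U\in\FF$ is Ext-projective in the opposite sense that $U$ is the direct sum of the indecomposable Ext-injective objects of $\FF$ (equivalently, the indecomposable summands of the cotilting module $V$ over the quotient algebra that appears in the proof of Example~\ref{perpex}(3) are precisely the indecomposable summands of $U$, up to the summand killed by passing to a quotient of $\Lambda$). In particular $\#\ind\I(\FF)=|U|$, and combining this with the previous display gives $\#\ind\P(\FF)=|U|$.

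Plugging this identification into Theorem~\ref{artinmain}, the equivalence of (1) and (2) is immediate. The only delicate point in the plan is the correct bookkeeping of the support: since $U$ is only support $\tau^-$-tilting, the argument has to pass through the quotient $\Lambda/\ann\FF$ so that $U$ becomes $\tau^-$-tilting there and the cotilting module $V$ of Example~\ref{perpex}(3) is well-defined; all other steps are purely formal consequences of the structural results already established in Sections~3--5. I expect the identification $\#\ind\I(\FF)=|U|$ to be the main place where one must cite \cite{air} carefully, while everything else is a direct application of Theorem~\ref{artinmain} and Proposition~\ref{cotiltprop}.
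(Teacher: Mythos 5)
Your proposal is correct and follows the same route the paper has in mind: the paper states this corollary without proof, as a direct dictionary translation of Theorem~\ref{artinmain} (via Example~\ref{perpex}(3) and Proposition~\ref{cotiltprop}) into the language of \cite{air}, and that is exactly what you carry out. One small bookkeeping remark: in your parenthetical about ``the summand killed by passing to a quotient of $\Lambda$,'' nothing is actually killed—since $\FF=\Sub U$ one has $\ann\FF=\ann U$, so over $\Lambda/\ann\FF$ the module $U$ is already faithful and becomes a genuine $\tau^-$-tilting (indeed cotilting) module with $\add U=\add V$; the cleaner way to get $\#\ind\I(\FF)=|U|$ is simply the standard fact from \cite{air} that $\add U$ coincides with the Ext-injective objects of $\FF$, without any detour through $V$.
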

Dually we obtain the $\tau$-tilting and torsion-class version:
\begin{corollary}\label{tautilting}
  Let $\Lambda$ be an artin algebra and $\TT = \Fac T$ a functorially finite torsion class of $\mod\Lambda$ where $T$ is a  support $\tau$-tilting module. Then the following are equivalent:
  \begin{enumerate}
    \item $\TT$ satisfies (JHP).
    \item The number of non-isomorphic simple objects in $\TT$ is equal to $|T|$.
  \end{enumerate}
\end{corollary}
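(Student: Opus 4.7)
The plan is to deduce the corollary directly from Theorem \ref{artinmain} applied to $\TT$. By Example \ref{perpex}(3), every functorially finite torsion class $\TT$ of $\mod\Lambda$ satisfies Assumption \ref{assumperp}: as sketched in the proof of that example, the Brenner--Butler theorem identifies $\TT$ (as an exact category) with a torsion-free class of the form ${}^\perp U$ over the endomorphism algebra of an associated classical tilting module, where $U$ is cotilting of injective dimension at most one. Consequently Theorem \ref{artinmain} is available, and it reduces the corollary to the identification
\[
\# \ind \P(\TT) = |T|.
\]

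To establish this equality I would invoke the structural result from $\tau$-tilting theory \cite{air}: when $\TT = \Fac T$ for a support $\tau$-tilting module $T$, the module $T$ is an Ext-projective generator of $\TT$, and the indecomposable Ext-projective objects of $\TT$ are, up to isomorphism, precisely the indecomposable direct summands of $T$. It remains to match ``Ext-projective in $\TT$'' with ``projective in the exact category $\TT$''. Since $\TT$ is extension-closed in $\mod\Lambda$ and conflations in $\TT$ are exactly those short exact sequences in $\mod\Lambda$ whose three terms lie in $\TT$, one has $\Ext^1_\TT(P, X) = \Ext^1_\Lambda(P, X)$ for all $P, X \in \TT$. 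Hence $P \in \TT$ is projective in the exact structure on $\TT$ if and only if $\Ext^1_\Lambda(P, X) = 0$ for every $X \in \TT$, which is the defining property of Ext-projectivity. Combining these two facts yields $\# \ind \P(\TT) = |T|$, and plugging this into Theorem \ref{artinmain} gives the stated equivalence.

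This corollary is the formal dual of the preceding one (the $\Sub U$ version for support $\tau^-$-tilting modules), so I do not anticipate a genuine obstacle. The only point worth checking carefully is the applicability of Example \ref{perpex}(3) to torsion classes rather than to torsion-free classes; the sketch given there already treats both cases via Brenner--Butler, so no additional work is required.
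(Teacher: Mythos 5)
Your proposal is correct and follows precisely the route the paper intends: the paper states this corollary (and its torsion-free companion) without a dedicated proof, viewing it as an immediate consequence of Theorem \ref{artinmain} together with the standard identification from $\tau$-tilting theory of the Ext-projectives of $\Fac T$ with $\add T$. You have simply spelled out the two implicit ingredients --- that $\TT$ falls under Assumption \ref{assumperp} via Example \ref{perpex}(3), and that ``projective in the induced exact structure on $\TT$'' coincides with ``Ext-projective in $\TT$'' --- both of which are correct and exactly what the paper tacitly uses.
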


In the case of functorially finite torsion-free classes, we obtain the following finiteness result on the positive cone $\MM(\FF)_\can \iso \KK_0^+(\FF)$. Note that $\MM(\FF)$ itself is not in general finitely generated (see Section \ref{kroex} for example).
\begin{proposition}\label{affinemonoid}
  Let $\Lambda$ be an artin algebra and $\FF$ a torsion-free class of $\mod \Lambda$ such that $\FF$ is the smallest torsion-free class which contains some $\Lambda$-module $U$ (for example, $\FF$ is functorially finite). Then $\KK_0^+(\FF)$ is isomorphic to a finitely generated submonoid of $\N^n$ for some $n$.
\end{proposition}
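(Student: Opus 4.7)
The plan is to exhibit $\KK_0^+(\FF)$ as $\udim \FF$, the image of $\FF$ under the dimension-vector map in $\N^n$, and prove this image is finitely generated. The natural monoid homomorphism $\MM(\FF) \to \N^n$ sending $[X] \mapsto \udim X$ factors through $\MM(\FF)_\can \iso \KK_0^+(\FF)$ by cancellativity of $\N^n$ and Proposition \ref{canquot}, yielding a surjection $\KK_0^+(\FF) \twoheadrightarrow \udim \FF$. As in Corollary \ref{setofdim}, this is an isomorphism provided $\KK_0(\FF) \to \KK_0(\mod\Lambda) \iso \Z^n$ is injective on $\KK_0^+(\FF)$; I would establish this directly from the torsion-pair structure of $\FF$. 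By Propositions \ref{genprop}, \ref{simpleatom}, and \ref{atomicfg}, $\MM(\FF)$ is atomic with atoms $\{[S] : S \in \simp \FF\}$, so $\udim \FF$ is generated as a submonoid of $\N^n$ by $\udim(\simp \FF) := \{\udim S : S \in \simp \FF\}$.

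The heart of the argument is the claim that every simple $S \in \FF$ embeds as a $\Lambda$-submodule of $U$, giving $\udim S \leq \udim U$ coordinate-wise and placing $\udim(\simp \FF)$ inside the finite set $\{v \in \N^n : v \leq \udim U\}$. To prove the claim I consider the class $\TT := \{T \in \mod \Lambda : \Hom_\Lambda(T, U) = 0\}$; applying $\Hom_\Lambda(-, U)$ to short exact sequences shows $\TT$ is closed under quotients and extensions and is therefore a torsion class of $\mod \Lambda$. Its right orthogonal $\TT^\perp$ is a torsion-free class containing $U$, so the minimality of $\FF$ gives $\FF \subseteq \TT^\perp$. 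If a simple $S \in \FF$ satisfied $\Hom_\Lambda(S, U) = 0$, then $S \in \TT \cap \FF \subseteq \TT \cap \TT^\perp$ would force $\Hom_\Lambda(S, S) = 0$, contradicting $\id_S \neq 0$. Hence any nonzero $\phi \colon S \to U$ gives a conflation $0 \to \ker\phi \to S \to \im\phi \to 0$ in $\FF$ (both $\ker\phi \subseteq S$ and $\im\phi \subseteq U$ lie in $\FF$ by closure under subobjects), exhibiting $\ker\phi$ as an admissible subobject of $S$; simplicity forces $\ker\phi = 0$, so $\phi$ is injective.

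The main obstacle I anticipate is the first reduction: Corollary \ref{setofdim} supplies the isomorphism $\KK_0^+(\FF) \iso \udim \FF$ only under Assumption \ref{assumperp}, which a general torsion-free class of the form described in the statement need not satisfy — for example, the smallest torsion-free class containing a regular simple over the Kronecker algebra is not functorially finite. Pinning down the injectivity of $\KK_0^+(\FF) \hookrightarrow \Z^n$ for arbitrary torsion-free classes is therefore the delicate technical point; the combinatorial core of the argument, that each simple of $\FF$ embeds into $U$, is self-contained and yields the finite generation of $\udim \FF$ at once.
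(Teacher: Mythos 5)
Your proof is essentially correct and, in its combinatorial core, takes a genuinely different route from the paper. The paper invokes Marks--\v{S}t\!'ov\'{i}\v{c}ek's description of $\FF$ as the filtration category $\Filt(\Sub U)$, concludes that $\MM(\FF)$ is generated by $\{[V] : V \subseteq U\}$, and then observes there are only finitely many dimension vectors below $\udim U$. You instead exploit atomicity of $\MM(\FF)$ to reduce to the simples of $\FF$, and then prove \emph{directly}---via the torsion pair $(\TT, \TT^\perp)$ with $\TT := \{T : \Hom_\Lambda(T,U)=0\}$, minimality of $\FF$, and simplicity---that every simple of $\FF$ embeds into $U$. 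That direct embedding argument is clean and self-contained; it recovers as a special case what one could also extract from the filtration description (a simple object can have no nontrivial admissible filtration, so it must itself lie in $\Sub U$), but without citing the external lemma. Both arguments land on the same finite bound $\{v \in \N^n : v \leq \udim U\}$.

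You also correctly and candidly identify the sensitive step that both arguments share: identifying $\KK_0^+(\FF)$ with $\udim\FF$ relies on Corollary~\ref{setofdim}, whose stated hypothesis is Assumption~\ref{assumperp}, and a torsion-free class that is merely ``smallest containing some $U$'' need not be functorially finite (your Kronecker example is apt). The paper applies Corollary~\ref{setofdim} here without addressing this; so the gap you flag is present in the paper's own proof as well, not an artifact of your approach. For the case the paper primarily cares about---functorially finite $\FF$, covered by Example~\ref{perpex}(3)---the step is justified and both proofs are complete. For the literal generality of the statement, one would need a separate argument that $\MM(\FF)_\can \to \KK_0(\mod\Lambda)$ is injective; you are right that this is the delicate point, and you are right that the embedding argument itself does not depend on it.
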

\begin{proof}
  This follows from the following fact: $\FF$ is described as the category of modules which have finite filtrations such that each successive quotient is a submodule of $U$ (to see this, it suffices to prove that the latter category is closed under submodules, and this follows from the same argument as in \cite[Lemma 3.1]{ms}). It follows that $\MM(\FF)$ is generated by $\{ [V] \, | \, \text{$V$ is a submodule of $U$} \}$.
  By Corollary \ref{setofdim}, we may identify $\KK_0^+(\FF)$ with the monoid of dimension vectors of modules in $\FF$, which is generated by the set of dimension vectors of submodules of $U$. This set is obviously finite, so $\KK_0^+(\FF)$ is finitely generated.
\end{proof}

\subsection{Torsion-free classes over Nakayama algebras}
In this subsection, we investigate torsion-free classes over Nakayama algebras, and show that all such categories satisfy (JHP).

First, we recall the notion of Nakayama algebras. For an artin algebra $\Lambda$, we say that a $\Lambda$-module $M$ is \emph{uniserial} if the set of submodules of $M$ is totally ordered by inclusion. An artin algebra $\Lambda$ is called \emph{Nakayama} if every indecomposable right and left projective $\Lambda$-module is uniserial. We will use the following description of indecomposable modules over Nakayama algebras (see e.g. \cite[Chapter V]{ASS} or \cite[Section VI.2]{ARS} for the detail).
\begin{proposition}\label{nakayamaprop}
  Let $\Lambda$ be a Nakayama algebra. Then the following hold.
  \begin{enumerate}
    \item Every indecomposable module in $\mod\Lambda$ is uniserial.
    \item $M \in \ind (\mod\Lambda)$ is uniquely determined by the following data:
    \begin{enumerate}
      \item The simple module $S :=\top M = M/\rad M$.
      \item $m:=l(M)$, the length of $M$ as a $\Lambda$-module.
    \end{enumerate}
    In particular, $\Lambda$ is of finite representation type.
    We denote this module by $S^{(m)}$.
    \item If two indecomposable modules $M$ and $N$ in $\mod\Lambda$ satisfy $\top M \iso \top N$ and $l(M) \geq l(N)$, then there exists a surjection $M \defl N$.
  \end{enumerate}
\end{proposition}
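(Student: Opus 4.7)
My plan is to prove the three parts in order, with part (1) being the main content and parts (2) and (3) following as direct consequences. The whole argument rests on the classical fact that quotients of uniserial modules are uniserial, so the bulk of the work is to reduce each indecomposable to a quotient of an indecomposable projective.

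First I would record the auxiliary observation that the Matlis duality $D$ exchanges $\mod\Lambda$ with $\mod\Lambda^{\op}$ and sends indecomposable projectives to indecomposable injectives. Since the Nakayama hypothesis is left-right symmetric, every indecomposable injective right $\Lambda$-module is uniserial as well. This symmetry is what makes part (1) tractable.

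For part (1), I would fix an indecomposable $M \in \mod\Lambda$ and show that $\top M$ is simple. Assuming towards contradiction that $\top M = S_1 \oplus \cdots \oplus S_k$ with $k \geq 2$, the projective cover of $M$ is $P = P(S_1) \oplus \cdots \oplus P(S_k)$ with each $P(S_i)$ uniserial, and I would analyze the kernel $K$ of $P \defl M$ by intersecting with the uniserial summands $P(S_i)$: each $K \cap P(S_i)$ equals $\rad^{n_i} P(S_i)$ for some $n_i$. Using that projection onto the summand with smallest such $n_i$ splits off a uniserial direct summand of $M$ (or, dually, using the injective-envelope argument on $\soc M$ via the duality above), I obtain a contradiction to indecomposability. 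Once $\top M = S$ is simple, the projective cover $P(S) \defl M$ exhibits $M$ as a quotient of a uniserial module, hence uniserial. I expect this splitting-off step to be the main obstacle, since it is the only place where the hypothesis is really used; the cleanest execution is probably via the symmetric dual statement (every indecomposable has simple socle), which follows from (1) applied to $\Lambda^{\op}$.

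For part (2), given an indecomposable $M$ with $\top M = S$ and $l(M) = m$, part (1) says $M$ is uniserial, so the projective cover $\pi \colon P(S) \defl M$ has kernel equal to $\rad^m P(S)$ by uniseriality of $P(S)$ and the length count. Hence $M \iso P(S)/\rad^m P(S)$, which depends only on the pair $(S,m)$. Finite representation type is then immediate since $m$ is bounded above by $l(P(S))$ and there are only finitely many simples $S$. For part (3), with $\top M \iso \top N = S$ and $l(M) \geq l(N)$, part (2) identifies $M = P(S)/\rad^{l(M)} P(S)$ and $N = P(S)/\rad^{l(N)} P(S)$; since $\rad^{l(M)} P(S) \subseteq \rad^{l(N)} P(S)$, the natural quotient map yields the required surjection $M \defl N$. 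Parts (2) and (3) are therefore routine bookkeeping once part (1) is in hand.
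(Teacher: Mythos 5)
The paper does not prove this proposition; it simply cites \cite[Chapter~V]{ASS} and \cite[Section~VI.2]{ARS}. So there is no in-text argument to compare against, and the question is whether your reconstruction stands on its own.

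Your reductions of (2) and (3) to (1) are correct and routine. The gap is in (1). You correctly reduce (1) to showing that an indecomposable $M$ has simple top, and you correctly observe that for a projective cover $\pi \colon P = \bigoplus_i P(S_i) \defl M$ with $K = \ker\pi \subseteq \rad P$, each $K \cap P(S_i)$ is of the form $\rad^{n_i}P(S_i)$. But the step that carries all the weight --- ``projection onto the summand with smallest $n_i$ splits off a uniserial direct summand of $M$'' --- does not follow from that observation, and as stated it is false. Take $\Lambda = k[x]/(x^3)$, $P = \Lambda \oplus \Lambda$, $K = \{(a,xa) : a \in (x)\}$. Then $K \cap (\Lambda \oplus 0) = (x^2)$, so $n_1 = 2$, while $K \cap (0 \oplus \Lambda) = 0$, so $n_2 = 3$; your claim would make $\Lambda/(x^2)$ (length $2$) a direct summand of $M = P/K$. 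But the automorphism $(a,b) \mapsto (a, b-xa)$ of $P$ carries $K$ to $(x)\oplus 0$, hence $M \iso k \oplus \Lambda$, which by Krull--Schmidt has no length-$2$ summand. The point is that $K$ need not equal $\bigoplus_i(K \cap P_i)$ in the chosen decomposition of $P$, so the $n_i$ computed from that decomposition carry no direct information about the summands of $M$. What the argument actually needs --- that one can \emph{re-choose} the decomposition $P = \bigoplus_i P_i'$ so that $K = \bigoplus_i (K \cap P_i')$ --- is precisely the content of the theorem, and you have asserted the conclusion rather than proved it.

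The fallback you offer is also not available: deducing the statement from ``(1) applied to $\Lambda^{\op}$'' is circular, since the Nakayama condition is left--right symmetric and (1) for $\Lambda^{\op}$ is the identical assertion. You still need a direct argument on at least one side --- for instance the change-of-decomposition/exchange argument above, or the irreducible-morphism analysis used in \cite[Chapter~V]{ASS}.
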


We will investigate simple objects in a torsion-free class of $\mod\Lambda$ for a Nakayama algebra.
Note that since $\mod\Lambda$ has finitely many indecomposables, every torsion-free class of $\mod\Lambda$ is functorially finite, so it satisfies Assumption \ref{assumperp} by Example \ref{perpex} (3).
\begin{theorem}\label{nakasimp}
  Let $\Lambda$ be a Nakayama algebra and $\FF$ a torsion-free class of $\mod\Lambda$. Then there exists bijections between the following three sets:
  \begin{enumerate}
    \item $\top \FF$, the set of isomorphism classes of simple modules $\top M$ for $M \in \ind \FF$.
    \item $\simp \FF$, the set of isomorphism classes of simple objects in $\FF$.
    \item $\ind\P(\FF)$, the set of isomorphism classes of indecomposable projective objects in $\FF$.
  \end{enumerate}
  The maps from {\upshape (2)} and {\upshape (3)} to {\upshape (1)} are given by $M \mapsto \top M$.
  On the other hand, for a simple module $S \in \top \FF$ in {\upshape (1)}, the corresponding objects are given by $S^{(m)}$ in {\upshape (2)} and $S^{(n)}$ in {\upshape (3)}, where
  \begin{align*}
    m &:= \min \{ i \,\,|\,\, S^{(i)} \in \FF \}, \\
    \text{and} \quad n &:= \max \{ i \,\,|\,\, S^{(i)} \in \FF \}.
  \end{align*}
\end{theorem}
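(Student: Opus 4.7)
\emph{Plan.} I would construct the stated maps between $\top\FF$ and each of $\simp\FF$, $\ind\P(\FF)$, and verify they are mutual inverses. First observe that for each $S \in \top\FF$ the set $\{i : S^{(i)} \in \FF\}$ is nonempty by definition and finite because $\Lambda$ is of finite representation type (Proposition \ref{nakayamaprop}(2)), so both $m$ and $n$ are well-defined integers. The core of the argument is then the two claims that $S^{(m)}$ is simple in $\FF$ and $S^{(n)}$ is projective in $\FF$.

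For simplicity of $S^{(m)}$: given a conflation $0 \to K \to S^{(m)} \to N \to 0$ in $\FF$ with $K, N \neq 0$, uniseriality of $S^{(m)}$ forces $K \subseteq \rad S^{(m)}$, so the nonzero quotient $N$ is uniserial with top $S$ and length strictly less than $m$; i.e.\ $N \iso S^{(l(N))}$ with $l(N) < m$, and $N \in \FF$ contradicts the minimality of $m$. For projectivity of $S^{(n)}$: given a conflation $0 \to L \to M \to S^{(n)} \to 0$ in $\FF$, decompose $M = \bigoplus_i M_i$ into indecomposables. Since $S^{(n)}$ is uniserial with unique maximal submodule $\rad S^{(n)}$, not every image of the restrictions $M_i \to S^{(n)}$ can lie in $\rad S^{(n)}$, so some summand $M_j$ surjects onto $S^{(n)}$. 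That $M_j$ is uniserial with top $S$ (the surjection induces $\top M_j \defl \top S^{(n)} = S$), hence $M_j \iso S^{(l(M_j))}$ with $l(M_j) \geq n$; but $M_j \in \FF$ as a submodule of $M \in \FF$, so maximality of $n$ forces $l(M_j) = n$, the surjection $M_j \defl S^{(n)}$ becomes an isomorphism, and the inclusion $M_j \hookrightarrow M$ composed with its inverse splits the conflation.

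For the reverse direction, let $T \in \simp\FF$. Then $T$ must be indecomposable as a $\Lambda$-module: any nontrivial decomposition $T \iso T_1 \oplus T_2$ gives summands in $\FF$ (which is closed under submodules) and hence a split nontrivial conflation in $\FF$, contradicting simplicity. Thus $T \iso S^{(l(T))}$ with $S = \top T \in \top\FF$; if $l(T) > m$, the surjection $T \defl S^{(m)}$ from Proposition \ref{nakayamaprop}(3) has kernel in $\FF$, producing a nontrivial conflation in $\FF$ that contradicts simplicity, so $l(T) = m$. Symmetrically, if $Q \in \ind\P(\FF)$ then $Q \iso S^{(l(Q))}$ with $S \in \top\FF$; if $l(Q) < n$ the surjection $S^{(n)} \defl Q$ yields a conflation in $\FF$ which must split by projectivity of $Q$, contradicting indecomposability of $S^{(n)}$. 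Hence $l(Q) = n$. The main obstacle I anticipate is the projectivity argument for $S^{(n)}$: locating a direct summand of $M$ that surjects onto $S^{(n)}$ and then using maximality of $n$ together with uniseriality to pin it down as $S^{(n)}$ itself; the rest is mechanical once one exploits closure of $\FF$ under submodules together with the rigid classification of indecomposables in Proposition \ref{nakayamaprop}.
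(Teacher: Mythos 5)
Your proof is correct, and it diverges from the paper's in one substantive place: the argument for why $S^{(n)}$ is projective in $\FF$. The paper first shows that the map $\top\colon\ind\P(\FF)\to\top\FF$ is \emph{surjective} by invoking the fact that $\FF$ has enough projectives (Proposition~\ref{assumprop}, which rests on Assumption~\ref{assumperp} via functorial finiteness) to deflate an arbitrary indecomposable $M\in\FF$ from projectives; then it eliminates all $S^{(i)}$ with $i<n$ as candidates, concluding indirectly that $S^{(n)}$ is the projective one. You instead give a direct, self-contained verification that $S^{(n)}$ is projective: given a conflation $0\to L\to M\to S^{(n)}\to 0$ in $\FF$, you decompose $M$ into indecomposables, observe that since $\rad S^{(n)}$ is the unique maximal submodule of the uniserial module $S^{(n)}$ some summand $M_j$ must surject onto $S^{(n)}$, and then use maximality of $n$ together with $M_j\in\FF$ (torsion-free classes are closed under submodules, hence under direct summands) to force $M_j\iso S^{(n)}$ and split the conflation. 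Your route avoids any appeal to the existence of enough projectives in $\FF$ and stays entirely within the structure theory of Nakayama algebras, which is a genuine simplification; the paper's route is shorter once Proposition~\ref{assumprop} is available and emphasizes the general machinery used throughout Section~5. The simplicity of $S^{(m)}$ and the reverse directions (any simple is $S^{(m)}$, any indecomposable projective is $S^{(n)}$) match the paper's arguments, with your version spelling out explicitly that simple objects of $\FF$ are indecomposable as $\Lambda$-modules, a point the paper leaves implicit.
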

\begin{proof}
  Let us denote by $\top \colon \ind\FF \to \top \FF$ the map which sends $M$ to $\top M$. This induces maps $\simp\FF \to \top \FF$ and $\ind\P(\FF) \to \top\FF$. We will show that these two maps are bijections.

  ($\top \colon \simp\FF \to \top \FF$ is a bijection):
  First, we will show that this map is a surjection.
  For $S \in \top\FF$, put $m := \min \{ i \,\,|\,\, S^{(i)} \in \FF \}$. We claim that $S^{(m)}$ is a simple object in $\FF$. Suppose this is not the case. Then there exists a short exact sequence
  \[
  0 \to K \to S^{(m)} \to M \to 0
  \]
  in $\FF$ with $K, M \neq 0$. Since $M$ is a quotient of $S^{(m)}$, it has top $S$. But this contradicts the minimality of $m$ because $0 < l(M) < l(S^{(m)}) = m$ holds. Thus, $S^{(m)}$ belongs to $\simp\FF$. Hence the map $\simp\FF \to \top\FF$ is a surjection.

  Next, we will show that this map is an injection. Suppose that this is not the case. Then there exist a simple module $S$ and $0 < i < j$ such that both $S^{(i)}$ and $S^{(j)}$ are simple in $\FF$. Then we have a short exact sequence
  \[
  0 \to K \to S^{(j)} \to S^{(i)} \to 0
  \]
  in $\mod\Lambda$ by Proposition \ref{nakayamaprop} (3), and $K \neq 0$ since $i \neq j$.
  However, $K$ belongs to $\FF$ since $\FF$ is closed under submodules. Thus, the above is a conflation in $\FF$, which shows that $S^{(j)}$ is not a simple object in $\FF$. This is a contradiction, so $\simp\FF \to \top\FF$ is an injection.

  It is clear from the above argument that the inverse of the map $\top \colon \simp\FF \to \top \FF$ is given by $S \mapsto S^{(m)}$ as claimed.

  ($\top \colon \ind\P(\FF) \to \top\FF$ is a bijection):
  First, we will show that this map is a surjection. Let $M$ be an indecomposable object in $\FF$ and put $S:= \top M$. Since $\FF$ satisfies Assumption \ref{assumperp}, it has enough projectives by Proposition \ref{assumprop}
  \footnote{Another way to show this is to use \cite[Corollary 3.15]{en2}: Every Hom-finite Krull-Schmidt exact $k$-category has enough projectives if it has only finitely many indecomposables.}. Thus, there exists a deflation
  \[
  P_1 \oplus P_2 \oplus \cdots \oplus P_l \defl M
  \]
  in $\FF$, where $P_i \in \ind\P(\FF)$ for each $i$. This map is a surjection in $\mod\Lambda$, so it induces a surjection
  \[
  \top P_1 \oplus \top P_2 \cdots \oplus \top P_l \defl \top M = S.
  \]
  It follows that $\top P_i = S$ holds for some $i$. This means that the map $\ind\P(\FF) \to \top\FF$ is surjective.

  Next, we will show that this map is an injection. Suppose that this is not the case. Then there exists a simple module $S$ and $0 < i < j$ such that both $S^{(i)}$ and $S^{(j)}$ are projective objects in $\FF$. Now Proposition \ref{nakayamaprop} (3) implies that there exists a short exact sequence
  \[
  0 \to K \to S^{(j)} \to S^{(i)} \to 0
  \]
  in $\mod\Lambda$. Since $\FF$ is closed under submodules, this is a conflation in $\FF$, and since $i < j$, we have $K \neq 0$. However, the projectivity of $S^{(i)}$ implies that the above sequence splits, which is a contradiction since $S^{(j)}$ is indecomposable. Thus, $\top \colon \ind \P(\FF) \to \top \FF$ is an injection.

  Finally, we shall describe the inverse of $\top \colon \ind \P(\FF) \to \top\FF$.
  For an object $S \in \top \FF$, put $n := \max \{ i \,\,|\,\, S^{(i)} \in \FF \}$, and we claim that the map $S \mapsto S^{(n)}$ is the inverse.
  Take any $1 \leq i < n$ with $S^{(i)} \in \FF$. Then as in the proof of injectivity, we have the following conflation in $\FF$
  \[
  0 \to K \to S^{(n)} \to S^{(i)} \to 0
  \]
  with $K \neq 0$, and this sequence does not split since $S^{(n)}$ is indecomposable. Thus, $S^{(i)}$ cannot be projective in $\FF$. By this fact and the fact that $\top \colon \ind\P(\FF) \to \top \FF$ is surjective, we must have that the inverse of this map is given by $S \mapsto S^{(n)}$.
\end{proof}
As an immediate corollary, we obtain the following result.
\begin{corollary}\label{nakayamamain}
  Let $\Lambda$ be a Nakayama algebra. Then every torsion-free class and every torsion class in $\mod\Lambda$ satisfies (JHP).
\end{corollary}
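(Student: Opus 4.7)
My plan is to deduce this immediately from the two ingredients that are already in hand: the bijections established in Theorem \ref{nakasimp} and the numerical criterion for (JHP) provided by Theorem \ref{artinmain}. So the proof will essentially be a one-line combination, and the bulk of the work has already been done.

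First I would verify that $\FF$ satisfies Assumption \ref{assumperp}, so that Theorem \ref{artinmain} applies. Since $\Lambda$ is Nakayama, $\mod\Lambda$ has only finitely many indecomposables (Proposition \ref{nakayamaprop}), and in particular every torsion-free class $\FF$ is functorially finite. Then Example \ref{perpex} (3) says that $\FF$ is exact equivalent to $^\perp U$ for some cotilting module $U$ (with $\id U \leq 1$), and hence Assumption \ref{assumperp} is satisfied.

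Next I would invoke Theorem \ref{artinmain}, which reduces (JHP) for $\FF$ to the numerical equality
\[
\#\simp\FF = \#\ind\P(\FF).
\]
But this is precisely what Theorem \ref{nakasimp} delivers: it exhibits bijections $\simp\FF \cong \top\FF \cong \ind\P(\FF)$, both realized through the top functor, with explicit inverses $S\mapsto S^{(m)}$ and $S\mapsto S^{(n)}$ respectively. In particular $\simp\FF$ and $\ind\P(\FF)$ have the same (finite) cardinality, so the equality holds and (JHP) follows.

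There is really no obstacle to overcome at this stage, since the hard work was absorbed into Theorem \ref{nakasimp}: the uniseriality of indecomposable modules over a Nakayama algebra (Proposition \ref{nakayamaprop}) was what let us simultaneously control simples and projectives in $\FF$ in terms of the same parameter set $\top\FF$. If anything, I would just make sure to state explicitly in the proof that the passage from Theorem \ref{nakasimp} to the numerical condition of Theorem \ref{artinmain} requires only that $\top\FF$ is a finite set, which is clear since there are only finitely many simple $\Lambda$-modules.
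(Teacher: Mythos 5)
Your proof is correct and follows exactly the paper's argument: establish that $\FF$ satisfies Assumption~\ref{assumperp} via finite representation type and Example~\ref{perpex}~(3), then combine the equality $\#\simp\FF=\#\ind\P(\FF)$ from Theorem~\ref{nakasimp} with the numerical criterion of Theorem~\ref{artinmain}. The additional remarks about functorial finiteness are precisely the observation the paper makes just before Theorem~\ref{nakasimp}.
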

\begin{proof}
  Let $\FF$ be a torsion-free class in $\mod\Lambda$. By Theorem \ref{nakasimp}, we have an equality $\# \simp\FF = \# \ind \P(\FF)$. Thus, $\FF$ satisfies (JHP) by Theorem \ref{artinmain}.
  The assertion for a torsion class follows from the standard duality $D \colon \mod\Lambda \leftrightarrow \mod\Lambda^{\op}$, since $\Lambda^{\op}$ is also Nakayama and an exact category satisfies (JHP) if so is its opposite category.
\end{proof}

\section{Torsion-free classes of type A quivers and Bruhat inversions}\label{typeasec}
In this section, we investigate simple objects in a torsion-free class of the category of representations of a quiver of type A by using the combinatorics of the symmetric group. For a quiver $Q$, we denote by $k Q$ the path algebra of $Q$ over a field $k$. As usual, we identify representations of $Q$ with right $kQ$-modules.

For an acyclic quiver $Q$, a classification of torsion-free classes of $\mod kQ$ with finitely many indecomposables is known: they are in bijection with so called \emph{$c$-sortable elements} of the Coxeter group of $Q$ (\cite{it} for the Dynkin case and \cite{airt,thomas} for the general case).
For a quiver of type A, the corresponding Coxeter group is just a symmetric group, and we can describe all indecomposable $kQ$-modules in a quite explicit way. In this section, we freely use these description particular to type A, but in a forthcoming paper \cite{enforth}, we will show that the results are valid in other Dynkin types, or more generally, preprojective algebras of Dynkin types (see Remark \ref{forthcoming}).
\subsection{Bruhat inversions of elements in the symmetric group}
First, we recall combinatorial notions on the symmetric groups we need later. These notions are well-studied in the context of Coxeter groups, but we will give an explicit description for type A case here for the convenience of the reader.
The standard reference is \cite{bb}.

We denote by $S_{n+1}$ the symmetric group which acts on the set $\{1,2, \cdots, n, n+1 \}$ from left.
We often use the \emph{one-line notation} to represent elements of $S_{n+1}$, that is, we write $w = w(1)w(2) \cdots w(n+1)$ for $w\in S_{n+1}$.
We denote by $(i \,\, j)$ for $1 \leq i,j \leq n+1$ the transposition of the letters $i$ and $j$, and write $T$ for the set of all transpositions in $S_{n+1}$. Then $S_{n+1}$ is generated by the \emph{simple reflections} $s_i:= (i \,\, i+1)$ for $1 \leq i \leq n$. We write $S$ for the set of all simple reflections.
For example, we have $s_2 s_1 s_3 s_2 = 3412$ in $S_4$.
Note that for a transposition $t = (i \,\,j)$ and $w \in S_{n+1}$, the element $t w $ is obtained by interchanging two letters $i$ and $j$ in the one-line notation for $w$, e.g. we have $(3 \,\, 4) \cdot 3412 = 4312$.

Each element $w \in S_{n+1}$ can be written as a product of simple reflections:
\[
w = s_{i_1} s_{i_2} \cdots s_{i_l}
\]
This expression of $w$ is called \emph{reduced} if $l$ is the minimal among all such expressions. In this case, we call $l$ the \emph{length} of $w$ and write $\ell(w) := l$.

For an element $w \in S_{n+1}$, a transposition $t\in T$ is called an \emph{inversion of $w$} if $\ell(t w) < \ell(w)$ holds, and we denote by $\inv (w)$ the set of all inversions of $w$. It is known that a transposition $(i \,\, j)$ with $i < j $ is an inversion of $w$ if and only if $j$ precedes $i$ in the one-line notation for $w$, that is, $w^{-1}(i) > w^{-1}(j)$. It is also known that $\ell(w) = \# \inv (w)$ holds.

\begin{example}\label{exinv}
  The following are examples of inversions in $S_5$.
  \begin{enumerate}
    \item For $w_1 = s_1 s_3 s_2 s_4 s_1 s_3 s_2 s_4 = 45231$ we have:
    \[
    \inv(w_1) = \{ (1\,\,2), \, (1\,\,3), \, (1\,\,4), \, (1\,\,5), \, (2\,\,4), \, (2\,\,5), \, (3\,\,4), \, (3\,\,5)\}.
    \]
    \item For $w_2 = s_1 s_3 s_2 s_4 s_1 s_3 s_2 s_1 = 54213$ we have:
    \[
    \inv(w_2) = \{ (1\,\,2), \, (1\,\,4),\, (1\,\,5),\, (2\,\,4),\,(2\,\,5),\, (3\,\,4),\,(3\,\,5),\, (4\,\,5) \}
    \]
  \end{enumerate}
\end{example}

The following class of inversions plays an important role in this paper, since we shall see that this corresponds to simple objects in a torsion-free class.
\begin{definition}\label{binvdef}
  We say that an inversion $t$ of an element $w \in S_{n+1}$ is a \emph{Bruhat inversion of $w$} if it satisfies $\ell(t w) = \ell(w) - 1$. We denote by $\Binv(w)$ the set of Bruhat inversions of $w$.
\end{definition}
We can interpret Bruhat inversions in terms of the \emph{cover relation of the Bruhat order}.
Recall that the Bruhat order on $S_{n+1}$ is a partial order $\leq$ generated by the following relation: \emph{for every $t\in T$ and $w \in S_{n+1}$ with $\ell(tw) < \ell(w)$, we have that $t w < w$ holds}. In what follows, we always denote by $\leq$ (and $<$) the Bruhat order on $S_{n+1}$.

\begin{lemma}[{\cite[Lemma 2.1.4]{bb}}]\label{binvlem}
  For a transposition $t = (i \,\, j) \in T$ with $i < j$ and an element $w \in S_{n+1}$, the following are equivalent:
  \begin{enumerate}
    \item $t$ is a Bruhat inversion of $w$.
    \item $tw$ is covered by $w$ in the Bruhat order, that is, $tw < w$ holds and there exists no element $u\in S_{n+1}$ satisfying $tw < u < w$.
    \item In the one-line notation for $w$, the letter $j$ precedes $i$, and there exists no $l$ with $i < l < j$ such that the letter $l$ appears between $j$ and $i$.
    \item $(i\,\,j) \in \inv(w)$ and there exists no $l$ with $i < l < j$ satisfying $(i\,\,l), (l\,\,j) \in \inv(w)$.
  \end{enumerate}
\end{lemma}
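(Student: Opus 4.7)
The plan is to prove the four conditions equivalent by splitting into three separate equivalences: $(1) \Leftrightarrow (2)$ from the general theory of the Bruhat order, $(1) \Leftrightarrow (3)$ via an explicit inversion count, and $(3) \Leftrightarrow (4)$ by direct translation between the one-line and inversion-set descriptions.

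For $(1) \Leftrightarrow (2)$, I would appeal to the standard fact (see \cite[Chapter 2]{bb}) that the covering relations of the Bruhat order on $S_{n+1}$ are exactly pairs $\{tw, w\}$ with $t \in T$ and $\ell(tw) = \ell(w)-1$. Bruhat inversions are by definition precisely such $t$, so (1) and (2) become literally the same statement.

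The core computation is $(1) \Leftrightarrow (3)$. Set $p = w^{-1}(j)$ and $q = w^{-1}(i)$, so $(i\,j) \in \inv(w)$ is equivalent to $p < q$. Left multiplication by $t = (i\,j)$ swaps the letters $i$ and $j$ in positions $p$ and $q$ of the one-line notation, fixing all other letters. The plan is to compute $\ell(w) - \ell(tw)$ by comparing inversion sets directly: pairs of positions neither of which is $p$ or $q$ contribute equally to both sides; the pair $(p,q)$ itself contributes $1$ to $\inv(w)$ and $0$ to $\inv(tw)$; and for each intermediate $r$ with $p < r < q$, a trichotomy on $l := w(r)$ (namely $l < i$, $\; i < l < j$, or $\; l > j$) shows that only the middle case changes the count, and it changes it by $2$. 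This will yield
\[
\ell(tw) = \ell(w) - 1 - 2\#\{\, r \mid p < r < q,\ i < w(r) < j\,\},
\]
so $\ell(tw) = \ell(w) - 1$ precisely when the indicated set is empty, which is condition (3).

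For $(3) \Leftrightarrow (4)$, I would simply observe that for $i < l < j$, the condition $(i\,l) \in \inv(w)$ means $l$ precedes $i$ in the one-line notation while $(l\,j) \in \inv(w)$ means $j$ precedes $l$; both hold simultaneously iff $l$ appears strictly between $j$ and $i$ in the one-line notation of $w$. Hence the ``forbidden $l$'' in (3) coincides with that in (4). The main obstacle is the inversion count for $(1) \Leftrightarrow (3)$, which is the only step requiring genuine case analysis; everything else is either a reformulation or a direct appeal to \cite{bb}.
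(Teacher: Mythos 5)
The paper does not actually prove this lemma; it is attributed to \cite[Lemma 2.1.4]{bb} and used as a black box, so there is no internal proof to compare against. Your blind attempt is, in effect, a reconstruction of the Bj\"orner--Brenti argument and it is substantially correct: with $p = w^{-1}(j) < q = w^{-1}(i)$, the formula
\[
\ell(tw) = \ell(w) - 1 - 2\,\#\{\, r \mid p < r < q,\ i < w(r) < j \,\}
\]
is right, and vanishing of the correction term is exactly condition~(3). The translation $(3)\Leftrightarrow(4)$ is indeed immediate as you describe. For $(1)\Leftrightarrow(2)$, appealing to the cover characterization of the Bruhat order is legitimate, but note that this characterization already encodes the strict length-monotonicity of the Bruhat order and the chain property of Bruhat intervals, so $(2)$ is not a mere rewording of $(1)$---the equivalence is being outsourced entirely to \cite{bb}, which is what the paper itself does for the whole lemma.

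There is, however, a genuine gap in your inversion count for $(1)\Leftrightarrow(3)$. Your case split covers pairs of positions avoiding $\{p,q\}$, the single pair $(p,q)$, and the pairs $(p,r),(r,q)$ with $p<r<q$. You must also account for pairs involving exactly one of $p,q$ and an \emph{outer} position, namely $(a,p),(a,q)$ with $a<p$ and $(p,b),(q,b)$ with $b>q$. For such an outer position $a$, the same trichotomy on $w(a)$ applies: if $w(a)<i$ or $w(a)>j$ neither pair changes status, and if $i<w(a)<j$ then $(a,p)$ gains an inversion while $(a,q)$ loses one (or vice versa for $b>q$), so the two changes cancel and the net contribution is zero. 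This cancellation is precisely why only intermediate positions survive into your formula, but it is a step of the argument and should not be elided; as written, your enumeration of cases is incomplete even though the conclusion is correct.
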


\begin{example}
  The following are Bruhat inversions of elements in Example \ref{exinv}.
  \begin{enumerate}
    \item For $w_1 = 45231$, we have:
    \[
    \Binv(w_1) = \{ (1\,\,2), \, (1\,\,3), \, (2\,\,4), \, (2\,\,5), \, (3\,\,4), \, (3\,\,5)\}.
    \]
    \item For $w_2 = 54213$, we have:
    \[
    \Binv(w_2) = \{ (1\,\,2), \, (2\,\,4),\, (3\,\,4),\, (4\,\,5) \}
    \]
  \end{enumerate}
  Note that $\# \inv(w_1) = \# \inv(w_2) = 8$, but $\Binv(w_1) = 6 > 4 = \#\Binv(w_2)$.
\end{example}

\begin{remark}\label{rem:coverref}
  There is a notion which is related to but different from Bruhat inversions, \emph{cover reflections} (see e.g. \cite[Section 1]{reading}). A transposition $t = (i \,\, j) \in T$ with $i < j$ is called a \emph{cover reflection} of $w \in S_{n+1}$ if $tw$ is covered by $w$ in the \emph{right weak order} on $S_{n+1}$, or equivalently $\cdots j i \cdots$ appears in the one-line notation for $w$.
  For example, cover reflections for $45231$ are $(2 \,\, 5)$ and $(1 \, \, 3)$.
  In view of Lemma \ref{binvlem}, the difference of these two concepts is that which poset structure we consider on $S_{n+1}$, the Bruhat order or the right weak order.
  It is immediate from definitions that the set of cover reflections for $w$ is a subset of $\Binv(w)$ (a proper subset in general). See Remark \ref{rem:wide} for a representation-theoretic interpretation of the relation between these two sets.
\end{remark}

We will use the notion of support of elements in $S_{n+1}$.
\begin{definition}
  Let $w$ be an element of $S_{n+1}$. Then $i \in \{ 1, \cdots, n \}$ is called a \emph{support} of $w$ if there exists some reduced expression of $w$ which contains $s_i$. We denote by $\supp(w)$ the set of all supports of $w$. We say that $w$ \emph{has full support} if $\supp(w) = \{ 1, 2, \cdots, n \}$.
\end{definition}
In fact, if $i$ is in $\supp(w)$, then any reduced expression of $w$ contains $s_i$ (\cite[Corollary 1.4.8]{bb}). We will use the following characterization of the support later.
\begin{lemma}\label{supplem}
  For $w \in S_{n+1}$ and $i \in \{1, \cdots, n \}$, the following are equivalent:
  \begin{enumerate}
    \item $i \in \supp(w)$.
    \item There exists some $j$ with $1 \leq j \leq i$ such that $w^{-1}(j) > i$, that is, $j$ does not appear in the initial segment of length $i$ in the one-line notation for $w$.
    \item There exists some $j$ with $1 \leq j \leq i$ such that $w(j) > i$.
    \item There exists some $l$ with $i < l$ such that $l$ appears in the initial segment of length $i$ in the one-line notation for $w$.
  \end{enumerate}
\end{lemma}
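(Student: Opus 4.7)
The plan is to observe that (2), (3), (4) are more or less restatements of the same combinatorial condition, and then reduce the substantive equivalence (1) $\iff$ (3) to a standard fact about parabolic subgroups of $S_{n+1}$.

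First, I would dispatch the equivalences among (2), (3), (4). Conditions (3) and (4) are literally the same with $l = w(j)$, since a value $l > i$ appears in the initial segment of length $i$ iff $l = w(j)$ for some $j \leq i$. For (2) $\iff$ (3), I would note that both are equivalent to the statement that the set $\{w(1), \ldots, w(i)\}$ is \emph{not} equal to $\{1, \ldots, i\}$: condition (3) says this set contains something $> i$, while (2) says $\{1, \ldots, i\}$ contains something not in this set, and since both sets have cardinality $i$, the two failures are equivalent.

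For the key equivalence (1) $\iff$ (3), I would work with the parabolic subgroup $W_{\hat\imath} := \langle s_j \mid j \in \{1, \ldots, n\} \setminus \{i\} \rangle$. It is a standard observation that $W_{\hat\imath}$ splits as $S_{\{1,\ldots,i\}} \times S_{\{i+1,\ldots,n+1\}}$, where $s_1, \ldots, s_{i-1}$ generate the first factor and $s_{i+1}, \ldots, s_n$ generate the second. In particular, $W_{\hat\imath}$ consists precisely of those permutations which stabilize $\{1, \ldots, i\}$ setwise. On the other hand, by \cite[Corollary 1.4.8]{bb}, the set of simple reflections appearing in a reduced expression of $w$ is an invariant of $w$, so $i \notin \supp(w)$ is equivalent to $w \in W_{\hat\imath}$. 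Combining these, $i \notin \supp(w)$ iff $\{w(1), \ldots, w(i)\} = \{1, \ldots, i\}$, which is the negation of (3).

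There is no real obstacle here; the only subtlety is citing the fact that $\supp(w)$ is well-defined (independent of the chosen reduced expression), which is already recorded in the paragraph preceding the lemma. The rest is bookkeeping with one-line notation.
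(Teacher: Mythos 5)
The paper states Lemma \ref{supplem} without proof (the lemma is immediately followed by worked examples, not a proof environment), so there is no argument in the source to compare against. Your proof is correct and is the natural one: collapsing (2), (3), (4) to the single condition $\{w(1), \dots, w(i)\} \neq \{1, \dots, i\}$ is the right bookkeeping (for (2) $\iff$ (3) the cardinality observation is the small point that needs to be said, and you say it), and for (1) $\iff$ (3) the identification of the parabolic $W_{\hat\imath} = \langle s_j : j \neq i \rangle$ with the setwise stabilizer of $\{1, \dots, i\}$, together with the invariance of $\supp(w)$ across reduced expressions from \cite[Corollary 1.4.8]{bb} (which also gives $w \in W_{\hat\imath} \iff i \notin \supp(w)$), is exactly the standard route. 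No gaps.
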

For example, by Using this criterion, we can easily check that $\supp(45231) = \{1,2,3,4,5\}$, $\supp(21543) = \{1,3,4\}$, and $\supp(12543)= \{3,4\}$.

\subsection{Coxeter element, c-sortable elements and torsion-free classes}
We will describe the \emph{Ingalls-Thomas bijection} \cite{it} between sortable elements and torsion-free classes for a quiver of type A, following \cite{thomas}.
In what follows, $Q$ is a quiver of type A with $n$ vertices, whose underlying graph is $\begin{tikzcd}
    1 \rar[dash] & 2 \rar[dash] & \cdots \rar[dash] & n
  \end{tikzcd}$.
As usual, we identify right $kQ$-modules over the path algebra $Q$ and representations of $Q$. For a $kQ$-module $M$ and a vertex $i$ of $Q$, we denote by $M_i$ the vector space attached to $i$.

A \emph{Coxeter element} of $S_{n+1}$ is an element $c \in S_{n+1}$ which is obtained as the product of all simple reflections $s_1, \cdots, s_n \in S$ in some order, or equivalently, an element with length $n$ which has full support.
We say that a Coxeter element $c$ is \emph{associated to $Q$} if $s_i$ appears before $s_j$ in $c$ whenever there exists an arrow $i \ot j$ in $Q$. It is known that Coxeter elements are in bijection with orientations of edges in the underlying graph of $Q$.

\begin{example}
  We give an example of the correspondence between Coxeter elements in $S_4$ and orientations of $A_3$ quiver.
  \[
  \begin{array}{c|c}
    \text{Orientations of $A_3$} & \text{Coxeter elements in $S_4$} \\ \hline
    1 \ot 2 \ot 3 & s_1 s_2 s_3 = 2341 \\
    1 \ot 2 \to 3 & s_1 s_3 s_2 = s_3 s_1 s_2 = 2413 \\
    1 \to 2 \ot 3 & s_2 s_1 s_3 = s_2 s_3 s_1 = 3142 \\
    1 \to 2 \to 3 & s_3 s_2 s_1 = 4123
    \end{array}
  \]
\end{example}
In what follows, we adopt the following convention:
\begin{assumption}\label{Aquiver}
  $Q$ is a quiver of type A with $n$ vertices, whose underlying graph is given by
  \[
  \begin{tikzcd}
    1 \rar[dash] & 2 \rar[dash] & \cdots \rar[dash] & n
  \end{tikzcd}
  \]
  and $c$ is the Coxeter element of $S_{n+1}$ associated with $Q$.
\end{assumption}

Torsion-free classes of $\mod kQ$ are classified by the combinatorial notion called \emph{$c$-sortable elements}, which was introduced by Reading \cite{reading}.

\begin{definition}
  Let $c$ be a Coxeter element of $S_{n+1}$. We say that an element $w$ of $S_{n+1}$ is \emph{$c$-sortable} if there exists a reduced expression of the form $w = c^{(0)} c^{(1)} \cdots c^{(m)}$ such that each $c^{(i)}$ is a subword of $c$ satisfying $\supp(c^{(0)}) \supset \supp(c^{(1)}) \supset \cdots \supset \supp(c^{(m)}).$
\end{definition}

Now we can state the correspondence between $c$-sortable elements and torsion-free classes. A \emph{support} of a module $M \in \mod kQ$ is a vertex $i \in Q$ with $M_i \neq 0$, and we denote by $\hc{i,j}$ the set of elements $l \in \{1,2,\cdots,n\}$ with $i \leq l < j$. The following is just a restatement of the well-known classification of indecomposable representations of $Q$.
\begin{proposition}\label{indectrans}
  Let $Q$ be a quiver as in Assumption \ref{Aquiver}. For a transposition $(i \,\, j) \in T$ with $i < j$, there exists a unique indecomposable module $M:= M_{\hc{i,j}}$ in $\mod kQ$ such that the set of all its supports is $\hc{i,j}$. This module is a representation of $Q$ defined by the following:
  \begin{itemize}
    \item For vertices, $M_l = k$ if $l \in \hc{i,j}$ and $M_l =0$ otherwise.
    \item For arrows $i \to j$ in $Q$, we put $\id_k \colon k \to k$ if $M_i = M_j = k$, and $0$ otherwise.
  \end{itemize}
  Moreover, this give a bijection $M_{\hc{\;}} \colon T \xrightarrow{\sim} \ind (\mod kQ)$ which restricts to $S \xrightarrow{\sim} \simp (\mod\Lambda)$.
\end{proposition}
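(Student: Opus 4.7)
The statement is essentially a repackaging of Gabriel's classification of indecomposable representations of an $A_n$ quiver, so my plan is to reduce to that classical result and then just verify that the proposed indexing by transpositions matches the usual one by intervals.

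First I would define $M_{\hc{i,j}}$ as described and check that it is a well-defined representation of $Q$: the only possible issue is that the structure map on an arrow $a\to b$ in $Q$ must be zero whenever one of the endpoints is outside $\hc{i,j}$, which is immediate from the definition. I would then verify indecomposability by a direct computation of $\End_{kQ}(M_{\hc{i,j}})$: any endomorphism is given on each vertex $l\in\hc{i,j}$ by a scalar $\lambda_l\in k$, and the commutativity along every arrow of $Q$ whose two endpoints both lie in $\hc{i,j}$ forces all the $\lambda_l$ to coincide (this uses that the underlying graph restricted to $\hc{i,j}$ is connected). Hence $\End_{kQ}(M_{\hc{i,j}})=k$, which is local, so $M_{\hc{i,j}}$ is indecomposable.

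Next I would invoke Gabriel's theorem for Dynkin type $A_n$: the path algebra $kQ$ is of finite representation type, and $M\mapsto \udim M$ induces a bijection between $\ind(\mod kQ)$ and the set of positive roots of the root system of type $A_n$. The positive roots of $A_n$ are precisely the vectors $\sum_{l\in\hc{i,j}} e_l$ for $1\le i<j\le n+1$, i.e.\ the indicator vectors of intervals $\hc{i,j}$. Since $\udim M_{\hc{i,j}}$ is exactly this indicator vector, the map $(i\,\,j)\mapsto [M_{\hc{i,j}}]$ is a bijection between $T$ and $\ind(\mod kQ)$. In particular, any indecomposable whose support equals $\hc{i,j}$ has this dimension vector, and hence is isomorphic to $M_{\hc{i,j}}$, proving uniqueness.

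Finally, I would identify the simples: under this bijection, $M_{\hc{i,j}}$ is simple (i.e.\ one-dimensional) iff $j=i+1$, in which case $\hc{i,i+1}=\{i\}$ and the corresponding transposition is the simple reflection $s_i=(i\,\,i+1)\in S$. This gives the claimed restriction $S\xrightarrow{\sim}\simp(\mod kQ)$. The only mild subtlety is to be precise about the orientation: since the construction of $M_{\hc{i,j}}$ uses only the underlying graph of $Q$, the result is independent of the orientation, whereas the Coxeter element $c$ does depend on it; but this independence is exactly what makes the indexing by $T$ natural, and no orientation-dependent verification is needed here. I expect no real obstacle, the main content being the appeal to Gabriel's theorem.
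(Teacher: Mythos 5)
Your proof is correct and in line with the paper, which does not give a detailed argument for this proposition but simply describes it as ``a restatement of the well-known classification of indecomposable representations'' of a type $A_n$ quiver, i.e.\ Gabriel's theorem. Your explicit reduction---connectedness of the support interval gives $\End_{kQ}(M_{\hc{i,j}})=k$, positive roots of $A_n$ are exactly the interval indicator vectors, simples correspond to unit vectors and hence to $s_i=(i\;\,i{+}1)$---is precisely the argument the paper leaves implicit.
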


\begin{example}
  Let $Q$ be the quiver $Q = 1 \to 2 \ot 3$. Then the Auslander-Reiten quiver of $\mod kQ$ is as follows, where indecomposables are labelled according to Proposition \ref{indectrans}.
  \[
    \begin{tikzpicture}
      [scale=0.8,  every node/.style={scale=.9}]
      \node (12) at (3,2) {$M_{\hc{1,2}}$};
      \node (13) at (1,0) {$M_{\hc{1,3}}$};
      \node (14) at (2,1) {$M_{\hc{1,4}}$};
      \node (23) at (0,1) {$M_{\hc{2,3}}$};
      \node (24) at (1,2) {$M_{\hc{2,4}}$};
      \node (34) at (3,0) {$M_{\hc{3,4}}$};

      \draw[->] (23) -- (24);
      \draw[->] (23) -- (13);
      \draw[->] (24) -- (14);
      \draw[->] (13) -- (14);
      \draw[->] (14) -- (12);
      \draw[->] (14) -- (34);
    \end{tikzpicture}
  \]
\end{example}
Now torsion-free classes of $\mod kQ$ are classified as follows. Here a \emph{support} of a subcategory $\FF$ of $\mod kQ$ is a vertex of $Q$ which is a support of some module in $\FF$.
\begin{theorem}[{\cite[Theorem 4.2]{thomas}}]\label{sortabletorf}
  Let $Q$ and $c$ be as in Assumption \ref{Aquiver}. For $w \in S_{n+1}$, define the subcategory $\FF(w)$ of $\mod kQ$ by
  \[
  \FF(w):= \add \{ M_{\hc{i,j}} \; | \; (i \,\, j) \in \inv (w) \text{ with $i < j$} \}.
  \]
  then the following hold.
  \begin{enumerate}
    \item If $w$ is $c$-sortable, then $\FF(w)$ is a torsion-free class of $\mod kQ$, and the bijection $T \xrightarrow{\sim} \ind(\mod kQ)$ given by $(i\,\,j) \mapsto M_{\hc{i,j}}$ restricts a bijection $\inv(w) \xrightarrow{\sim} \ind \FF(w)$.
    \item The map $w \mapsto \FF(w)$ gives a bijection between $c$-sortable elements of $S_{n+1}$ and torsion-free classes of $\mod kQ$.
    \item Let $w$ be a $c$-sortable element. Then $\supp(w)$ coincides with the set of all supports of $\FF(w)$, and the equality $\# \supp(w) = \#\ind \P(\FF(w))= \# \ind \I(\FF(w))$ holds.
  \end{enumerate}
\end{theorem}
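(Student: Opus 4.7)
The plan is to reduce everything to the interval combinatorics of Proposition \ref{indectrans}. First I would record the explicit homological data for $\mod kQ$: for intervals $\hc{i,j}$ and $\hc{k,l}$, both $\Hom$ and $\Ext^1$ between the corresponding interval modules are at most one-dimensional, and every non-split short exact sequence between interval modules has its middle term (indecomposable summands) expressible again as intervals whose endpoints are an elementary function of the outer ones. This dictionary turns each statement about $\FF(w)$ into a statement about the family of inversions $\{(i\,\,j)\in\inv(w)\}$.

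For (1), given a $c$-sortable reduced expression $w = c^{(0)} c^{(1)} \cdots c^{(m)}$, I would partition $\inv(w)$ according to which factor $c^{(k)}$ contributes each inversion; the nested support condition $\supp(c^{(0)})\supset\supp(c^{(1)})\supset\cdots$ forces the corresponding intervals to satisfy a nesting property. Using the interval description of sub- and extension operations from the previous step, one checks combinatorially that this family is closed under taking subintervals of middle terms and under the permitted extensions. This shows $\FF(w)$ is closed under subobjects and extensions in $\mod kQ$, giving the torsion-free class property, and the bijection $\inv(w)\xrightarrow{\sim}\ind\FF(w)$ is immediate from the definition.

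For (2), I would build the inverse by induction on the number of vertices. Given a torsion-free class $\FF$ of $\mod kQ$, pick a source $i$ of $Q$ and compare $\FF$ with the subcategory of modules in $\FF$ not supported at $i$; this subcategory is a torsion-free class over a smaller quiver $Q\setminus\{i\}$, to which induction applies. Tracking the simple reflections corresponding to the vertices peeled off, in the order dictated by the orientation of $Q$, produces a reduced word which is $c$-sortable, and verifying that this is inverse to $w\mapsto\FF(w)$ is a bookkeeping exercise reading off the inversion sets on each side.

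For (3), the statement that $\supp(w)$ equals the support of $\FF(w)$ follows from Lemma \ref{supplem} combined with part (1): $i\in\supp(w)$ exactly when there is some inversion $(j\,\,l)\in\inv(w)$ with $j\le i < l$, which in interval language means the corresponding $M_{\hc{j,l}}$ has nonzero component at vertex $i$. The harder content is $\#\ind\P(\FF(w))=\#\supp(w)$: using the explicit extensions from step one, an interval module in $\FF(w)$ is Ext-projective iff its endpoints are extremal among inversions of $w$ in a suitable one-sided sense dictated by the orientation of $Q$, and this extremal condition produces exactly one Ext-projective per element of $\supp(w)$; the injective count is dual. I expect the combinatorial closure argument in step (1) to be the main obstacle, since verifying extension-closure requires a careful case analysis of all possible non-split extensions of intervals, coordinated with the nesting enforced by $c$-sortability.
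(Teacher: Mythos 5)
Your proposal and the paper take quite different routes. The paper treats this theorem as essentially a restatement of known results: parts (1) and (2) are obtained by citing Thomas's theorem and translating his root-system conventions into the interval language of Proposition~\ref{indectrans}, and for part (3) the equalities $\#\ind\P(\FF(w))=\#\ind\I(\FF(w))$ and ``$\#\ind\I(\FF(w))=$ number of supports of $\FF(w)$'' are delegated to Example~\ref{perpex} and general $\tau$-tilting theory respectively, with the only fresh work being the root-theoretic check that $\supp(w)$ coincides with the support of $\FF(w)$. You, by contrast, propose to re-derive everything from first principles over the dictionary of interval modules. This is a legitimate alternative, and in one place it is actually an improvement: your argument that $i\in\supp(w)$ iff some $(j\,\,l)\in\inv(w)$ has $j\le i<l$ is a direct consequence of Lemma~\ref{supplem} (if $i\notin\supp(w)$ the initial segment of length $i$ is a permutation of $\{1,\dots,i\}$, precluding such an inversion; conversely one extracts $l>i$ inside the initial segment and $j'\le i$ outside it), which is simpler and avoids the root-system machinery the paper uses for the same claim. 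However, the remainder of your plan leaves the genuinely hard content as declared ``bookkeeping.'' For (1), verifying extension-closure from the nested-support condition in a $c$-sorting word is precisely the nontrivial combinatorial core of Ingalls--Thomas, and you acknowledge you have not carried it out. More seriously for (2), the ``peel a source'' induction as you describe it loses information: restricting $\FF$ to the smaller quiver $Q\setminus\{i\}$ does not determine $\FF$, and the established inductive proofs instead branch on whether $s_i\le w$ and rebuild both $\FF$ and $w$ through a parabolic/BGP-type recursion rather than simply deleting a vertex. As written, your step (2) does not yet contain an argument; it names an inverse construction without explaining how the discarded data is recovered. For the Ext-projective count in (3), classifying projectives as ``extremal intervals'' is plausible and avoids citing $\tau$-tilting theory, but again no details are supplied. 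In short: a workable but substantially heavier self-contained route, with a concrete gap in the inductive bijection for (2) and the closure verification for (1) left unargued.
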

\begin{proof}
  The proof is essentially contained in other references such as \cite{thomas}, but we will clarify the relation between our convention and others, since others often use the language of root systems.

  We freely use basics of the root system. Let $\Phi$ be a root system of type $A_n$ whose Dynkin graph is as in Assumption \ref{Aquiver}, and let $e_1, \cdots, e_n$ be the set of simple roots corresponding to vertices $1, \cdots, n$. We identify $S_{n+1}$ with the Weyl group $W$ of $\Phi$ as usual.
  Under this identification, a transposition $(i\,\,j)$ of $S_{n+1}$ corresponds to a reflection with respect to a positive root $\alpha_{\hc{i,j}}:= \sum_{l \in \hc{i,j}}e_l$. By this, the set of transpositions are in bijection with the set of positive roots.

  For a $kQ$-module $M$, we define the dimension vector $\udim M$ by $\udim M := \sum_l \dim_k (M_l) e_l$. Then $M_{\hc{i,j}}$ in Proposition \ref{indectrans} satisfies $\udim M_{\hc{i,j}} = \alpha_{\hc{i,j}}$.

  The following description of $\inv(w)$ is well-known, see e.g. \cite[Proposition 4.4.6]{bb}:
  \begin{lemma}\label{invlemma}
    Let $t\in S_{n+1} = W$ be a transposition, $\alpha \in \Phi$ the positive root corresponding to $t$ and $w= s_{i_1}s_{i_2}\cdots s_{i_r}$ a reduced expression of arbitrary $w\in W$. Then $t \in \inv(w)$ if and only if $\alpha = s_{i_1} s_{i_2} \cdots s_{i_{l-1}}(e_{i_l})$ for some $1 \leq l \leq r$.
  \end{lemma}
  Now return to our proof. (1) and (2) follow from \cite[Theorem 4.2]{thomas} and our above identification. Let us prove (3).

  Let $w$ be a $c$-sortable element.
  By Example \ref{perpex}, the equality $\#\ind\P(\FF(w)) = \#\ind \I(\FF(w))$ holds. On the other hand, the number of supports of $\FF(w)$ is equal to $\#\ind\I(\FF(w))$ by the general theory of support ($\tau$-)tilting theory, see e.g. \cite[Proposition 2.5(4), Lemma 2.9]{it} or \cite[Theorem 2.7]{air}. Thus, it suffices to show that the number of supports of $\FF(w)$ is equal to $\#\supp(w)$.

  Take a reduced expression $w = c^{(0)} c^{(1)} \cdots c^{(m)}$ such that $c^{(i)}$ is a subword of $c$ and satisfies $\supp(c^{(0)}) \supset \supp(c^{(1)}) \supset \cdots \supset \supp(c^{(m)})$.
  First, suppose that $l \in \supp(w)$, which means that $s_l$ appears in the word $c^{(0)}$. Then we can write the reduced expression of $c^{(0)}$ as  $c^{(0)} = c' s_l c''$ such that $c'$ (and $c''$) does not contain the letter $s_l$.
  Let $(i\,\,j)$ be a transposition which corresponds to a positive root $\alpha := c'(e_l)$, then it belongs to $\inv(w)$ by Lemma \ref{invlemma}. Clearly the $e_l$-component of $\alpha$ is equal to $1$, thus $l$ is a support of $M_{\hc{i,j}}$ since $\udim M_{\hc{i,j}} = \alpha$.

  Conversely, suppose that $l$ is a support of $M:= M_{\hc{i,j}}$ for some $(i\,\,j) \in \inv(w)$. Take a positive root which corresponds to $(i\,\,j)$. Then we have $\udim M = \alpha$, thus the $e_l$-component of $\alpha$ is strictly positive. On the other hand, Lemma \ref{invlemma} shows that $l$ must be appear in a reduced expression of $w$, thus $l \in \supp(w)$.
\end{proof}

\subsection{Bruhat inversions and simples}
Our main result in this section is the following, which establishes a bijection between simples in $\FF(w)$ and Bruhat inversions of $w$.
\begin{theorem}\label{binvsimp}
  Let $Q$ and $c$ be as in Assumption \ref{Aquiver} and $w$ a $c$-sortable element of $S_{n+1}$. Then the natural bijection $\inv(w) \xrightarrow{\sim} \ind \FF(w)$ restricts to a bijection $\Binv(w) \to \simp \FF(w)$.
  In other words, for $(i \,\, j)\in \inv(w)$ with $i < j$, the object $M_{\hc{i,j}}$ is simple in $\FF(w)$ if and only if $(i \,\, j)$ is a Bruhat inversion of $w$.
\end{theorem}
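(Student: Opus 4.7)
My plan is to use the combinatorial form of Bruhat inversions recorded in Lemma \ref{binvlem}(4): $(i\,\,j)$ fails to be a Bruhat inversion of $w$ precisely when there exists $l$ with $i<l<j$ and $(i\,\,l),(l\,\,j) \in \inv(w)$. Both implications then reduce to connecting this combinatorial condition to the existence of a proper conflation of $M_{\hc{i,j}}$ in $\FF(w)$.

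For the direction ``$(i\,\,j)$ not Bruhat $\Rightarrow M_{\hc{i,j}}$ not simple'', I would take the $l$ supplied by Lemma \ref{binvlem}(4). Both $M_{\hc{i,l}}$ and $M_{\hc{l,j}}$ then lie in $\FF(w)$ by Theorem \ref{sortabletorf}. A direct inspection of subrepresentation structure for $M_{\hc{i,j}}$ shows that, depending on which way the arrow of $Q$ between vertices $l-1$ and $l$ points, exactly one of $M_{\hc{i,l}}$ or $M_{\hc{l,j}}$ is a submodule of $M_{\hc{i,j}}$ and the other is the quotient. In either case we obtain a proper short exact sequence in $\mod kQ$ with all three terms in $\FF(w)$, i.e.\ a proper conflation of $M_{\hc{i,j}}$ in $\FF(w)$, contradicting simplicity.

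For the converse, suppose $M_{\hc{i,j}}$ admits a proper conflation $0 \to A \to M_{\hc{i,j}} \to B \to 0$ in $\FF(w)$. Because $M_{\hc{i,j}}$ has every vertex-dimension at most one, the indecomposable summands of $A$ must be interval modules $M_{\hc{a_s,b_s}}$ with pairwise disjoint supports inside $\hc{i,j}$. The quotient $B$ then decomposes as the direct sum of interval modules supported on the connected components of $\hc{i,j}\setminus \bigcup_s \hc{a_s,b_s}$. Altogether this produces a consecutive tiling $i = e_0 < e_1 < \cdots < e_k = j$ of $\hc{i,j}$ whose intervals $\hc{e_{r-1},e_r}$ are indecomposable summands of $A$ or of $B$. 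Since torsion-free classes are closed under direct summands, every such summand lies in $\FF(w)$, so $(e_{r-1}\,\,e_r) \in \inv(w)$ by Theorem \ref{sortabletorf}. Because $A$ and $B$ are both nonzero, $k \geq 2$.

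To finish, I would use the transitivity of $\inv(w)$: if $(a\,\,b),(b\,\,c) \in \inv(w)$ with $a<b<c$, then $w^{-1}(c) < w^{-1}(b) < w^{-1}(a)$, which forces $(a\,\,c) \in \inv(w)$. Iterating this along $e_0 < e_1 < \cdots < e_k$ yields $(i\,\,e_1),(e_1\,\,j) \in \inv(w)$ with $i < e_1 < j$, so $(i\,\,j) \notin \Binv(w)$ by Lemma \ref{binvlem}(4). I expect the main obstacle to be justifying the tiling claim carefully — specifically, verifying that submodules of an interval module $M_{\hc{i,j}}$ are precisely direct sums of subinterval modules with pairwise disjoint supports. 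This relies on the explicit description of subrepresentations of $M_{\hc{i,j}}$ as those subsets of $\hc{i,j}$ that are closed under outgoing arrows of $Q$ restricted to this interval, and in particular on the orientation-dependent conditions at the endpoints of each chosen subinterval.
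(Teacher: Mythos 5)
Your first implication is essentially identical to the paper's (and to the paper's Lemma on the two possible short exact sequences $M_{\hc{i,l}} \hookrightarrow M_{\hc{i,j}} \twoheadrightarrow M_{\hc{l,j}}$ or its mirror). For the second implication, however, you take a genuinely different route. The paper picks a \emph{single} indecomposable direct summand $M_{\hc{l,l'}}$ of the subobject, invokes exactly one explicit exact sequence to place $M_{\hc{i,l}} \oplus M_{\hc{l',j}}$ in $\FF(w)$, and then uses that torsion-free classes are closed under \emph{submodules} to conclude $M_{\hc{l,j}} \in \FF(w)$ and hence $(i\,\,l), (l\,\,j) \in \inv(w)$. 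You instead fully decompose both the subobject $A$ and the quotient $B$ into interval modules, obtaining a consecutive tiling $i = e_0 < e_1 < \cdots < e_k = j$ with $k \geq 2$ whose pieces all lie in $\FF(w)$ by closure under direct summands, and then collapse the right-hand side of the tiling into a single inversion $(e_1\,\,j)$ via transitivity of $\inv(w)$ (which holds because $w^{-1}$ strictly decreases along each $(e_{r-1}\,\,e_r)$). This is correct: your transitivity step is an elementary combinatorial observation that replaces the use of closure under submodules, and the tiling claim you flag as the main obstacle does go through because $M_{\hc{i,j}}$ is one-dimensional at each vertex, so any subrepresentation is determined by a support closed under outgoing arrows of $Q$ inside $\hc{i,j}$, its connected components are intervals, and the quotient is supported on the complementary intervals with identity maps wherever possible. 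The paper's route is more economical in that it requires examining only one summand; yours is slightly more robust in that it only needs closure under direct summands (plus the combinatorial transitivity), which is worth noting since it would survive in settings where closure under submodules is unavailable.
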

Before we give a proof, let us state the immediate consequence of this, which characterizes (JHP) in a purely combinatorial way. See Example \ref{main1ex} for the actual example of Theorem \ref{binvsimp} and Corollary \ref{jhtypea}.
\begin{corollary}\label{jhtypea}
  Let $Q$ and $c$ be as in Assumption \ref{Aquiver} and $w$ a $c$-sortable element of $S_{n+1}$. Then $\FF(w)$ satisfies (JHP) if and only if $\# \supp(w) = \# \Binv(w)$ holds.
\end{corollary}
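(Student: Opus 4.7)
The plan is to assemble this corollary as a direct consequence of the main criterion Theorem~\ref{artinmain} together with the two previously established bijections. First I would observe that, since $\mod kQ$ has only finitely many indecomposables for $Q$ of Dynkin type A, every torsion-free class (and in particular $\FF(w)$) is functorially finite. Hence by Example~\ref{perpex}(3), the exact category $\FF(w)$ satisfies Assumption~\ref{assumperp}, so Theorem~\ref{artinmain} applies: $\FF(w)$ satisfies (JHP) if and only if
\[
\#\simp \FF(w) \;=\; \#\ind \P(\FF(w)).
\]

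Next I would identify both sides of this equality with the combinatorial quantities appearing in the statement. For the left-hand side, Theorem~\ref{binvsimp} gives a bijection $\Binv(w) \xrightarrow{\sim} \simp \FF(w)$, so $\#\simp \FF(w) = \#\Binv(w)$. For the right-hand side, Theorem~\ref{sortabletorf}(3) asserts exactly $\#\supp(w) = \#\ind \P(\FF(w))$. Substituting these two equalities into the criterion above yields the desired equivalence: $\FF(w)$ satisfies (JHP) if and only if $\#\supp(w) = \#\Binv(w)$.

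There is no real obstacle here, as the corollary is essentially a formal composition of three previously proved statements; the substantive work lies entirely in Theorems~\ref{artinmain}, \ref{sortabletorf}, and \ref{binvsimp}. The only point worth being careful about is to confirm that Assumption~\ref{assumperp} truly applies to $\FF(w)$ (which is why invoking functorial finiteness in the Dynkin setting at the outset is essential), so that Theorem~\ref{artinmain} can legitimately be used.
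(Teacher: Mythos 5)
Your proof is correct and follows exactly the same route as the paper: invoke Theorem~\ref{artinmain} (after confirming Assumption~\ref{assumperp} via functorial finiteness and Example~\ref{perpex}(3)), then substitute the bijections from Theorem~\ref{binvsimp} and Theorem~\ref{sortabletorf}(3). Your extra sentence spelling out why Assumption~\ref{assumperp} holds is a reasonable precision that the paper leaves implicit.
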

\begin{proof}
  Theorem \ref{binvsimp} implies $\#\simp \FF(w) = \#\Binv(w)$. On the other hand, Theorem \ref{sortabletorf} implies $\# \ind\P(\FF(w)) = \#\supp(w)$. Thus, our assertion follows from Theorem \ref{artinmain}.
\end{proof}
To prove Theorem \ref{binvsimp}, we use the following explicit exact sequences in $\mod kQ$.
\begin{lemma}\label{exlemma}
  Let $(i\,\,j)$ be a transposition in $S_{n+1}$ with $i < j$ and consider the $kQ$-module $M_{\hc{i,j}}$ given in Proposition \ref{indectrans}. Then the following hold.
  \begin{enumerate}
    \item For each $l$ with $i < l < j$, one of the following two exact sequences exists:
    \begin{align}
      \text{either }\quad &0 \to M_{\hc{i,l}} \to M_{\hc{i,j}} \to M_{\hc{l,j}} \to 0, \label{ex1}\\
      \text{or }\quad &0 \to M_{\hc{l,j}} \to M_{\hc{i,j}} \to M_{\hc{i,l}} \to 0. \label{ex2}
    \end{align}
    \item Suppose that we have a monomorphism $M_{\hc{l,l'}} \hookrightarrow M_{\hc{i,j}}$ for some $l < l'$. Then we have $\hc{l,l'} \subset \hc{i,j}$, and there exists an exact sequence
    \begin{equation}\label{ex3}
    0 \to M_{\hc{l,l'}} \to M_{\hc{i,j}} \to M_{\hc{i,l}} \oplus M_{\hc{l',j}} \to 0.
    \end{equation}
    Moreover, there exist the following two exact sequences:
    \begin{align}
      &0 \to M_{\hc{i,l'}} \to M_{\hc{i,j}} \to M_{\hc{l',j}} \to 0, \label{ex4}\\
      \text{and }\quad &0 \to M_{\hc{l,j}} \to M_{\hc{i,j}} \to M_{\hc{i,l}} \to 0.\label{ex5}
    \end{align}
    Here we put $M_{\hc{a,a}} = 0$ for $a \in \{ 1, \cdots, n+1\}$.
  \end{enumerate}
\end{lemma}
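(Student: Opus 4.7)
The plan is to proceed by direct subrepresentation analysis specific to the type A setting.

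For part (1), fix $l$ with $i < l < j$ and consider the arrow in $Q$ between vertices $l-1$ and $l$, both of which lie in $\hc{i,j}$. I would verify that if this arrow is oriented $l-1 \to l$, then the collection $N_v := k$ for $v \in \{l, \ldots, j-1\}$ and $N_v := 0$ otherwise defines a subrepresentation of $M_{\hc{i,j}}$ isomorphic to $M_{\hc{l,j}}$ (the only nontrivial subrepresentation condition at the boundary arrow reduces to $0 \subseteq k$), while $M_{\hc{i,l}}$ cannot embed as a subrepresentation (the same condition on the same arrow would force the identity of $k$ to land in zero). The quotient is then visibly $M_{\hc{i,l}}$, giving sequence (\ref{ex2}). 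The opposite orientation $l \to l-1$ yields (\ref{ex1}) by the symmetric argument.

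For part (2), the support of any subrepresentation of $M_{\hc{i,j}}$ must lie inside $\hc{i,j}$, so the existence of $M_{\hc{l,l'}} \hookrightarrow M_{\hc{i,j}}$ forces $i \leq l < l' \leq j$, i.e.\ $\hc{l,l'} \subseteq \hc{i,j}$. The same boundary analysis as in part (1), applied at the two ends of $\hc{l,l'}$, shows that the embedding constrains the two boundary arrows: whenever $l > i$, the arrow between $l-1$ and $l$ must point $l-1 \to l$, and whenever $l' < j$, the arrow between $l'-1$ and $l'$ must point $l' \to l'-1$ (the reverse orientations would once again force the identity of $k$ to land in zero). With these orientations established, sequences (\ref{ex4}) and (\ref{ex5}) follow from part (1) applied with the intermediate vertex taken to be $l'$ and $l$, respectively (and are tautological in the boundary cases $l = i$ or $l' = j$ under the stated convention $M_{\hc{a,a}} = 0$). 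For sequence (\ref{ex3}), I would compute the cokernel of $M_{\hc{l,l'}} \hookrightarrow M_{\hc{i,j}}$ directly from its dimension vector: the support is $\{i, \ldots, l-1\} \sqcup \{l', \ldots, j-1\}$, and since $l < l'$ the vertices $l-1$ and $l'$ are non-adjacent in the type A graph, so no arrow of $Q$ joins the two pieces of the support, and the cokernel splits as $M_{\hc{i,l}} \oplus M_{\hc{l',j}}$.

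None of these steps is individually deep; the point requiring real care is tracking how the embedding in part (2) forces the orientations of the two boundary arrows, and then recognising that the cokernel decomposes precisely because its support splits into two intervals separated by the support of $M_{\hc{l,l'}}$.
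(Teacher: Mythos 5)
Your proof is correct and follows essentially the same route as the paper's: in both parts you analyze subrepresentation conditions at the boundary arrows of the interval, and for (\ref{ex3}) you split the quotient by observing that its support $\hc{i,l} \sqcup \hc{l',j}$ consists of two intervals separated by at least one vertex (namely $l$), hence no arrow of $Q$ connects them. The only cosmetic difference is ordering (you establish the orientation constraints and (\ref{ex4}), (\ref{ex5}) before (\ref{ex3}), while the paper does (\ref{ex3}) first) and that you spell out the direct-sum splitting of the quotient, which the paper labels ``easily checked.''
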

\begin{proof}
  (1)
  Consider the orientation of the edge between $l-1$ and $l$ in $Q$. Suppose that we have $l-1 \ot l$ in $Q$.
  A subspace $M_{\hc{i,l}}$ of $M_{\hc{i,j}}$ is closed under the action of $Q$ in $M_{\hc{i,j}}$ because there exists no path which starts from one of $\hc{i,l}$ and ends at one of $\hc{l,j}$.
  Thus, $M_{\hc{i,l}}$ is a submodule of $M_{\hc{i,j}}$, and we obtain (\ref{ex1}).
  By the same reason, if we have $l-1 \to l$ in $Q$, then we have (\ref{ex2}).

  (2)
  Since $M_{\hc{l,l'}}$ must be a subspace of $M_{\hc{i,j}}$, it is clear that $\hc{l,l'} \subset \hc{i,j}$ holds.
  The set of all supports of the quotient $M_{\hc{i,j}}/ M_{\hc{l,l'}}$ is a disjoint union $\hc{i,l} \sqcup \hc{l',j}$, and it is easily checked that the action of $kQ$ on this quotient coincides with that on $M_{\hc{i,l}} \oplus M_{\hc{l',j}}$. Thus, we obtain (\ref{ex3}).
  Moreover, since $M_{\hc{l,l'}}$ is closed under actions of $kQ$ in $M_{\hc{i,j}}$, we must have that either $i= l$ or $l-1 \to l$ in $Q$ holds, and that either $l'=j$ or $l'-1 \ot l'$ in $Q$ holds. Thus, the existence of two exact sequences (\ref{ex4}) and (\ref{ex5}) follows from the proof of (1).
\end{proof}

\begin{proof}[Proof of Theorem \ref{binvsimp}]
  Let $(i\,\,j)$ be an inversion of $w$ with $i< j$. Since any simple object is indecomposable, it suffices to show that $(i\,\,j)$ is a Bruhat inversion of $w$ if and only if $M_{\hc{i,j}}$ is simple in $\FF(w)$.

  First, suppose that $(i\,\,j)$ is \emph{not} a Bruhat inversion of $w$. Then by Proposition \ref{binvlem}, there exists some $l$ with $i < l < j$ such that both $(i\,\,l)$ and $(l\,\,j)$ belong to $\inv(w)$. Thus, both $M_{\hc{i,l}}$ and $M_{\hc{l,j}}$ belong to $\FF(w)$.
  Now we have an exact sequence (\ref{ex1}) or (\ref{ex2}) by Lemma \ref{exlemma}. In either case, this gives a conflation in $\FF(w)$, and since $M_{\hc{i,l}}$ and $M_{\hc{l,j}}$ are non-zero, $M_{\hc{i,j}}$ is \emph{not} a simple object in $\FF(w)$.

  Conversely, suppose that $M_{\hc{i,j}}$ is \emph{not} a simple object in $\FF(w)$. Then we have a non-isomorphic inflation $N \infl M_{\hc{i,j}}$ in $\FF(w)$ for some object $0 \neq N$ of $\FF(w)$.
  Take an indecomposable direct summand $M_{\hc{l,l'}}$ of $N$. Then the composition $M_{\hc{l,l'}} \hookrightarrow N \infl M_{\hc{i,j}}$ is a non-isomorphic inflation in $\FF(w)$, because the section $M_{\hc{l,l'}} \hookrightarrow N$ is an inflation and inflations are closed under compositions.
  Now Lemma \ref{exlemma} (2) tells us that $\hc{l,l'} \subset \hc{i,j}$ and that $M_{\hc{i,l}} \oplus M_{\hc{l',j}}$ belongs to $\FF(w)$, hence both $M_{\hc{i,l}}$ and $M_{\hc{l',j}}$ belong to $\FF(w)$ since $\FF(w)$ is closed under direct summands.

  Since $M_{\hc{l,l'}} \hookrightarrow M_{\hc{i,j}}$ is not an isomorphism, we have $i < l$ or $l' < j$.
  Suppose that the former holds.
  Then since $M_{\hc{i,l}} \in \FF(w)$, the transposition $(i\,\,l)$ is an inversion of $w$.
  On the other hand, the exact sequence (\ref{ex5}) of Lemma \ref{exlemma} implies that $M_{\hc{l,j}}$ belongs to $\FF(w)$, since $\FF(w)$ is closed under submodules. Thus, $(l\,\,j)$ is also an inversion of $w$. This implies that $(i\,\,j)$ is \emph{not} a Bruhat inversion by Lemma \ref{binvlem}. The case $i < l' < j$ is completely similar, except we use (\ref{ex4}) instead of (\ref{ex5}).
\end{proof}

\begin{example}\label{main1ex}
  Let us look at several examples.
  \begin{enumerate}
    \item Let $Q$ be the quiver $Q = 1 \to 2 \ot 3$. Then the Coxeter element associated with $Q$ is $c = s_2 s_1 s_3 = 3142 \in S_4$.
    In Table \ref{213ex}, we list all $c$-sortable elements and the corresponding inversions, Bruhat inversions and torsion-free classes. The black vertices indicate simple objects in $\FF(w)$ and the white ones indicate the rest of indecomposables in $\FF(w)$.
    \begin{table}
      \caption{Example of Theorem \ref{binvsimp} for $Q = 1 \to 2 \ot 3$}
      \label{213ex}
      \begin{tabular}{C|C|C|C|c}
        \text{$c$-sortable elements $w$} & \supp(w) & \inv(w) & \Binv(w) & $\FF(w)$ \\ \hline \hline
        e = 1234 & \varnothing & \varnothing & \varnothing &
        \begin{tikzpicture}
          [baseline={([yshift=-.5ex]current bounding box.center)}, scale=0.4,  every node/.style={scale=0.5}]
          \node (12) at (3,2) {};
          \node (13) at (1,0) {};
          \node (14) at (2,1) {};
          \node (23) at (0,1) {};
          \node (24) at (1,2) {};
          \node (34) at (3,0) {};

          \draw[->] (23) -- (24);
          \draw[->] (23) -- (13);
          \draw[->] (24) -- (14);
          \draw[->] (13) -- (14);
          \draw[->] (14) -- (12);
          \draw[->] (14) -- (34);
        \end{tikzpicture}
        \\ \hline
        s_2 = 1324 & 2 & (2\,\,3) & (2\,\,3) &
        \begin{tikzpicture}
          [baseline={([yshift=-.5ex]current bounding box.center)}, scale=0.4,  every node/.style={scale=0.5}]
          \node (12) at (3,2) {};
          \node (13) at (1,0) {};
          \node (14) at (2,1) {};
          \node (23) at (0,1)[black] {};
          \node (24) at (1,2) {};
          \node (34) at (3,0) {};

          \draw[->] (23) -- (24);
          \draw[->] (23) -- (13);
          \draw[->] (24) -- (14);
          \draw[->] (13) -- (14);
          \draw[->] (14) -- (12);
          \draw[->] (14) -- (34);
          \useasboundingbox ([shift={(.3,.3)}]current bounding box.north east) rectangle ([shift={(-.3,-.3)}]current bounding box.south west);
        \end{tikzpicture}
        \\ \hline
        s_1 = 2134 & 1 & (1\,\,2)  & (1\,\,2)  &
        \begin{tikzpicture}
          [baseline={([yshift=-.5ex]current bounding box.center)}, scale=0.4,  every node/.style={scale=0.5}]
          \node (12) at (3,2) [black] {};
          \node (13) at (1,0) {};
          \node (14) at (2,1) {};
          \node (23) at (0,1) {};
          \node (24) at (1,2) {};
          \node (34) at (3,0) {};

          \draw[->] (23) -- (24);
          \draw[->] (23) -- (13);
          \draw[->] (24) -- (14);
          \draw[->] (13) -- (14);
          \draw[->] (14) -- (12);
          \draw[->] (14) -- (34);
          \useasboundingbox ([shift={(.3,.3)}]current bounding box.north east) rectangle ([shift={(-.3,-.3)}]current bounding box.south west);
        \end{tikzpicture}
        \\ \hline
        s_3 = 1243 & 3 & (3\,\,4) & (3\,\,4) &
        \begin{tikzpicture}
          [baseline={([yshift=-.5ex]current bounding box.center)}, scale=0.4,  every node/.style={scale=0.5}]
          \node (12) at (3,2) {};
          \node (13) at (1,0) {};
          \node (14) at (2,1) {};
          \node (23) at (0,1) {};
          \node (24) at (1,2) {};
          \node (34) at (3,0)[black] {};

          \draw[->] (23) -- (24);
          \draw[->] (23) -- (13);
          \draw[->] (24) -- (14);
          \draw[->] (13) -- (14);
          \draw[->] (14) -- (12);
          \draw[->] (14) -- (34);
          \useasboundingbox ([shift={(.3,.3)}]current bounding box.north east) rectangle ([shift={(-.3,-.3)}]current bounding box.south west);
        \end{tikzpicture}
        \\ \hline
        s_2 s_1 = 3124 & 1, 2 & (1\,\,3), (2\,\,3) & (1\,\,3),(2\,\,3) &
        \begin{tikzpicture}
          [baseline={([yshift=-.5ex]current bounding box.center)}, scale=0.4,  every node/.style={scale=0.5}]
          \node (12) at (3,2) {};
          \node (13) at (1,0)[black] {};
          \node (14) at (2,1) {};
          \node (23) at (0,1)[black] {};
          \node (24) at (1,2) {};
          \node (34) at (3,0) {};

          \draw[->] (23) -- (24);
          \draw[->] (23) -- (13);
          \draw[->] (24) -- (14);
          \draw[->] (13) -- (14);
          \draw[->] (14) -- (12);
          \draw[->] (14) -- (34);
          \useasboundingbox ([shift={(.3,.3)}]current bounding box.north east) rectangle ([shift={(-.3,-.3)}]current bounding box.south west);
        \end{tikzpicture}
        \\ \hline
        s_2 s_3 = 1342 & 2,3 & (2\,\,3), (2\,\,4) & (2\,\,3), (2\,\,4) &
        \begin{tikzpicture}
          [baseline={([yshift=-.5ex]current bounding box.center)}, scale=0.4,  every node/.style={scale=0.5}]
          \node (12) at (3,2) {};
          \node (13) at (1,0) {};
          \node (14) at (2,1) {};
          \node (23) at (0,1)[black] {};
          \node (24) at (1,2)[black] {};
          \node (34) at (3,0) {};

          \draw[->] (23) -- (24);
          \draw[->] (23) -- (13);
          \draw[->] (24) -- (14);
          \draw[->] (13) -- (14);
          \draw[->] (14) -- (12);
          \draw[->] (14) -- (34);
          \useasboundingbox ([shift={(.3,.3)}]current bounding box.north east) rectangle ([shift={(-.3,-.3)}]current bounding box.south west);
        \end{tikzpicture}
        \\ \hline
        s_1 s_3 = 2143 & 1,3 & (1\,\,2), (3\,\,4)  & (1\,\,2),(3\,\,4)  &
        \begin{tikzpicture}
          [baseline={([yshift=-.5ex]current bounding box.center)}, scale=0.4,  every node/.style={scale=0.5}]
          \node (12) at (3,2) [black] {};
          \node (13) at (1,0) {};
          \node (14) at (2,1) {};
          \node (23) at (0,1) {};
          \node (24) at (1,2) {};
          \node (34) at (3,0)[black] {};

          \draw[->] (23) -- (24);
          \draw[->] (23) -- (13);
          \draw[->] (24) -- (14);
          \draw[->] (13) -- (14);
          \draw[->] (14) -- (12);
          \draw[->] (14) -- (34);
          \useasboundingbox ([shift={(.3,.3)}]current bounding box.north east) rectangle ([shift={(-.3,-.3)}]current bounding box.south west);
        \end{tikzpicture}
        \\ \hline
        s_2 s_1 s_2 = 3214 & 1, 2 & (1\,\,2), (1\,\,3), (2\,\,3) & (1\,\,2),(2\,\,3) &
        \begin{tikzpicture}
          [baseline={([yshift=-.5ex]current bounding box.center)}, scale=0.4,  every node/.style={scale=0.5}]
          \node (12) at (3,2)[black] {};
          \node (13) at (1,0)[white] {};
          \node (14) at (2,1) {};
          \node (23) at (0,1)[black] {};
          \node (24) at (1,2) {};
          \node (34) at (3,0) {};

          \draw[->] (23) -- (24);
          \draw[->] (23) -- (13);
          \draw[->] (24) -- (14);
          \draw[->] (13) -- (14);
          \draw[->] (14) -- (12);
          \draw[->] (14) -- (34);
          \useasboundingbox ([shift={(.3,.3)}]current bounding box.north east) rectangle ([shift={(-.3,-.3)}]current bounding box.south west);
        \end{tikzpicture}
        \\ \hline
        s_2 s_3 s_2= 1432 & 2,3 & (2\,\,3), (2\,\,4), (3\,\,4) & (2\,\,3), (3\,\,4) &
        \begin{tikzpicture}
          [baseline={([yshift=-.5ex]current bounding box.center)}, scale=0.4,  every node/.style={scale=0.5}]
          \node (12) at (3,2) {};
          \node (13) at (1,0) {};
          \node (14) at (2,1) {};
          \node (23) at (0,1)[black] {};
          \node (24) at (1,2)[white] {};
          \node (34) at (3,0)[black] {};

          \draw[->] (23) -- (24);
          \draw[->] (23) -- (13);
          \draw[->] (24) -- (14);
          \draw[->] (13) -- (14);
          \draw[->] (14) -- (12);
          \draw[->] (14) -- (34);
          \useasboundingbox ([shift={(.3,.3)}]current bounding box.north east) rectangle ([shift={(-.3,-.3)}]current bounding box.south west);
        \end{tikzpicture}
        \\ \hline
        s_2 s_3 s_1 = 3142 & 1,2,3 & (1\,\,3), (2\,\,3), (2\,\,4) & (1\,\,3), (2\,\,3), (2\,\,4) &
        \begin{tikzpicture}
          [baseline={([yshift=-.5ex]current bounding box.center)}, scale=0.4,  every node/.style={scale=0.5}]
          \node (12) at (3,2) {};
          \node (13) at (1,0)[black] {};
          \node (14) at (2,1) {};
          \node (23) at (0,1)[black] {};
          \node (24) at (1,2)[black] {};
          \node (34) at (3,0) {};

          \draw[->] (23) -- (24);
          \draw[->] (23) -- (13);
          \draw[->] (24) -- (14);
          \draw[->] (13) -- (14);
          \draw[->] (14) -- (12);
          \draw[->] (14) -- (34);
          \useasboundingbox ([shift={(.3,.3)}]current bounding box.north east) rectangle ([shift={(-.3,-.3)}]current bounding box.south west);
        \end{tikzpicture}
        \\ \hline
        s_2 s_3 s_1 s_2 = 3412 & 1,2,3 & \makecell{(1\,\,3), (1\,\,4),\\ (2\,\,3), (2\,\,4)} & \makecell{(1\,\,3), (1\,\,4),\\ (2\,\,3), (2\,\,4)} &
        \begin{tikzpicture}
          [baseline={([yshift=-.5ex]current bounding box.center)}, scale=0.4,  every node/.style={scale=0.5}]
          \node (12) at (3,2) {};
          \node (13) at (1,0)[black] {};
          \node (14) at (2,1)[black] {};
          \node (23) at (0,1)[black] {};
          \node (24) at (1,2)[black] {};
          \node (34) at (3,0) {};

          \draw[->] (23) -- (24);
          \draw[->] (23) -- (13);
          \draw[->] (24) -- (14);
          \draw[->] (13) -- (14);
          \draw[->] (14) -- (12);
          \draw[->] (14) -- (34);
          \useasboundingbox ([shift={(.3,.3)}]current bounding box.north east) rectangle ([shift={(-.3,-.3)}]current bounding box.south west);
        \end{tikzpicture}
        \\ \hline
        s_2 s_3 s_1 s_2 s_1 = 4312 & 1,2,3 & \makecell{(1\,\,3), (1\,\,4),\\ (2\,\,3), (2\,\,4), (3\,\,4)} & (1\,\,3), (2\,\,3), (3\,\,4) &
        \begin{tikzpicture}
          [baseline={([yshift=-.5ex]current bounding box.center)}, scale=0.4,  every node/.style={scale=0.5}]
          \node (12) at (3,2) {};
          \node (13) at (1,0)[black] {};
          \node (14) at (2,1)[white] {};
          \node (23) at (0,1)[black] {};
          \node (24) at (1,2)[white] {};
          \node (34) at (3,0)[black] {};

          \draw[->] (23) -- (24);
          \draw[->] (23) -- (13);
          \draw[->] (24) -- (14);
          \draw[->] (13) -- (14);
          \draw[->] (14) -- (12);
          \draw[->] (14) -- (34);
          \useasboundingbox ([shift={(.3,.3)}]current bounding box.north east) rectangle ([shift={(-.3,-.3)}]current bounding box.south west);
        \end{tikzpicture}
        \\ \hline
        s_2 s_3 s_1 s_2 s_3 = 3421 & 1,2,3 & \makecell{(1\,\,2), (1\,\,3), (1\,\,4),\\ (2\,\,3), (2\,\,4)} & (1\,\,2), (2\,\,3), (2\,\,4) &
        \begin{tikzpicture}
          [baseline={([yshift=-.5ex]current bounding box.center)}, scale=0.4,  every node/.style={scale=0.5}]
          \node (12) at (3,2)[black] {};
          \node (13) at (1,0)[white] {};
          \node (14) at (2,1)[white] {};
          \node (23) at (0,1)[black] {};
          \node (24) at (1,2)[black] {};
          \node (34) at (3,0) {};

          \draw[->] (23) -- (24);
          \draw[->] (23) -- (13);
          \draw[->] (24) -- (14);
          \draw[->] (13) -- (14);
          \draw[->] (14) -- (12);
          \draw[->] (14) -- (34);
          \useasboundingbox ([shift={(.3,.3)}]current bounding box.north east) rectangle ([shift={(-.3,-.3)}]current bounding box.south west);
        \end{tikzpicture}
        \\ \hline
        s_2 s_3 s_1 s_2 s_3 s_1= 4321 & 1,2,3 &
        \makecell{(1\,\,2), (1\,\,3), (1\,\,4),\\ (2\,\,3), (2\,\,4), (3\,\,4)} & (1\,\,2), (2\,\,3), (3\,\,4) &
        \begin{tikzpicture}
          [baseline={([yshift=-.5ex]current bounding box.center)}, scale=0.4,  every node/.style={scale=0.5}]
          \node (12) at (3,2)[black] {};
          \node (13) at (1,0)[white] {};
          \node (14) at (2,1)[white] {};
          \node (23) at (0,1)[black] {};
          \node (24) at (1,2)[white] {};
          \node (34) at (3,0)[black] {};

          \draw[->] (23) -- (24);
          \draw[->] (23) -- (13);
          \draw[->] (24) -- (14);
          \draw[->] (13) -- (14);
          \draw[->] (14) -- (12);
          \draw[->] (14) -- (34);
          \useasboundingbox ([shift={(.3,.3)}]current bounding box.north east) rectangle ([shift={(-.3,-.3)}]current bounding box.south west);
        \end{tikzpicture}
        \\ \hline
      \end{tabular}
    \end{table}
    According to this table, we conclude that $\FF(w)$ satisfies (JHP) except $w=3412$.
    \item Let $Q$ be the quiver $Q = 1 \ot 2 \to 3 \ot 4$. The associated Coxeter element is $c=s_1 s_3 s_2 s_4 = 24153 \in S_5$. The Auslander-Reiten quiver of $\mod kQ$ is as follows:
    \[
    \begin{tikzpicture}
      [scale=0.8,  every node/.style={scale=.9}]
      \node (12) at (0,0) {$M_{\hc{1,2}}$};
      \node (34) at (0,2) {$M_{\hc{3,4}}$};
      \node (14) at (1,1) {$M_{\hc{1,4}}$};
      \node (35) at (1,3) {$M_{\hc{3,5}}$};
      \node (24) at (2,0) {$M_{\hc{2,4}}$};
      \node (15) at (2,2) {$M_{\hc{1,5}}$};
      \node (25) at (3,1) {$M_{\hc{2,5}}$};
      \node (13) at (3,3) {$M_{\hc{1,3}}$};
      \node (45) at (4,0) {$M_{\hc{4,5}}$};
      \node (23) at (4,2) {$M_{\hc{2,3}}$};

      \draw[->] (12) -- (14);
      \draw[->] (34) -- (14);
      \draw[->] (34) -- (35);
      \draw[->] (14) -- (24);
      \draw[->] (14) -- (15);
      \draw[->] (35) -- (15);
      \draw[->] (24) -- (25);
      \draw[->] (15) -- (25);
      \draw[->] (15) -- (13);
      \draw[->] (25) -- (45);
      \draw[->] (25) -- (23);
      \draw[->] (13) -- (23);
    \end{tikzpicture}
    \]
    In Table \ref{1324ex}, we list all faithful torsion-free classes of $\mod kQ$, which corresponds to $c$-sortable elements with full support. Here as in (1), the black vertices indicate simple objects in $\FF(w)$ and the white the rest.
    From this table, for example, we can check that the number of faithful torsion-free classes satisfying (JHP) is 8. By using the computer program, we can calculate that the number of all torsion-free classes is 42, which is the Catalan number, and 34 ones among them satisfy (JHP).
    \begin{table}
      \caption{Example of Theorem \ref{binvsimp} for faithful torsion-free classes over $Q = 1 \ot 2 \to 3 \ot 4$.}
      \label{1324ex}
      \begin{tabular}{C|C|C|c|c}
        \text{$c$-sortable elements $w$} & \inv(w) & \Binv(w) & $\FF(w)$ & $\# \simp\FF(w)$ \\ \hline \hline
        \makecell{c=s_1 s_3 s_2 s_4 \\= 24153} &
        \inv(c) =\makecell{(1\,\,2), (1\,\,4) \\ (3\,\,4), (3\,\,5)} &
        \inv(c) &
        \begin{tikzpicture}
          [baseline={([yshift=-.5ex]current bounding box.center)}, scale=0.3,  every node/.style={scale=0.4}]
          \node (12) at (0,0)[black] {};
          \node (34) at (0,2)[black] {};
          \node (14) at (1,1)[black] {};
          \node (35) at (1,3)[black] {};
          \node (24) at (2,0) {};
          \node (15) at (2,2) {};
          \node (25) at (3,1) {};
          \node (13) at (3,3) {};
          \node (45) at (4,0) {};
          \node (23) at (4,2) {};

          \draw (12) -- (14);
          \draw (34) -- (14);
          \draw (34) -- (35);
          \draw (14) -- (24);
          \draw (14) -- (15);
          \draw (35) -- (15);
          \draw (24) -- (25);
          \draw (15) -- (25);
          \draw (15) -- (13);
          \draw (25) -- (45);
          \draw (25) -- (23);
          \draw (13) -- (23);
          \useasboundingbox ([shift={(.3,.3)}]current bounding box.north east) rectangle ([shift={(-.3,-.3)}]current bounding box.south west);
        \end{tikzpicture}
        & 4
        \\ \hline
        c s_1 = 42153 &
        \inv(c), (2\,\,4) &
        \makecell{(1\,\,2), (2\,\,4), \\ (3\,\,4), (3\,\,5)} &
        \begin{tikzpicture}
          [baseline={([yshift=-.5ex]current bounding box.center)}, scale=0.3,  every node/.style={scale=0.4}]
          \node (12) at (0,0)[black] {};
          \node (34) at (0,2)[black] {};
          \node (14) at (1,1)[white] {};
          \node (35) at (1,3)[black] {};
          \node (24) at (2,0)[black] {};
          \node (15) at (2,2) {};
          \node (25) at (3,1) {};
          \node (13) at (3,3) {};
          \node (45) at (4,0) {};
          \node (23) at (4,2) {};

          \draw (12) -- (14);
          \draw (34) -- (14);
          \draw (34) -- (35);
          \draw (14) -- (24);
          \draw (14) -- (15);
          \draw (35) -- (15);
          \draw (24) -- (25);
          \draw (15) -- (25);
          \draw (15) -- (13);
          \draw (25) -- (45);
          \draw (25) -- (23);
          \draw (13) -- (23);
          \useasboundingbox ([shift={(.3,.3)}]current bounding box.north east) rectangle ([shift={(-.3,-.3)}]current bounding box.south west);
        \end{tikzpicture}
        & 4
        \\ \hline
        c s_3 = 24513 &
        \inv(c), (1\,\,5) &
        \inv(c), (1\,\,5) &
        \begin{tikzpicture}
          [baseline={([yshift=-.5ex]current bounding box.center)}, scale=0.3,  every node/.style={scale=0.4}]
          \node (12) at (0,0)[black] {};
          \node (34) at (0,2)[black] {};
          \node (14) at (1,1)[black] {};
          \node (35) at (1,3)[black] {};
          \node (24) at (2,0) {};
          \node (15) at (2,2)[black] {};
          \node (25) at (3,1) {};
          \node (13) at (3,3) {};
          \node (45) at (4,0) {};
          \node (23) at (4,2) {};

          \draw (12) -- (14);
          \draw (34) -- (14);
          \draw (34) -- (35);
          \draw (14) -- (24);
          \draw (14) -- (15);
          \draw (35) -- (15);
          \draw (24) -- (25);
          \draw (15) -- (25);
          \draw (15) -- (13);
          \draw (25) -- (45);
          \draw (25) -- (23);
          \draw (13) -- (23);
          \useasboundingbox ([shift={(.3,.3)}]current bounding box.north east) rectangle ([shift={(-.3,-.3)}]current bounding box.south west);
        \end{tikzpicture}
        & 5
        \\ \hline
        c s_1 s_3 = 42513 &
        \inv(c), (1\,\,5), (2\,\,4) &
        \makecell{(1\,\,2), (1\,\,5), \\ (2\,\,4), (3\,\,4), (3\,\,5)} &
        \begin{tikzpicture}
          [baseline={([yshift=-.5ex]current bounding box.center)}, scale=0.3,  every node/.style={scale=0.4}]
          \node (12) at (0,0)[black] {};
          \node (34) at (0,2)[black] {};
          \node (14) at (1,1)[white] {};
          \node (35) at (1,3)[black] {};
          \node (24) at (2,0)[black] {};
          \node (15) at (2,2)[black] {};
          \node (25) at (3,1) {};
          \node (13) at (3,3) {};
          \node (45) at (4,0) {};
          \node (23) at (4,2) {};

          \draw (12) -- (14);
          \draw (34) -- (14);
          \draw (34) -- (35);
          \draw (14) -- (24);
          \draw (14) -- (15);
          \draw (35) -- (15);
          \draw (24) -- (25);
          \draw (15) -- (25);
          \draw (15) -- (13);
          \draw (25) -- (45);
          \draw (25) -- (23);
          \draw (13) -- (23);
          \useasboundingbox ([shift={(.3,.3)}]current bounding box.north east) rectangle ([shift={(-.3,-.3)}]current bounding box.south west);
        \end{tikzpicture}
        & 5
        \\ \hline
        c s_3 s_2= 25413 &
        \inv(c), (1\,\,5), (4\,\,5) &
        \makecell{(1\,\,2), (1\,\,4), \\ (3\,\,4), (4\,\,5)} &
        \begin{tikzpicture}
          [baseline={([yshift=-.5ex]current bounding box.center)}, scale=0.3,  every node/.style={scale=0.4}]
          \node (12) at (0,0)[black] {};
          \node (34) at (0,2)[black] {};
          \node (14) at (1,1)[black] {};
          \node (35) at (1,3)[white] {};
          \node (24) at (2,0) {};
          \node (15) at (2,2)[white] {};
          \node (25) at (3,1) {};
          \node (13) at (3,3) {};
          \node (45) at (4,0)[black] {};
          \node (23) at (4,2) {};

          \draw (12) -- (14);
          \draw (34) -- (14);
          \draw (34) -- (35);
          \draw (14) -- (24);
          \draw (14) -- (15);
          \draw (35) -- (15);
          \draw (24) -- (25);
          \draw (15) -- (25);
          \draw (15) -- (13);
          \draw (25) -- (45);
          \draw (25) -- (23);
          \draw (13) -- (23);
          \useasboundingbox ([shift={(.3,.3)}]current bounding box.north east) rectangle ([shift={(-.3,-.3)}]current bounding box.south west);
        \end{tikzpicture}
        & 4
        \\ \hline
        c s_3 s_4 = 24531 &
        \inv(c), (1\,\,3), (1\,\,5) &
        \makecell{(1\,\,2), (1\,\,3), \\ (3\,\,4), (3\,\,5)} &
        \begin{tikzpicture}
          [baseline={([yshift=-.5ex]current bounding box.center)}, scale=0.3,  every node/.style={scale=0.4}]
          \node (12) at (0,0)[black] {};
          \node (34) at (0,2)[black] {};
          \node (14) at (1,1)[white] {};
          \node (35) at (1,3)[black] {};
          \node (24) at (2,0) {};
          \node (15) at (2,2)[white] {};
          \node (25) at (3,1) {};
          \node (13) at (3,3)[black] {};
          \node (45) at (4,0) {};
          \node (23) at (4,2) {};

          \draw (12) -- (14);
          \draw (34) -- (14);
          \draw (34) -- (35);
          \draw (14) -- (24);
          \draw (14) -- (15);
          \draw (35) -- (15);
          \draw (24) -- (25);
          \draw (15) -- (25);
          \draw (15) -- (13);
          \draw (25) -- (45);
          \draw (25) -- (23);
          \draw (13) -- (23);
          \useasboundingbox ([shift={(.3,.3)}]current bounding box.north east) rectangle ([shift={(-.3,-.3)}]current bounding box.south west);
        \end{tikzpicture}
        & 4
        \\ \hline
        c s_1 s_3 s_2 = 45213 &
        \makecell{\inv(c), (1\,\,5),\\ (2\,\,4), (2\,\,5)}  &
        \makecell{(1\,\,2), (2\,\,4), (2\,\,5), \\ (3\,\,4), (3\,\,5)} &
        \begin{tikzpicture}
          [baseline={([yshift=-.5ex]current bounding box.center)}, scale=0.3,  every node/.style={scale=0.4}]
          \node (12) at (0,0)[black] {};
          \node (34) at (0,2)[black] {};
          \node (14) at (1,1)[white] {};
          \node (35) at (1,3)[black] {};
          \node (24) at (2,0)[black] {};
          \node (15) at (2,2)[white] {};
          \node (25) at (3,1)[black] {};
          \node (13) at (3,3) {};
          \node (45) at (4,0) {};
          \node (23) at (4,2) {};

          \draw (12) -- (14);
          \draw (34) -- (14);
          \draw (34) -- (35);
          \draw (14) -- (24);
          \draw (14) -- (15);
          \draw (35) -- (15);
          \draw (24) -- (25);
          \draw (15) -- (25);
          \draw (15) -- (13);
          \draw (25) -- (45);
          \draw (25) -- (23);
          \draw (13) -- (23);
          \useasboundingbox ([shift={(.3,.3)}]current bounding box.north east) rectangle ([shift={(-.3,-.3)}]current bounding box.south west);
        \end{tikzpicture}
        & 5
        \\ \hline
        c s_1 s_3 s_4 = 42531 &
        \makecell{\inv(c), (1\,\,3),\\ (1\,\,5), (2\,\,4)} &
        \makecell{(1\,\,2), (1\,\,3), \\(2\,\,4), (3\,\,4), (3\,\,5)} &
        \begin{tikzpicture}
          [baseline={([yshift=-.5ex]current bounding box.center)}, scale=0.3,  every node/.style={scale=0.4}]
          \node (12) at (0,0)[black] {};
          \node (34) at (0,2)[black] {};
          \node (14) at (1,1)[white] {};
          \node (35) at (1,3)[black] {};
          \node (24) at (2,0)[black] {};
          \node (15) at (2,2)[white] {};
          \node (25) at (3,1) {};
          \node (13) at (3,3)[black] {};
          \node (45) at (4,0) {};
          \node (23) at (4,2) {};

          \draw (12) -- (14);
          \draw (34) -- (14);
          \draw (34) -- (35);
          \draw (14) -- (24);
          \draw (14) -- (15);
          \draw (35) -- (15);
          \draw (24) -- (25);
          \draw (15) -- (25);
          \draw (15) -- (13);
          \draw (25) -- (45);
          \draw (25) -- (23);
          \draw (13) -- (23);
          \useasboundingbox ([shift={(.3,.3)}]current bounding box.north east) rectangle ([shift={(-.3,-.3)}]current bounding box.south west);
        \end{tikzpicture}
        & 5
        \\ \hline
        c s_3 s_2 s_4= 25431 &
        \makecell{\inv(c), (1\,\,3), \\ (1\,\,5), (4\,\,5)} &
        \makecell{(1\,\,2), (1\,\,3), \\ (3\,\,4), (4\,\,5)} &
        \begin{tikzpicture}
          [baseline={([yshift=-.5ex]current bounding box.center)}, scale=0.3,  every node/.style={scale=0.4}]
          \node (12) at (0,0)[black] {};
          \node (34) at (0,2)[black] {};
          \node (14) at (1,1)[white] {};
          \node (35) at (1,3)[white] {};
          \node (24) at (2,0) {};
          \node (15) at (2,2)[white] {};
          \node (25) at (3,1) {};
          \node (13) at (3,3)[black] {};
          \node (45) at (4,0)[black] {};
          \node (23) at (4,2) {};

          \draw (12) -- (14);
          \draw (34) -- (14);
          \draw (34) -- (35);
          \draw (14) -- (24);
          \draw (14) -- (15);
          \draw (35) -- (15);
          \draw (24) -- (25);
          \draw (15) -- (25);
          \draw (15) -- (13);
          \draw (25) -- (45);
          \draw (25) -- (23);
          \draw (13) -- (23);
          \useasboundingbox ([shift={(.3,.3)}]current bounding box.north east) rectangle ([shift={(-.3,-.3)}]current bounding box.south west);
        \end{tikzpicture}
        & 4
        \\ \hline
        \makecell{c^2 = c s_1 s_3 s_2 s_4 \\= 45231} &
        \makecell{\inv(c), (1\,\,3), (1\,\,5),\\ (2\,\,4), (2\,\,5)}  &
        \makecell{(1\,\,2), (1\,\,3), (2\,\,4),\\ (2\,\,5), (3\,\,4), (3\,\,5)} &
        \begin{tikzpicture}
          [baseline={([yshift=-.5ex]current bounding box.center)}, scale=0.3,  every node/.style={scale=0.4}]
          \node (12) at (0,0)[black] {};
          \node (34) at (0,2)[black] {};
          \node (14) at (1,1)[white] {};
          \node (35) at (1,3)[black] {};
          \node (24) at (2,0)[black] {};
          \node (15) at (2,2)[white] {};
          \node (25) at (3,1)[black] {};
          \node (13) at (3,3)[black] {};
          \node (45) at (4,0) {};
          \node (23) at (4,2) {};

          \draw (12) -- (14);
          \draw (34) -- (14);
          \draw (34) -- (35);
          \draw (14) -- (24);
          \draw (14) -- (15);
          \draw (35) -- (15);
          \draw (24) -- (25);
          \draw (15) -- (25);
          \draw (15) -- (13);
          \draw (25) -- (45);
          \draw (25) -- (23);
          \draw (13) -- (23);
          \useasboundingbox ([shift={(.3,.3)}]current bounding box.north east) rectangle ([shift={(-.3,-.3)}]current bounding box.south west);
        \end{tikzpicture}
        & 6
        \\ \hline
        \makecell{c s_1 s_3 s_2 s_1 \\ = 54213} &
        \makecell{\inv(c), (1\,\,5),\\ (2\,\,4), (2\,\,5), (4\,\,5)}  &
        \makecell{(1\,\,2), (2\,\,4), \\ (3\,\,4), (4\,\,5)} &
        \begin{tikzpicture}
          [baseline={([yshift=-.5ex]current bounding box.center)}, scale=0.3,  every node/.style={scale=0.4}]
          \node (12) at (0,0)[black] {};
          \node (34) at (0,2)[black] {};
          \node (14) at (1,1)[white] {};
          \node (35) at (1,3)[white] {};
          \node (24) at (2,0)[black] {};
          \node (15) at (2,2)[white] {};
          \node (25) at (3,1)[white] {};
          \node (13) at (3,3) {};
          \node (45) at (4,0)[black] {};
          \node (23) at (4,2) {};

          \draw (12) -- (14);
          \draw (34) -- (14);
          \draw (34) -- (35);
          \draw (14) -- (24);
          \draw (14) -- (15);
          \draw (35) -- (15);
          \draw (24) -- (25);
          \draw (15) -- (25);
          \draw (15) -- (13);
          \draw (25) -- (45);
          \draw (25) -- (23);
          \draw (13) -- (23);
          \useasboundingbox ([shift={(.3,.3)}]current bounding box.north east) rectangle ([shift={(-.3,-.3)}]current bounding box.south west);
        \end{tikzpicture}
        & 4
        \\ \hline
        \makecell{c^2 s_1 = c s_1 s_3 s_2 s_4 s_1 \\= 54231} &
        \makecell{\inv(c), (1\,\,3), (1\,\,5),\\ (2\,\,4), (2\,\,5), (4\,\,5)}  &
        \makecell{(1\,\,2), (1\,\,3), (2\,\,4),\\ (3\,\,4), (4\,\,5)} &
        \begin{tikzpicture}
          [baseline={([yshift=-.5ex]current bounding box.center)}, scale=0.3,  every node/.style={scale=0.4}]
          \node (12) at (0,0)[black] {};
          \node (34) at (0,2)[black] {};
          \node (14) at (1,1)[white] {};
          \node (35) at (1,3)[white] {};
          \node (24) at (2,0)[black] {};
          \node (15) at (2,2)[white] {};
          \node (25) at (3,1)[white] {};
          \node (13) at (3,3)[black] {};
          \node (45) at (4,0)[black] {};
          \node (23) at (4,2) {};

          \draw (12) -- (14);
          \draw (34) -- (14);
          \draw (34) -- (35);
          \draw (14) -- (24);
          \draw (14) -- (15);
          \draw (35) -- (15);
          \draw (24) -- (25);
          \draw (15) -- (25);
          \draw (15) -- (13);
          \draw (25) -- (45);
          \draw (25) -- (23);
          \draw (13) -- (23);
          \useasboundingbox ([shift={(.3,.3)}]current bounding box.north east) rectangle ([shift={(-.3,-.3)}]current bounding box.south west);
        \end{tikzpicture}
        & 5
        \\ \hline
        \makecell{c^2 s_3 = c s_1 s_3 s_2 s_4 s_3\\= 45321} &
        \makecell{\inv(c), (1\,\,3), (1\,\,5),\\ (2\,\,3), (2\,\,4), (2\,\,5)}  &
        \makecell{(1\,\,2),  (2\,\,3),\\  (3\,\,4), (3\,\,5)} &
        \begin{tikzpicture}
          [baseline={([yshift=-.5ex]current bounding box.center)}, scale=0.3,  every node/.style={scale=0.4}]
          \node (12) at (0,0)[black] {};
          \node (34) at (0,2)[black] {};
          \node (14) at (1,1)[white] {};
          \node (35) at (1,3)[black] {};
          \node (24) at (2,0)[white] {};
          \node (15) at (2,2)[white] {};
          \node (25) at (3,1)[white] {};
          \node (13) at (3,3)[white] {};
          \node (45) at (4,0) {};
          \node (23) at (4,2)[black] {};

          \draw (12) -- (14);
          \draw (34) -- (14);
          \draw (34) -- (35);
          \draw (14) -- (24);
          \draw (14) -- (15);
          \draw (35) -- (15);
          \draw (24) -- (25);
          \draw (15) -- (25);
          \draw (15) -- (13);
          \draw (25) -- (45);
          \draw (25) -- (23);
          \draw (13) -- (23);
          \useasboundingbox ([shift={(.3,.3)}]current bounding box.north east) rectangle ([shift={(-.3,-.3)}]current bounding box.south west);
        \end{tikzpicture}
        & 4
        \\ \hline
        \makecell{c^2 s_1 s_3 = c s_1 s_3 s_2 s_4 s_1 s_3 \\= 54321} &
        \makecell{\inv(c), (1\,\,3), (1\,\,5),\\ (2\,\,3), (2\,\,4), (2\,\,5), (4\,\,5)}  &
        \makecell{(1\,\,2), (2\,\,3), \\ (3\,\,4), (4\,\,5)} &
        \begin{tikzpicture}
          [baseline={([yshift=-.5ex]current bounding box.center)}, scale=0.3,  every node/.style={scale=0.4}]
          \node (12) at (0,0)[black] {};
          \node (34) at (0,2)[black] {};
          \node (14) at (1,1)[white] {};
          \node (35) at (1,3)[white] {};
          \node (24) at (2,0)[white] {};
          \node (15) at (2,2)[white] {};
          \node (25) at (3,1)[white] {};
          \node (13) at (3,3)[white] {};
          \node (45) at (4,0)[black] {};
          \node (23) at (4,2)[black] {};

          \draw (12) -- (14);
          \draw (34) -- (14);
          \draw (34) -- (35);
          \draw (14) -- (24);
          \draw (14) -- (15);
          \draw (35) -- (15);
          \draw (24) -- (25);
          \draw (15) -- (25);
          \draw (15) -- (13);
          \draw (25) -- (45);
          \draw (25) -- (23);
          \draw (13) -- (23);
          \useasboundingbox ([shift={(.3,.3)}]current bounding box.north east) rectangle ([shift={(-.3,-.3)}]current bounding box.south west);
        \end{tikzpicture}
        & 4
        \\ \hline
      \end{tabular}
    \end{table}
  \end{enumerate}
\end{example}

As an application of Corollary \ref{jhtypea}, we obtain the following result on the linearly oriented case. The proof is purely combinatorial. Note that this also follows from Corollary \ref{nakayamamain}, since $kQ$ is a Nakayama algebra in this case.
\begin{corollary}
  Let $Q$ be a linearly oriented quiver of type A. Then every torsion-free class $\FF$ of $\mod kQ$ satisfies (JHP).
\end{corollary}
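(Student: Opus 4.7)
The plan is to prove this via Corollary \ref{jhtypea}: I need to show that for any $c$-sortable element $w$ of $S_{n+1}$, where $c$ is the Coxeter element associated to a linearly oriented type A quiver $Q$, we have $\#\supp(w) = \#\Binv(w)$. Rather than arguing combinatorially in $S_{n+1}$, I would translate the condition into closure properties of the set of intervals representing $\FF(w)$ and produce an explicit bijection between the support of $\FF(w)$ and the simple objects of $\FF(w)$.

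Without loss of generality, take $Q\colon 1 \to 2 \to \cdots \to n$. For this orientation, the submodules of $M_{\hc{i,j}}$ are exactly $M_{\hc{l,j}}$ with $i \leq l \leq j$, and there is always a non-split extension $0 \to M_{\hc{l,j}} \to M_{\hc{i,j}} \to M_{\hc{i,l}} \to 0$ for $i < l < j$. So the set $\II := \{ [i,j) : M_{\hc{i,j}} \in \FF(w) \}$ of intervals (including empty intervals corresponding to $0$) satisfies (a) $[i,j) \in \II$ and $i \leq l \leq j$ imply $[l,j) \in \II$, and (b) $[i,l), [l,j) \in \II$ imply $[i,j) \in \II$. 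By Theorem \ref{sortabletorf}(3) we have $\#\supp(w) = \#S$ where $S := \bigcup_{[i,j)\in\II}[i,j) \subseteq \{1,\dots,n\}$, and by Theorem \ref{binvsimp} the set $\Binv(w)$ is in bijection with the simple objects in $\FF(w)$, i.e.\ with the intervals $[i,j) \in \II$ such that no $l$ with $i < l < j$ has both $[i,l)$ and $[l,j)$ in $\II$.

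I would then define a bijection $\Phi\colon S \to \{\text{simple intervals in }\II\}$ as follows. For $k \in S$, set $j_k := \min\{ j : [i,j) \in \II \text{ for some } i \leq k < j \}$ and put $\Phi(k) := [k,j_k)$; by (a), $[k,j_k) \in \II$. Simpleness of $[k,j_k)$ is immediate: if $k < l < j_k$ with $[k,l), [l,j_k) \in \II$, then $[k,l)$ witnesses $l$ in the defining set of $j_k$, violating its minimality. Injectivity of $\Phi$ is trivial, and for surjectivity, given a simple interval $[i,j) \in \II$, we have $j_i \leq j$, and if $j_i < j$ then (a) applied to a witness $[i',j_i) \in \II$ gives $[i,j_i) \in \II$, while (a) applied to $[i,j)$ gives $[j_i,j) \in \II$, contradicting simpleness; hence $j = j_i$ and $[i,j) = \Phi(i)$. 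This yields $\#\supp(w) = \#\Binv(w)$, so (JHP) holds by Corollary \ref{jhtypea}. The only potential subtlety is bookkeeping with empty intervals and the closure axioms, but these are routine once the combinatorial framework is set up.
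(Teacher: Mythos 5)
Your proof is correct, and it takes a genuinely different route from the paper's. The paper reduces the claim to the purely combinatorial statement that $\#\Binv(w) = \#\supp(w)$ for every $231$-avoiding permutation $w$ (using Reading's characterization of $c$-sortability for the linear orientation) and then constructs the bijection $\Binv(w) \to \supp(w)$, $(i\,\,j)\mapsto i$, by arguing directly with one-line notation and $231$-avoidance. You instead encode $\FF(w)$ as a set $\II$ of intervals closed under taking right-subintervals (closure of $\FF(w)$ under submodules for the linear orientation) and under concatenation (closure under extensions), and build the inverse bijection $k\mapsto [k,\min\{j: [i,j)\in\II,\ i\le k<j\})$ using only the submodule-closure axiom; your bijection is the inverse of the paper's. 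The net effect is the same, but the paper's argument requires importing the $231$-avoiding characterization of linear-orientation $c$-sortability, while yours needs only the Ingalls--Thomas bijection already set up in Section 6 and the elementary submodule structure of indecomposables over the linearly oriented quiver, so it is somewhat more self-contained within the paper's framework. Your parenthetical remark that the empty-interval bookkeeping is routine is fair; the only thing to tidy in a final write-up is to state cleanly whether $\II$ contains the empty intervals (so that axiom (a) with $l=j$ reads correctly) and to exclude them when enumerating simple intervals.
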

\begin{proof}
  We may assume that $Q$ is $1 \to 2 \to \cdots \to n$, so $c = s_n \cdots s_2 s_1$.
  We use the combinatorial characterization of the $c$-sortable-ness given in \cite[Theorem 4.12]{reading}: an element $w \in S_{n+1}$ is $c$-sortable if it is \emph{231-avoiding}, that is, there exists no $i < l < j$ such that $l$ appears before $j$ and $j$ appears before $i$ in the one-line notation for $w$.

  By Theorem \ref{sortabletorf}, there exists a $c$-sortable element $w \in S_{n+1}$ satisfying $\FF = \FF(w)$.
  Then according to Corollary \ref{jhtypea}, the assertion amounts to the following purely combinatorial lemma (or equivalently, this lemma follows from two categorical statements: Corollaries \ref{jhtypea} and \ref{nakayamamain}).
  \begin{lemma*}
    For every {\upshape 231}-avoiding element $w$ of $S_{n+1}$, we have $\#\Binv (w) = \#\supp(w)$.
  \end{lemma*}

  To prove this lemma, we will construct an explicit bijection $\Binv(w) \xrightarrow{\sim} \supp(w)$. In what follows, we will work on the one-line notation for $w$.

  Let $(i \,\,j)$ be a Bruhat inversion of $w$ with $i < j$. We claim that $i \in \supp(w)$ holds. If this is not the case, then the initial segment of length $i$ contains exactly $1,2, \cdots, i$ by Lemma \ref{supplem}. This is a contradiction since $j \,\,(> i)$ appears before $i$ by $(i\,\,j) \in \inv(w)$. Thus, $i \in  \supp(w)$ holds, and we obtain a map $\Binv(w) \to \supp(w)$ by $(i\,\,j) \mapsto i$.

  We will show that this map is an injection. Suppose $(i \,\, j_1)$ and $(i \,\,j_2)$ are two different Bruhat inversion of $w$ with $i < j_1 < j_2$, so $j_1$ and $j_2$ appear before $i$. Then $j_1$ must appear before $j_2$, since otherwise $(i\,\,j_2)$ would not be a Bruhat inversion. Thus, $w$ looks like $\cdots j_1 \cdots  j_2 \cdots i \cdots$, which contradicts to that $w$ is 231-avoiding. Thus, this map is an injection.

  Next, we will show that this map is a surjection.
  For $i \in \supp(w)$, we will show the following:

  {\bf (Claim)}:
   \emph{There exists some letter $j$ such that $i < j$ and $j$ appears before $i$}.

   By Lemma \ref{supplem}, there exists some $l$ with $l > i$ such that $l$ appears in the initial segment of length $i$. Suppose that any letters left to $i$ are less than $i$, that is, there is no $j$ as in the above claim. Then in particular $i$ belongs to the initial segment of length $i$, and is left to $l$.
  Also by Lemma \ref{supplem} (2), there exists a letter $i'$ with $i' \leq i$ such that $i'$ does not appear in the initial segment of length $i$.
  However, we must have $i' \neq i$, thus $i' < i$, since $i$ appears in the initial segment of length $i$.
  Now we have $i' < i < l$ and $w$ looks like$\cdots i \cdots l \cdots i' \cdots$, which contradicts to that $w$ is 231-avoiding. Thus, the claim follows.

  Take the rightmost letter $j$ with the claimed property. Then $(i \,\, j)$ is clearly a Bruhat inversion. Thus, the map $\Binv(w) \to \supp(w)$ is surjective.
\end{proof}

\begin{remark}\label{forthcoming}
  In a forthcoming paper \cite{enforth}, we will show that the natural analogue for Theorem \ref{binvsimp} holds for $\mod kQ$ with an arbitrary Dynkin quiver $Q$, or more generally, for $\mod \Pi$ for a preprojective algebra $\Pi$ of an arbitrary Dynkin type.
\end{remark}

\begin{remark}\label{rem:wide}
  Recall from Remark \ref{rem:coverref} that the set of cover relations is a subset of the set of Bruhat inversions. This can be understood representation-theoretically as follows. Let $Q$, $c$ and $w$ be as in Theorem \ref{binvsimp} and consider the torsion-free class $\FF:= \FF(w)$. Associated to $\FF$, Ingalls-Thomas defined a wide subcategory $\WR(\FF)$, which is given by
  \[
  \WR(\FF) := \{ X \in \FF \, | \, \text{for every map $\varphi \colon X \to F$ with $F \in \FF$, we have $\coker\varphi \in \FF$} \}.
  \]
  This category is a proper subcategory of $\FF$ which is abelian, hence one can speak of simple objects in $\WR(\FF)$, and the natural bijection $\inv(w) \xrightarrow{\sim} \ind \FF$ turns out to induce a bijection between the set of cover reflections for $w$ and $\simp\WR(\FF)$.
  Consequently, $\simp\WR(\FF)$ is a subset of $\simp\FF$ by Theorem \ref{binvsimp}, and \emph{this is always the case} under a bijection between wide subcategories and torsion-free classes established in \cite{ms}.
  In fact, $\FF$ is the smallest torsion-free class containing the semibrick $\simp\WR(\FF)$, and a characterization of this semibricks inside $\FF$ in \cite[Lemma 1.7]{asai} immediately implies that $\simp\WR(\FF) \subseteq \simp\FF$ holds. Note that \cite{it,ms,asai} deal with torsion classes, but we use torsion-free classes.
\end{remark}

\section{Computations of the Grothendieck monoids}\label{compgromon}
So far, we do not know any method to compute the Grothendieck monoid $\MM(\EE)$ for a given length exact category $\EE$ in general, except the following information:
\begin{itemize}
  \item $\MM(\EE)$ is atomic, that is, generated by $\Atom \MM(\EE) = \{ [S] \, | \, S \in \simp \EE \}$ (Proposition \ref{atomicfg}).
  \item $\MM(\EE)$ is just a free monoid if $\EE$ turns out to satisfy (JHP) (Theorem \ref{JHchar}).
  \item The cancellative quotient $\MM(\EE)_\can$, which is isomorphic to the positive part $\KK_0^+(\EE)$ of the Grothendieck group (Proposition \ref{canquot}), is nothing but the monoid of dimension vectors of modules belonging to $\EE$ under the Assumption \ref{assumperp} (Corollary \ref{setofdim}).
\end{itemize}
Thus, we have the following problem: we do not know how to check whether $\MM(\EE)$ is cancellative or not (if $\MM(\EE)$ is cancellative, then $\MM(\EE)$ can be calculated in principle by the above), and the calculation seems to be rather difficult if $\MM(\EE)$ is \emph{not} cancellative.

In this section, we will show (a bit artificial) examples of  calculations of $\MM(\EE)$ such that $\MM(\EE)$ is \emph{not} cancellative.

\subsection{A category of modules with a designated set of dimension vectors}
We will convince the reader that lots of monoids can appear as the Grothendieck monoids of exact categories.
First, we consider the case of split exact categories.
\begin{proposition}\label{splitmon}
  Let $\EE$ be a split exact category, that is, every conflation splits. Then $\MM(\EE)$ is isomorphic to $\Iso \EE$ (with addition given by $\oplus$) as monoids.
\end{proposition}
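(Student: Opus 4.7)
The plan is to verify directly that $\Iso\EE$, equipped with the identity map $\id \colon \Iso\EE \to \Iso\EE$, already enjoys the universal property defining the Grothendieck monoid; by uniqueness this forces $\MM(\EE) \cong \Iso\EE$. Since $\EE$ is additive, the direct sum makes $\Iso\EE$ into a commutative monoid with unit $[0]$, so this target makes sense.

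First I would check that the identity map $\id \colon \Iso\EE \to \Iso\EE$ respects conflations. Given any conflation $X \infl Y \defl Z$ in $\EE$, the splitting hypothesis yields an isomorphism $Y \iso X \oplus Z$, whence $[Y] = [X \oplus Z] = [X] + [Z]$ already holds in $\Iso\EE$. Thus the defining conflation relations are satisfied tautologically.

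Next I would observe that any map $f \colon \Iso\EE \to M$ to a monoid $M$ that respects conflations is automatically a monoid homomorphism. Indeed, $f[0] = 0$ by assumption, and for any $A, B \in \EE$ the trivial split conflation $A \infl A \oplus B \defl B$ (which is a conflation in every exact category) forces $f[A \oplus B] = f[A] + f[B]$, i.e.\ $f([A] + [B]) = f[A] + f[B]$. Consequently $f$ itself is the unique monoid homomorphism $\ov{f} \colon \Iso\EE \to M$ factoring $f$ through the identity map, and uniqueness is automatic. This establishes the universal property, hence $\MM(\EE) \iso \Iso\EE$.

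There is no real obstacle here: the whole argument is formal, and the only substantive input is the recognition that split short exact sequences are conflations in any exact category, which is part of Keller's axioms. One could alternatively work from the explicit construction in Proposition \ref{gromon}: the monoid congruence $\sim$ on $\Iso\EE$ generated by $[Y] \sim [X] + [Z]$ for conflations $X \infl Y \defl Z$ becomes the trivial congruence when every conflation splits, giving the same conclusion.
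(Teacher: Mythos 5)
Your proof is correct and follows the universal-property route that the paper explicitly names as one of its two suggested approaches (the paper leaves the details to the reader). The key observations — that split conflations make the identity map respect conflations, and that the trivial split conflation $A \infl A\oplus B \defl B$ forces any conflation-respecting map to be a monoid homomorphism — are exactly what is needed, and you also note the alternative congruence-based argument the paper mentions.
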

\begin{proof}
  This follows from the construction given in Proposition \ref{gromon}, or one can show this by checking the universal property directly. The details are left to the reader.
\end{proof}

Using this, we can realize any submonoids of $\N^n$ as Grothendieck monoids.
\begin{proposition}
  Let $M$ be a submonoid of $\N^n$ for some $n \geq 0$. Then there exists a split exact category $\EE$ whose Grothendieck monoid is isomorphic to $M$.
\end{proposition}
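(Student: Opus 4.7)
The plan is to leverage Proposition \ref{splitmon}, which reduces the problem to constructing an additive category $\EE$ with $\Iso\EE \cong M$ as monoids under $\oplus$, and then equipping it with the split exact structure.

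Fix a field $k$ and consider the commutative semisimple artin algebra $k^n$, whose simple modules $S_1, \dots, S_n$ satisfy the property that every object of $\mod k^n$ decomposes uniquely as $S_1^{a_1} \oplus \cdots \oplus S_n^{a_n}$. Consequently, the dimension vector gives a monoid isomorphism $\udim \colon \Iso(\mod k^n) \xrightarrow{\sim} \N^n$ (with direct sum on the left and addition on the right). I would then define $\EE$ as the full subcategory of $\mod k^n$ consisting of those modules $X$ with $\udim X \in M$. Since $M$ is a submonoid of $\N^n$, it contains $0$ and is closed under addition, so $\EE$ contains the zero object and is closed under $\oplus$; hence $\EE$ is an additive subcategory of $\mod k^n$.

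Equip $\EE$ with its \emph{split exact structure}, in which conflations are declared to be precisely the split short exact sequences. This always defines an exact structure on any additive category, so $\EE$ becomes a split exact category. The restriction of $\udim$ then gives a monoid isomorphism $\Iso\EE \cong M$, and Proposition \ref{splitmon} yields $\MM(\EE) \cong \Iso\EE \cong M$, as required.

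The construction has no real obstacle: the only subtle point is that $\EE$ need not be closed under direct summands of $\mod k^n$ (for instance if $M$ contains $(2,0)$ but not $(1,0)$), but this is not required for $\EE$ to be an additive, and hence split exact, category. If desired, one could equally well take $k = \F_1$-style by working with $\Iso\EE$ defined abstractly as the monoid $M$ itself regarded as a category with only identity morphisms (together with a formal zero), but the module-theoretic model above is more transparent and keeps $\EE$ enriched in $k$-vector spaces.
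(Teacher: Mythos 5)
Your proposal is correct and matches the paper's construction essentially verbatim: take $\Lambda = k^n$ semisimple, let $\EE$ be the full subcategory of $\mod\Lambda$ cut out by $\udim X \in M$, observe every conflation splits, and apply Proposition \ref{splitmon}. The only cosmetic difference is that the paper obtains the split exact structure on $\EE$ as the one inherited from $\mod\Lambda$ (noting $\EE$ is extension-closed and $\mod\Lambda$ is semisimple), whereas you impose the split structure directly; these coincide here.
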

\begin{proof}
  Let $k$ be a field and consider a semisimple $k$-algebra $\Lambda := k^n$. Then $\Lambda$ has $n$ non-isomorphic simple modules, and we denote by $\udim X \in \N^n$ for $X \in \mod\Lambda$ the dimension vector of $X$, see Definition \ref{dimvecdef}. Define the full subcategory $\EE$ of $\mod \Lambda$ by the following:
  \[
  \EE := \{ X \in \mod\Lambda \, | \, \udim X \in M \}.
  \]
  This is an extension-closed subcategory of $\mod\Lambda$, and we claim that $\MM(\EE) \iso M$ holds.
  First, observe that every short exact sequence in $\mod\Lambda$ splits, so $\EE$ is a split exact category.
  Thus, it suffices to show $\Iso \EE \iso M$ by Proposition \ref{splitmon}. The map $X \mapsto \udim X$ induces an isomorphism $\Iso (\mod\Lambda) \iso \N^n$, and by construction, this map restricts to an isomorphism $\Iso \EE \iso M$.
\end{proof}
If $\EE$ is a full subcategory of a module category, then by regarding $\EE$ as an exact category with a split exact structure, Proposition \ref{splitmon} shows that our monoid $\MM(\EE)$ only concerns with direct sum decompositions of modules.
In this case, $\MM(\EE)$ is free if and only if the uniqueness of direct sum decompositions holds (e.g. Krull-Schmidt categories), thus the combinatorial property of the monoid $\MM(\EE)$ encodes information on the non-unique direct sum factorizations of modules. This has been studied by several authors (e.g. \cite{fac1,fac2,baeg}), and we refer the reader to the recent article \cite{baeg} and references therein for more information on this direction.

In a similar way to the construction of $\EE$ above, we can attach extension-closed subcategories of module categories to all submonoids of $\MM(\mod\Lambda)$ for an artin algebra $\Lambda$.
\begin{definition}\label{emdef}
  Let $\Lambda$ be an artin algebra with $n$ non-isomorphic simple modules. For a submonoid $M$ of $\N^n$, we define the subcategory $\EE_M$ of $\mod\Lambda$ by the following:
  \[
  \EE_M := \{ X \in \mod \Lambda \, | \, \udim X \in M \}.
  \]
  Then it is an extension-closed subcategory of $\mod\Lambda$, thus an exact category.
\end{definition}
By the universal property of $\MM(\EE_M)$, we have a monoid homomorphism $\MM(\EE_M) \to M$ induced by $\udim$. Indeed $\MM(\EE_M)$ is a \emph{thickening} of $M$ by some non-cancellative part:
\begin{proposition}\label{mem}
  Let $\Lambda$ be an artin algebra with $n$ non-isomorphic simple modules and $M$ a submonoid of $\N^n$. Then $\udim \colon \MM(\EE_M) \to M$ induces an isomorphism $\MM(\EE_M)_\can \iso M$ of monoids.
\end{proposition}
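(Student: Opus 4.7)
The plan is to study the monoid homomorphism $\udim \colon \MM(\EE_M) \to M$ induced by $[X] \mapsto \udim X$; this is well-defined because $\udim$ is additive on short exact sequences. For surjectivity, observe that every $v = (v_1, \ldots, v_n) \in M$ is realized by the semisimple module $Z_v := \bigoplus_i S_i^{v_i}$, which lies in $\EE_M$ since $\udim Z_v = v \in M$. Because $M \subset \Z^n$ is cancellative, the map factors through a surjective homomorphism $\overline{\udim} \colon \MM(\EE_M)_\can \to M$.

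The heart of the argument is injectivity of $\overline{\udim}$, which I reduce to the following claim: for every $X \in \EE_M$, $[X] = [X^{ss}]$ in $\MM(\EE_M)_\can$, where $X^{ss} := Z_{\udim X}$. Granted this, if $\udim X = \udim Y$ then $X^{ss} \iso Y^{ss}$, giving $[X] = [X^{ss}] = [Y^{ss}] = [Y]$ in $\MM(\EE_M)_\can$.

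I prove the claim by induction on the invariant $n(X) := l(X) - s(X)$, where $l(X)$ is the $\Lambda$-length and $s(X)$ is the number of indecomposable summands; note $n(X) = 0$ precisely when $X$ is semisimple. For $n(X) \geq 1$, choose an indecomposable summand $X_{i_0}$ of $X$ with $l(X_{i_0}) \geq 2$, and a simple submodule $X_1 \hookrightarrow X_{i_0}$, isomorphic to some $S_{j_0}$. The key construction is an explicit short exact sequence in $\mod\Lambda$,
\[
0 \to X^{ss} \xrightarrow{\phi} X \oplus X^{ss} \to X' \to 0, \qquad X' := (X/X_1) \oplus X_1,
\]
where $\phi$ sends one distinguished copy of $S_{j_0}$ inside $X^{ss}$ to the inclusion $X_1 \hookrightarrow X$, and embeds the remaining simple summands of $X^{ss}$ identically into the $X^{ss}$ factor of the target. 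A direct computation identifies the cokernel of $\phi$ as $(X/X_1) \oplus X_1$. Writing $v := \udim X$, the three terms have dimension vectors $v, 2v, v$, all in $M$, so the sequence is a conflation in $\EE_M$.

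Combined with the trivial $[X \oplus X^{ss}] = [X] + [X^{ss}]$, this yields $[X] + [X^{ss}] = [X^{ss}] + [X']$ in $\MM(\EE_M)$, and cancellation produces $[X] = [X']$ in $\MM(\EE_M)_\can$. A straightforward count shows $n(X') = n(X) - s(X_{i_0}/X_1) < n(X)$, since stripping $X_1$ off a length-$\geq 2$ indecomposable summand strictly increases the number of summands while preserving the total length. By induction $[X'] = [X'^{ss}]$ in $\MM(\EE_M)_\can$, and $X'^{ss} \iso X^{ss}$ because $X'$ and $X$ have identical composition factors, closing the induction. The main obstacle is engineering the conflation entirely inside $\EE_M$: neither $X/X_1$ nor $X_1$ alone need lie in $\EE_M$, but their direct sum does because its dimension vector equals $v \in M$, which is what allows the inductive step to remain within $\EE_M$.
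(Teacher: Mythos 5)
Your proof is correct and takes essentially the same route as the paper's: both build the conflation $0 \to X^{ss} \to X \oplus X^{ss} \to X' \to 0$ by embedding a simple socle element of a non-simple indecomposable summand into the direct sum, use cancellation in $\MM(\EE_M)_\can$, and descend to the semisimple module with the same dimension vector. The only cosmetic difference is the induction parameter: you use $n(X) = l(X) - s(X)$, which decreases strictly at each single step, while the paper inducts on $l_{\max}(X)$, the maximum length of an indecomposable summand, and iterates the replacement over all summands of that maximal length before invoking the inductive hypothesis.
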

\begin{proof}
  Let us introduce some notation. Write $\EE: = \EE_M$ for simplicity. Let $S_i$ denote the simple $\Lambda$-module with $\udim S_i = e_i$ for $1 \leq i \leq n$, where $e_i$ is the standard basis of $\N^n$.
  For an element $d \in \N^n$, we denote by $S_d$ the unique semisimple $\Lambda$-module satisfying $\udim S_d = d$, in other words, $S_d := \bigoplus_i S_i^{\oplus a_i}$ for $d = \sum_{i} a_i e_i$ with $a_i \in \N$.

  For $d \in M$, clearly we have $S_d \in \EE$. Since $\udim S_d = d$ holds, the map $\udim \colon \MM(\EE) \to M$ is a surjection.

  Thus, to prove $\MM(\EE)_\can \iso M$, it suffices to show that for two objects $X, Y \in \EE$, we have $[X] \sim_\can [Y]$ in $\MM(\EE)$ if and only if $\udim X = \udim Y$ holds (see Proposition \ref{canquot}).
  The ``only if'' part is clear since $\N^n$ (or $M$) is cancellative, so we will prove the ``if'' part.
  To do this, it suffices to show $[X] \sim_\can [S_d]$ for every $X \in \EE$ with $\udim X = d$.

  Let $l_{\rm max} (X)$ be the maximum of lengths of indecomposable direct summands of $X$, and we show $[X] \sim_\can [S_d]$ by induction on $l := l_{\rm max}(X)$.
  If $l =1$, then $X$ is semisimple and $X \iso S_d$ holds.
  Suppose $l >1$, and take a direct summand $X_1$ of $X$ with $l(X_1) = l$.
  We have $X = X_1 \oplus X/X_1$, and take a simple submodule $S$ of $X_1$.
  Note that $S$ is a direct summand of $S_d$ since $S$ is a composition factor of $X$.
  Thus, there are two exact sequences in $\mod\Lambda$:
  \[
  \begin{tikzcd}[row sep=0em]
    0 \rar & S \rar & X_1 \rar & X_1/S \rar & 0, \quad \text{and} \\
    0 \rar & S_d/S \rar & S_d/S \oplus X/X_1 \oplus S \rar & X/X_1 \oplus S \rar & 0,
  \end{tikzcd}
  \]
  where the second one is just a split exact sequence.
  By taking direct sum of these two sequences, we obtain the following exact sequence.
  \[
  \begin{tikzcd}
    0 \rar & S_d \rar & X \oplus S_d \rar & S \oplus X_1/S \oplus X/X_1 \rar & 0
  \end{tikzcd}
  \]
  The dimension vectors of modules in this exact sequence is $d$, $2d$ and $d$ respectively, so this is a conflation of $\EE$. Therefore, we have $[X] + [S_d] = [X'] + [S_d]$ in $\MM(\EE)$, hence $[X] \sim_\can [X']$ holds, where $X' = S \oplus X_1/S \oplus X/X_1$.
  Thus, we can replace the direct summand $X_1$ of $X$ with $S \oplus X_1/S$. By iterating this process to all direct summands of $X$ with length $l$, we obtain an object $Z$ in $\EE$ such that $l_{\rm max}(Z) < l$ and $[X] \sim_\can [Z]$. Thus, the claim follows from induction.
\end{proof}
Thus, our category $\EE_M$ has a monoid $M$ as a cancellative quotient of its Grothendieck monoid $\MM(\EE_M)$, but $\MM(\EE_M)$ is not cancellative in general. In the next subsection, we give an example of computation of $\MM(\EE_M)$.

\subsection{The case of $A_2$ quiver and monoids with one generator}
Throughout this subsection, we denote by $k$ a field and by $Q$ a quiver $1 \ot 2$, and put $\Lambda := kQ$. Although the structure of the category $\mod \Lambda$ seems to be completely understood, this category has lots of extension-closed subcategories, and the computation of their Grothendieck monoids is rather hard as we shall see in this subsection. A particular case of this computation was considered in \cite[Section 3.4]{bg}.

First, we recall basic properties of the category $\mod \Lambda$.
We have the complete list of indecomposable objects in $\mod \Lambda$: two simple modules $S_1$ and $S_2$, which are supported at vertices $1$ and $2$ respectively, and one non-simple projective injective module $P$.
The Grothendieck group $\KK_0(\mod\Lambda)$ is a free abelian group with basis $\{[S_1], [S_2]\}$, and we identify it with $\Z^2$.

Now we apply the construction $\EE_M$ defined in the previous subsection to this case, where $M$ is a submonoid of $\N^2$. We will compute the monoid $\MM(\EE_M)$ in the case that $M$ is generated by one element. In such a case, by Proposition \ref{mem}, $\MM(\EE_M)_\can \iso \N$ holds, thus $\MM(\EE_M)$ is a \emph{thickening} of $\N$ in some sense.

To present the monoid structure, we will use the \emph{Cayley quiver}, which is a monoid version of the Cayley graph of a group.
\begin{definition}
  Let $M$ be a monoid with a set of generators $A \subset M$. Then the \emph{Cayley quiver} of $M$ with respect to $A$ is a quiver defined as follows:
  \begin{itemize}
    \item The vertex set is $M$.
    \item For each $a \in A$ and $m \in M$, we draw a (labelled) arrow $m \xrightarrow{a} m+a$.
  \end{itemize}
\end{definition}
For an atomic monoid $M$, the natural choice of $A$ above is the set $\Atom M$ of all atoms of $M$. Now we can state our computation.
\begin{proposition}\label{compex}
  Let $M:=\N (m,n)$ be a submonoid of $\N^2$ generated by $(m,n)\in \N^2$ with $(m,n) \neq (0,0)$.  Consider the exact category $\EE:= \EE_M$ as in Definition \ref{emdef}. Then the following hold, where we set $l := \min \{m,n\}$.
  \begin{enumerate}
    \item $\EE$ has exactly $l+1$ distinct simple objects $A_0, \cdots, A_l$, where
    \[
    A_i := P^{\oplus i} \oplus S_1^{\oplus (m-i)} \oplus S_2^{\oplus(n-i)}
    \]
    for $0 \leq i \leq l$. We have $\dim A_i = (m,n)$ for every $i$.
    \item Put $a_i := [A_i] \in \MM(\EE)$ for $0 \leq i \leq l$, which are precisely the atoms of $\MM(\EE)$. Then the Cayley quiver of $\MM(\EE)$ with respect to $\Atom \MM(\EE)$ is given as follows, where we draw arrows $\twoheadrightarrow$ to represent $l+1$ arrows $a_0, \cdots, a_l$.
    \begin{itemize}
      \item {\bf (Case 1)} The case $m \neq n$:
      \[
      \begin{tikzcd}[row sep = tiny]
        & a_0 \ar[rdd, twoheadrightarrow] \\
        & a_1 \ar[rd, twoheadrightarrow]\\
        0 \ar[ruu, "a_0"] \ar[ru, "a_1"'] \ar[rd, "a_l"'] & \vdots & 2a_0 \rar[twoheadrightarrow] & 3a_0 \rar[twoheadrightarrow] & \cdots \\
        & a_l \ar[ru, twoheadrightarrow]
      \end{tikzcd}
      \]
      In particular, if $m=0$ or $n=0$, then $\MM(\EE) \iso \N$ holds.
      \item {\bf (Case 2)} The case $m = n$:
      \[
      \begin{tikzcd}[row sep = tiny]
        & a_0 \rar[twoheadrightarrow] & 2a_0 \rar[twoheadrightarrow] & 3a_0 \rar[twoheadrightarrow] & \cdots \\
        & a_1 \ar[ru, twoheadrightarrow]\\
        0 \ar[ruu, "a_0"] \ar[ru, "a_1"'] \ar[rd, "a_n"'] & \vdots\\
        & a_n \ar[ruuu] \rar["a_n"'] & 2 a_n \ar[ruuu]\rar["a_n"'] & 3a_n \ar[ruuu]\rar["a_n"'] & \cdots
      \end{tikzcd}
      \]
      Here non-labelled arrows $\to$ represent $n$ arrows $a_0, \cdots, a_{n-1}$.
    \end{itemize}
  \end{enumerate}
\end{proposition}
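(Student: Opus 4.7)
The plan is to handle part (1) by a direct dimension-vector argument, and part (2) by combining two families of relations on the atoms $a_i := [A_i]$: the split-sum isomorphisms $A_i \oplus A_j \iso A_{i'} \oplus A_{j'}$ whenever $i+j = i'+j'$, and non-split extensions built from $\Ext^1(S_2, S_1) = k$ (arising from the hereditary sequence $0 \to S_1 \to P \to S_2 \to 0$).

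For part (1), every $X \in \EE$ has $\udim X = k(m,n)$ for some $k \geq 0$. If $k = 1$, any conflation $L \infl X \defl N$ in $\EE$ with $L, N \neq 0$ would force $\udim L, \udim N \in M \setminus \{0\}$, which is impossible because their sum is $(m,n)$; hence such an $X$ is simple, and Krull--Schmidt gives $X \iso A_i$ for some $i \in [0, l]$. If $k \geq 2$, I exhibit a proper admissible sub $A_i \hookrightarrow X$ by writing $X = P^a \oplus S_1^{km - a} \oplus S_2^{kn - a}$ and picking $i$ in the (non-empty) interval $[\max(0, a - (k-1)m, a - (k-1)n),\, \min(a, l)]$; the cokernel automatically lies in $\EE$ by its dimension vector.

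For part (2), $\MM(\EE)$ is atomic with atoms $\{a_0, \ldots, a_l\}$ by Propositions~\ref{atomicfg} and~\ref{simpleatom}, and its cancellative quotient is $M \iso \N$ via $a_i \mapsto 1$ by Corollary~\ref{setofdim}, so it suffices to classify, for each $d \geq 2$, the elements $[X]$ with $\udim X = d(m,n)$. The split-sum observation is that $A_i \oplus A_j \iso P^{i+j} \oplus S_1^{2m - i - j} \oplus S_2^{2n - i - j}$ depends only on $i + j$, so $a_i + a_j = a_{i'} + a_{j'}$ whenever $i + j = i' + j'$; call this common value $b_r$, where $r = i + j$. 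The non-split relation is that when $i < m$ and $j < n$, a non-zero class in $\Ext^1(A_j, A_i)$ produces an extension $0 \to A_i \to E \to A_j \to 0$ whose middle term is obtained from $A_i \oplus A_j$ by merging one $S_1$- and one $S_2$-summand into a new $P$, giving $E \iso P^{i+j+1} \oplus S_1^{2m - i - j - 1} \oplus S_2^{2n - i - j - 1}$ and hence $b_{r+1} = a_i + a_j = b_r$. In Case~1 ($m \neq n$, say $m < n$), the condition $j < n$ is automatic and one only needs $i \leq l - 1$, which can be arranged for every $r \in [0, 2l - 1]$, so $b_0 = \cdots = b_{2l}$; an induction on $d$ using $a_i + (d-1) a_0 = b_i + (d-2) a_0 = 2 a_0 + (d-2) a_0 = d a_0$ then yields $[X] = d a_0$ for every such $X$ and $d \geq 2$. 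In Case~2 ($m = n$), the sole failure is $(i, j) = (n, n)$, because $A_n = P^n$ is projective--injective and all its self-extensions split; this forces $b_0 = \cdots = b_{2n - 1}$ but leaves $b_{2n} = 2 a_n$ distinct, and because $S_2$ is not a submodule of $P$, any composition factor of $P^{nd}$ in $\EE$ must be $A_n$, so $[P^{nd}] = d a_n$ remains distinguishable from $d a_0$.

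The main obstacle I anticipate is the careful bookkeeping associated with embeddings $A_i \hookrightarrow X$ and the identification of $X / A_i$ up to isomorphism --- specifically, tracking whether the $S_1$-summands of $A_i$ land in $S_1$-summands of $X$ or in socles of $P$-summands of $X$, which controls the $P$-count of the quotient and is exactly what couples the split and non-split relations used in part (2). Once the equalities $b_r = b_{r+1}$ in the correct range are in place, the induction on $d$ and the verification of the Cayley-quiver arrows reduce to routine applications of the identities already derived.
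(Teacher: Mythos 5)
Your proof follows essentially the same route as the paper's, with one genuine gap at the end of Case~2.

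For part~(1), your argument is a valid minor variant: you establish that $A_0,\dots,A_l$ are the only simples by showing any $X$ with $\udim X = k(m,n)$, $k\geq 2$, admits a proper admissible subobject $A_i$; the paper instead proves the slightly stronger claim that every object of $\EE$ decomposes as a direct sum of $A_i$'s, which it then reuses in part~(2). Your route still works because you appeal to atomicity of $\MM(\EE)$ (Propositions~\ref{atomicfg} and~\ref{simpleatom}) in part~(2), not to the decomposition result.

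For part~(2), your two families of relations are the same ones the paper uses, just phrased differently. Your split-sum relation $a_i+a_j=b_{i+j}$ is immediate; your non-split extension $0\to A_i\to E\to A_j\to 0$ (arising from $\Ext^1(S_2,S_1)\ne 0$, which needs exactly $i<m$ and $j<n$) gives $b_{r+1}=b_r$, and this is the same equality the paper obtains in its ``Claim~1'' by summing the sequence $0\to S_1\to P\to S_2\to 0$ with suitable split sequences. The bookkeeping through both cases is right: $b_r$ is constant on $[0,2l]$ when $m\neq n$, and constant on $[0,2n-1]$ when $m=n$ with $b_{2n}=2a_n$ isolated, leading to $\MM(\EE)_d=\{da_0\}$ or $\{da_0,da_n\}$ respectively by the induction you sketch.

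The gap is in the very last step of Case~2. You write ``because $S_2$ is not a submodule of $P$, any composition factor of $P^{nd}$ in $\EE$ must be $A_n$, so $[P^{nd}]=da_n$ remains distinguishable from $da_0$.'' The inference from ``$P^{nd}$ and $A_0^d$ have different composition factors'' to ``$[P^{nd}]\ne[A_0^d]$ in $\MM(\EE)$'' is exactly the kind of inference that fails when (JHP) fails, which is the situation here; equality in $\MM(\EE)$ is defined by a chain of isomorphic inflation series (Proposition~\ref{explicit}), not by composition factors, and indeed $[P^{nd}]=[A_0^d]$ already holds in $\KK_0(\EE)$. What is actually needed is to verify that $\add(A_n)$ is a \emph{Serre} subcategory of $\EE$ (closed under admissible subobjects, quotients, and extensions) and then invoke Proposition~\ref{serre}: since $P^{nd}\in\add(A_n)$ but $A_0^d\notin\add(A_n)$, the two classes must differ. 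Your observation about socles proves closure under admissible subobjects; you still need closure under admissible quotients (conveniently handled via tops, since $\top P^{nd}\in\add S_2$ while $\top A_i$ has an $S_1$-summand for $i<n$) before the Serre property is established, and you need the explicit appeal to Proposition~\ref{serre} to convert the Serre property into a distinction inside $\MM(\EE)$. This is precisely the paper's ``Claim~2.''
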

\begin{proof}
  For simplicity, put $\EE := \EE_M$, and $X^i := X^{\oplus i}$ for $X \in \mod\Lambda$.

  (1)
  It is clear that each $A_i$ is a simple object in $\EE$ since $\udim A_i = (m,n)$, and that $A_i \iso A_j$ holds if and only if $i = j$.
  On the other hand, we claim that every object in $\EE$ is a \emph{direct sum} of $A_i$'s, which clearly implies the assertion. In fact, take an object $X \in \EE$ with $\udim X = (Nm, Nn)$ for $N > 0$. Then $X$ is isomorphic to $P^{ i} \oplus S_1^{Nm-i} \oplus S_2^{Nn-i}$ for some $0 \leq i \leq Nl$.
  Take any integers $i_1, i_2,\cdots, i_N$ such that $0 \leq i_j \leq l$ for each $j$ and $i = i_1 + \cdots + i_N$. Then it is straightforward to see that $X \iso A_{i_1} \oplus \cdots \oplus A_{i_N}$ holds.

  (2)
  By Proposition \ref{simpleatom}, we have that $\Atom \MM(\EE) = \{ a_0,a_1, \cdots, a_l\}$ holds for $a_i:=[A_i]$, and all of them are distinct.
  On the other hand, we have a map $\varphi \colon \MM(\EE) \to \N$ which sends an object $X$ with $\udim X = (Nm, Nn)$ to $N$ for $N \in \N$. This map is clearly a surjection (and actually this is the universal cancellative quotient of $\MM(\EE)$ by Proposition \ref{mem}, but this fact is not needed here).
  Denote by $\MM(\EE)_N$ the inverse image $\varphi^{-1}(N)$, and our strategy to compute $\MM(\EE)$ is to compute $\MM(\EE)_N$.
  Clearly we have $\MM(\EE)_0 = \{ 0=[0] \}$ and $\MM(\EE)_1 = \Atom \MM(\EE) = \{a_0,a_1, \cdots, a_l\}$. Moreover, since every object of $\EE$ is a direct sum of $A_i$'s, we have $x \in \MM(\EE)_N$ if and only if there exist integers $0 \leq i_1, \cdots, i_N \leq l$ satisfying $x = a_{i_1} + \cdots + a_{i_N}$.
  The key part of our computation is to show the following equality in $\MM(\EE)$:

  {\bf (Claim 1)}:
  \emph{
    For $1 \leq i \leq l$ and $0 \leq j \leq l$, we have the equality
    $a_i + a_j = a_{i-1} + a_j$ in $\MM(\EE)$, except the case $j =m = n$.
    }

  \emph{Proof of Claim 1}.
  First, note that we have the following exact sequence in $\mod \Lambda$.
  \[
  \begin{tikzcd}
    0 \rar & S_1 \rar & P \rar & S_2 \rar & 0
  \end{tikzcd}
  \]
  Suppose that the equality $j = m = n$ does not hold, then one can easily check that either $j \leq m-1$ or $j \leq n-1$ (or both) holds. Suppose the former, and consider a split exact sequence below
  \[
  \begin{tikzcd}[column sep=1em]
    0 \rar & P^{j} \oplus S_1^{m-1-j} \oplus S_2^{n-j} \rar &
    P^{i+j-1} \oplus S_1^{2m-i-j} \oplus S_2^{2n-i-j} \rar &
    P^{i-1} \oplus S_1^{m-i+1} \oplus S_2^{n-i} \rar & 0.
  \end{tikzcd}
  \]
  By taking direct sum of the above two exact sequences, one obtains a conflation in $\EE$:
  \[
  \begin{tikzcd}
    0 \rar & A_j \rar & A_i \oplus A_j \rar & A_{i-1} \rar & 0.
  \end{tikzcd}
  \]
  Thus, $a_i + a_j = a_{i-1} + a_j$ holds in $\MM(\EE)$.

  Similarly, suppose that $j \leq n-1$ holds. Then by considering the following split sequence
  \[
  \begin{tikzcd}[column sep=1em]
    0 \rar & P^{i-1} \oplus S_1^{m-i} \oplus S_2^{n-i+1} \rar &
    P^{i+j-1} \oplus S_1^{2m-i-j} \oplus S_2^{2n-i-j} \rar &
    P^j \oplus S_1^{m-j} \oplus S_2^{n-1-j} \rar & 0,
  \end{tikzcd}
  \]
  as in the previous case, we obtain a conflation
  \[
  \begin{tikzcd}
    0 \rar & A_{i-1} \rar & A_i \oplus A_j \rar & A_j \rar & 0,
  \end{tikzcd}
  \]
  which shows $a_i + a_j = a_{i-1} + a_j$ as well. This finishes the proof of Claim 1. $\qed$

  In what follows, we consider two cases.

  {\bf (Case 1)}:
  \emph{The case $m \neq n$}.
  For every $i,j$ with $0 \leq i \leq l$ and $0 \leq j \leq l$, we have $a_i + a_j = a_{i-1} + a_j = \cdots = a_0 + a_j = a_0 + a_{j-1} = \cdots = a_0 + a_0 = 2a_0$ by Claim 1. Thus, $\MM(\EE)_2 = \{ 2 a_0 \}$ holds.
  We claim inductively that $\MM(\EE)_N = \{ N a_0 \}$ for $N \geq 2$. This is because every element $x$ in $\MM(\EE)_{N+1}$ can be written as $x = a_i + y$ for some $0 \leq i \leq l$ and $y \in \MM(\EE)_N$, thus $x = a_i + N a_0 = (a_i + a_0) + (N-1) a_0 = 2 a_0 + (N-1) a_0 = (N+1) a_0$.
  Now that $\MM(\EE)_N$ has been computed for all $N \geq 0$, the description of the Cayley quiver of $\MM(\EE)$ easily follows.

  {\bf (Case 2)}:
  \emph{The case $m=n$}.
  Note that $l = m = n$ in this case, so we only use the letter $n$. For every $i,j$ with $0 \leq i \leq n$ and $0 \leq j \leq n$, if $j \neq n$, then $a_i + a_j = a_{i-1} + a_j = \cdots = a_0 + a_j = a_0 + a_{j-1} = \cdots = 2 a_0$ by Claim 1. Thus, $\MM(\EE)_2 = \{ 2 a_0, 2 a_n \}$ holds (and later we will show $2 a_0 \neq 2 a_n$).
  The same inductive argument as in (Case 1) shows that $\MM(\EE)_N = \{ N a_0, N a_n \}$ holds for $N \geq 2$, and it suffices to show that $N a_0 \neq N a_n$ in $\MM(\EE)$ for $N \geq 2$.

  Suppose that $N a_0 = N a_n$ holds in $\MM(\EE)$, that is, $[A_0^N] = [A_n^N]$. Denote by $\EE_P$ the full subcategory of $\EE$ consisting of direct sums of $A_n = P^n$, or equivalently, the subcategory of $\EE$ consisting of projective $kQ$-modules. Note that $A_n^N \in \EE_P$ holds.
  Then the following claim holds:

  {\bf (Claim 2)}:
  \emph{In (Case 2), for every conflation}
  \[
  \begin{tikzcd}
    0 \rar & X \rar & Y \rar["\pi"] & Z \rar & 0
  \end{tikzcd}
  \]
  \emph{in $\EE$, we have that $Y \in \EE_P$ holds if and only if both $X\in \EE_P$ and $Z \in \EE_P$ hold (in other words, $\EE_P$ is a Serre subcategory of $\EE$).
  }

  \emph{Proof of Claim 2}.
  If $X \in \EE_P$ and $Z \in \EE_P$ hold, then the above sequence splits since $Z$ is projective, thus $Y \iso X \oplus Z \in \EE_P$ holds.
  Conversely, suppose that $Y \in \EE_P$ holds. Since $\pi$ induces a surjection  $\top Y \defl \top Z$, and $\top Y$ is a direct sum of $\top P = S_2$, we have that so is $\top Z$. However, $\top A_i$ contains $S_1$ as a direct summand if $i < n$. Therefore, $Z$, which is a direct sum of appropriate $A_i$'s, must be isomorphic to a direct sum of $A_n$. Thus, $Z \in \EE_P$ holds. Moreover, the above sequence splits since $Z$ is projective, thus $Y \iso X \oplus Z$.
  Clearly $\EE_P$ is closed under direct summands in $\EE$, so $X$ belongs to $\EE_P$. $\qed$

  Let us return to our situation, that is, $[A_0^N] = [A_n^N]$ holds. Since $\EE_P$ is a Serre subcategory of $\EE$ by Claim 2 and $A_n^N$ belongs to $\EE_P$, Proposition \ref{serre} implies that $A_0^N$ belongs to $\EE_P$. This is a contradiction since $A_0$ is not projective. Therefore, we have $N a_0 \neq N a_n$, which completes the proof.
\end{proof}

\section{Counter-examples}
In this section, we give examples of \emph{bad behavior} of exact categories on several topics which we have studied in the previous sections.

\subsection{On the poset of admissible subobjects}
In this subsection, we collect some counter-examples on the poset $\PP(X)$ (see Section \ref{posetdef} for the detail)

\subsubsection{Subobject posets which are not lattices}
The following example shows that \emph{$\PP(X)$ is not necessarily a lattice}.
\begin{example}\label{nonlattice1}
  Let $k$ be a field and $\EE$ a category of $k$-vector spaces whose dimensions are not equal to $1$ and $3$. Then $\EE$ is an extension-closed subcategory of $\mod k$, thus an exact category. Denote by $X$ the $6$-dimensional vector space with basis $x_1, \cdots, x_6$, and put $A:= \langle x_1,x_2,x_3,x_4 \rangle$ and $B:= \langle x_1, x_2, x_3, x_5 \rangle$.
  Then since $X/A$ and $X/B$ has dimension $2$, both $A$ and $B$ are admissible subobjects of $X$.
  Now the set-theoretic intersection of $A$ and $B$ is a $3$-dimensional subspace $C:=\langle x_1, x_2, x_3 \rangle$, which does not belong to $\EE$, thus lower bounds of $A$ and $B$ in $\PP_\EE(X)$ are precisely subspaces of $C$ whose dimensions are exactly two or zero. Then $A$ and $B$ do not have the greatest lower bound, since there are many two-dimensional subspaces of $C$.
\end{example}
The following is also an example in which $\PP(X)$ is not a lattice, although the ambient category is \emph{pre-abelian}.
\begin{example}\label{nonlattice2}
  Let $k$ be a field and $\Lambda$ an algebra
  $\Lambda:=k (1 \xleftarrow{\alpha} 2 \xleftarrow{\beta}3 ) / (\beta \alpha)$. Put $\EE:=\proj\Lambda$, then $\EE$ has the natural exact structure induced from $\mod\Lambda$, and every conflation splits in $\EE$. It follows that a morphism $\iota \colon A \to B$ in $\EE$ is an inflation if and only if $\iota$ is a section.
  Moreover, since $\gl \Lambda = 2$ holds, kernels of morphisms between projective modules are projective, thus $\EE$ is pre-abelian.

  We claim that $\PP_\EE(\Lambda_\Lambda)$ is not a lattice. By the previous argument, we can identify $\PP_\EE(\Lambda)$ with the poset of submodules of $\Lambda$ which are  direct summands of $\Lambda$. We can write $\Lambda$ as follows:
  \begin{equation}\label{compfactor}
  \Lambda_\Lambda = P_1 \oplus P_2 \oplus P_3
  = 1 \oplus \begin{smallmatrix} 2\\1 \end{smallmatrix} \oplus \begin{smallmatrix} 3 \\ 2 \end{smallmatrix}
  \end{equation}
  Here $P_i$ is the indecomposable projective modules corresponding to the vertex $i$, and the rightmost notation indicates composition series of modules.

  Consider the following two maps:
  \begin{align*}
    \iota_1 \colon P_1 \oplus P_2 &
    \xrightarrow{\begin{sbmatrix}1 & 0 \\ 0 & 1 \\ 0 & 0 \end{sbmatrix}}
     P_1 \oplus P_2 \oplus P_3 \\
    \iota_2 \colon P_1 \oplus P_2 &
    \xrightarrow{\begin{sbmatrix}1 & 0 \\ 0 & 1 \\ 0 & \beta \end{sbmatrix}}
     P_1 \oplus P_2 \oplus P_3
  \end{align*}
  Here $\beta \colon P_2 \to P_3$ denotes the multiplication map $\beta \cdot (-)$. We can check by matrix elimination that these maps are sections, thus $N_i := \im \iota_i$ belongs to $\PP_\EE(\Lambda)$ for each $i$. In the notation of (\ref{compfactor}), each $N_i$ looks like the following:
  \begin{align*}
    N_1 &= \{ (a, \begin{smallmatrix}b \\ c \end{smallmatrix}, \begin{smallmatrix}0 \\ 0 \end{smallmatrix}) \in P_1 \oplus P_2 \oplus P_3 \, | \, a,b,c \in k \} \\
    N_2 &= \{ (a, \begin{smallmatrix}b \\ c \end{smallmatrix}, \begin{smallmatrix}0 \\ b \end{smallmatrix}) \in P_1 \oplus P_2 \oplus P_3 \, | \, a,b,c \in k \}
  \end{align*}
  We claim that $N_1$ and $N_2$ do not have the greatest lower bound in $\PP_\EE(\Lambda)$. Let $N$ be a lower bound for $N_1$ and $N_2$ in $\PP_\EE(\Lambda)$. Then $N$ must be contained in the \emph{set-theoretic} intersection $N_1 \cap N_2$:
  \[
  N_1 \cap N_2 =
  1 \oplus \begin{smallmatrix} \\1 \end{smallmatrix} \oplus 0
  =
  \{ (a, \begin{smallmatrix}0 \\ c \end{smallmatrix}, \begin{smallmatrix}0 \\ 0 \end{smallmatrix}) \in P_1 \oplus P_2 \oplus P_3 \, | \, a,c \in k \},
  \]
  which is a two-dimensional vector space spanned by $e_1$ and $\alpha$. Here $(a, \begin{smallmatrix}0 \\ c \end{smallmatrix}, \begin{smallmatrix}0 \\ 0 \end{smallmatrix})$ in the above notation corresponds to $a e_1 + c \alpha$. Consider the quotient module $\Lambda/N_1 \cap N_2$:
  \[
  \Lambda/N_1 \cap N_2 = S_2 \oplus P_3 = 2 \oplus \begin{smallmatrix} 3 \\ 2 \end{smallmatrix}
  \]
  This is not projective, thus we have $N_1 \cap N_2 \notin \PP_\EE(\Lambda)$. Hence we have $N \subsetneq N_1 \cap N_2$, so $\dim N = 0$ or $\dim N = 1$ holds. Therefore, to show that the meet of $N_1$ and $N_2$ does not exist in $\PP_\EE(\Lambda)$, it suffices to show that there exist two distinct one-dimensional submodules of $N_1 \cap N_2$ which belong to $\PP_\EE(\Lambda)$.
  To show this, consider the following two maps:
  \begin{align*}
    i_1 \colon P_1 &
    \xrightarrow{{}^t[1,0,0]}
     P_1 \oplus P_2 \oplus P_3 \\
    i_2 \colon P_1 &
    \xrightarrow{^t [1,\alpha,0]}
     P_1 \oplus P_2 \oplus P_3
  \end{align*}
  The images of these maps are one-dimensional submodules generated by $e_1$ and $e_1+\alpha$ respectively. By matrix elimination, we can show that these are direct summands of $\Lambda$, thus both belong to $\PP_\EE(\Lambda)$. Therefore, $\PP_\EE(\Lambda)$ is not a lattice.
\end{example}

\subsubsection{Subobject posets are lattices but not modular lattices}
Recall from Proposition \ref{quasiabelian} that if $\EE$ is a quasi-abelian category with the maximal exact structure, subobject posets are in fact lattices.
However, there exist examples which are quasi-abelian but these lattices are not necessarily modular.
For such examples, we refer the reader to Section \ref{nonulpsec}, since the modularity implies the unique length property (Corollary \ref{modulp}).

\subsection{(JHP) and the unique length property}\label{jhpex}
Next, let us see some counter-examples on the unique factorization properties on exact categories.
\subsubsection{Lengths are not unique}\label{nonulpsec}
We collect some examples in which the unique length property fails (see Section \ref{basicdef}).
Example \ref{exa} in the introduction is one of such example, but is rather artificial and not idempotent complete.
We give several idempotent complete examples (actually torsion-free classes, so quasi-abelian) which do not satisfy the unique length property. Actually we can obtain more examples in a quiver of type A by using the results in Section \ref{typeasec}.
\begin{example}\label{nonulp1}
  Let $Q$ be a quiver $1 \ot 2 \ot 3 \to 4$. Then the Auslander-Reiten quiver of $\mod kQ$ is as follows. Here we use the notation introduced in Proposition \ref{indectrans}.
  \[
  \begin{tikzpicture}
    [scale=0.6,  every node/.style={scale=.7}]
    \fill[gray!30, rounded corners] (-1,0) -- (1,2) -- (0,3) -- (1,4) -- (2,3) -- (3,4) -- (4,3) -- (3,2) -- (5,0) -- (4,-.7) -- (0,-.7) -- cycle;

    \node[circle, draw=black, inner sep = 0] (12) at (0,0) {$M_{\hc{1,2}}$};
    \node (34) at (5,1) {$M_{\hc{3,4}}$};
    \node[circle, draw=black, inner sep = 0] (14) at (3,3) {$M_{\hc{1,4}}$};
    \node[circle, draw=black, inner sep = 0] (35) at (4,0) {$M_{\hc{3,5}}$};
    \node (24) at (4,2) {$M_{\hc{2,4}}$};
    \node[rectangle, draw=black, inner sep = .2, outer sep=.2em] (15) at (2,2) {$M_{\hc{1,5}}$};
    \node (25) at (3,1) {$M_{\hc{2,5}}$};
    \node (13) at (1,1) {$M_{\hc{1,3}}$};
    \node[circle, draw=black, inner sep = 0] (45) at (1,3) {$M_{\hc{4,5}}$};
    \node[circle, draw=black, inner sep = 0] (23) at (2,0) {$M_{\hc{2,3}}$};

    \draw[->] (12) -- (13);
    \draw[->] (13) -- (23);
    \draw[->] (13) -- (15);
    \draw[->] (45) -- (15);
    \draw[->] (23) -- (25);
    \draw[->] (15) -- (14);
    \draw[->] (15) -- (25);
    \draw[->] (14) -- (24);
    \draw[->] (25) -- (24);
    \draw[->] (25) -- (35);
    \draw[->] (24) -- (34);
    \draw[->] (35) -- (34);

    \fill[gray!30] (7,3) circle (6pt);
    \node[right] at (8,3) {: $\EE$};

    \draw (7,2) circle (6pt);
    \node[right] at (8,2) {: simple objects in $\EE$};
  \end{tikzpicture}
  \]
  Define $\EE$ as the additive subcategory of $\mod kQ$ corresponding to the gray region, then it is closed under extensions (and actually a torsion-free class) in $\mod kQ$.
  We can check that $\EE$ has $5$ simples indicated by circles: $\simp \EE = \{ M_{\hc{1,2}}, M_{\hc{2,3}}, M_{\hc{3,5}}, M_{\hc{4,5}}, M_{\hc{1,4}}\}$.
  Now consider an object $X:=M_{\hc{1,5}}$.
  Roughly speaking, the gray region below $X$ looks like a module category of an $A_3$ quiver, thus $X$ seems to have length $3$, and the gray region above $X$ looks like a module category of an $A_2$ quiver, thus $X$ seems to have length $2$. In fact, we have the following two composition series of $X$ in $\EE$:
  \begin{align*}
    &0 < M_{\hc{4,5}} < X , \text{ and} \\
    &0 < M_{\hc{1,2}} < M_{\hc{1,3}} < X.
  \end{align*}
  Thus, lengths of $X$ are not unique.
\end{example}
Here is another example, which we have already encountered.
\begin{example}\label{nonulp2}
  Consider Example \ref{main1ex} and let $w$ be the fourth one in Table \ref{1324ex}, that is, $w = 42513$. Then $\FF(w)$ is the additive subcategory corresponding to $\{A,B,C,D,E,F\}$ depicted as follows:
  \[
  \begin{tikzpicture}
    [scale=0.5,  every node/.style={scale=.7}]
    \node[circle, draw, inner sep=.1em] (12) at (0,0) {A};
    \node[circle, draw, inner sep=.1em] (34) at (0,2) {B};
    \node (14) at (1,1) {C};
    \node[circle, draw, inner sep=.1em] (35) at (1,3) {D};
    \node[circle, draw, inner sep=.1em] (24) at (2,0) {E};
    \node[circle, draw, inner sep=.1em] (15) at (2,2) {F};
    \node (25) at (3,1) {};
    \node (13) at (3,3) {};
    \node (45) at (4,0) {};
    \node (23) at (4,2) {};

    \draw (12) -- (14);
    \draw (34) -- (14);
    \draw (34) -- (35);
    \draw (14) -- (24);
    \draw (14) -- (15);
    \draw (35) -- (15);
    \draw (24) -- (25);
    \draw (15) -- (25);
    \draw (15) -- (13);
    \draw (25) -- (45);
    \draw (25) -- (23);
    \draw (13) -- (23);
  \end{tikzpicture}
  \]
  We have that $\simp \FF(w) = \{A,B,D,E,F\}$. Consider $X:= C \oplus D$. Then a conflation $A \infl C \defl E$ shows that $C$ has length $2$, hence $X = C \oplus D$ has length $3$. On the other hand, a conflation $B \infl X \defl F$ implies that $X$ has length $2$. Thus, lengths of $X$ are not unique.
\end{example}

\subsubsection{Length are unique but (JHP) fails}
Here is an example  of an exact category which satisfies the unique length property, but does not satisfy (JHP).
\begin{example}\label{ulpnonjh}
  Let $\FF$ be a hereditary torsion-free class of abelian length categories. Then $\FF$ satisfies the unique length property by Corollary \ref{htfulp}. Typically, $\FF$ arises in the following way:
  Take any artin algebra $\Lambda$ and chose any set of simple modules $\SS = \{S_1, \cdots, S_l \}$. Define $\FF$ as follows:
    \[
    \FF := \{ X \in \mod\Lambda \, | \, \Hom_\Lambda(S_i, X)=0 \text{ for $1 \leq i \leq l$} \}.
    \]
  Then $\FF$ is a torsion-free class, and the corresponding torsion class is the smallest Serre subcategory containing $\SS$. Thus, $\FF$ is a hereditary torsion-free class.

  We have already encountered such examples which do not satisfy (JHP):
  $\EE_1$ in Example \ref{introex}, and $\FF(c^2)$ and $\FF(c^2 s_1)$ in Example \ref{main1ex}.
\end{example}

\subsubsection{Examples which satisfy (JHP)}
Finally, we collect examples which satisfy (JHP) for the convenience of the reader.
\begin{example}\label{jhex}
  The following examples satisfy (JHP).
  \begin{enumerate}
    \item Krull-Schmidt categories \emph{together with the split exact structure}. This follows by Proposition \ref{splitmon} and the fact that the uniqueness of direct sum decomposition holds.
    \item Abelian length categories. This is because the Jordan-H\"older theorem holds in any abelian category, see e.g. \cite[p.92]{st}.
    \item Torsion(-free) classes of $\mod\Lambda$ for a Nakayama algebra $\Lambda$ (Corollary \ref{nakayamamain}).
    \item Torsion-free classes of $\mod kQ$ for a quiver $Q$ of type A which satisfies the condition given in Corollary \ref{jhtypea}. Explicit examples are given in Example \ref{main1ex} and $\EE_1$ in Example \ref{introex}.
    \item The category $\FF(\Delta)$ of modules with $\Delta$-filtrations over a quasi-hereditary algebra, or more generally, over a standardly stratified algebra (see e.g. \cite[Proposition 1.2]{pr}). Simple objects in $\FF(\Delta)$ are precisely standard modules.
  \end{enumerate}
\end{example}

\subsection{Non-cancellative Grothendieck monoids}\label{noncancel}
In this subsection, we will give some examples in which the Grothendieck monoids are \emph{not finitely generated} or \emph{not cancellative}.

\subsubsection{Functorially finite torsion-free class, but neither finitely generated nor cancellative}\label{kroex}
In a length exact category, its Grothendieck monoid is atomic, thus it is not finitely generated as a monoid if and only if there exists infinitely many non-isomorphic simple objects (Proposition \ref{atomicfg}). We will give such an example.

Let $k$ be an algebraically closed field and $Q$ a Kronecker quiver, namely, $Q \colon 1 \leftleftarrows 2$. We refer the reader to \cite[Section VIII.7]{ARS} for the structure of $\mod kQ$.
We denote by $\EE$ the subcategory of $\mod kQ$ consisting of modules which does not contain $S_2$ (simple modules supported at $2$) as a direct summand. Then $\EE$ is closed under extensions, thus an exact category. Actually, $\EE$ is a torsion-free class associated with an APR cotilting module $I_1 \oplus \tau S_2$, thus it is a functorially finite torsion-free class (with infinitely many indecomposables).

Recall that for each element $x=[x_1:x_2] \in \mathbb{P}^1(k)$ in the projective line $\mathbb{P}^1(k)$, we have a regular module $R_x$ with dimension vector $(1,1)$, that is, $R_x$ is the following representation of $Q$.
\[
\begin{tikzcd}
  k & k \ar[l, shift left, "x_2"] \ar[l, shift right, "x_1"']
\end{tikzcd}
\]
This assignment is injective, that is, we have $R_x \iso R_y$ if and only if $x=y$ in $\mathbb{P}^1(k)$. Now we claim the following:
\[
\simp \EE = \{ S_1 \} \cup \{R_x \, | \, x \in \mathbb{P}^1(k)\}.
\]
Actually, for any indecomposable preprojective module $X$ except  $S_1$, it can be shown by explicit calculation that there exists an exact sequence
\[
0 \to S_1 \to X \to R \to 0
\]
such that $R$ is a non-zero regular module. Thus, such $X$ is not simple in $\EE$.
Moreover, any indecomposable regular module is known to be written as a finite extension of $R_x$ for some $x \in \mathbb{P}^1(k)$. Therefore, indecomposable modules except $S_1$ and $R_x$'s are not simple.
On the other hand, consider the monoid homomorphism $\udim \colon \MM(\EE) \to \N^2$ induced by the dimension vector with respect to $\{[S_1],[S_2]\}$ (see Definition \ref{dimvecdef}) and its image $\udim \EE := \udim(\MM(\EE))$.
Then each $R_x$ must be a simple object, because $\udim R_x = (1,1)$ is an atom of $\udim \EE$. Thus, $R_x$ is simple in $\EE$ for each $x \in \mathbb{P}^1 (k)$.

Therefore, $\MM(\EE)$ has infinitely many atoms (since $k$ is an infinite field), so it is \emph{not finitely generated}.

Furthermore, we claim that $\MM(\EE)$ is not cancellative.
In fact, for each $x = [x_1:x_2] \in \mathbb{P}^1(k)$, we have an exact sequence
\[
\begin{tikzcd}[column sep= large]
  0 \rar & S_1 \rar["\begin{sbmatrix}-x_2\\ x_1 \end{sbmatrix}"] &  P_2 \rar & R_x \rar & 0
\end{tikzcd}
\]
where $P_2$ is an indecomposable projective module corresponding to $2 \in Q$, and the left map is an embedding of $S_1$ into the socle $S_1 \oplus S_1$ of $P_2$.
This shows that $[S_1] + [R_x] = [P_2]$ holds  for every $x \in \mathbb{P}^1(k)$. Since $[R_x] \neq [R_y]$ in $\MM(\EE)$ for $x \neq y$, we must conclude that $\MM(\EE)$ is \emph{not cancellative}.

Moreover, this gives an example such that \emph{non-isomorphic simples may represent the same element in the Grothendieck group}. By Proposition \ref{setofdim}, we have an isomorphism $\udim \colon \KK_0(\EE) \xrightarrow{\sim} \Z^2$. Thus, for each $x,y \in \mathbb{P}^1(k)$, we have $[R_x] = [R_y]$ in $\KK_0(\EE)$.

\subsubsection{Only finitely many indecomposables, but not cancellative}\label{finex}
In the previous example, the category we considered has infinitely many indecomposable objects. If an exact category $\EE$ has finitely many indecomposables, then $\MM(\EE)$ is finitely generated by a trivial reason, but it \emph{may not be cancellative} as we shall see.

Let $\Lambda$ be the following algebra, which is defined by the ideal quotient of the path algebra:
\[
\Lambda := k \left(
\begin{tikzcd}
  1 \ar[loop, out = 120, in = 60, looseness=5, "\beta", pos = 0.15] & 2 \lar["\alpha"']
\end{tikzcd}
\right) / (\beta^2)
\]
Then the Auslander-Reiten quiver of $\mod \Lambda$ is as follows:
\[
\scalebox{0.7}{
\begin{tikzpicture}
  \fill[gray!30, rounded corners]
  (0,0.5) -- (1.5,-1) -- (0,-2.5) -- (-1.5,-1) -- cycle;
  \fill[gray!30, rounded corners]
  (3,3.5) -- (4.5,2) -- (3,.5) -- (1.5,2) -- cycle;
  \fill[gray!30, rounded corners]
  (6,0.5) -- (7.5,-1) -- (6,-2.5) -- (4.5,-1) -- cycle;
  \fill[gray!30, rounded corners]
  (9,3.5) -- (10.5,2) -- (9,.5) -- (7.5,2) -- cycle;

  \node at (-2.5,1.5) {\Huge $\cdots$};
  \node at (11,-.5) {\Huge $\cdots$};

  \node (21a) at (-1,1) {$\begin{smallmatrix}2 \\ 1 \end{smallmatrix}$};
  \node (11) at (-1,-1) {$P_1$};
  \node (121) at (0,0) {$M$};
  \node (211) at (0,-2) {$P_2$};
  \node (1) at (1,1) {1};
  \node (2121) at (1,-1) {$I_1$};
  \node (11b) at (2,2) {$P_1$};
  \node (21) at (2,0) {$\begin{smallmatrix}2 \\ 1 \end{smallmatrix}$};
  \node (2) at (2,-2) {2};
  \node (211b) at (3,3) {$P_2$};
  \node (121b) at (3,1) {$M$};
  \node (2121b) at (4,2) {$I_1$};
  \node (1b) at (4,0) {1};
  \node (2b) at (5,3) {2};
  \node (21b) at (5,1) {$\begin{smallmatrix}2 \\ 1 \end{smallmatrix}$};
  \node (11c) at (5,-1) {$P_1$};
  \node (121c) at (6,0) {$M$};
  \node (211c) at (6,-2) {$P_2$};
  \node (1c) at (7,1) {1};
  \node (2121c) at (7,-1) {$I_1$};
  \node (11d) at (8,2) {$P_1$};
  \node (21c) at (8,0) {$\begin{smallmatrix}2 \\ 1 \end{smallmatrix}$};
  \node (2c) at (8,-2) {2};
  \node (211d) at (9,3) {$P_2$};
  \node (121d) at (9,1) {$M$};
  \node (2121d) at (10,2) {$I_1$};
  \node (1d) at (10,0) {1};

  \draw[->] (21a) -- (121);
  \draw[->] (11) -- (121);
  \draw[->] (11) -- (211);
  \draw[->] (121) -- (1);
  \draw[->] (121) -- (2121);
  \draw[->] (211) -- (2121);
  \draw[->] (1) -- (11b);
  \draw[->] (1) -- (21);
  \draw[->] (11) -- (211);
  \draw[->] (2121) -- (21);
  \draw[->] (2121) -- (2);
  \draw[->] (11b) -- (211b);
  \draw[->] (11b) -- (121b);
  \draw[->] (21) -- (121b);
  \draw[->] (121b) -- (1b);
  \draw[->] (121b) -- (2121b);
  \draw[->] (211b) -- (2121b);
  \draw[->] (1b) -- (11c);
  \draw[->] (1b) -- (21b);
  \draw[->] (11b) -- (211b);
  \draw[->] (2121b) -- (21b);
  \draw[->] (2121b) -- (2b);

  \draw[->] (11c) -- (211c);
  \draw[->] (11c) -- (121c);
  \draw[->] (21b) -- (121c);
  \draw[->] (121c) -- (1c);
  \draw[->] (121c) -- (2121c);
  \draw[->] (211c) -- (2121c);

  \draw[->] (1c) -- (11d);
  \draw[->] (1c) -- (21c);
  \draw[->] (11c) -- (211c);
  \draw[->] (2121c) -- (21c);
  \draw[->] (2121c) -- (2c);
  \draw[->] (11d) -- (211d);
  \draw[->] (11d) -- (121d);
  \draw[->] (21c) -- (121d);
  \draw[->] (121d) -- (1d);
  \draw[->] (121d) -- (2121d);
  \draw[->] (211d) -- (2121d);

  \draw[dashed] (11) -- (2121);
  \draw[dashed] (11b) -- (2121b);
  \draw[dashed] (11c) -- (2121c);
  \draw[dashed] (11d) -- (2121d);
  \draw[dashed] (121) -- (21);
  \draw[dashed] (121b) -- (21b);
  \draw[dashed] (121c) -- (21c);
  \draw[dashed] (211) -- (2);
  \draw[dashed] (211b) -- (2b);
  \draw[dashed] (211c) -- (2c);
  \draw[dashed] (1) -- (121b);
  \draw[dashed] (1b) -- (121c);
  \draw[dashed] (1c) -- (121d);
  \draw[dashed] (21) -- (1b);
  \draw[dashed] (21b) -- (1c);
  \draw[dashed] (21c) -- (1d);
  \draw[dashed] (1) -- (21a);
\end{tikzpicture}
}
\]
where we write composition factors of modules, and $P_1 = \begin{smallmatrix} 1 \\ 1 \end{smallmatrix}$, $P_2 = \begin{smallmatrix} 2 \\ 1 \\ 1\end{smallmatrix}$, $I_1 = \begin{smallmatrix}2 \\ 1 && 2 \\ &1\end{smallmatrix}$ and $M := \begin{smallmatrix}1 && 2 \\ &1 \end{smallmatrix}$.
Let $\EE$ denote the additive subcategory of $\mod\Lambda$ corresponding to these gray regions: $\ind \EE = \{P_1, P_2, I_1, M \}$.
Then $\EE$ is shown to be closed under extensions in $\mod\Lambda$
\footnote{This does not seem to be so trivial. One categorical way to show this is to use results in \cite{en1,en2}. By \cite[Corollary 3.10]{en2}, one can endow $\EE$ with the exact structure which corresponds to the Auslander-Reiten quiver of $\EE$ drawn below. In this exact structure, $\EE$ has a progenerator $P_1 \oplus P_2 = \Lambda$, and one can consider the Morita type embedding $\EE(\Lambda, -) \colon \EE \to \mod\Lambda$, which is nothing but the inclusion functor. Then by \cite[Proposition 2.8]{en1}, its image, $\EE$, is an extension-closed subcategory of $\mod\Lambda$.}
, and the Auslander-Reiten quiver of the exact category $\EE$ is as follows:
\[
\begin{tikzcd}[column sep = tiny, row sep = tiny]
  & M\ar[rd, shift left = .7ex] \ar[ld, shift left = .7ex]\ar[loop, in = 120, out = 60, dashed, looseness=4]\\
  P_1 \ar[rd] \ar[ru, shift left = .7ex]& & I_1 \ar[lu, shift left = .7ex] \ar[ll,dashed]\\
  & P_2 \ar[ru]
\end{tikzcd}
\]
By checking subobjects, it can be shown that all 4 indecomposable objects in $\EE$ are simple objects in $\EE$. However, by the Auslander-Reiten quiver of $\mod\Lambda$, it can be checked that we have the following conflations in $\EE$:
\[
\begin{tikzcd}[row sep = 0]
  0 \rar & P_1 \rar & M \oplus P_2 \rar & I_1 \rar & 0, \\
  0 \rar & M \rar & P_1 \oplus I_1 \rar & M \rar & 0, \\
  0 \rar & P_1 \rar & P_2 \oplus P_2 \rar & I_1 \rar & 0.
\end{tikzcd}
\]
This implies the equality $[M] + [M] = [M] + [P_2] = [P_2] + [P_2]$ in $\MM(\EE)$. Since $[M]\neq[P_2]$ by Proposition \ref{simpleatom}, we must have that $\MM(\EE)$ is \emph{not cancellative}.

\section{Problems}
In this section, we collect some open problems on several topics in this paper.

As we saw in Section 7, the computation of the Grothendieck monoid is rather difficult if it fails to be cancellative.
\begin{problem}
  Let $\EE$ be an exact category. Is there any criterion to check whether $\MM(\EE)$ is cancellative?
\end{problem}
This leads to the following question.
\begin{problem}
  Is there an example of an exact category $\EE$ which satisfies the following conditions:
  \begin{enumerate}
    \item $\EE$ has finitely many indecomposable objects up to isomorphisms.
    \item $\EE = {}^\perp U$ for a cotilting module $U$ over an artin algebra $\Lambda$, or more strongly, $\EE$ is a torsion-free class of $\mod \Lambda$.
    \item $\MM(\EE)$ is not cancellative.
  \end{enumerate}
\end{problem}
If we drop (1) or (2), then we have such an example (Section \ref{kroex} and \ref{finex} respectively).

In what follows, let $\FF$ be a functorially finite torsion-free class of $\mod\Lambda$ for an artin algebra $\Lambda$. The most general (and thus difficult) problem is the following:
\begin{problem}\label{comprob}
  Compute the Grothendieck monoid $\MM(\FF)$, more precisely, draw the Cayley quiver as we did in Proposition \ref{compex}.
\end{problem}

As a first approximation to $\MM(\FF)$, the following problem naturally occurs, which is of interest in its own right.
\begin{problem}
  For a given torsion-free class $\FF$, classify (or count) simple objects in $\FF$.
\end{problem}

Of course, we cannot expect the general classification of simples, but it may be done when $\Lambda$ and $\FF$ are given explicitly.

The cancellative quotient $\MM(\FF)_\can$ is easier to handle with by Corollary \ref{setofdim}: it is nothing but the monoid of dimension vectors $\udim \FF$.
Moreover, it is a finitely generated submonoid of $\N^n$ by Proposition \ref{affinemonoid}. Such a monoid is called a \emph{positive affine monoid}, and this class is investigated in the theory of combinatorial commutative algebra and toric geometry via its monoid algebra. We refer the reader to \cite{brg} for the details on affine monoids and affine monoid algebras.
\begin{problem}\label{combiprob}
  For a given torsion-free class $\FF$, investigate the combinatorial property of the affine monoid $\udim \FF$, the monoid of dimension vectors of modules in $\FF$. In particular,
  \begin{itemize}
    \item Is this monoid normal?
    \item Describe its minimal generating set (this is related to simple objects in $\FF$).
    \item When is this monoid homogeneous (this is related to the unique length property of $\FF$)?
    \item Compute invariants of this monoid, such as extreme rays, the class group, support hyperplanes, and so on.
  \end{itemize}
\end{problem}

Finally, we consider topics in Section 7. Let $Q$ be a quiver of type A (or more generally, any Dynkin type, see Remark \ref{forthcoming}).
Theorem \ref{sortabletorf} gives an explicit description of torsion-free classes of $\mod kQ$. Thus, it is reasonable to expect that Problems \ref{comprob} and \ref{combiprob} may be easier in this case.
In addition, the author does not know when the uniqueness of lengths holds, except Example \ref{ulpnonjh}.
\begin{problem}
  In the situation of Theorem \ref{sortabletorf}, is there a combinatorial criterion for $w$ to check whether $\FF(w)$ has the unique length property?
\end{problem}

Also the following (purely combinatorial) problem is of interest.
\begin{problem}
  For a Coxeter element $c$, is there any closed formula which gives the number of torsion-free classes satisfying (JHP), or equivalently, the number of $c$-sortable elements $w$ such that $\#\supp(w) = \#\Binv(w)$ holds?
\end{problem}

\begin{appendix}

\section{Preliminaries on monoids}
We collect some basic definitions and properties on monoids needed in this paper. Recall that \emph{monoids are always assumed to be commutative monoids with units}, and that {we use an additive notation with unit $0$}.

\subsection{Basic definitions}
First, we collect some basic definitions on monoids.
\begin{definition}
  Let $M$ be a monoid.
  \begin{enumerate}
    \item $M$ is \emph{reduced} if $a + b = 0$ implies $a = b = 0$ for $a,b\in M$.
    \item $M$ is \emph{cancellative} if $a + x = a + y$ implies $x = y$ for $a,x,y\in M$.
  \end{enumerate}
\end{definition}

There exists a natural pre-order $\leq$ on any monoid, defined as follows.
\begin{definition}
  Let $M$ be a monoid. We define $x \leq y$ if there exists some $a \in M$ such that $y = x + a$.
\end{definition}
It can be checked that $x \leq y$ if and only if $y \in x + M$ if and only if $y + M \subset x + M$.
This pre-order is sometimes called \emph{Green's pre-order} in semigroup theory, e.g. \cite{gri1,gri2}, or \emph{divisibility pre-order} in the multiplicative theory of integral domains, e.g. \cite{ghk}.

\begin{definition}
  A monoid $M$ is called \emph{naturally partially ordered} if the pre-order $\leq$ on $M$ is a partial order, that is, $x \leq y$ and $y \leq x$ implies $x = y$.
\end{definition}
The following properties can be easily checked.
\begin{proposition}\label{natpo}
  Let $M$ be a monoid. Then $M$ is reduced if and only if $0 \leq x \leq 0$ implies $x = 0$ for every $x \in M$. In particular, a naturally partially ordered monoid is reduced.
\end{proposition}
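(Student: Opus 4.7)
The plan is to unpack the two conditions in terms of the defining inequality $\leq$ and then to observe that they are tautologically equivalent. First I would note the trivial observation that $0 \leq x$ holds for every $x \in M$, since we may take $a = x$ in the definition of $\leq$ to get $x = 0 + x$. Consequently the condition ``$0 \leq x \leq 0$ implies $x = 0$'' reduces to ``$x \leq 0$ implies $x = 0$,'' which by definition of $\leq$ states: whenever there exists $b \in M$ with $x + b = 0$, we have $x = 0$.

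Next I would prove the forward implication. Assume $M$ is reduced, and suppose $x + b = 0$ for some $b \in M$. By the reducedness assumption, both $x = 0$ and $b = 0$. So the condition holds.

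For the converse, assume that $x + b = 0$ implies $x = 0$, and suppose $a + b = 0$ in $M$. Then this very relation witnesses $a \leq 0$, so by the hypothesis $a = 0$; the relation $b + a = 0$ then gives $b = 0$ analogously. Hence $M$ is reduced.

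Finally, for the ``in particular'' part, suppose $M$ is naturally partially ordered. For any $x \in M$, the condition $0 \leq x \leq 0$ forces $x = 0$ by antisymmetry of the partial order $\leq$. Hence $M$ satisfies the equivalent condition just established, so $M$ is reduced. I do not anticipate any obstacle; the whole proof is a direct unpacking of definitions and should occupy only a few lines.
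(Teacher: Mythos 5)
Your proof is correct: you observe that $0 \leq x$ always holds, so the condition reduces to ``$x \leq 0$ implies $x=0$,'' and then both directions are immediate from the definitions of $\leq$ and reducedness, with the ``in particular'' clause following from antisymmetry. The paper leaves this proposition as an easy check with no written proof, and your direct unpacking is exactly the intended argument.
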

Although we cannot naively take quotients of monoids as in abelian groups since there may not exist additive inverses, we can obtain some kind of \emph{quotient monoids} by considering quotient sets with respect to \emph{monoid congruences}, defined as follows.
\begin{definition}
  Let $M$ be a monoid. The equivalence relation $\sim$ is called a \emph{(monoid) congruence} if $x \sim y$ implies $a+x \sim a+y$ for every $a,x,y \in M$. In this case, the quotient set $M/\sim$ of equivalence classes naturally has the structure of a monoid.
\end{definition}
We often use the smallest monoid congruence generated by some binary relation. See  e.g. \cite[Propositions I.4.1, I.4.2]{gri1} for the details.
\begin{proposition}\label{gencong}
  Let $M$ be a monoid and $\sim$ an arbitrary (not necessarily equivalence) binary relation on $M$. Then there exists the smallest monoid congruence $\approx$ which contains $\sim$, that is, $x \sim y$ implies $x \approx y$ for $x,y \in M$.
\end{proposition}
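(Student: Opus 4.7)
The plan is to prove existence via an intersection argument, which is the standard universal-algebra trick: the smallest congruence containing $\sim$ is simply the intersection of all congruences containing $\sim$.

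First I would introduce the family
\[
\mathcal{F} := \{\, \rho \subseteq M \times M \mid \rho \text{ is a monoid congruence and } \sim \, \subseteq \rho \,\}.
\]
Here I identify each monoid congruence with its graph in $M \times M$. The first step is to observe that $\mathcal{F}$ is non-empty: the total relation $M \times M$ is trivially a monoid congruence (it is an equivalence relation, and $x \,(M\times M)\, y$ implies $a+x \,(M\times M)\, a+y$ for all $a$), and it contains $\sim$. So we may define
\[
\approx \; := \bigcap_{\rho \in \mathcal{F}} \rho,
\]
and by construction $\sim \, \subseteq\, \approx$, and any congruence in $\mathcal{F}$ contains $\approx$; hence once we verify $\approx$ itself lies in $\mathcal{F}$, it will be the smallest such congruence.

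The remaining step is the routine verification that the intersection of any family of monoid congruences is again a monoid congruence. Concretely, I would check the four defining properties pointwise: reflexivity ($x \,\rho\, x$ for each $\rho$, hence $x \approx x$), symmetry, transitivity, and compatibility with addition (if $x \approx y$ then $x \,\rho\, y$ for each $\rho \in \mathcal{F}$, so $a+x \,\rho\, a+y$ for each such $\rho$, and therefore $a+x \approx a+y$). Each of these is immediate from the corresponding property of the members of $\mathcal{F}$.

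There is no real obstacle here; the statement is essentially a closure-under-intersection argument and the proof is short. The only point worth flagging is the non-emptiness of $\mathcal{F}$, which is why I explicitly take the total relation $M \times M$ as a witness. If one prefers an explicit description, one can alternatively define $x \approx y$ to mean that there is a finite chain $x = z_0, z_1, \dots, z_n = y$ in $M$ such that for each $i$ one can write $z_{i-1} = a_i + b_i$ and $z_i = a_i + c_i$ for some $a_i \in M$ and some pair $(b_i, c_i)$ with $b_i \sim c_i$ or $c_i \sim b_i$ (allowing the trivial step $z_{i-1} = z_i$), and then verify directly that this is the smallest congruence containing $\sim$. I would mention this as an optional remark, but keep the intersection argument as the main proof since it is shorter and more conceptual.
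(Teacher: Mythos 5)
Your proof is correct. The paper itself does not prove this proposition; it simply cites \cite[Propositions I.4.1, I.4.2]{gri1}, so there is no paper proof to compare against. Your intersection argument is the standard universal-algebra existence proof, and the details check out: the total relation $M\times M$ is indeed a congruence containing $\sim$, so the family $\mathcal{F}$ is nonempty; and each of reflexivity, symmetry, transitivity, and compatibility with translation by a fixed $a\in M$ passes to an arbitrary intersection of congruences. The explicit chain description you sketch as an alternative is also correct and is closer to what one typically finds in references like the one the paper cites (and to what the paper actually uses later, e.g.\ in the proof of Proposition~\ref{explicit}, where membership in the generated congruence is characterized via finite chains of elementary rewritings); if anything, it would be worth noting that the two descriptions agree by a straightforward argument (the chain relation is a congruence containing $\sim$, and any congruence containing $\sim$ is closed under the elementary steps and their transitive closure).
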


\subsection{Factorization properties on monoids}
Let us define several notions on the \emph{unique factorization property} on monoids. We refer the reader to \cite{ghk} for the details in this subsection (be aware that \emph{monoids} in \cite{ghk} are assumed to be \emph{cancellative}, but contents in this subsection hold in non-cancellative monoids).
The most typical one is the freeness of monoids, which corresponds to (JHP) for exact categories by our main result (Theorem \ref{JHchar}).
\begin{definition}
  Let $M$ be a monoid.
  \begin{enumerate}
    \item We say that a monoid is \emph{finitely generated} if there exists a finite subset of $M$ which generates $M$.
    \item For a subset $A$ of $M$, we say that $M$ is \emph{free} on $A$ if every element $x\in M$ can be written as a finite sum of elements in $A$ in a unique way up to permutations. A monoid is called \emph{free} if it is free on some subset of $M$.
  \end{enumerate}
\end{definition}
For a set $A$, we denote by $\N^{(A)}$ the submonoid $\bigoplus_{a \in A} \N a$ of the free abelian group $\Z^{(A)} := \bigoplus_{a \in A} \Z a$ with basis $A$, which consists of finite sums of non-negative linear combinations of elements in $A$. It is easy to see that $\N^{(A)}$ is free on $A$ and that a monoid is free if and only if it is isomorphic to $\N^{(A)}$ for some set $A$.
If $A$ is a finite set, then we often write $\N^{A} = \N^{(A)}$ and $\Z^{A} = \Z^{(A)}$. Moreover it is obvious that a free monoid is reduced.

Now let us consider elements of a monoid which cannot be decomposed into smaller ones. Here, for simplicity, we only consider \emph{reduced} monoids (this is a reasonable assumption since Grothendieck monoids are reduced by Proposition \ref{mreduced}).
\begin{definition}\label{monoiddef}
  Let $M$ be a reduced monoid.
  \begin{enumerate}
    \item A non-zero element $x$ of $M$ is called an \emph{atom} if $x = y + z$ implies either $y=0$ or $z=0$ for $y,z \in M$. We denote by $\Atom M$ the set of all atoms in $M$.
    \item $M$ is called \emph{atomic} if $\Atom M$ generates $M$, that is, every element of $M$ is a finite sum of atoms.
    \item $M$ is called \emph{factorial} if every element can be expressed as a finite sum of atoms, and this expression is unique up to permutations.
    \item $M$ is called \emph{half-factorial} if it is atomic, and for every element $x$ and expressions
    \[
    x = a_1 + \cdots + a_n
    \]
    with $a_i \in \Atom M$, the number $n$ depends only on $x$.
  \end{enumerate}
\end{definition}

The following observations can be proved directly from the definitions.
\begin{proposition}\label{factprop}
  Let $M$ be a reduced monoid. Then the following hold.
  \begin{enumerate}
    \item If $M$ is generated by a subset $A$, then $\Atom M \subset A$ holds. In particular, $\Atom M$ is a finite set for a finitely generated monoid $M$.
    \item $M$ is finitely generated and atomic if and only if $M$ is atomic and $\Atom M$ is a finite set.
    \item If $M$ is free on a subset $A$ of $M$, then $A = \Atom M$ holds.
    \item If $M$ is free, then it is atomic, cancellative and factorial.
    \item $M$ is free if and only if $M$ is factorial.
  \end{enumerate}
\end{proposition}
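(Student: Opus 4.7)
The plan is to work through the five claims in sequence, with (1) supplying the basic lemma that drives the rest. For (1), take any atom $x$ and write $x = a_1 + \cdots + a_n$ with $a_i \in A$; since $x \neq 0$ some $a_i$ is nonzero, and I would use reducedness to show that if two distinct summands were nonzero, then regrouping $x = a_1 + (a_2 + \cdots + a_n)$ would exhibit $x$ as a sum of two nonzero elements, contradicting atomicity. (Here reducedness is needed to guarantee that a sum of nonzero elements is nonzero, via a short induction.) Hence exactly one $a_i$ is nonzero and equals $x$, so $x \in A$. The ``in particular'' clause then follows because any generating set contains $\Atom M$. Claim (2) is immediate from (1), since a finite $\Atom M$ is itself a finite generating set in the atomic case.

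For (3), assume $M$ is free on $A$. First I would show $A \subseteq \Atom M$: given $a \in A$, if $a = y + z$, expand $y$ and $z$ as sums of elements of $A$ and invoke the uniqueness part of freeness against the trivial decomposition of $a$ as the single-term sum $a$; this forces all but one of the summands in the expansions of $y, z$ to be $0$ (reducedness rules out a nonzero sum adding up to $0$), whence either $y = 0$ or $z = 0$. Conversely, for $x \in \Atom M$, writing $x$ as a sum of elements of $A$ and applying (1) shows $x \in A$. For (4), freeness directly gives factoriality and atomicity (using (3)), while cancellativity follows because the assignment $x \mapsto$ (multiplicities in its unique decomposition) gives a monoid isomorphism $M \cong \N^{(A)}$, and $\N^{(A)}$ embeds into the free abelian group $\Z^{(A)}$.

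Finally, (5) reduces to a tautological comparison: ``factorial'' says $M$ is atomic and every element has a unique decomposition into atoms up to permutation, which is literally the statement that $M$ is free on $\Atom M$; the reverse implication is part of (4). The only mildly delicate point throughout is the repeated use of reducedness to ensure that nonzero elements cannot sum to zero, so that the uniqueness clauses in the definitions of free and atomic actually have teeth; I expect no serious obstacle, since each item unfolds essentially from the definitions once this observation is in hand.
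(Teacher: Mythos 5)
Your proposal is correct and fills in exactly the kind of routine verification the paper had in mind when it wrote ``The following observations can be proved directly from the definitions'' without supplying details. The core mechanism you identify is the right one: reducedness ensures (by a trivial induction using $a+b=0 \Rightarrow a=b=0$) that a finite sum vanishes only if every summand vanishes, and this is what makes the atom/uniqueness bookkeeping work in (1) and (3). One small point worth making explicit in (3) is that freeness forces $0 \notin A$ (otherwise $0$ would have both the empty decomposition and the one-term decomposition $0$, violating uniqueness); once that is noted, the uniqueness clause says any decomposition of $a \in A$ into elements of $A$ has exactly one summand, so in $a = y + z$, writing $y$ and $z$ as $A$-sums forces one of those sums to be empty, i.e.\ $y=0$ or $z=0$. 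Your phrasing ``all but one of the summands to be $0$'' is slightly off on this account (no summand can be $0$ since $0 \notin A$), but the conclusion you draw is the intended one. Items (2), (4), and (5) are handled cleanly, with (5) correctly observed to be essentially a restatement of what ``free on $\Atom M$'' means once (3) and (4) are in place.
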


\subsection{Group completion}
For a given monoid $M$, there is the universal construction which transforms $M$ into a group. We call it a \emph{group completion} in this paper.
\begin{definition}
  Let $M$ be a monoid. Then the \emph{group completion} $\gp M$ is an abelian group $\gp M$ together with a map $\iota \colon M \to \gp M$ which satisfies the following universal property:
  \begin{enumerate}
    \item $\iota$ is a monoid homomorphism.
    \item Every monoid homomorphism $f \colon M \to G$ into a group $G$ factors uniquely $\iota$, that is, there exists a unique group homomorphism $\ov{f} \colon \gp M \to G$ which satisfies $f = \iota \ov{f}$.
  \end{enumerate}
\end{definition}
\begin{example}\label{freecompl}
  Let $M$ be a monoid which is free on $A$. Then $\gp M$ is a free abelian group with basis $A$, that is, $\gp (\N^{(A)}) = \Z^{(A)}$. The original basis $A$ can be reconstructed from $M$ since $A = \Atom M$, but this information is lost when taking the group completion, since bases of the free abelian group is far from being unique. This is one reason why we consider monoids, not groups.
\end{example}
The explicit construction of the group completion is given as follows.
\begin{proposition}
  Let $M$ be a monoid. Define an equivalence relation $\sim$ on the set $M \times M$ by
  \[
  (x_1,y_1) \sim (x_2,y_2) \, :\Leftrightarrow \, \text{there exists an element $a \in M$ such that } x_1 + y_2 + a = x_2 + y_1 + a.
  \]
  Then the quotient set $(M \times M)/ \sim$, together with a map $M \to (M \times M)/\sim$ given by $x \mapsto (x,0)$, is a group completion of $M$ .
\end{proposition}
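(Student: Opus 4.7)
The plan is to verify the construction in four standard steps: (i) $\sim$ is an equivalence relation, (ii) componentwise addition turns $(M\times M)/\sim$ into an abelian group, (iii) the map $\iota\colon M\to(M\times M)/\sim$, $x\mapsto[(x,0)]$, is a monoid homomorphism, and (iv) it satisfies the required universal property.

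First I would check $\sim$ is an equivalence relation. Reflexivity and symmetry are immediate from the definition. For transitivity, suppose $(x_1,y_1)\sim(x_2,y_2)$ via $a$ and $(x_2,y_2)\sim(x_3,y_3)$ via $b$, so $x_1+y_2+a=x_2+y_1+a$ and $x_2+y_3+b=x_3+y_2+b$. Adding these and rearranging (using commutativity) gives $x_1+y_3+(x_2+y_2+a+b)=x_3+y_1+(x_2+y_2+a+b)$, so $(x_1,y_1)\sim(x_3,y_3)$ via $c:=x_2+y_2+a+b$. The auxiliary element $a$ in the definition is precisely what makes this work without cancellativity, and this is the only place where its presence is essential; it will be the main technical point to highlight.

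Next, componentwise addition on $M\times M$ is obviously compatible with $\sim$, so descends to a commutative monoid structure on $G:=(M\times M)/\sim$ with unit $[(0,0)]$. The key computation is that $[(x,y)]+[(y,x)]=[(x+y,x+y)]=[(0,0)]$ (take $a=0$ in the definition), so every element has an inverse and $G$ is an abelian group. The map $\iota(x):=[(x,0)]$ clearly preserves addition and $0$.

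Finally, for the universal property, let $f\colon M\to H$ be a monoid homomorphism into an abelian group $H$. Define $\ov{f}\colon G\to H$ by $\ov{f}[(x,y)]:=f(x)-f(y)$; this is well defined because if $(x_1,y_1)\sim(x_2,y_2)$ via $a$, then applying $f$ and cancelling $f(a)$ in $H$ gives $f(x_1)-f(y_1)=f(x_2)-f(y_2)$. It is immediate that $\ov{f}$ is a group homomorphism with $\ov{f}\circ\iota=f$, and uniqueness follows because any such extension must send $[(x,0)]$ to $f(x)$ and $[(0,y)]$ to $-f(y)$, hence is determined on all of $G$ since $[(x,y)]=[(x,0)]+[(0,y)]$. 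I do not anticipate any serious obstacle beyond the transitivity argument above; everything else is a straightforward diagram/identity check.
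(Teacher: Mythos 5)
Your proof is correct and complete. The paper states this proposition without proof (it is a standard construction), and your four-step verification — equivalence relation, group structure via $[(x,y)]+[(y,x)]=[(0,0)]$, the homomorphism $\iota$, and the universal property with $\ov{f}[(x,y)]=f(x)-f(y)$ — is exactly the standard argument the paper implicitly appeals to; in particular you correctly identify that the auxiliary element $a$ is what rescues transitivity and well-definedness in the absence of cancellativity.
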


The  cancellation property is related to the group completion as follows.
\begin{proposition}
  Let $M$ be a monoid. Then it is cancellative if and only if the natural map $\iota \colon M \to \gp M$ is an injection if and only if there is an injective monoid homomorphism into some group. In this case, every injective monoid homomorphism $\varphi \colon M \to G$ such that $\varphi(M)$ generates $G$ is automatically a group completion of $M$.
\end{proposition}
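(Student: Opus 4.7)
The plan is to establish the equivalence of the three conditions by the cycle (b)$\Rightarrow$(c)$\Rightarrow$(a)$\Rightarrow$(b), and then handle the final assertion separately using the universal property of $\gp M$.

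First I would dispatch the two easy implications. For (b)$\Rightarrow$(c), the natural map $\iota\colon M\to\gp M$ itself does the job. For (c)$\Rightarrow$(a), suppose $\varphi\colon M\to G$ is an injective monoid homomorphism into a group, and that $a+x=a+y$ holds in $M$. Applying $\varphi$ gives $\varphi(a)+\varphi(x)=\varphi(a)+\varphi(y)$, and since $\varphi(a)$ has an additive inverse in $G$ we conclude $\varphi(x)=\varphi(y)$, hence $x=y$ by injectivity of $\varphi$.

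For (a)$\Rightarrow$(b) I would use the explicit construction of $\gp M$ as $(M\times M)/\!\sim$ given just above this statement. Recall that $\iota(x)=[(x,0)]$, so $\iota(x)=\iota(y)$ means there exists $a\in M$ with $x+0+a=y+0+a$, i.e.\ $x+a=y+a$. If $M$ is cancellative, this forces $x=y$, which shows $\iota$ is injective.

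For the final assertion, let $\varphi\colon M\to G$ be an injective monoid homomorphism with $\varphi(M)$ generating $G$. By what we have just shown, $M$ is cancellative and $\iota\colon M\to\gp M$ is injective. Applying the universal property of $\gp M$ to the monoid homomorphism $\varphi$ yields a unique group homomorphism $\bar\varphi\colon\gp M\to G$ with $\bar\varphi\iota=\varphi$. I would then argue that $\bar\varphi$ is an isomorphism: surjectivity follows because $\Image\bar\varphi$ is a subgroup of $G$ containing the generating set $\varphi(M)$; injectivity follows because every element of $\gp M$ has the form $\iota(x)-\iota(y)$ for some $x,y\in M$, and $\bar\varphi(\iota(x)-\iota(y))=0$ gives $\varphi(x)=\varphi(y)$, hence $x=y$ and the element was already $0$. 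Once $\bar\varphi$ is identified as an isomorphism, the pair $(G,\varphi)$ inherits the universal property of $(\gp M,\iota)$, so $\varphi\colon M\to G$ is itself a group completion.

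There is essentially no serious obstacle here; the whole statement is formal manipulation of universal properties and the explicit $(M\times M)/\!\sim$ construction. The only step requiring a little care is (a)$\Rightarrow$(b), where one must remember that the defining equivalence relation involves an auxiliary element $a\in M$, and that cancellativity is precisely what is needed to eliminate it.
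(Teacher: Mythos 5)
Your proof is correct. The paper states this proposition without proof (it is a standard fact about group completions), and your argument --- the implication cycle (b)$\Rightarrow$(c)$\Rightarrow$(a)$\Rightarrow$(b) using the explicit $(M\times M)/\!\sim$ construction, followed by the universal-property argument that $\bar\varphi$ is an isomorphism --- is exactly the standard one the paper implicitly intends.
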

It follows that the image of the group completion is always cancellative. One of the difficulties of dealing with a monoid $M$ is that $M$ may not be cancellative. Thus, this image is much easier to deal with than $M$, and it has more information on its group completion $\gp M$. This corresponds to the \emph{positive part} of the Grothendieck group, see Section \ref{pospartsubsec}.
\begin{definition}
  Let $M$ be a monoid. We denote by $M_\can$ the image of the group completion $\iota \colon M \to \gp M$, and call it the \emph{cancellative quotient} of $M$.
\end{definition}
We leave it to the reader to check $M_\can$ is actually the largest cancellative quotient of $M$:
\begin{proposition}\label{canquot}
  Let $M$ be a monoid and define an equivalence relation $\sim_\can$ on $M$ by
  \[
  x \sim_\can y \,:\Leftrightarrow\, \text{there exists an element $a \in M$ such that } x + a = y + a.
  \]
  Then $\sim_\can$ is a monoid congruence on $M$, and we have an isomorphism of monoids $M_\can \iso M/ \sim_\can$.
  Consequently, for every monoid homomorphism $\varphi \colon M \to N$ to a cancellative monoid $N$, there exists a unique monoid homomorphism $\ov{\varphi} \colon M_\can \to N$ such that $\varphi = \ov{\varphi}\iota$.
\end{proposition}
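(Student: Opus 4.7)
The plan is to first verify directly that $\sim_{\can}$ is a monoid congruence, then identify $M_\can$ with $M/\sim_\can$ by comparing with the explicit description of the group completion, and finally deduce the universal property from cancellativity of $N$.

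First I would check that $\sim_\can$ is an equivalence relation compatible with addition. Reflexivity and symmetry are immediate. For transitivity, if $x+a=y+a$ and $y+b=z+b$, then adding $b$ to the first and $a$ to the second yields $x+(a+b)=y+(a+b)=z+(a+b)$, so $x\sim_\can z$. Compatibility with addition is equally straightforward: if $x+a=y+a$ then $(x+c)+a=(y+c)+a$ for any $c\in M$. Thus $\sim_\can$ is a monoid congruence, and $M/\sim_\can$ inherits a monoid structure.

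Next I would compare $M/\sim_\can$ with $M_\can=\iota(M)\subset\gp M$. Using the explicit construction of the group completion as $(M\times M)/\sim$, the element $\iota(x)\in \gp M$ is represented by $(x,0)$, and $(x,0)\sim(y,0)$ holds precisely when there exists $a\in M$ with $x+a=y+a$, which is exactly the definition of $x\sim_\can y$. Therefore the composition $M\to M_\can$ of $\iota$ with its corestriction induces a monoid isomorphism $M/\sim_\can\xrightarrow{\sim} M_\can$, which is the desired identification.

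For the universal property, let $\varphi\colon M\to N$ be a monoid homomorphism into a cancellative monoid $N$. If $x\sim_\can y$ with witnessing element $a$, then $\varphi(x)+\varphi(a)=\varphi(y)+\varphi(a)$, and cancellativity of $N$ gives $\varphi(x)=\varphi(y)$. Hence $\varphi$ descends to a well-defined monoid homomorphism $\overline{\varphi}\colon M/\sim_\can\to N$, which under the identification $M_\can\iso M/\sim_\can$ becomes a homomorphism $\overline{\varphi}\colon M_\can\to N$ satisfying $\varphi=\overline{\varphi}\iota$. Uniqueness is automatic since $\iota(M)=M_\can$ generates $M_\can$ as a set.

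There is no genuine obstacle here; the only point requiring attention is matching the congruence $\sim_\can$ with the equivalence relation $\sim$ used in the construction of $\gp M$ (specifically, noting that when comparing $(x,0)$ and $(y,0)$ the second coordinates cancel out trivially, reducing $\sim$ to $\sim_\can$). Everything else is a routine verification of universal properties.
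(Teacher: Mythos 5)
Your proposal is correct and gives the standard verification. Note that the paper itself does not supply a proof of this proposition: it explicitly states ``We leave it the reader to check $M_\can$ is actually the largest cancellative quotient of $M$,'' so there is no authorial argument to compare against. Your three steps (direct check that $\sim_\can$ is a congruence; identifying $M/\sim_\can$ with $\iota(M)\subset\gp M$ via the explicit pair construction $(x,0)\sim(y,0)\Leftrightarrow x+a=y+a$; and deducing the universal property from cancellativity of $N$ together with surjectivity of $M\to M_\can$) are exactly the intended routine verification. The only stylistic nitpick is the final phrase ``generates $M_\can$ as a set''---since $\iota(M)=M_\can$ by definition, the relevant point is simply that $M\to M_\can$ is surjective, which forces uniqueness of any factorization through it.
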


\subsection{Length-like functions on monoids}
We will introduce a kind of \emph{length} on monoids, which corresponds to length-like functions on exact categories introduced in Definition \ref{lengthlikedef}.
\begin{definition}\label{llmonoid}
  Let $M$ be a monoid. A \emph{length-like function} on $M$ is a monoid homomorphism $\nu \colon M \to \N$ such that $\nu(x) = 0$ implies that $x = 0$.
\end{definition}
The existence of a length-like function implies some nice properties.
\begin{proposition}\label{llprop}
  Let $M$ be a monoid and suppose that there exists a length-like function $\nu \colon M \to \N$ on $M$. Then the following hold.
  \begin{enumerate}
    \item $M$ is naturally partially ordered, hence reduced.
    \item $M$ is atomic.
  \end{enumerate}
\end{proposition}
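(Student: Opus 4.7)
The plan is to handle the two statements separately, each by a short direct argument using the two defining properties of $\nu$: additivity and the fact that $\nu(x)=0$ forces $x=0$.

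For (1), I would take $x,y \in M$ with $x \leq y$ and $y \leq x$, pick witnesses $a,b \in M$ with $y = x+a$ and $x = y+b$, and substitute to obtain $x = x+a+b$. Applying $\nu$ yields $\nu(x) = \nu(x) + \nu(a) + \nu(b)$ in $\N$, so $\nu(a) = \nu(b) = 0$; the second axiom of a length-like function then gives $a = b = 0$, hence $x = y$. This establishes that $\leq$ is antisymmetric, and reducedness is automatic by Proposition \ref{natpo} (alternatively, it can be read off directly: if $a+b = 0$, additivity of $\nu$ gives $\nu(a) = \nu(b) = 0$).

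For (2), I would argue by strong induction on the value $\nu(x) \in \N$. The base case $\nu(x) = 0$ gives $x = 0$, which is the empty sum of atoms. For the inductive step, suppose $\nu(x) \geq 1$. If $x$ is already an atom we are done; otherwise $x = y + z$ with $y, z \neq 0$, and because $\nu$ takes the value $0$ only at $0$, both $\nu(y)$ and $\nu(z)$ are at least $1$. By additivity $\nu(y) + \nu(z) = \nu(x)$, so each of $\nu(y), \nu(z)$ is strictly less than $\nu(x)$. The induction hypothesis expresses $y$ and $z$ as finite sums of atoms, and concatenating these expressions writes $x$ as a finite sum of atoms.

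Neither step has a serious obstacle: the argument is essentially that $\nu$ provides an $\N$-valued gauge on $M$ that is strictly positive on nonzero elements, which simultaneously rules out nontrivial cycles in the divisibility pre-order and guarantees termination of any attempt to decompose an element into summands. The only point requiring a tiny bit of care is reading off $\nu(a) = \nu(b) = 0$ in $\N$ from an equation of the form $n = n + p + q$, which is just the cancellativity of $\N$.
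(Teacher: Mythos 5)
Your proof is correct and follows essentially the same route as the paper: part (1) uses the additivity of $\nu$ plus the positivity axiom to collapse a two-sided divisibility relation (the paper extracts $\nu(a)=0$ from the single equality $\nu(y)=\nu(x)+\nu(a)$ after first noting $\nu(x)=\nu(y)$, whereas you combine both witnesses into $x = x + a + b$; these are the same calculation). Part (2) is the paper's minimal-counterexample argument recast as strong induction on $\nu(x)$, which is the identical idea in positive form.
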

\begin{proof}
  (1)
  Suppose that $x \leq y \leq x$ holds for $x,y \in M$. Since $\nu$ is a monoid homomorphism, it follows immediately that $\nu(x) \leq \nu(y) \leq \nu(x)$ holds, thus $\nu(x) = \nu(y)$. On the other hand, we have $y = x + a$ for some $a \in M$ by $x \leq y$. Thus, $\nu(y) = \nu(x) + \nu(a)$, which implies that $\nu(a)=0$. Since $\nu$ is a length-like function, $a = 0$ holds, hence $x = y$.
  Thus, $M$ is naturally partially ordered, so it is reduced by Proposition \ref{natpo}.

  (2)
  Suppose that $M$ is not atomic. Then take an element $x$ such that:
  \begin{enumerate}[label={\upshape(\alph*)}]
    \item $x \neq 0$.
    \item $x$ cannot be expressed as a finite sum of atoms.
    \item $\nu(x)$ is minimal among those $x$ which satisfy (a) and (b).
  \end{enumerate}
  Obviously $x$ is not an atom by (b). Thus, there is a decomposition $x = y + z$ with $y, z \neq 0$. Clearly either $y$ or $z$ satisfies (b), so let us assume that $y$ satisfies (b). However, we have $\nu(x) = \nu(y) + \nu(z)$ with $\nu(y), \nu(z) \neq 0$ since $\nu$ is a length-like function. Therefore, $\nu(y) < \nu(x)$ and $y$ satisfies (a) and (b). This contradicts the minimality of $x$.
\end{proof}

For later use, we show the following characterization of half-factorial monoids.
\begin{lemma}\label{freell}
  Let $M$ be a monoid. Then the following are equivalent:
  \begin{enumerate}
    \item $M$ is a half-factorial monoid.
    \item $M$ has a length-like function $\nu$ satisfying $\nu(a) = 1$ for every $a \in \Atom M$.
  \end{enumerate}
\end{lemma}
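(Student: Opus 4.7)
The plan is to verify each implication separately, exploiting the fact that the two statements are both pinned to the atomic structure of $M$.

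For the implication (1)$\Rightarrow$(2), the strategy is to define $\nu$ as the ``atom-counting'' function. Since $M$ is half-factorial, it is in particular atomic, so every $x \in M$ admits at least one expression $x = a_1 + \cdots + a_n$ with $a_i \in \Atom M$, and half-factoriality ensures that the integer $n$ depends only on $x$. Setting $\nu(x):= n$ (with $\nu(0) := 0$ coming from the empty sum), I would check in order: $\nu(0) = 0$; additivity, which follows by concatenating an atomic factorization of $x$ with one of $y$ to get a factorization of $x + y$ of length $\nu(x) + \nu(y)$; the property $\nu(a) = 1$ for atoms, which is immediate from the singleton factorization $a = a$; and the length-like property, which holds because $\nu(x) = 0$ forces $x$ to be the empty sum of atoms, i.e. $x = 0$ (half-factorial monoids are reduced by convention).

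For the implication (2)$\Rightarrow$(1), the length-like function $\nu$ immediately gives what we need. First, Proposition \ref{llprop} ensures that $M$ is reduced and atomic, so every $x \in M$ admits at least one atomic factorization. Next, for any atomic expression $x = a_1 + \cdots + a_n$, applying the monoid homomorphism $\nu$ yields
\[
\nu(x) = \nu(a_1) + \cdots + \nu(a_n) = n,
\]
since $\nu(a_i) = 1$ for each atom $a_i$. Hence the length $n$ is forced to equal $\nu(x)$, so it depends only on $x$; this is precisely the half-factorial condition.

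The argument is essentially a translation between two equivalent ways of recording the ``length'' of a factorization, so I do not anticipate a real obstacle. The only mild subtlety is making sure in the first direction that $\nu$ is well-defined on all of $M$ (i.e.\ that atomicity is available), which is given for free as part of the definition of half-factoriality; and in the second direction that atomicity of $M$ is genuinely needed to guarantee every element even has an atomic expression, which is supplied by Proposition \ref{llprop}.
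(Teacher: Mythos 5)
Your proof is correct and follows essentially the same route as the paper: you define the atom-counting function for (1)\,$\Rightarrow$\,(2) and use Proposition~\ref{llprop} together with additivity of $\nu$ for (2)\,$\Rightarrow$\,(1), exactly as the paper does, just spelling out the verification steps a bit more explicitly.
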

\begin{proof}
  (1) $\Rightarrow$ (2)
  Suppose that $M$ is half-factorial. We define a monoid homomorphism $l \colon M \to \N$ as follows:
  For every $x$ in $\EE$, we can write $x = \sum_{i=1}^l a_i$ in $\MM(\EE)$ with $a_i \in \Atom M$ for each $i$ since $M$ is atomic. We set $l(x) := l$. Since $M$ is half-factorial, $l$ does not depend on the choice of expressions, thus this map is well-defined. Furthermore, it is easy to see that $l$ is a length-like function and that $l(a) = 1$ for every $a \in \Atom M$.

  (2) $\Rightarrow$ (1):
  First, observe that $M$ is reduced and atomic by Proposition \ref{llprop}.
  Let $x$ be an element of $M$. Consider any expression
  \[
  x = a_1 + \cdots + a_n
  \]
  with $a_i \in \Atom M$. Then by (2), we have that $\nu(x) = \nu(a_1) + \cdots + \nu(a_n) = n$.
  Therefore, the number $n$ depends only on $x$, so $M$ is half-factorial.
\end{proof}

\subsection{Characterizations of free monoids}
In what follows, we give a criterion for a given monoid to be free. We use this results to check (JHP) in Section 4.

If a monoid $M$ is free, then it has a length-like function (Lemma \ref{freell}), its group completion is also free, and its rank coincides with the number of atoms. In general, we have the following inequality.
\begin{proposition}\label{ineq}
  Let $M$ be a reduced atomic monoid and suppose that $\gp M$ is a free abelian group. Then the following inequality holds.
  \[
  \rank (\gp M) \leq \# \Atom M
  \]
\end{proposition}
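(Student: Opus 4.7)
The plan is to reduce the inequality to the standard fact that the rank of a free abelian group is bounded by the cardinality of any generating set. Concretely, I would show that the image of $\Atom M$ in $\gp M$ generates $\gp M$ as an abelian group, and then apply this fact.

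First, since $M$ is atomic by assumption, every element $x \in M$ admits an expression $x = a_1 + \cdots + a_n$ with $a_i \in \Atom M$. Let $\iota \colon M \to \gp M$ denote the natural map. Applying $\iota$, every element of $\iota(M)$ is a non-negative integer linear combination of elements of $\iota(\Atom M)$. By the explicit construction of the group completion, every element of $\gp M$ can be written as $\iota(x) - \iota(y)$ for some $x,y \in M$, hence as an integer linear combination of elements of $\iota(\Atom M)$. Therefore $\iota(\Atom M)$ generates $\gp M$ as an abelian group.

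Next, I would invoke the general fact that if a free abelian group $G$ admits a generating set of cardinality $\kappa$, then $\rank G \leq \kappa$. If $\#\Atom M$ is infinite, the claimed inequality is automatic. If $\#\Atom M$ is finite, then $\gp M$ is a finitely generated free abelian group, and the bound $\rank \gp M \leq \#\Atom M$ follows by tensoring with $\Q$ and counting dimensions (the images of $\iota(\Atom M)$ span $\gp M \otimes_\Z \Q$, whose dimension equals $\rank \gp M$).

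There is no real obstacle here: the content is just the observation that atomicity transfers a generating property from $M$ to $\gp M$, combined with the standard linear-algebra bound. The only mild subtlety worth flagging is that $\iota$ need not be injective (that would require $M$ to be cancellative), but this is irrelevant for the argument since we only use that $\iota(\Atom M)$ generates $\gp M$, not that it injects.
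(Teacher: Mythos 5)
Your argument is correct and is essentially the paper's own proof: atomicity shows $\iota(\Atom M)$ generates $\gp M$ as an abelian group, and then the standard bound on the rank of a free abelian group in terms of a generating set gives the inequality. The extra details you supply (tensoring with $\Q$, noting $\iota$ need not be injective) are fine but do not change the substance.
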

\begin{proof}
  Let $\iota \colon M \to \gp M$ denote the group completion. As an abelian group, $\gp M$ is generated by $\iota M$, so it is generated by $\iota(\Atom M)$. Thus, $\rank (\gp M) \leq \# \Atom M$ holds.
\end{proof}

The following gives a kind of converse to this. This is an important characterization of free monoids, which is very useful to our setting.

\begin{theorem}\label{freechar0}
  Let $M$ be a monoid, and denote by $\iota \colon M \to \gp M$ the group completion of $M$. Then the following are equivalent:
  \begin{enumerate}
    \item $M$ is a free monoid.
    \item $M$ has a length-like function, $\iota$ is injective on $\Atom M$, and $\gp M$ is a free abelian group with basis $\iota (\Atom M)$.
    \item $M$ is reduced atomic, $\iota$ is injective on $\Atom M$, and $\gp M$ is a free abelian group with basis $\iota (\Atom M)$.
    \item $M$ is reduced atomic, $\iota$ is injective on $\Atom M$, and all elements in $\iota (\Atom M)$ are linearly independent over $\Z$.
  \end{enumerate}
\end{theorem}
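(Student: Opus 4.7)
My plan is to establish the cycle of implications $(1) \Rightarrow (2) \Rightarrow (3) \Rightarrow (4) \Rightarrow (1)$, drawing on the preparatory results already developed for monoids. Three of these implications are essentially bookkeeping, while the substantive content is concentrated in the last one.

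For $(1) \Rightarrow (2)$, if $M$ is free on a set $A$, then Proposition \ref{factprop} (3) identifies $A = \Atom M$, and Example \ref{freecompl} shows that $\gp M$ is the free abelian group on $\iota(\Atom M)$. In particular $\iota$ is injective on $\Atom M$ because the elements of a basis of a free abelian group are distinct. A length-like function is produced by the canonical ``number of atoms'' map $\sum n_a a \mapsto \sum n_a$ (alternatively, via Lemma \ref{freell} applied to the half-factoriality of a free monoid). For $(2) \Rightarrow (3)$, the existence of a length-like function yields that $M$ is reduced and atomic by Proposition \ref{llprop}. For $(3) \Rightarrow (4)$, the statement that $\iota(\Atom M)$ is a \emph{basis} of $\gp M$ immediately entails that these distinct elements are $\Z$-linearly independent.

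The main work lies in $(4) \Rightarrow (1)$, which I would handle directly. Since $M$ is atomic, every element admits at least one expression as a finite $\N$-linear combination of atoms, so it suffices to verify uniqueness. Suppose
\[
\sum_{a \in \Atom M} n_a \, a \;=\; \sum_{a \in \Atom M} m_a \, a
\]
in $M$, where the coefficients $n_a, m_a \in \N$ have finite support. Applying the group completion homomorphism $\iota$ and using that $\iota$ is injective on $\Atom M$ (so the elements $\iota(a)$ for $a \in \Atom M$ are genuinely distinct in $\gp M$), we obtain the equality $\sum (n_a - m_a)\,\iota(a) = 0$ in $\gp M$. The $\Z$-linear independence hypothesis in (4) then forces $n_a = m_a$ for every $a$, giving the desired uniqueness.

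The subtle point I expect to need care with is exactly this last step: the map $\iota \colon M \to \gp M$ need not be injective on all of $M$ (that would be cancellativity), so one cannot simply ``embed $M$ in $\gp M$'' and read off uniqueness. What makes the argument work is that uniqueness of an expression as a sum of atoms concerns only the multiplicities $(n_a)_{a \in \Atom M}$, and these multiplicities are recoverable from the image in $\gp M$ precisely because $\iota$ separates atoms and the images are $\Z$-linearly independent; cancellativity of $M$ itself will then emerge as a byproduct rather than a prerequisite. Apart from this conceptual point, the proof is a straightforward assembly of Proposition \ref{factprop}, Proposition \ref{llprop}, Lemma \ref{freell}, and Example \ref{freecompl}.
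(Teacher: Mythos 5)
Your proof is correct and follows essentially the same route as the paper: same handling of $(1)\Rightarrow(2)$ via Proposition \ref{factprop} and Example \ref{freecompl}, same use of Proposition \ref{llprop} for $(2)\Rightarrow(3)$, and for $(4)\Rightarrow(1)$ your direct uniqueness check (apply $\iota$, invoke injectivity on atoms plus $\Z$-linear independence to recover the multiplicities) is precisely the paper's argument that the natural surjection $\N^{(\Atom M)}\to M$ is injective, only phrased without drawing the commutative square. Your closing remark about why $\iota$ need not be a global embedding yet uniqueness still follows is exactly the right conceptual point.
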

\begin{proof}
  (1) $\Rightarrow$ (2):
  $M$ has a length-like function by Proposition \ref{freell}.
  Since $M$ is free, it is free on $\Atom M$ by Proposition \ref{factprop} (3), and we have an isomorphism $M \iso \N^{(\Atom M)}$. Thus, we have $\gp M \iso \Z^{(\Atom M)}$ by Example \ref{freecompl}. Thus, (2) follows.

  (2) $\Rightarrow$ (3):
  This follows from Proposition \ref{llprop}.

  (3) $\Rightarrow$ (4):
  This is trivial.

  (4) $\Rightarrow$ (1):
  Define a monoid homomorphism $\varphi \colon \N^{(\Atom M)} \to M$ by $\varphi(a) = a$ for each $a \in \Atom M$. We claim that this map is an isomorphism of monoids. It suffices to show that $\varphi$ is a bijection.

  Since $M$ is reduced and atomic, every element of $M$ is a finite sum of atoms. Thus, $\varphi$ is a surjection. On the other hand, consider the following commutative diagram of monoids
  \[
  \begin{tikzcd}
    \N^{(\Atom M)} \rar["\varphi", twoheadrightarrow] \dar[hookrightarrow, "i"'] & M \dar["\iota"] \\
    \Z^{(\Atom M)} \rar["\ov{\varphi}"] & \gp M,
  \end{tikzcd}
  \]
  where $\iota$ and $i$ are group completions of $M$ and $\N^{(\Atom M)}$ respectively, and $\ov{\varphi}$ is a group homomorphism induced by $\varphi$. Here $i$ is obviously an injection.
  Moreover, since $\iota$ is injective on $\Atom M$ and all elements in $\iota(\Atom M)$ are linearly independent, $\ov{\varphi}$ is an injection. Then the above commutative diagram shows that so is $\varphi$. Therefore, $\varphi$ is a bijection.
\end{proof}

Under the assumption of finite generation, we have a more convenient characterization, in which we only have to count the number of atoms.
\begin{corollary}\label{freechar}
  Let $M$ be a monoid, and denote by $\iota \colon M \to \gp M$ the group completion of $M$. Then the following are equivalent:
  \begin{enumerate}
    \item $M$ is finitely generated and free as a monoid.
    \item $M$ is a free monoid and $\# \Atom M$ is finite.
    \item $M$ is reduced and atomic, $\iota$ is injective on $\Atom M$ and $\gp M$ is a free abelian group of finite rank with basis $\iota (\Atom M)$.
    \item The following hold:
    \begin{enumerate}
      \item $M$ is reduced and atomic.
      \item $\gp M$ is a free abelian group of finite rank.
      \item $\# \Atom M = \rank (\gp M)$ holds, where $\rank$ denotes a rank as an abelian group.
    \end{enumerate}
  \end{enumerate}
\end{corollary}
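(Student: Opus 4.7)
My plan is to derive Corollary \ref{freechar} from Theorem \ref{freechar0} together with a couple of elementary facts about finitely generated free abelian groups, threading the four equivalent conditions through condition (2) as a convenient hub.

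For (1) $\Leftrightarrow$ (2), if $M$ is free then by Proposition \ref{factprop}(3) the set $\Atom M$ is automatically a basis, so $M \iso \N^{(\Atom M)}$; such a monoid is finitely generated if and only if $\Atom M$ is finite, using Proposition \ref{factprop}(1) to see that any finite generating set of a reduced atomic monoid must contain (indeed equal) $\Atom M$.

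For (2) $\Leftrightarrow$ (3), this is essentially Theorem \ref{freechar0}((1) $\Leftrightarrow$ (3)). The only additional observation needed is that, once $\iota(\Atom M)$ is known to be a $\Z$-basis of $\gp M$, the rank of $\gp M$ equals $\#\Atom M$, so finiteness of $\#\Atom M$ and finiteness of $\rank \gp M$ are synonymous.

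For (3) $\Leftrightarrow$ (4), the direction (3) $\Rightarrow$ (4) is immediate. For (4) $\Rightarrow$ (3) I will invoke the following standard lemma: a $\Z$-generating subset of a free abelian group of rank $n$ has cardinality at least $n$, with equality forcing the subset to be a $\Z$-basis; this is seen at once by tensoring with $\Q$ and counting dimensions. Since $M$ is atomic, $\iota(\Atom M)$ generates $\gp M$ as an abelian group, and it has at most $\#\Atom M = \rank \gp M$ elements, so the lemma forces $\iota$ to be injective on $\Atom M$ and $\iota(\Atom M)$ to be a $\Z$-basis of $\gp M$. This linear-algebraic step over $\Z$ is the only substantive point in the proof, and it is entirely routine; the rest is bookkeeping over Theorem \ref{freechar0}.
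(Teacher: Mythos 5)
Your proposal is correct and follows essentially the paper's own route: both reduce everything to Theorem \ref{freechar0} plus the elementary fact that a surjective map between finitely generated free abelian groups of equal rank is an isomorphism. The only cosmetic difference is that you close the loop at (4) $\Rightarrow$ (3) (a generating set of size equal to the rank must be a basis), whereas the paper closes it at (4) $\Rightarrow$ (1) by re-running the $\varphi \colon \N^{(\Atom M)} \to M$ construction from the proof of Theorem \ref{freechar0} and applying the same rank count to $\ov{\varphi}$; the two are the same argument rearranged.
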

\begin{proof}
  (1) $\Rightarrow$ (2):
  This follows from Proposition \ref{factprop} (1).

  (2) $\Rightarrow$ (3):
  This follows from (1) $\Rightarrow$ (3) in Theorem \ref{freechar0}.

  (3) $\Rightarrow$ (4): Trivial.

  (4) $\Rightarrow$ (1):
  We use the same notations and strategy as in the proof of (4) $\Rightarrow$ (1) in Theorem \ref{freechar0}. It suffices to show that the natural map $\ov{\varphi} \colon \Z^{\Atom M} \to \gp M$ is an isomorphism.
  Since $\varphi$ is a surjection, so is $\ov{\varphi}$. On the other hand, we have that $\gp M$ is free of rank $\# \Atom M$. By counting ranks, it follows that $\ov{\varphi}$ is an isomorphism.
\end{proof}

\medskip\noindent
{\bf Acknowledgement.}
The author would like to thank his supervisor Osamu Iyama for many helpful comments and discussions.
He is grateful to Jacob Greenstein for pointing out the use of Grothendieck monoids and explaining related results in \cite{bg}.
He would also like to thank the anonymous referee for many valuable comments.
This work is supported by JSPS KAKENHI Grant Number JP21J00299.

\end{appendix}

\end{document}